\numberwithin{equation}{subsection}
\newcommand{\SD}{{\mathcal{D}}}
\newcommand{\ST}{{\mathcal{T}}}
\newcommand{\SV}{{\mathcal{V}}}
\newcommand{\SL}{{\mathcal{L}}}
\newcommand{\SJ}{{\mathcal{J}}}
\newcommand{\SF}{{\mathcal{F}}}
\newcommand{\F}{{\mathcal{F}}}
\newcommand{\SR}{{\mathcal{R}}}
\renewcommand{\SS}{{\mathcal{S}}}
\newcommand{\X}{{\mathfrak{X}}}
\newcommand{\wtd}{\widetilde}
\renewcommand{\d}{{\operatorname{d}}}
\renewcommand{\ss}{{s}}
\newcommand{\R}{{\mathbb{R}}}
\newcommand{\Z}{{\mathbb{Z}}}
\newcommand{\NS}{{\mathbb{S}}}
\newcommand{\D}{{\mathbb{D}}}
\newcommand{\N}{\mathbb{N}}
\newcommand{\Op}{{\mathcal{O}p}}
\newcommand{\Sol}{{\operatorname{Sol}}}
\newcommand{\Diff}{{\operatorname{Diff}}}
\newcommand{\Hom}{{\operatorname{Hom}}}
\newcommand{\GL}{{\operatorname{GL}}}
\newcommand{\Gr}{{\operatorname{Gr}}}
\newcommand{\Sym}{{\operatorname{Sym}}}
\renewcommand{\Vert}{{\operatorname{Vert}}}
\newcommand{\can}{{\operatorname{can}}}
\newcommand{\Lift}{{\operatorname{Lift}}}
\newcommand{\meta}{{\operatorname{meta}}}
\newcommand{\Image}{{\operatorname{Image}}}
\newcommand{\id}{{\operatorname{id}}}
\newcommand{\fold}{{\operatorname{Fld}}}
\newcommand{\doubleFold}{{\operatorname{DbFld}}}
\newcommand{\ClosedDoubleFold}{{\operatorname{ClDbFld}}}
\newcommand{\pleat}{{\operatorname{Pleat}}}
\newcommand{\ClosedPleat}{{\operatorname{ClPleat}}}
\newcommand{\Reid}{{\operatorname{Reid}}}
\newcommand{\Stab}{{\operatorname{Stab}}}
\newcommand{\Wrin}{{\operatorname{Wrin}}}
\newcommand{\ClosedWrin}{{\operatorname{ClWrin}}}
\newtheorem{lemma}{Lemma}[section]
\newtheorem{proposition}[lemma]{Proposition}
\newtheorem{theorem}[lemma]{Theorem}
\newtheorem{corollary}[lemma]{Corollary}
\newtheorem{definition}[lemma]{Definition}
\newtheorem{remark}[lemma]{Remark}
\newtheorem{notation}[lemma]{Notation}
\begin{document} 

\title{Wrinkling $h$-principles for integral submanifolds of jet spaces}

\subjclass[2020]{Primary: 58A17, 58A30. Secondary: 57R57, 57R17, 53C17.}
\date{\today}

\keywords{h-principle, jet spaces, integral submanifolds, horizontal submanifolds.}

\author{\'Alvaro del Pino}
\address{Utrecht University, Department of Mathematics, Budapestlaan 6, 3584 Utrecht, The Netherlands}
\email{a.delpinogomez@uu.nl}

\author{Lauran Toussaint}
\address{Universite Libre de Bruxelles, Department of Mathematics, Boulevard du Triomphe, 1050 Bruxelles, Belgium}
\email{lauran.toussaint@ulb.be}

\begin{abstract}
Y. Eliashberg and N. Mishachev introduced the notion of wrinkled embedding to show that any tangential homotopy can be approximated by a homotopy of topological embeddings with mild singularities. This concept plays an important role in Contact Topology: The loose legendrian $h$-principle of E. Murphy relies on wrinkled embeddings to manipulate the legendrian front. Similarly, the simplification of legendrian front singularities was proven by D. \'Alvarez-Gavela by defining the notion of wrinkled legendrian.

This paper and its sequel generalise these ideas to general jet spaces. The main theorem in the present paper proves the analogue of the result by Eliashberg and Mishachev: Any homotopy of the $r$--order differential information of an embedding can be approximated by a homotopy of embeddings with wrinkle-type singularities (of order $r$).

The local version of the previous statement, which is of independent interest, says that the holonomic approximation theorem holds over closed manifolds if, instead of sections, we consider multi-valued sections with simple singularities.
\end{abstract}

\maketitle
\setcounter{tocdepth}{1}
\tableofcontents

\section{Introduction}

\subsection{Wrinkled embeddings}

Let $M \subset N$ be manifolds of dimensions $m$ and $n$, respectively. In many geometrically meaningful situations, we are interested in producing isotopies of $M$ that simplify its position with respect to some geometric structure in $N$. For instance, $N$ may be endowed with a foliation $\SF$ and we want to isotope $M$ so that it becomes transverse.

In general, such a process is obstructed. The tangent space of $M$ defines a section $\Gr(M): M \to \Gr(TN,m)|_M$ into the Grassmannian of $m$-planes of $N$. If we want to make $M$ transverse to $\SF$ by an isotopy, it is certainly necessary that $\Gr(M)$ can be homotoped to be transverse to $\SF$. This obstruction is purely algebraic topological in nature and can be analysed using obstruction theory. One may then ponder whether the vanishing of these obstructions (that we call \emph{formal}) is sufficient for the existence of the desired isotopy. The answer is, in general, no. For instance, if $N$ is a fibration over $\R$ and $\SF$ is the foliation by fibres, making $M$ transverse to $\SF$ would produce a function on $M$ with no critical points, which is impossible if $M$ is closed. This obstruction is geometric and not algebraic.

In situations where singularities may be unavoidable for geometric reasons, we may attempt to make them as mild as possible instead. This was proven, in the aforementioned setting, by Eliashberg and Mishachev \cite{ElMiWrinEmb}: They showed that, if no formal obstruction exists, $M$ can be isotoped to have simple singularities of tangency with respect to $\SF$. The idea of the proof is as follows: Since no formal obstruction exists, we are given a homotopy $\Gr_s$ starting at $G_0 = \Gr(M)$ and finishing at a bundle $G_1$ transverse to $\SF$. Then, instead of isotoping $M$, we produce a homotopy $M_s$ of topological submanifolds that may have cuspidal singularities (or so-called \emph{wrinkles}). Despite being singular, the $M_s$ admit well-defined Grassmannian maps $\Gr(M_s)$ and the heart of the argument is that it is possible to choose $M_s$ so that $\Gr(M_s)$ approximates $\Gr_s$; here the flexibility provided by the cuspidal singularities/wrinkles is key. The proof concludes by smoothing out $M_s$; when we do so, $M_1$ becomes a smooth submanifold and the cusps/wrinkles are traded for (simple!) singularities of tangency with $\SF$.

\subsection{Wrinkled embeddings of higher order}

The starting point of the present article is that the result of Eliashberg-Mishachev is a first order statement. That is: the Gauss map $\Gr(M)$ is the first derivative of $M$, and their theorem states that any homotopy $\Gr_s$ of this first derivative can be approximated by a homotopy $M_s$ of $M$, as long as we allow for simple singularities in the process. Our main result says that this can be done for higher order data as well:
\begin{theorem} \label{thm:wrinkledEmbeddingsInformal}
Fix an integer $r$. Any homotopy of the $r$--order information of an embedding can be approximated by a homotopy of topological embeddings with zig-zags. Similar statements hold parametrically and relatively by allowing zig-zags to appear and disappear.
\end{theorem}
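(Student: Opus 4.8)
The plan is to deduce Theorem~\ref{thm:wrinkledEmbeddingsInformal} from a ``wrinkled holonomic approximation theorem'', mirroring the way the Eliashberg--Mishachev statement for $r=1$ follows from ordinary holonomic approximation over the codimension-one skeleton of an open manifold \cite{ElMiWrinEmb}. The first step is to encode the $r$-order data of an embedding $M\hookrightarrow N$ as a section of the bundle $\SJ^r\to M$ of $r$-jets of $m$-dimensional submanifolds of $N$, the natural higher-order replacement of the Grassmann bundle $\Gr(TN,m)$. The hypothesis then reads: a homotopy $F_s\colon M\to\SJ^r$ of such sections with $F_0$ holonomic, that is, equal to the genuine $r$-jet prolongation of $M$; we must approximate it by a homotopy $M_s$ of topological embeddings whose $r$-jet data tracks $F_s$ away from a controlled singularity set.

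Next comes the local-to-global reduction. Fix a handle (equivalently, cell) decomposition of $M$ and argue inductively over the handles, as in the usual proof of the $h$-principle. Over the union of the handles of index $<m$, thickened to an open subset $A\subset M$ of positive codimension in $M$, ordinary holonomic approximation applies: after a $C^0$-small diffeotopy pushing $A$ off itself, one obtains a genuine holonomic section $C^0$-close to $F_1|_A$, realised by an honest isotopy of $M$ near $A$. All the difficulty is concentrated over the top-index handles, i.e.\ over a disjoint union of $m$-balls $B_j$, where we must interpolate between the already-holonomic datum on $\partial B_j$ and a prescribed formal datum in the interior.

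The heart of the matter --- and the main obstacle --- is filling those top cells with zig-zags. One has to construct an order-$r$ ``zig-zag'' local model: a multi-valued section over $B_j$, singular along a codimension-one sphere, whose underlying graph is a topologically embedded (singular) $m$-submanifold of $N$ carrying only cuspidal singularities of order $r$, which is holonomic away from its singularity and $C^0$-close to the prescribed formal datum. The delicate points are (a) designing the model so that the zig-zag absorbs precisely the derivative defect that obstructs the existence of a genuine section, (b) verifying that the resulting singular submanifold is still a topological embedding, with no spurious self-intersections created during the interpolation, and (c) arranging the singularity locus to be of the standard stable ``wrinkle'' type, so that the conclusion is a meaningful simplification. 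Gluing these local models to the holonomic part over $A$ and propagating the $C^0$-estimates so that the global map remains an embedding yields the homotopy $M_s$ of topological embeddings with zig-zags.

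Finally, for the parametric and relative statements one runs the same scheme over a compact parameter manifold, using the parametric holonomic approximation theorem over the skeleton together with the fact that zig-zags, being a stable singularity type, appear and disappear in $1$-parameter families through the standard birth/death moves; a parametrised version of the order-$r$ zig-zag model, plus an induction over the skeleton of the parameter space, makes the argument relative in the parameter and over a closed subset of $M$ where $F_s$ is already holonomic. The estimates of the previous step must be made uniform in the parameter, but this is routine once the local models are in place, so the genuine work remains the construction and analysis of the order-$r$ zig-zag.
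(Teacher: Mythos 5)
Your broad strategy matches the paper's---reduce the statement for submanifolds to a ``wrinkled holonomic approximation for multi-sections'' (Theorems~\ref{thm:holonomicApproxZZ} and~\ref{thm:holonomicApproxParam}), then use skeleton-style reductions together with an order-$r$ cusp/zig-zag local model to fill top cells---but there is a genuine gap in the local-to-global step. Your reduction treats the endpoint $F_1$ as if it were graphical over the starting embedding, i.e.\ as if it factored through the chart $J^r(\nu(M))\hookrightarrow J^r(Y,n)$ supplied by a tubular neighbourhood, so that holonomic approximation near the positive-codimension skeleton could be ``realised by an honest isotopy of $M$''. A general $r$-jet homotopy (already a general tangential homotopy when $r=1$) leaves this chart: once the formal Gauss map has rotated far enough, no multi-section over $M$ can $C^0$-track $F_1$. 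The paper handles this by chopping the time interval $[0,1]$ into subintervals over which the homotopy stays graphical and inducting over those subintervals. That induction carries a further consequence your proposal has no mechanism for: after the first step the object being deformed is no longer a submanifold but a singular submanifold with zig-zags, so each subsequent chop must interpolate \emph{relative to the existing singular locus}. The paper does this by producing an auxiliary thickening $\SS$ of the cusp locus, running classic holonomic approximation on $\SS$ and the complement separately, and accepting that new zig-zags become nested inside the membranes of old ones; the wrinkle definitions in Section~\ref{sec:singularities} are designed from the outset to allow that nesting. Without time-chopping and the accompanying bookkeeping of nested membranes, the handle-by-handle scheme you describe collapses already for a half-turn tangential homotopy of a circle in the plane.

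A secondary issue: the parametric argument should not lean on zig-zags being a ``stable singularity type''. The paper deliberately avoids any stability or genericity input, because (as Lychagin and Givental observed) the relevant Whitney-type normal forms may fail to be stable once the fibre of $Y$ has dimension $\geq 2$. Instead, the models are planted by hand via explicit point symmetries, and the birth/death behaviour is prescribed by the swallowtail family of bump functions rather than read off from a transversality argument.
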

Roughly speaking, a zig-zag is a pair of cusp singularities that sit with respect to each other as depicted in Figure \ref{Fig:HolonomicApprox_Zigzag}. For the first order case, treated in \cite{ElMiWrinEmb}, one can rely on standard semi-cubic cusps. The higher order case needs ``sharper'' higher order cusps. Moreover, during a homotopy, one also needs to introduce birth/death phenomena for such singularities.

This statement will be stated more formally and proven, as Theorem \ref{thm:wrinkledEmbeddings}, in Section \ref{sec:wrinkledEmbeddings}. The parametric version is stated as Theorem \ref{thm:wrinkledEmbeddingsParam}.

\subsection{Holonomic approximation}

One can succinctly state Theorem \ref{thm:wrinkledEmbeddingsInformal} by saying that \emph{holonomic approximation holds for submanifolds with zig-zags}. To put this into perspective, let us recall the standard setup for $h$-principle and geometric PDEs.

Given a smooth bundle $Y \to X$, we can define the \emph{bundle of $r$-jets} $J^r(Y) \to X$. Its fibres consist of $r$-order Taylor polynomials of sections of $Y$. Given any section $f: X \to Y$, we can consider its $r$-order differential data $j^r f: X \to J^r(Y)$; such a section of jet space is said to be \emph{holonomic}. Most sections $F: X \to J^r(Y)$ are not holonomic and, to emphasise this, we call them \emph{formal sections}.

This provides a very convenient setup to discuss partial differential relations (PDRs). Indeed, we can define a PDR of order $r$ to be a subset $\SR \subset J^r(Y)$. It readily follows that a \emph{solution} of $\SR$ is a section $f: X \to Y$ whose $r$-order Taylor polynomial $j^rf$ takes values in $\SR$. More generally, we can define \emph{formal solutions} of $\SR$ to be sections $F: X\to \SR$. The existence of formal solutions is thus a necessary condition for the existence of solutions. One can then compare the spaces of solutions and formal solutions and ask, in particular, whether the two are weakly homotopy equivalent. If this is the case, $\SR$ is said to \emph{satisfy the $h$-principle}.

If the relation $\SR$ we consider is open (as is sometimes the case with relations of geometric origin, like those describing contact or symplectic structures), we could attempt to find solutions of $\SR$ using the following idea: We start with a formal solution $F$ and we find $f: X \to Y$ such that $j^rf$ approximates $F$. If this approximation is good enough, $j^rf$ will land in $\SR$ and $f$ will be a solution. This idea is called, quite descriptively, \emph{holonomic approximation}.

It turns out that this does not work and, indeed, many open relations do not satisfy the $h$-principle (for instance, symplectic structures). However, using his \emph{method of flexible sheaves}, M. Gromov \cite{Gr71} proved that holonomic approximation does hold if we try to approximate only over a subset $X' \subset X$ of positive codimension (in fact, one needs to approximate not quite over the given $X'$ but over a $C^0$-close copy of $X'$ that is more ``wiggly''). Due to the fact that open manifolds can be retracted to their skeleton (which is a positive codimension CW-complex), this can then be used to prove that the $h$-principle applies for any $\SR$ open and Diff-invariant (i.e. invariant under the action of the diffeomorphism group of $X$), as long as $X$ is open. This result applies to ``generic''/``non-degenerate'' geometric structures (like contact or symplectic) and generalises prior results about immersions (due to Hirsch-Smale \cite{Sm57,Hi59}) and submersions (due to Phillips \cite{Ph67}).

One can then pose the question: ``what can we do for closed manifolds?'' We henceforth assume that $X$ is indeed closed.

\subsection{Holonomic approximation for multiply-valued sections}

At each point in $J^r(Y)$, we are given a collection of tautological equations encoding the fact that certain fibre directions should correspond to derivatives of some others. These equations are pointwise linear and define the so-called \emph{Cartan distribution} $\xi_\can$ in jet space. From its construction, it follows that holonomic sections can be characterised as those sections tangent to $\xi_\can$. This led R. Thom \cite{Thom1} to define \emph{generalised solutions} of $\SR$ as maps $X \to J^r(Y)$ (not necessarily sections!) tangent to $\xi_\can$ and taking values in $\SR$. To emphasise the fact that these are not sections, one may note that it also makes sense to consider general tangent maps $M \to (J^r(Y),\xi_\can)$, where $M$ is some other manifold of the same dimension as $X$.

Recall that holonomic sections $j^rf: X \to J^r(Y)$ are in correspondence with their underlying sections $f: X \to Y$. This is (almost) true for generalised solutions as well: $\phi: M \to (J^r(Y),\xi_\can)$ can be uniquely recovered from its \emph{front projection} $\pi_f \circ \phi: M \to Y$ whenever the \emph{base projection} $\pi_b \circ \phi: M \to X$ is a immersion. It follows that, as long as $\phi$ is graphical over $X$ in a dense set, it will be uniquely recovered from $\pi_f \circ \phi$. However, the fibre over a point $x \in X$ may intersect the image of $\pi_f \circ \phi$ in a number of points different than one. This allows us to regard such front projections as \emph{multi-sections} $M \to Y$ and the tangent maps $\phi$ themselves as holonomic lifts of multi-sections.

We then ask whether it is possible to approximate any formal section by a holonomic lift of multi-section. We phrase this as ``holonomic approximation holds, even over closed manifolds, in the class of multi-sections''. Remarkably, Thom, preceding Gromov's work, proved this in \cite{Thom1}, using ideas that seem to be a precursor of the pleating/wrinkling approaches to $h$-principles \cite{ElMiWrinI,ElMiWrinII,ElMiWrinIII,ElMiWrinEmb}. However, Thom's argument is somewhat incomplete (particularly regarding higher jets) and, more importantly for us, his method produces generalised solutions with uncontrolled singularities. Later, Gromov provided an argument \cite[Section 4.3]{Gro96}, based on microflexibility/pleating, to construct generalised solutions whose only singularities of mapping are folds. His approach applies to more general manifolds endowed with bracket-generating distributions, as long as certain dimensional constraints hold.

Our result produces multi-sections whose holonomic lifts are smooth embeddings. To do so, we use pleating in the front projection, instead of in jet space itself (unlike Gromov). More concretely: our multi-sections are topological embeddings whose only singularities are order-$r$ cusps (that project to the base manifold $X$ as folds and that are smooth when lifted to $r$-jet space). Our second result reads:
\begin{theorem}\label{thm:wrinkledSectionsNonParametricInformal}
Let $F: X \to J^r(Y)$ be an arbitrary formal section. Then, for any $\varepsilon > 0$, there exists a tangent embedding $f: X \to (J^r(Y),\xi_\can)$ satisfying:
\begin{itemize}
\item $f$ is the lift of a multi-section with zig-zags;
\item $|f - F|_{C^0} < \varepsilon$.
\end{itemize}
\end{theorem}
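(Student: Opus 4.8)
The plan is to reduce Theorem~\ref{thm:wrinkledSectionsNonParametricInformal} to a local extension problem over an $n$-disk, via a triangulation together with the classical holonomic approximation theorem, and then to solve that local problem by inserting a \emph{wrinkle} assembled out of order-$r$ zig-zags.

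First I would fix a triangulation of $X$ fine enough (mesh $h$, to be chosen last) that $Y$ and $\xi_\can$ trivialise over each closed star, and let $X^{(n-1)}$ be its codimension-one skeleton, $n = \dim X$. Applying the holonomic approximation theorem \cite{Gr71} to $F$ and the positive-codimension polyhedron $X^{(n-1)}$ produces an arbitrarily small diffeotopy of $X$ carrying $X^{(n-1)}$ to a wiggled copy $\wtd{X}^{(n-1)}$, together with a genuine holonomic section $j^r g$ on $\Op(\wtd{X}^{(n-1)})$ with $|j^r g - F|_{C^0} < \varepsilon/2$. Since $X$ is closed, the complement of $\Op(\wtd{X}^{(n-1)})$ is a finite disjoint union of closed $n$-disks $D_1,\dots,D_k$, each lying in a single top star and each carrying, on a collar of $\partial D_i$, the holonomic germ $j^r g$. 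The problem then becomes local: given a formal germ $F$ on an $n$-disk and a holonomic germ $j^r g$ on a neighbourhood of its boundary with $|j^r g - F|_{C^0}$ small, build a tangent embedding into $(J^r(Y),\xi_\can)$ that equals $j^r g$ near the boundary, is the lift of a multi-section whose only singularities are order-$r$ cusps, and is $C^0$-close to $F$; gluing the local solutions to $j^r g$ over $\Op(\wtd{X}^{(n-1)})$ then yields $f$.

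The engine of the local step is a one-variable model, in the space $J^r$ of $r$-jets of functions $\R\to\R$ with coordinates $(x,y_0,\dots,y_r)$ and $\xi_\can = \{\d y_{i-1} = y_i\,\d x : 1\le i\le r\}$. A curve $s\mapsto(x(s),y_0(s),\dots,y_r(s))$ tangent to $\xi_\can$ satisfies $\dot y_{i-1} = y_i\dot x$, so at a non-degenerate critical point of the base projection $x$ one has $\dot x = 0$, forcing $\dot y_0 = \dots = \dot y_{r-1} = 0$: the curve must move purely in the $y_r$-direction there, and the front $(x,y_0)$ acquires an order-$r$ cusp (the semicubical cusp of \cite{ElMiWrinEmb} when $r=1$, and progressively ``sharper'' as $r$ grows, the non-smooth part of $y_0$ entering only at order $(x-x_0)^{r+1/2}$). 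A pair of such cusps forms a zig-zag, as in Figure~\ref{Fig:HolonomicApprox_Zigzag}. The key computation is that integrating $\dot y_{i-1} = y_i\dot x$ across a zig-zag supported in an $x$-interval of width $w$ shifts the values $(y_0,\dots,y_r)$ by quantities that are $O(w)$ and can, by tuning the cusp profiles, be prescribed essentially independently, while keeping the curve within $O(w)$ of any preassigned arc in $J^r$. Concatenating finitely many zig-zags then interpolates, in $C^0$, between the holonomic germ $j^r g$ near the boundary and an arc tracking $F$ in the interior: on forward sheets the $r$-jet follows $F$, while backward sheets absorb the ``synchronisation defect'' of size $O(h)$ between the boundary holonomy of $g$ and the (generally non-holonomic) formal datum $F$, which is exactly what obstructs an honest single-valued holonomic filling.

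To promote this to the disk I would write $D^n \cong \D^{n-1}\times[-1,1]$ with $\xi_\can$ adapted so each segment $\{p\}\times[-1,1]$ carries the relations above, and insert into each segment a zig-zag chain whose amplitudes and $x$-widths depend smoothly on $p\in\D^{n-1}$, tapering to zero near $\partial\D^{n-1}$ and near $\D^{n-1}\times\{\pm1\}$: the zig-zags are thus born and die as $p$ varies, their singular loci assemble into a single wrinkle-type singularity, and the construction agrees with $j^r g$ on a collar of $\partial D^n$. It then remains to check that the resulting map $D^n\to J^r(Y)$ is tangent to $\xi_\can$ (immediate from the fibrewise construction), is a smooth embedding (a transversality check on the assembled family), is $C^0$-close to $F$ (by the previous paragraph and the choice of $h,\varepsilon$), and is the lift of a multi-section with only order-$r$ cusps, i.e. projects to $X$ as a fold map. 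The hard part, I expect, is concentrated in the last two paragraphs: producing an order-$r$ cusp profile that is simultaneously holonomic, of the correct singularity type downstairs, and controlled in \emph{every} jet coordinate $y_0,\dots,y_r$ rather than only the base coordinate --- it is this last demand that forces the sharper higher-order cusps and couples all the choices through the iterated integrals $\dot y_{i-1} = y_i\dot x$ --- and then fitting these profiles into a wrinkle with admissible birth--death behaviour along $\D^{n-1}$; the fully parametric refinement needs, in addition, birth/death models for entire zig-zags.
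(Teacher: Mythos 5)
Your overall reduction scheme matches the paper's proof of Theorem~\ref{thm:holonomicApproxZZ}: triangulate $X$ finely, run classical holonomic approximation over (a wiggled copy of) the codimension-one skeleton to obtain a holonomic germ $g$ on a collar of each top cell, and then fill in each top cell by inserting controlled zig-zags. Your one-variable engine---concatenating order-$r$ cusps so that the iterated integrals $\dot y_{i-1} = y_i\,\dot x$ drift by controlled amounts---is likewise the same mechanism as the paper's zig-zag bump function (Proposition~\ref{prop:zigzagBumpConstruction}).

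The step that breaks, however, is your ``promotion to the disk.'' You decompose the top cell as $\D^{n-1}\times[-1,1]$ and let the zig-zag chains taper to zero in amplitude and width as $p$ approaches $\partial\D^{n-1}$, so that the singular loci close up into a ``wrinkle-type'' set. But the moment a zig-zag dies along $\partial\D^{n-1}$ in a \emph{fixed} (non-parametric) multi-section, you have introduced a swallowtail locus: a pleat if $r$ is even (a $\Sigma^{1,1}$ tangency, hence not a zig-zag), or a closed pleat if $r$ is odd (a genuine singularity of mapping of the integral lift, so the lift is not even an embedding). Either way, the output is no longer a ``multi-section with zig-zags'' in the sense of Definition~\ref{def:sectionZigzag}, which allows \emph{only} zig-zags, each with \emph{closed} base, and the statement claims a tangent \emph{embedding}. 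The paper sidesteps this entirely by working in the spherical collar $\NS^{n-1}\times[1-\delta,1]$ rather than a bi-collared $\D^{n-1}\times[-1,1]$: the zig-zag bump function $\rho_N$ is a function of the \emph{radial} variable alone, applied uniformly over the $\NS^{n-1}$-factor (via the frame $A$), so the cusp loci are concentric spheres, there is no birth/death, and the singularities are genuinely all zig-zags with spherical base. Swallowtail moves are then needed only in the parametric Theorem~\ref{thm:holonomicApproxParam}, not here.

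A second, smaller gap: you flag as ``the hard part'' the need to control all jet coordinates $y_0,\dots,y_r$ simultaneously when modifying only one principal direction, but you do not resolve it. The paper's resolution is the ``pushing trick'' (shift $s_1$ by a constant so that $\wtd s_1(x)\neq s_0(x)$ pointwise) followed by a fibre-linear normal form (the frame $A$ making the difference section equal to $e_1$); this reduces the interpolation to the scalar model $A(\rho_N e_1)$ whose $C^0$-bound on the full $r$-jet follows from the single estimate $|\partial^{I'}\rho_N|=O(1/N)$, and simultaneously delivers topological embeddedness for free since $\rho_N e_1$ is embedded and $A$ is a fibrewise diffeomorphism. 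As written, your proposal leaves both the embeddedness and the uniform jet control as hopes rather than arguments.
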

This is restated as Theorem \ref{thm:holonomicApproxZZ} in Section \ref{sec:holonomicApproxZZ}. It then follows immediately:
\begin{corollary}
Let $\SR \subset J^r(X)$ be an open differential relation admitting a formal solution $F$. Then, $\SR$ admits a generalised solution $f$, $C^0$-close to $F$.
\end{corollary}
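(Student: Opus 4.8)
The statement is an immediate consequence of Theorem \ref{thm:wrinkledSectionsNonParametricInformal}, and the plan is simply to combine it with the openness of $\SR$ and the compactness of $X$. Recall that, in the sense of Thom recalled above, a \emph{generalised solution} of $\SR$ is any map $X \to J^r(Y)$ that is tangent to the Cartan distribution $\xi_\can$ and takes values in $\SR$; in particular, a tangent embedding whose image lies in $\SR$ is automatically a generalised solution. It therefore suffices to produce a tangent embedding that is $C^0$-close to $F$ and whose image is contained in $\SR$.

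To this end, fix an auxiliary complete distance function on $J^r(Y)$ (for instance the one induced by a Riemannian metric). Since $X$ is closed, $F(X)$ is compact; and since $\SR$ is open and $F$ is a formal solution, $F(X)$ lies in $\SR$ at positive distance $\delta > 0$ from the complement $J^r(Y) \setminus \SR$. Applying Theorem \ref{thm:wrinkledSectionsNonParametricInformal} with any $\varepsilon < \delta$ yields a tangent embedding $f : X \to (J^r(Y),\xi_\can)$, the holonomic lift of a multi-section with zig-zags, satisfying $|f - F|_{C^0} < \varepsilon$. By the choice of $\varepsilon$, each point $f(x)$ lies within distance $\delta$ of $F(X)$ and hence inside $\SR$; thus $f$ is a generalised solution of $\SR$, $C^0$-close to $F$, as desired.

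There is essentially no obstacle here: all of the analytic content is packaged inside Theorem \ref{thm:wrinkledSectionsNonParametricInformal}, whose proof occupies Section \ref{sec:holonomicApproxZZ}. The only points that deserve a moment's attention are routine: that the $C^0$-estimate may be taken uniform over $X$ (which holds because $X$ is compact, so that a single $\varepsilon$ works), and that the multi-section with zig-zags produced by the theorem genuinely admits a smooth lift tangent to $\xi_\can$ — but this is already part of the statement of the theorem, the zig-zags being designed precisely so that the order-$r$ cusps in the front projection become regular points once lifted to $r$-jet space.
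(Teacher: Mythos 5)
Your proof is correct and matches the paper's own argument (given in Section 6 as the proof of Corollary \ref{cor:holonomicApproxZZ}): compactness yields a uniform $\varepsilon$-neighbourhood of $F(X)$ contained in $\SR$, and Theorem \ref{thm:wrinkledSectionsNonParametricInformal} produces a tangent embedding landing in it. The only small difference is that the paper also treats the case of non-compact $X$ by an exhaustion by compacts, which your argument omits by assuming $X$ closed.
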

That is, even though the $h$-principle does not hold for arbitrary open differential relations, it does hold when we allow generalised solutions. As before, both these statements can be extended to the parametric (in which $F$ varies in a family $F_s$) and relative settings (where all the $F_s$ are already holonomic in some part of the domain, and some of them are everywhere holonomic). These versions are stated and proven in Section \ref{sec:holonomicApproxParam}. 

The reader may wonder what is the relation between Theorem \ref{thm:wrinkledEmbeddingsInformal} and Theorem \ref{thm:wrinkledSectionsNonParametricInformal}. The answer is that the latter is the local version of the former. That is: when trying to isotope a submanifold $M \subset N$, we work in its tubular neighbourhood $\Op(M)$, which we identify with the normal bundle $\nu(M) \to M$. Then, sufficiently small homotopies of $M$ as a submanifold with zig-zags correspond to homotopies of the zero section as a multi-section with zig-zags. That is, Theorem \ref{thm:wrinkledSectionsNonParametricInformal} implies Theorem \ref{thm:wrinkledEmbeddingsInformal}; this is carried out in detail in Section \ref{sec:wrinkledEmbeddings}.

\subsection{Organisation of the paper}

We review jet spaces, the $h$-principle philosophy, and some singularity theory in Sections \ref{sec:jetSpace}, \ref{sec:hPrinciple}, and \ref{sec:singularitiesPrelim} respectively. The reader familiar with $h$-principles can skip these sections safely.

The main technical ingredient of the paper is introduced in Section \ref{sec:metasymplectic}. It is the so-called \emph{metasymplectic projection}, which is the natural generalisation of the lagrangian projection to general jet spaces. Every map tangent to $\xi_\can$ appearing in this paper will be the lift of a map into the metasymplectic projection. 

In Section \ref{sec:singularities} we use this lifting approach to provide models of maps tangent to $\xi_\can$. These models correspond to different singularities of tangency with respect to the fibres of jet space. In a couple of instances, the models also have singularities of mapping. 

Theorem \ref{thm:wrinkledSectionsNonParametricInformal} is stated in its non-parametric form in Subsection \ref{sec:holonomicApproxZZ}. The parametric version is addressed in Subsection \ref{sec:holonomicApproxParam}. Theorem \ref{thm:wrinkledEmbeddingsInformal} is stated and proven in Section \ref{sec:wrinkledEmbeddings}.

A feature of our constructions is that our multi-sections/singular embeddings have singularities with a very concrete topology. Namely, in the non-parametric case, they form spheres of cusps. This differs slightly from the standard wrinkling approach, where the singularities appear in a wrinkle configuration. In Appendix \ref{sec:surgeries} we explain a surgery procedure to pass to wrinkles. This is well-known in wrinkling and goes under the name of \emph{chopping}.

Another subtlety is that some of our singularities (namely, the birth/death event of a zig-zag when $r$ is odd) are singularities of mapping (and tangency!). This is problematic, since we want our tangent mappings into jet space to be embeddings. This can be addressed as long as the jet space in question is not contact. The argument relies on a surgery construction, which we provide in Appendix \ref{sec:desingularisationOdd}. The main result is Theorem \ref{thm:fishSurgerySec}.

\subsection{Tangent submanifolds in jet spaces}

The main motivation behind this article was to understand better the space of embeddings $M \to (J^r(Y),\xi_\can)$ tangent to $\xi_\can$. This is extremely natural: Recall that the first jet space of functions $(J^1(X,\R),\xi_\can)$ is a contact manifold and therefore tangent embeddings are precisely legendrians. The study of legendrians is a driving theme in Contact Topology, where $h$-principle results provide flexibility (i.e. classification results) and generating functions/pseudoholomorphic curves/sheaves provide rigidity (in the form of obstructions/invariants).

There are two results on legendrians that we can highlight as inspiration for the present work: The first is the celebrated classification of loose legendrians due to E. Murphy \cite{Mur}, where it is shown that a certain subfamily of legendrians in higher dimensions satisfy the $h$-principle. The second is the simplification of legendrian singularities of tangency due to D. \'Alvarez-Gavela \cite{Gav2}; this generalises the work of Eliashberg and Mishachev on wrinkled submanifolds to the contact setting.

Similar results are not yet available for higher order jets (or for bundles with larger fibre). One exception is the case of curves tangent to Engel structures (which are locally modelled on $(J^2(\R,\R),\xi_\can)$), and which was treated in \cite{PP16,CP}.

The theorems in the present paper are partial results in this direction. Namely, they deal with integral submanifolds that are $C^0$-small deformations of formal sections in jet space and additionally have simple singularities of tangency with the front projection. We will study more general tangent submanifolds in the sequel \cite{PT2}, still under the assumption that their singularities of tangency are of corank-$1$. These upcoming results rely on the tools introduced in the present paper. We postpone further discussion till then, but there is one concrete observation that we want to emphasise:
\begin{remark}
The results in this paper indicate that, as long as they are assumed to have simple singularities of tangency, the classification up to homotopy of integral embeddings into non-contact jet spaces $(J^r(Y),\xi_\can)$ displays greater flexibility than the classification of legendrians.

However, even though the topological aspects of the theory seem rather flexible, there is non-trivial geometry to be understood. Namely, as of yet, the authors do not know how to handle higher singularities of tangency. \hfill$\triangle$
\end{remark}

\textbf{Acknowledgments:} The first author was funded, for the duration of the project, by the NWO grant 016.Veni.192.013. The second author was funded by the NWO grant 639.033.312, as well as the F.R.S-FNRS and the FWO under the Excellence of Science programme (grant No.
30950721).
\section{Jet spaces} \label{sec:jetSpace}

In this Section we recall some elementary notions about jet spaces. A standard reference in the Geometry of PDEs literature is \cite[Chapter IV]{KrLyVi}, but we also recommend \cite[Section 2]{Vi14}. The two standard $h$-principle references also treat jet spaces, namely \cite[Section 1.1]{Gr86} and \cite[Chapter 1]{ElMi}.

\subsection{Jet spaces of sections} \label{ssec:jetSpace}

Let $X$ be an $n$-dimensional manifold and let $\pi: Y \to X$ be a submersion with $\ell$-dimensional fibres. We write $\Vert(Y) \subset TY$ for the vector subbundle consisting of vectors tangent to the fibres of $Y$.

We write $J^r(Y)$ for the space of $r$-jets of sections $X \to Y$. When $Y$ is the trivial $\R^\ell$-bundle over $X$ we often denote it by $J^r(X,\R^\ell)$. The spaces of $r$-jets, for varying $r$, fit in a tower of affine bundles:
\begin{equation}\label{eq:jetBundles}
\begin{tikzcd}
J^r(Y) \arrow[r,"\pi_{r,r-1}"] & J^{r-1}(Y) \arrow[r,"\pi_{r-1,r-2}"] &  \dots \arrow[r,"\pi_{1,0}"] & J^0(Y) = Y.
\end{tikzcd}
\end{equation}
For notational convenience, we single out the \textbf{front projection} and the \textbf{base projection} which are given, respectively, by the forgetful maps:
\[ \pi_f := \pi_{r,0}: J^r(Y) \to Y, \qquad \pi_b: J^r(Y) \to X. \]

Given a section $f: X \to Y$, we will write $j^rf: X \to J^r(Y)$ for its holonomic lift.

\subsubsection{Local coordinates} \label{ssec:coordinatesJetSpace}

By working locally we may assume that the base $X$ is a $n$-dimensional vector space, denoted by $B$, and that the fibre of $Y$ is a $\ell$-dimensional vector space, denoted by $F$. In this local setting the jet space $J^r(Y)$ can be identified with $J^r(B,F)$. To be explicit, we choose coordinates $x:=(x_1,\cdots,x_n)$ in $B$ and coordinates $y:=(y_1,\cdots,y_\ell)$ in $F$. We use $(x,y)$ to endow $J^r(B,F)$ with coordinates, as we now explain. 

A point $p \in J^r(B,F)$ is uniquely represented by an $r$-order Taylor polynomial based at $\pi_b(p) \in X$. Now, the $r$-order Taylor polynomial of a map $f: B \to F$ at $x$ reads:
\[ f(x+h) \cong \sum_{0 \leq |I| \leq r} (\partial^I f(x))\dfrac{dx^{\odot I}}{I!}(h,\dots,h), \]
where $I = (i_1,\dots,i_n)$ ranges over all multi-indices of length at most $r$. Here $\odot$ denotes the symmetric tensor product and we use the notation
\[ \d x^{\odot I} := \d x_{i_1} \odot \dots \odot \d x_{i_n}, \quad I  = (i_1,\dots,i_n).\]
This tells us that $J^r(B,F) \to B$ is a vector bundle and that, formally, we can use the monomials 
\[ \dfrac{dx^{\odot I}}{I!} \otimes e_j, \qquad 0 \leq |I| \leq r' \quad 1 \leq j \leq \ell \]
as a framing; here $\{e_j\}_{1\leq j \leq \ell}$ is the standard basis of $F$ in the $(y)$-coordinates.

We can write $z_j^{(I)}$ for the coordinate dual to the vector $\frac{dx^{\odot I}}{I!} \otimes e_j \in \Sym^{|I|}(B,F)$. This definition depends only on the choice of coordinates $(x,y): Y \to B \times F$. We give these coordinates a name:
\begin{definition}\label{def:JetBundlesCoordinates}
The coordinates 
\[ (x,y,z) := (x,y=z^0,z^1,\dots,z^r), \qquad z^{r'} := \{z_j^{(I)} \mid |I| = r', 1 \leq j \leq \ell\}, \]
in $J^r(Y)$ are said to be \textbf{standard}.
\end{definition}

The monomials above with $|I|=r'$ form a basis of $\Sym^{r'}(B,F)$, the space of a symmetric tensors with $r'$ entries in $B$ and values in $F$. This leads us to identify:
\[ J^r(B,F) = B \times F \times \Hom(B,F) \times \Sym^2(B,F) \times \dots \times \Sym^r(B,F). \]
In particular, $\pi_{r,r-1}$ is an affine bundle with fibres modelled on $\Sym^r(B,F)$.

\subsection{The Cartan distribution}

The \textbf{Cartan distribution} $\xi_\can$ in $J^r(Y)$ is uniquely defined by the following universal property: A section $X \to J^r(Y)$ is tangent to $\xi_\can$ if and only if it is holonomic. The subbundle $V_\can := \ker(d\pi_{r,r-1}) \subset \xi_\can$ is called the \textbf{vertical distribution}.

We also introduce the notation $\SF_\can := \ker(d\pi_f) \subset TJ^r(Y)$ to denote the subbundle tangent to the fibres of the front projection. The following equality holds: $\xi_\can \cap \SF_\can = V_\can$.

\subsubsection{Submanifolds}

A submanifold of $J^r(Y)$ tangent to $\xi_\can$ is said to be \textbf{integral}. These are the objects we are interested in studying.

By construction, the image of a holonomic section $j^rf: X \to J^r(Y)$ is an integral submanifold everywhere transverse to $V_\can$. These are, in fact, the easiest integral manifolds one can deal with, since the closed constraint of being tangent is automatic from the exactness condition of being holonomic. In particular, do note that any integral manifold $C^1$-close to the image of $j^rf$ is still the graph of a holonomic section.

\subsubsection{Local coordinates} \label{sssec:CartanInCoordinates}

In terms of the standard coordinates $(x,y,z) \in J^r(B,F)$ defined above, the holonomic lift of a map $f: B \to F$ reads:
\begin{align*}
j^rf: B \quad\to\quad & J^r(B,F) =\quad   B \times F \times  \Hom(B,F) \times       \Sym^2(B,F) \times     \dots \times  \Sym^r(B,F), \\
x       \quad\to\quad & j^rf(x) \quad=\quad   (x,      y = f(x), z^1 = (\partial f)(x), z^2 = (\partial^2 f)(x), \dots,        z^r = (\partial^r f)(x)).
\end{align*}
That is, a holonomic section satisfies the relations
\[ z_j^{(I)}(x) = (\partial^{I} y_j)(x), \qquad I = (i_1,\dots,i_n),\, 0 \leq |I| \leq r,\, 1 \leq j \leq \ell. \]
Equivalently, the tautological distribution $\xi_\can$ is the simultaneous kernel of the \textbf{Cartan $1$-forms}:
\begin{equation} \label{eq:Cartan1Forms}
\alpha_j^I = dz_j^{(I)} - \sum_{a=1}^n z_j^{(i_1,\cdots,i_a+1,\cdots,i_n)} dx_a, \qquad I = (i_1,\dots,i_n),\, 0 \leq |I| < r, \, 1 \leq j \leq \ell.
\end{equation}
A submanifold $N$ of $J^r(B,F)$ is integral if and only if these forms restrict as zero to $N$.

\subsection{Jet spaces of submanifolds} \label{ssec:jetsSubmanifolds}

Let $Y$ be a smooth manifold and fix an integer $n < \dim(Y) = n + \ell$. We say that two $n$-submanifolds have the same \textbf{$r$-jet} at $p \in Y$ if they are tangent at $p$ with multiplicity $r$. We denote the space of $r$-jets of $n$-submanifolds as $J^r(Y,n)$. We have, just like in the case of sections, a sequence of forgetful projections
\[ \pi_{r,r'}: J^r(Y,n) \to J^{r'}(Y,n), \]
with $\pi_f := \pi_{r,0}$ the \textbf{front projection}. There is no base projection, as there is no base manifold.

The \textbf{holonomic lift} of an $n$-submanifold $X \subset Y$ is the submanifold $j^rX \subset J^r(Y,n)$ consisting of all the $r$-jets of $X$ at each of its points. The \textbf{Cartan distribution} $\xi_\can \subset TJ^r(Y,n)$ in $J^r(Y,n)$ is the smallest subbundle that is tangent to every holonomic lift.

\begin{remark}
If $\ell=1$ and $r=1$, the structure we just constructed is precisely the \textbf{space of contact elements}. In general, the space $J^1(Y,n)$ is the Grassmannian of $n$-planes $\Gr(TY,n)$. \hfill$\triangle$
\end{remark}

\subsection{Automorphisms} \label{ssec:automorphisms}

The most general notion of automorphism in $(J^r(Y),\xi_\can)$ is that of a \emph{contact transformation}, meaning a $\xi_\can$-preserving diffeomorphism. A more restrictive notion of symmetry is the following:
\begin{definition}\label{def:point_symmetry}
Let $Y \to X$ be a submersion. Let $\Psi: Y \to Y$ be a fibre-preserving diffeomorphism lifting a diffeomorphism $\psi: X \to X$. The \textbf{point symmetry} lifting $\Psi$ is defined as:
\begin{align*}
j^r\Psi: (J^r(Y),\xi_\can) \quad\to\quad & (J^r(Y),\xi_\can) \\
j^rf(x)                   \quad\to\quad & (j^r\Psi)(j^rf(x)) := j^r(\Psi \circ f \circ \psi^{-1})(\psi(x)).
\end{align*}
\end{definition}
Point symmetries form a subgroup of the group of contact transformations. It is well-known in Contact Geometry that the space of contact transformations of $J^1(X,\R)$ is strictly larger than the space of point symmetries. However, we recall the following classic fact (see for instance \cite[Chapter VI]{KrLyVi}):
\begin{lemma} \label{lem:automorphismsAreLifts}
Assume $r>1$ or $\ell > 1$. Any contact transformation of $J^r(Y)$ is the lift of a contact transformation of $J^{r-1}(Y)$.

In particular, every contact transformation is a point symmetry if $\ell > 1$.
\end{lemma}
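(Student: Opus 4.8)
The plan is to analyze how a contact transformation $\Phi$ of $(J^r(Y),\xi_\can)$ interacts with the vertical distribution $V_\can = \ker(d\pi_{r,r-1})$, and to show that $\Phi$ descends to $J^{r-1}(Y)$, i.e. that $d\Phi(V_\can) = V_\can$. Once this is established, $\Phi$ covers a diffeomorphism $\Phi_{r-1}$ of $J^{r-1}(Y)$, and since $\Phi$ preserves $\xi_\can$ while $\xi_\can$ projects onto $\xi_\can^{(r-1)}$ (because holonomic sections project to holonomic sections), $\Phi_{r-1}$ preserves the Cartan distribution downstairs; one then iterates. The final assertion ($\ell > 1 \Rightarrow$ every contact transformation is a point symmetry) follows by descending all the way to $J^0(Y) = Y$ and $J^1(Y)$: the transformation on $J^1(Y)$ must preserve the foliation by fibres of $\pi_{1,0}$, which for $\ell > 1$ one checks forces it to be fibre-preserving over $X$ as well, hence a point transformation $\Psi$ on $Y$, and $\Phi = j^r\Psi$ by uniqueness of lifts.

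The key step is the identification of $V_\can$ in terms of the contact-geometric data available to $\Phi$. The natural candidate is to characterize $V_\can$ intrinsically as the set of vectors $v \in \xi_\can$ such that $v$ lies in the kernel of the ``curvature form'' of $\xi_\can$ restricted to the tautological symbol directions, or more concretely as the last term in the canonical flag determined by iterating the Lie bracket / Cauchy characteristic construction on $\xi_\can$. Specifically, $\xi_\can$ carries a canonical symbol structure: the bracket $\xi_\can \times \xi_\can \to TJ^r(Y)/\xi_\can$ is, fibrewise, a bilinear map whose form can be read off from \eqref{eq:Cartan1Forms}, and $V_\can$ is precisely the kernel of this bracket pairing when one of the arguments ranges over all of $\xi_\can$ — that is, $V_\can$ is the Cauchy characteristic space of the (non-integrable part of the) Cartan distribution. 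Since $\Phi$ preserves $\xi_\can$, it preserves its Cauchy characteristics, hence preserves $V_\can$. This is where the hypothesis $r > 1$ or $\ell > 1$ enters: when $r = 1$ and $\ell = 1$, $\xi_\can$ is a contact distribution, its bracket pairing is non-degenerate, the Cauchy characteristic space is zero, and the argument collapses — this is exactly the classical phenomenon that $J^1(X,\mathbb{R})$ admits contact transformations that are not point symmetries.

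I would organize the write-up as: (1) recall/derive the coordinate form of $\xi_\can$ and compute the induced bracket pairing $\Lambda\colon \Lambda^2\xi_\can \to TJ^r(Y)/\xi_\can$ from \eqref{eq:Cartan1Forms}; (2) show that $V_\can = \{ v \in \xi_\can : \Lambda(v, \cdot) = 0 \}$ under the stated hypothesis (a linear-algebra check on symmetric-tensor spaces); (3) conclude $d\Phi(V_\can) = V_\can$, so $\Phi$ descends to $\Phi_{r-1}$ on $J^{r-1}(Y)$ preserving $\xi_\can^{(r-1)}$; (4) induct down to the base, and at the bottom handle $J^1(Y)$ and $J^0(Y)=Y$ to recover a fibre-preserving $\Psi\colon Y \to Y$ over a diffeomorphism $\psi$ of $X$ (using $\ell > 1$ for the last descent to force fibre-preservation over $X$), and invoke uniqueness to get $\Phi = j^r\Psi$. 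The main obstacle I anticipate is step (2): verifying carefully that the Cauchy characteristic space of $\xi_\can$ is exactly $V_\can$ and not something larger, which requires a genuine computation with the structure of $\Sym^{r}(B,F)$ and the way the forms $\alpha_j^I$ with $|I| = r-1$ pair against the $dz_j^{(I)}$ with $|I| = r$; the degenerate low-dimensional cases must be isolated precisely so that the hypothesis ``$r>1$ or $\ell>1$'' is seen to be exactly what makes the characterization go through. This is classical (it goes back to Lie and Bäcklund), so I would cite \cite[Chapter VI]{KrLyVi} for the detailed computation and only sketch it here.
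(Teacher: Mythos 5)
Your overall strategy is the standard Lie--B\"acklund one, and it is what the paper (which only sketches an idea and defers to \cite[Chapter~VI]{KrLyVi}) has in mind: show that a contact transformation preserves $V_\can = \ker d\pi_{r,r-1}$, so that it descends, and induct. The problem is that your step (2), the intrinsic characterisation of $V_\can$, is false as stated. You claim $V_\can$ is the kernel of the curvature pairing $\Lambda\colon \Lambda^2\xi_\can \to TJ^r(Y)/\xi_\can$, i.e.\ the Cauchy characteristic space of $\xi_\can$. That kernel is in fact \emph{trivial} for every $r\geq 1$, $\ell\geq 1$, $n\geq 1$: writing $v = \sum_a c_a D_a + \sum_{|J|=r} b^J_j\,\partial_{z_j^{(J)}}$ and pairing against the total derivatives $D_b$ forces all $b^J_j=0$, and pairing against the verticals $\partial_{z_{j'}^{(J)}}$ forces all $c_a=0$. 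Indeed, the triviality of this kernel is precisely what makes the metasymplectic projection of Section~\ref{sec:metasymplectic} well defined (see Lemma~\ref{lem:metasymplecticProjection}: $d\pi_\meta$ is an \emph{isomorphism} on $\xi_\can$ intertwining $\Omega(\xi_\can)$ with $\Omega_\can$). $V_\can$ is an isotropic subspace of $(\xi_\can,\Omega_\can)$, not an annihilating one; isotropic is what you want but it does not single out $V_\can$, since graphical integral $n$-planes are also isotropic. Relatedly, your claim that the degeneracy is governed by ``non-degeneracy of the bracket pairing'' and that $r=\ell=1$ is the unique degenerate case because the pairing becomes non-degenerate there is incorrect: the kernel is trivial in all cases.

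The correct intrinsic characterisation, for $r\geq 2$, is that $V_\can$ is the Cauchy characteristic space of the \emph{derived} distribution $\xi_\can + [\xi_\can,\xi_\can] = (d\pi_{r,r-1})^{-1}(\xi_\can^{(r-1)}) = \ker d\pi_{r,r-2}$, not of $\xi_\can$ itself. One checks directly that $[\partial_{z_j^{(J)}}, D_a] = \partial_{z_j^{(J-e_a)}}$ lands in $\ker d\pi_{r,r-2}$ whenever $|J|=r$, while this fails for $|J|=r-1$ and for the horizontal directions (take $J = r\,e_a$). This recovers $V_\can$ contact-invariantly when $r\geq 2$, for any $\ell$. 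When $r=1$ the derived distribution is all of $TJ^1(Y)$, so this approach collapses too; there, with $\ell>1$, you must argue instead via the classification of maximal integral elements of $\xi_\can$: the fibres of $\pi_{1,0}$ are $n\ell$-dimensional integral elements, while the graphical ones are $n$-dimensional, and $n\ell>n$ precisely when $\ell>1$, which makes $V_\can$ contact-invariant. So your sketch needs two distinct intrinsic characterisations (one for $r\geq 2$, one for $r=1$, $\ell>1$) and neither is the one you wrote; I would either repair step (2) along these lines or simply defer to \cite[Chapter~VI]{KrLyVi} without proposing the erroneous Cauchy-characteristic description of $V_\can$.
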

The idea behind the proof is that, as long as $r'>1$ or $r=1$ if $\ell>1$, the fibres of $\pi_{r,r'}$ are subbundles of $TJ^r(Y)$ intrinsically associated to $\xi_\can$ as a distribution.

\subsubsection{Morphisms}

More generally, we fix submersions $Y \to X$ and $Y' \to X'$ and open subsets $A \subset J^r(Y)$ and $A' \subset J^r(Y')$. Suppose $\dim(Y) = \dim(Y')$ and $\dim(X) = \dim(X')$. A map $A \to A'$ is said to be \textbf{isocontact} if it preserves $\xi_\can$.

Similarly, let $\Psi: Y \to Y'$ be an embedding lifting a mapping $\psi: X \to X'$. Then, $\Psi$ lifts to an isocontact map
\begin{align*}
j^r\Psi: (J^r(Y),\xi_\can) \quad\to\quad & (J^r(Y'),\xi_\can) \\
j^rf(x)                   \quad\to\quad & j^r(\Psi \circ f \circ \psi^{-1})(\psi(x)),
\end{align*}
that we call the \textbf{point symmetry} associated to $\Psi$.

\subsubsection{Jet spaces of submanifolds}

Suppose $Z$ is a smooth manifold and $X \subset Z$ is a submanifold. Given an embedding of the tubular neighbourhood $\nu(M) \to M$ of $M$ into $Z$, we can restrict our attention to those submanifolds with image in $\nu(M)$ that are graphical over $M$. We recall the following useful result:
\begin{lemma} \label{lem:zeroSection}
There is an isocontact embedding $(J^r(\nu(M)),\xi_\can) \to (J^r(Z,n),\xi_\can)$ acting on $M$ as the identity.
\end{lemma}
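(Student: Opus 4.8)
The plan is to build the isocontact embedding by first setting things up at the level of submanifolds and then lifting to jet spaces via the functoriality of the point symmetry construction. Recall that a section $s: M \to \nu(M)$ of the tubular neighbourhood corresponds, via the chosen embedding $\iota: \nu(M) \hookrightarrow Z$, to a submanifold $\graph(s) \subset Z$ that is $C^1$-close to $M$ (for $s$ small) and, more to the point, is \emph{graphical} over $M$ in the sense that it is transverse to the fibres of $\nu(M)$. The assignment $s \mapsto \iota(\graph(s))$ is injective, and two sections with the same $r$-jet at a point $p$ produce submanifolds with the same $r$-jet at $\iota(s(p))$, since $\iota$ is a diffeomorphism onto its image and the notion of ``tangent with multiplicity $r$'' is diffeomorphism-invariant. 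This is precisely what is needed to get a well-defined map on $r$-jets.

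The key steps, in order, are the following. First, I would make precise that $J^r(\nu(M)) \to \nu(M) \to M$ and recall that a point of $J^r(\nu(M))$ is represented by the $r$-jet at a point $q \in M$ of a local section of $\nu(M)$; the map $\Phi$ to be constructed sends this to the $r$-jet at $\iota(q)$ (viewing $q$ as the zero vector over itself, so $\iota(q) \in M \subset Z$) of the submanifold $\iota(\graph(s))$. Second, I would check well-definedness and smoothness: two local sections agreeing to order $r$ at $q$ give submanifolds agreeing to order $r$ at $\iota(q)$ because $\iota$ is smooth with smooth inverse on its image; smoothness of $\Phi$ follows by writing everything in standard coordinates on $J^r(\nu(M))$ (Definition \ref{def:JetBundlesCoordinates}) and the analogous jet coordinates on $J^r(Z,n)$, where $\Phi$ becomes a fibrewise-polynomial map whose coefficients depend smoothly on the base point through $\iota$. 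Third, I would verify injectivity: if $\iota(\graph(s_1))$ and $\iota(\graph(s_2))$ have the same $r$-jet at a common base point, then applying $\iota^{-1}$ (a diffeomorphism) they have the same $r$-jet as graphs over $M$, hence $j^r s_1 = j^r s_2$ there; thus $\Phi$ is an injective immersion, and on the relevant (relatively compact, or by working in $\Op(M)$) piece it is an embedding. Fourth, and most importantly, I would check the isocontact property: a section $X' \to J^r(\nu(M))$ is tangent to $\xi_\can$ iff it is holonomic, i.e. of the form $j^r s$; and $\Phi \circ j^r s = j^r(\iota(\graph(s)))$ by construction, which is tangent to $\xi_\can$ in $J^r(Z,n)$ by the universal property of the Cartan distribution there (Section \ref{ssec:jetsSubmanifolds}: $\xi_\can$ is the smallest distribution tangent to all holonomic lifts). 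Since $\xi_\can$ on either side is determined by the holonomic sections/submanifolds and $\Phi$ intertwines them, $d\Phi$ carries $\xi_\can$ into $\xi_\can$; a dimension count (both distributions have the same rank, as $\dim J^r(\nu(M)) = \dim J^r(Z,n)$ and the ranks of the Cartan distributions depend only on $n$, $\ell$, $r$) upgrades this to $d\Phi(\xi_\can) = \xi_\can$. Finally, $\Phi$ acts as the identity on $M$: the zero section of $\nu(M)$ over $q$ has $r$-jet represented by the constant section, whose graph is $M$ itself near $\iota(q) = q$, so its image $r$-jet is $j^r M$ at $q$.

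The main obstacle is the fourth step — establishing that the map is isocontact — because one must argue carefully that intertwining holonomic objects forces $d\Phi(\xi_\can) \subset \xi_\can$ even at non-holonomic points. The clean way is: holonomic lifts $j^r s$ foliate $J^r(\nu(M))$ (every point lies on one, namely the jet of its own Taylor polynomial), their tangent spaces at a point $p$ span $\xi_\can|_p$ together with the vertical directions $V_\can$, and $V_\can = \ker d\pi_{r,r-1}$ is sent by $\Phi$ into the corresponding vertical subbundle on the target because $\Phi$ is compatible with the tower of forgetful projections (a section agreeing to order $r-1$ but differing at order $r$ maps to a submanifold with the same property). So $d\Phi$ sends a spanning set of $\xi_\can$ into $\xi_\can$, and equality follows by rank. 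The remaining steps are routine once coordinates are fixed, though some bookkeeping is needed to identify $J^r(Z,n)$ locally near $M$ with a jet-of-sections space compatibly with $\iota$; this is standard and is implicitly what makes the statement useful in Section \ref{sec:wrinkledEmbeddings}.
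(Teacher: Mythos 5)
The paper states Lemma \ref{lem:zeroSection} without proof, prefacing it with ``we recall the following useful result'', so there is no author argument to compare against. Your construction is the standard one and the argument is essentially correct: identify local sections $s$ of $\nu(M)$ with the graphical $n$-submanifolds $\iota(\graph(s))$ of $Z$, observe this induces a map $\Phi$ on $r$-jets, check that $\Phi$ is a smooth open embedding onto the jets of graphical submanifolds, verify compatibility with the jet towers so that $d\Phi$ carries $V_\can$ into the target vertical, and use that the tangent space of any one holonomic lift together with $V_\can$ spans $\xi_\can$ to get $d\Phi(\xi_\can) \subset \xi_\can$; equality then follows by a rank count since $\Phi$ is an immersion and both Cartan distributions have the same rank.

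One slip to correct in the detailed walk-through: you say $\Phi$ sends the jet of $s$ at $q$ to ``the $r$-jet at $\iota(q)$ (viewing $q$ as the zero vector over itself)'' of $\iota(\graph(s))$. This is only meaningful when $s(q)=0$: for a nonzero section the submanifold $\iota(\graph(s))$ does not pass through $\iota(q)$, and the $r$-jet of a submanifold is defined only at points \emph{on} that submanifold. The basepoint must be $\iota(s(q))$, the unique point of $\iota(\graph(s))$ in the $\nu(M)$-fibre over $q$ --- which is in fact exactly what you wrote (as $\iota(s(p))$) in the opening paragraph, so this is an internal inconsistency rather than a misconception. With the corrected basepoint, well-definedness, smoothness, injectivity, compatibility with $\pi_{r,r-1}$, the isocontact verification, and the ``identity on $M$'' statement all go through unchanged.
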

That is, jet spaces of submanifolds are locally modelled on jet spaces of sections.

\subsubsection{Front symmetries}

The notion of point symmetry does not make sense for jet spaces of submanifolds, but the isocontact embedding produced in Lemma \ref{lem:zeroSection} is nonetheless special: it commutes with the front projection. This motivates us to consider the following notion: Let $A$ and $A'$ be open subsets in respective jet spaces (of either sections or submanifolds) of the same dimension. An isocontact map $A \to A'$ lifting a map $\pi_f(A) \to \pi_f(A')$ between the fronts is said to be a \textbf{front symmetry}.

By construction, every point symmetry is a front symmetry and every front symmetry is an isocontact map.

\subsubsection{Models around holonomic sections} \label{sssec:zeroSection}

A useful corollary of Lemma \ref{lem:zeroSection} is the following: Given a submersion $Y \to X$ and a holonomic section $j^rf: X \to J^r(Y)$, we can consider the normal bundle $\nu(f) \subset Y$ of $\Image(f)$. We see $\Image(f) \cong X$ as the zero section in $\nu(f) \to \Image(f)$. The Lemma yields then a point symmetry $J^r(\nu(f)) \to J^r(Y)$ mapping the zero section to $j^rf$. This is useful in order to carry out local manipulations of the integral submanifold $\Image(j^rf)$, as we will see in Subsection \ref{ssec:principalProjection}.

\section{The $h$-principle} \label{sec:hPrinciple}

The $h$-principle is a collection of techniques and heuristic approaches whose purpose is to describe spaces of solutions of partial differential relations. This Section provides a quick overview, and readers familiar with $h$-principles are invited to skip ahead.

In Subsection \ref{ssec:PDRs} we review the notion of differential relation. Then we go over two classic $h$-principle techniques: \emph{holonomic approximation} in Subsection \ref{ssec:holonomicApprox} and \emph{triangulations in general position} in Subsection \ref{ssec:Thurston}. Both will be used in the proofs of our main results.

For a panoramic view of $h$-principles we refer the reader to the two standard texts \cite{ElMi} and \cite{Gr86} (which we suggest to check in that order).

\subsection{Differential relations} \label{ssec:PDRs}

Let $Y \to X$ be a submersion. A \textbf{partial differential relation} (PDR) of order $r$ is a subset $\SR \subset J^r(Y)$. This provides a framework for PDRs of sections, but one can define PDRs of $n$-submanifolds of $Y$ as subsets of $J^r(Y,n)$ as well.

Endow $\Gamma(J^r(Y))$ with the weak $C^0$-topology. We may use the inclusion
\[ j^r: \Gamma(Y) \quad\to\quad \Gamma(J^r(Y)), \]
to pull it back and endow the domain with its usual weak $C^r$-topology. This makes $j^r$ a continuous map. We write $\Sol^f(\SR)$ for the subspace of sections in $\Gamma(J^r(Y))$ with image in $\SR$, i.e. the space of formal solutions. Similarly, we write $\Sol(\SR)$ for the space of solutions, which is a subspace of $\Gamma(Y)$. 
\begin{definition}
We say that the \textbf{(complete) $h$-principle} holds for $\SR$ if the inclusion
\begin{align*}
\iota_\SR: \Sol(\SR) \quad\to\quad & \Sol^f(\SR) \\
f                    \quad\to\quad & \iota_\SR(f) := j^rf
\end{align*}
is a weak homotopy equivalence.
\end{definition}

\subsubsection{Flavours of $h$-principle} \label{sssec:flavoursHPrinciple}

The $h$-principle is \textbf{relative in the domain} when the following property holds: Any family of formal solutions of $\SR$, which are already honest solutions in a neighbourhood of a closed set $A$, can be homotoped to become solutions over the whole of $U$ while remaining unchanged over $\Op(A)$.

Similarly, the $h$-principle is \textbf{relative in the parameter} when: Any family of formal solutions $\{F_k\}_{k \in K}$, parametrised by a closed manifold $K$, and with $F_{k'}$ holonomic for every $k'$ in an open neighbourhood of a fixed closed subset $K' \subset K$, can be homotoped to be holonomic relative to $\Op(K')$.

\subsection{Holonomic approximation}\label{ssec:holonomicApprox}

One of the cornerstones of the classical theory of $h$-principles is the holonomic approximation theorem. It states that any formal section of a jet bundle can be approximated by a holonomic one in a neighbourhood of a perturbed CW-complex of codimension at least $1$. The precise statement reads as follows:
\begin{theorem}[\cite{ElMi}] \label{thm:holonomicApprox}
Let $Y \to X$ be a fiber bundle, $K$ a compact manifold, $A \subset M$ a polyhedron of positive codimension, and $(F_{k,0})_{k \in K}: X \to J^r(Y)$ a family of formal sections. Then, for any $\varepsilon >0$ there exists
\begin{itemize}
\item a family of isotopies $(\phi_{k,\ss})_{\ss \in [0,1]}: X \to X$,
\item a homotopy of formal sections $(F_{k,\ss})_{k \in K, \ss \in [0,1]}: X \to Y$,
\end{itemize}
satisfying:
\begin{itemize}
\item $F_{k,1}$ is holonomic in $\Op(\phi_{k,1}(A))$,
\item $|\phi_{k,\ss} - \id|_{C^0} < \varepsilon$ and is supported in a $\varepsilon$-neighbourhood of $A$,
\item $|F_{k,\ss} - F_{k,0}|_{C^0} < \varepsilon$.
\end{itemize}
Moreover the following hold:
\begin{itemize}
\item If $V \in \X(\Op(A))$ is a vector field transverse to $A$, then we can arrange that $\phi_{k,\ss}$ is a flow tangent to the flowlines of $V$, for all $\ss$ and $k$.
\item If the $F_{k,\ss}$ are already holonomic in a neighborhood of a subcomplex $B \subset A$, then we can take $F_{k,\ss} = F_{k,0}$ and $\phi_{k,\ss} = \id$ on $\Op(B)$, for all $k$.
\item If $F_{k,\ss}$ is everywhere holonomic for every $k$ in a neighbourhood of a CW-complex $K' \subset K$, then we can take $F_{k,\ss} = F_{k,0}$ and $\phi_{k,\ss} = \id$ for $k \in \Op(K')$.
\end{itemize}
\end{theorem}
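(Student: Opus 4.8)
I would follow the classical Eliashberg--Mishachev argument, which is organised around a local statement fed into an induction over cells. First I would reduce to a model situation: since everything happens near $A$, one may assume $Y \to X$ is a trivial bundle over $X = \R^n$; choosing a handle/cell decomposition of the polyhedron $A$ and using the clause ``relative to a subcomplex $B$'' as the inductive hypothesis, it suffices to extend a holonomic approximation already defined near (a perturbed copy of) the $j$-skeleton of $A$ across a single $(j{+}1)$-cell $\sigma$. Straightening $\sigma$ and using a flow-box for the transverse vector field $V$, this becomes the following: given a formal section $F$ over $\Op(\sigma)$ with $\sigma \subset \R^k \times \{0\} \subset \R^n$, $k \le n-1$, and $V = \partial_{x_n}$, produce a small function $h \colon \sigma \to \R$, supported where $F$ is not yet holonomic, and a holonomic $G = j^r g$ defined near $\Gamma = \graph(h)$ with $|G - F|_{C^0} < \varepsilon$ near $\Gamma$. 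The parameter manifold $K$ and the subsets $B \subset A$, $K' \subset K$ ride along as spectators throughout, so the corresponding clauses of the theorem will follow automatically once the supports are arranged to nest correctly.

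The heart is this local statement, which I would prove by subdivision and gluing. Partition $\sigma$ into subcubes $Q_\alpha$ of side $\delta$, with $\delta$ so small that over each slab $Q_\alpha \times [-\delta,\delta]$ the section $F$ is approximated to within $\varepsilon/10$, in the metric on $J^r$, by a \emph{holonomic} section $G_\alpha = j^r g_\alpha$ --- concretely take for $g_\alpha$ the degree-$r$ Taylor polynomial of a germ whose $r$-jet at the centre of $Q_\alpha$ equals $F$ there, so that $j^r g_\alpha = F$ at that point and stays close nearby by uniform continuity. Over the interior of each $Q_\alpha$ one simply sets $G := G_\alpha$ on an almost-flat piece of $\Gamma$. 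The work is in the collars around the walls $W = \overline{Q_\alpha} \cap \overline{Q_{\alpha'}}$: there $G_\alpha$ and $G_{\alpha'}$ are each $\varepsilon/10$-close to $F$, hence $C^0$-close to one another as $r$-jets, but not equal, and one cannot interpolate holonomic sections and stay holonomic. The device --- and this is exactly where positive codimension is used --- is to let $h$ detour in the $x_n$-direction near $W$, ramping up to height $\sim\delta$ and back down, so that the corresponding piece of $\Gamma$ is long and thin. Over this long thin box one must exhibit an honest function whose $r$-jet interpolates the germ of $g_\alpha$ at one end and of $g_{\alpha'}$ at the other while remaining $\varepsilon$-close to $F$; this reduces to a one-dimensional interpolation in the ramp parameter (the other directions being spectators), where one prescribes the top-order derivative freely and integrates downward, and the accumulated error in the lower-order terms is controllable precisely because the two endpoint jets are already $C^0$-close to the nearly holonomic $F$. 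Patching the flat pieces and the collars yields $\Gamma$ with $|h| = O(\delta) < \varepsilon$ and $|G - F|_{C^0} < \varepsilon$ everywhere.

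With the local statement in hand I would run the cell-by-cell induction from the first paragraph, checking that the support of each new perturbation stays inside the region where $F$ is still non-holonomic, so that the pieces glue and the relative clauses over $\Op(B)$ and over $\Op(K')$ are preserved. To obtain the isotopies $\phi_{k,\ss}$ as flows tangent to $V$, realise the graph perturbation as the time-one map of a compactly supported flow along $V$: let $\phi_{k,\ss}$ move $a$ along the $V$-flowline for time $\ss\,h(a)$, cut off away from $A$; this is tangent to the flowlines of $V$ for every $\ss$ by construction. The formal homotopy $F_{k,\ss}$ is then the linear interpolation in jet space from $F_{k,0}$ to the holonomic section transported to sit over $\phi_{k,1}(A)$, and all the $C^0$-estimates track through. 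The parametric clauses are handled by carrying $K$ as an extra family of spectator parameters and taking $\delta$ uniform over the compact $K$.

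The step I expect to be the genuine obstacle is the gluing over the collars, equivalently the one-dimensional holonomic interpolation lemma: this is the only place the positive-codimension hypothesis is indispensable, and it is what makes the whole theorem work (and fail without it). Everything else --- the reductions, the subdivision estimates, carrying the parameter, nesting supports through the induction, and converting the wiggle into a flow of $V$ --- is standard $h$-principle bookkeeping.
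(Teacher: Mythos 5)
The paper does not prove this theorem; it is a quoted background result attributed to \cite{ElMi}, and the surrounding prose merely gives a heuristic gloss (Taylor-polynomial approximation at points, interpolation in the normal directions gained by wiggling). Your sketch faithfully reproduces the actual Eliashberg--Mishachev proof: reduction to a local model, cell-by-cell induction along the skeleta of $A$, subdivision of each cell into small cubes with near-holonomic Taylor-polynomial approximants, the detour in the transverse $V$-direction over the walls to create a long thin box over which one can holonomically interpolate the top-order derivative and integrate downward with controlled error (this being exactly where positive codimension is indispensable), converting the wiggle into a $V$-flow, and carrying $K$, $B$, $K'$ as spectators with nested supports for the relative clauses. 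This is the same argument the paper points to, so there is nothing to compare beyond noting the match.
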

\begin{remark}
Note that in the above statement, the inequalities
\[ |\phi_{k,\ss} - \id|_{C^0} < \varepsilon,\quad |F_{k,\ss} - F_{k,0}|_{C^0} < \varepsilon,\]
depend on a choice of Riemannian metric on $X$ and $Y$. \hfill$\triangle$
\end{remark}
For the proof and a much longer account of its history, we refer the reader to \cite{ElMi}. Essentially, this theorem recasts the method of flexible sheaves due to M. Gromov (itself a generalisation of the methods used by S. Smale in his proof of the sphere eversion and the general $h$-principle for immersions) in a different light. Let us go over the statement. 

The starting point is the family of formal sections $F_{k,0}$, which we want to homotope until they become holonomic. This is not possible, but the theorem tells us that at least we can achieve holonomicity in a neighbourhood of a set of positive codimension. We are not allowed to fix this set. Instead, we begin with a polyhedron $A$, which we deform in a $C^0$ small way to yield an isotopic polyhedron $\phi_{k,1}(A)$. This isotopy occurs in the normal directions of $A$ (which we may prefix by taking a transverse vector field $V$), and essentially produces a copy $\phi_{k,1}(A)$ of $A$ of greater length. This process is called, descriptively, \textbf{wiggling}. The room we gain by wiggling is what allows us to achieve holonomicity: the main idea is that, at each point $p \in A$, we approximate $F_{k,0}$ by the corresponding Taylor polynomial $F_{k,0}(p)$ and then we use the directions normal to $A$ to interpolate between these polynomials keeping control of the derivatives. Hence, we can take the $F_{k,\ss}$ to be arbitrarily close to our initial data, and the wiggling to be $C^0$-small. However, if we desire better $C^0$-bounds, we will be forced to wiggle more aggressively, i.e. the isotopies $\phi_{k,\ss}$ will become $C^1$-large.

\subsection{Thurston's triangulations} \label{ssec:Thurston}

An important step in the application of many $h$-principles (including ours), is the reduction of the global statement (global in the manifold $M$), to a local statement taking place in a small ball. These reductions allow us not to worry about (global) topological considerations, making the geometric nature of the arguments involved more transparent. Working on small balls (i.e. ``zooming-in'') usually has the added advantage of making the geometric structures we consider seem ``almost constant''; this will play a role later on.

A possible approach to achieve this is to triangulate the ambient manifold $M$ and then work locally simplex by simplex. A small neighbourhood of a simplex is a smooth ball which can be assumed to be arbitrarily small if the subdivision is sufficiently fine; thus, this achieves our intended goal. When we deal with parametric results, we want to zoom-in in the parameter space $K$ too. This requires us to triangulate in parameter directions as well (in a manner adapted to the projection $K \times M \to K$).

Let $(M,\SF)$ be a manifold of dimension $m=n+k$ endowed with a foliation of rank $n$ . Given a triangulation $\ST$, we write $\ST^{(i)}$ for the collection of $i$-simplices, where $i=0,\dots,m$. We think of each $i$-simplex $\sigma \in \ST^{(i)}$ as being parametrised $\sigma: \Delta^i \to M$, where the domain is the standard simplex in $\R^i$. The parametrisation $\sigma$ allows us to pull-back data from $M$ to $\Delta^i$. In particular, if $\sigma$ is a top-dimensional simplex, it is a diffeomorphism with its image and we may assume that $\sigma$ extends to an embedding $\Op(\Delta^n) \to M$ of a ball.

If the image of $\sigma$ is sufficiently small, we would expect that the parametrisation $\sigma$ can be chosen to be reasonable enough so that $\sigma^*\SF$ is almost constant. This can be phrased as follows: 
\begin{definition}
A top-dimensional simplex $\sigma$ is in \textbf{general position} with respect to the foliation $\SF$ if the linear projection 
\[ \Delta^m/(\sigma^*\SF)_p \to \R^k \]
restricts to a map of maximal rank over each subsimplex of $\sigma$; here we use the identification $T_p\R^m = \R^m$. In particular, $\sigma^*\SF$ is transverse to each subsimplex.

The triangulation $\ST$ is in \textbf{general position} with respect to $\SF$ if all of its top-simplices are in general position.
\end{definition}

\begin{theorem} \label{thm:Thurston}
Let $(M,\SF)$ be a foliated manifold. Then, there exists a triangulation $\ST$ of $M$ which is in general position with respect to $\SF$.
\end{theorem}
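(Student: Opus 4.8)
The statement to prove is Theorem~\ref{thm:Thurston}: every foliated manifold $(M,\SF)$ admits a triangulation in general position with respect to $\SF$. Let me think about how I would approach this.

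\textbf{Overall strategy.} The plan is to start with an arbitrary smooth triangulation $\ST_0$ of $M$ (which exists by Whitehead's theorem), and then perturb it, simplex by simplex in order of increasing dimension, using a transversality/general-position argument to kill the bad loci. The key observation is that "general position with respect to $\SF$" is an open and dense condition on the space of embeddings of a simplex: for a fixed simplex $\sigma$ with small image, the foliation $\SF$ restricts to something close to a constant foliation (after pulling back by a suitable chart), and the condition that the linear projection $T_pM/(\sigma^*\SF)_p \to \R^k$ have maximal rank on every subsimplex is the condition that each subsimplex be transverse to $\SF$ in the appropriate jet-theoretic sense. This is precisely a Thom-transversality statement, so a generic perturbation achieves it.

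\textbf{Key steps, in order.} First I would fix a smooth triangulation $\ST_0$ and pass to a sufficiently fine subdivision (iterated barycentric, say) so that every closed star is contained in a foliation chart $U_\alpha \cong \R^n \times \R^k$ in which $\SF$ is $C^1$-close to the horizontal foliation $\{\R^n \times \{c\}\}$; this uses compactness only locally, or a locally finite cover in the non-compact case. Second, I would set up an induction over the skeleta: assuming the $(i-1)$-skeleton is already in general position (meaning every subsimplex of dimension $\le i-1$ is transverse to $\SF$ with the maximal-rank property), I perturb the interiors of the $i$-simplices. For a single $i$-simplex $\sigma: \Delta^i \to M$, I consider the map $\Delta^i \to \Gr(TM, ?)$ recording the tangent planes to $\sigma$ relative to $\SF$; general position is transversality of this Gauss-type map to a stratification of the relevant Grassmannian bundle by the rank-drop loci, which have positive codimension precisely because we are projecting onto $\R^k$ and $i \le m$. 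By parametric transversality (Thom), an arbitrarily $C^1$-small perturbation of $\sigma$, fixed near $\partial \Delta^i$, makes $\sigma$ transverse; openness of the condition lets me do this consistently across all $i$-simplices and keep the lower skeleton fixed. Third, I iterate up to $i = m$ and observe that the perturbations, being $C^1$-small and supported away from the lower skeleton, assemble into a genuine triangulation (the simplices still fit together and remain embedded), which is now in general position by construction.

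\textbf{Main obstacle.} The delicate point is bookkeeping the compatibility of the perturbations along faces: when I perturb an $i$-simplex I must not move its boundary, yet I need the maximal-rank condition to hold on \emph{every} subsimplex of every top-simplex, including faces shared by many simplices. The clean way is to perform the general-position argument on the \emph{subsimplices themselves} in order of dimension (each face is perturbed once, rel its own boundary, and then all top-simplices inherit it), using that the maximal-rank condition for the quotient projection $\Delta^m/(\sigma^*\SF)_p \to \R^k$ restricted to a subsimplex depends only on that subsimplex and the germ of $\SF$ along it. A second, more technical obstacle is making precise the claim that transversality of subsimplices to $\SF$ (a condition about tangent directions) is equivalent to the stated maximal-rank condition about the linear quotient; this requires the fineness hypothesis so that $\sigma^*\SF$ is close enough to constant that its value at the single point $p$ (the barycenter, say) controls the behaviour over the whole simplex. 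Granting these, the argument is a standard application of multi-jet transversality in the spirit of Thurston's original construction of foliations via triangulations, and I would cite \cite{ElMi} or Thurston for the detailed verification.
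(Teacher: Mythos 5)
The paper does not prove Theorem~\ref{thm:Thurston}; it only cites Thurston \cite{Th2,Th1}, and later (in the proof of the parametric holonomic approximation theorem) refers to the procedure as ``subdivision and jiggling''. Thurston's actual argument, the \emph{jiggling lemma}, is not an application of Thom transversality for maps. One first passes to a crystalline (periodic) subdivision of a given triangulation, so that only finitely many affine types of simplex occur, and then translates all vertices by a small ``jiggling'' vector. The set of jiggles that fail general position for some simplex is then a finite union of positive-codimension semialgebraic subsets of the \emph{finite-dimensional} jiggling parameter space, so a generic jiggle works. Moving the vertices is essential here, because it is the vertex positions that determine the affine structure of the simplices and hence the plane $(\sigma^*\SF)_p$ relative to the fixed subspaces $\langle\tau\rangle \subset \Delta^m$.

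Your proposal takes a genuinely different route, and the proposed fix for the compatibility problem has a gap. General position, as defined in the paper, is a condition on the single plane $(\sigma^*\SF)_p \in \Gr(\R^m,n)$ at one fixed point $p$ of the top simplex $\sigma$, measured against the fixed affine subspaces $\langle\tau\rangle \subset \R^m$ spanned by the subsimplices of $\Delta^m$; the identification $T_p\R^m = \R^m$ in the definition is exactly this pointwise linearisation. Your skeleton induction is inert for this condition: perturbing a face $\tau$ rel its own boundary changes neither the model subspace $\langle\tau\rangle$ (which lives in $\Delta^m$, not in $M$) nor $(\sigma^*\SF)_p$ for the interior barycentre $p$ of the adjacent top simplex. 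So ``perturb the subsimplices in order of dimension'' does not in fact move the quantities that the definition constrains, and the compatibility issue you flag remains. All of the content must be carried by the final perturbation of each top simplex near its barycentre, and the correct justification there is not transversality of a Gauss-type map $\Delta^i \to \Gr(TM,\cdot)$ to a rank-drop stratification as $p$ varies, but a dimension count showing that the set of planes in $\Gr(\R^m,n)$ that are non-generic relative to the finite family $\{\langle\tau\rangle\}$ has positive codimension, together with an argument that a single small correction can be made consistently across the top simplices sharing each face. This is precisely the role the crystalline subdivision and vertex-jiggling play in Thurston's proof, and it is cleaner to do it at the level of vertices than at the level of parametrisations rel boundary.
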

This statement was first stated and proven by W. Thurston in \cite{Th2,Th1}, playing a central role in his $h$-principles for foliations.


\section{Some singularity theory} \label{sec:singularitiesPrelim}

The motto behind the wrinkling approach to $h$-principles is that, as long as there are no homotopical obstructions, we can restrict our attention to maps with simple singularities. The precise meaning of ``simplicity'' depends on the problem at hand. In this paper, we will study integral maps into jet space whose singularities are indeed simple.

Before we get there, we need to review some of the basics on Singularity Theory. In Subsection \ref{ssec:ThomBoardman} we define singularities of mapping and of tangency and we introduce the Thom-Boardman hierarchy. In Subsection \ref{ssec:equivalenceSmooth} we recall the notion of (left-right) equivalence for smooth maps. We then discuss equivalence for integral maps in Subsection \ref{ssec:equivalenceIntegral}; this relies on the various notions of jet space morphisms introduced in Subsection \ref{ssec:automorphisms}. 

All this background will allow us to introduce the singularity models relevant for our $h$-principles in Section \ref{sec:holonomicApproxZZ}.

\subsection{The Thom-Boardman hierarchy} \label{ssec:ThomBoardman}

\subsubsection{Singularities of tangency} \label{sssec:ThomBoardman}

Let $M$ and $N$ be manifolds of dimension $a$ and $b$, respectively. Suppose $N$ is endowed with a foliation $\SF$ of rank $c$, and let $f:M \to N$ be an immersion. A point $p \in M$ is a \textbf{singularity of tangency} with respect to $\SF$ if $d_pf(TM)$ and $\SF_{f(p)}$ are not transverse to one another as linear subspaces of $T_{f(p)}N$.

We define the \textbf{locus of singularities of tangency of corank $j$}
\[ \Sigma^j(f,\SF) := \{p \in M \,\mid\, \dim(df(T_pM) \cap \SF_{f(p)}) - \max(a+c-b,0) = j\} \]
as the set of points where the dimension of the intersection $df(T_pM) \cap \SF_p$ surpasses the transverse case by $j$. 

Assuming that $\Sigma^j(f,\SF)$ is a submanifold, one can recursively define higher tangency loci of corank $J = j_0,\dots,j_l$ by setting
\[ \Sigma^J(f,\SF) := \Sigma^{j_l}(f|_{\Sigma^{j_0 j_1 \cdots j_{l-1}}(f,\SF)},\SF) \subset M.\]
Thom \cite{Th55} and Boardman \cite{Bo67} proved that one may perturb $f$ so that all the $\Sigma^J(f,\SF)$ are smooth submanifolds of appropriate dimensions forming a stratification $\SS$ of $M$. One should think of $\SS$ as the pullback along $f$ of the universal stratification of $\Gr(TN,a) \to N$ defined by intersection with $\SF$.

\subsubsection{Singularities of mapping}

One may similarly consider the locus of singularities of mapping of a map $g: M \to N$ given by:
\[ \Sigma^j(g) := \{p \in M \,\mid\, \dim(dg(T_pM)) - \min(a,b) = j\}. \]
As well as the recursively defined $\Sigma^J(f) := \Sigma^{j_l}(g|_{\Sigma^{j_0 j_1 \cdots j_{l-1}}(f)})$.

It can be checked that the singularities of tangency of a map $f: M \to (N,\SF)$ correspond to the singularities of mapping of $\pi \circ f: M \to N/\SF$, where $\pi: N \to N/\SF$ is the projection to the leaf space (locally-defined on foliation charts).

\subsection{Equivalence in the smooth setting} \label{ssec:equivalenceSmooth}

Two maps are equivalent if they agree up to the action of the diffeomorphism groups of the source and the target. In detail:
\begin{definition} \label{def:equivalenceSmooth}
Fix manifolds $X$, $X'$, $Y$, and $Y'$, as well as subsets $A \subset X$ and $A' \subset X'$ (usually submanifolds, possibly with boundary).

Two maps $f: X \to Y$ and $g: X' \to Y'$ are said to be \textbf{equivalent} along $A$ and $A'$ if there are diffeomorphisms $\phi: \Op(A) \to \Op(A')$ and $\Phi: \Op(f(A)) \to \Op(g(A'))$ such that:
\begin{itemize}
\item $\phi$ restricts to a homeomorphism $A \to A'$.
\item $g \circ \phi = \Phi \circ f$.
\end{itemize}
\end{definition}
Equivalence implies that $\dim Y = \dim Y'$, $\dim X = \dim X'$, and (if it makes sense) that $\dim A = \dim A'$.

\subsubsection{Equivalence in the fibered setting}

We fix smooth maps $M \to K$, $Z \to K$, $M' \to K'$ and $Z' \to K'$, and subsets $A \subset M$ and $A' \subset M'$. Two maps $f: M \to Z$ and $g: M' \to Z'$ are equivalent along $A$ and $A'$, in a fibered manner over $K$ and $K'$, if there are:
\begin{itemize}
\item A diffeomorphism $\phi: \Op(A) \to \Op(A')$ identifying $A$ with $A'$,
\item and a diffeomorphism $\Psi: \Op(f(A)) \to \Op(g(A'))$,
\item both of them fibered, meaning that they lift the same (locally-defined) diffeomorphism from $K$ to $K'$,
\end{itemize}
such that $g \circ \phi = \Phi \circ f$.

In general, we do not require the maps to $K$ and $K'$ to be fibrations, although this will be the case in the parametric setting. In that case, $K$ and $K'$ will play the role of parameter spaces and the fibrations will be trivial. Allowing more general maps covers other cases of interest as well. For instance, our statements in Section \ref{sec:holonomicApproxZZ} deal with maps $M \to Y$, where $Y \to X$ is a fibre bundle. We think of these as maps into the front projection of $J^r(Y)$. We will characterise the singularities of such maps up to fibered equivalence over the base.

\subsection{Equivalence in the integral setting} \label{ssec:equivalenceIntegral}

In Subsection \ref{ssec:automorphisms} we discussed isocontact maps, front symmetries, and point symmetries. Each of these definitions leads to a different notion of equivalence for integral maps. The setting is as follows: We fix manifolds $M$ and $M'$, subsets $A \subset M$ and $A' \subset M'$ (usually submanifolds, possibly with boundary), jet spaces (of sections or submanifolds) $B$ and $B'$ of the same dimension, and integral maps $f: M \to B$ and $g: M' \to B'$.

\begin{definition} \label{def:equivalenceContact}
The maps $f$ and $g$ are \textbf{contact equivalent} along $A$ and $A'$ if there are
\begin{itemize}
\item a diffeomorphism $\phi: \Op(A) \to \Op(A')$,
\item and an isocontact embedding $\Psi: \Op(f(A)) \to \Op(g(A'))$, 
\end{itemize}
such that $\Psi \circ f = g \circ \phi$.
\end{definition}

If we require $\Psi$ to be a front symmetry, we obtain the following definition:
\begin{definition} \label{def:equivalenceFront}
The maps $f$ and $g$ are \textbf{front equivalent} along $A$ and $A'$ if their front projections are equivalent.
\end{definition}

Lastly, if we ask $\Psi$ to be a point symmetry:
\begin{definition} \label{def:equivalencePoint}
Suppose $B$ and $B'$ are jet spaces of sections with base manifolds $X$ and $X'$, respectively. The maps $f$ and $g$ are \textbf{point equivalent} along $A$ and $A'$ if their front projections are equivalent in a fibered manner over $X$ and $X'$.
\end{definition}

In this paper we will be concerned with the last two definitions. The first one will play a more central role in the sequel \cite{PT2}.

\subsubsection{Equivalence for families} \label{sssec:equivalenceParam}

Let $M$, $M'$, $B$ and $B'$ as above. Fix additionally compact manifolds $K$ and $K'$ serving as parameter spaces, subsets $A \subset K \times M$ and $A' \subset K' \times M'$, and families of integral maps 
\[ f = (f_k)_{k \in K}: M \to B, \qquad g = (g_{k'})_{k' \in K'}: M' \to B'. \]
We think of $f$ as a map $K \times M \to K \times B$ fibered over $K$. Similarly, we think of $g$ as a fibered-over-$K'$ map.

The maps $f$ and $g$ are front equivalent if their front projections are equivalent in a fibered manner over $K$ and $K'$.

Similarly, they are point equivalent if their front projections are equivalent in a fibered manner simultaneously with respect to the pair $(K,K')$ and the pair $(K \times X,K' \times X')$. Here $X$ is the base of $B$ and $X'$ the base of $B'$.

We say that $f$ and $g$ are contact equivalent along $A$ and $A'$ if there are fibered diffeomorphisms
\begin{itemize}
\item $\phi: \Op(A) \to \Op(A')$ identifying $A$ with $B$,
\item and $\Psi: \Op(f(A)) \to \Op(g(B))$ that is fibrewise isocontact,
\end{itemize}
such that $\Psi \circ f = g \circ \phi$.

%
%
%
%
%
%

\section{Metasymplectic projections} \label{sec:metasymplectic}

In Contact Topology it is standard to manipulate legendrian submanifolds using the front and lagrangian projections. In this section we introduce an analogue of the latter for general jet spaces and we explain how it can be used to construct and deform integral submanifolds.

We denote $\dim(X) = n$ and $\dim(Y) = k$, where $J^r(Y) \to X$ is the jet space of interest. Our manipulations will be carried out locally in charts, so we introduce vector spaces $B$ and $F$ as local replacements of $X$ and the fibres of $Y$, respectively. Thus, we work in $J^r(B,F)$, which we endow with standard coordinates $(x,y,z)$.

We will project $J^r(B,F)$ to so-called \emph{standard metasympletic space}. Morally speaking, this amounts to projecting to $\xi_\can$ endowed with its curvature (seen as a vector-valued 2-form). This is explained in Subsection \ref{ssec:metasymplectic}. As far as the authors are aware, the first explicit reference to metasymplectic space appeared in \cite{Ly80b}, although it was probably known earlier to some experts in geometric PDEs and Differential Topology.

In Subsection \ref{ssec:integralLift} we prove Proposition \ref{prop:integralLift}: exact isotropic submanifolds in standard metasympletic space can be uniquely lifted to integral submanifolds of $J^r(B,F)$. This is sufficient to manipulate 1-dimensional integral submanifolds; see Subsection \ref{ssec:liftingCurves}.

For higher-dimensional integral submanifolds the story is more complicated, because it is non-trivial to manipulate their metasymplectic projections directly. To address this, we work ``\emph{one direction at a time}'', effectively thinking about them as parametric families of curves. This is done in Subsections \ref{ssec:principalProjection} and \ref{ssec:principalProjectionGeneral}.

In Section \ref{sec:singularities} we will introduce concrete metasymplectic projection models that we then translate to the front projection.

\subsection{Standard metasymplectic space} \label{ssec:metasymplectic}

Recall the Cartan $1$-forms defining $\xi_\can$, as introduced in Subsection \ref{sssec:CartanInCoordinates}:
\[ \alpha_j^I = dz_j^{(I)} - \sum_{a=1}^n z_j^{(i_1,\cdots,i_a+1,\cdots,i_n)} dx_a, \qquad I = (i_1,\dots,i_n),\, |I| = r-1, \, 1 \leq j \leq k. \]
Since we focus on $|I| = r-1$, these forms only depend on the coordinates $z^r$. Their differentials are the $2$-forms:
\[ \Omega_j^I = \sum_{a=1}^n dx_a \wedge dz_j^{(i_1,\cdots,i_a+1,\cdots,i_n)} , \qquad I = (i_1,\dots,i_n),\, |I| = r-1, \, 1 \leq j \leq k, \]
which, by construction, are pullbacks of forms in the product $B \oplus \Sym^r(B,F)$ (which have the same coordinate expression, so we abuse notation and denote them the same). We package all these $2$-forms as follows, where $I$ and $j$ index the coordinates on $\Sym^r$:
\begin{definition} \label{def:metasymplectic}
The  \textbf{standard metasymplectic structure} in $B \oplus \Sym^r(B,F)$ is the $2$-form:
\[ \Omega_\can := (\Omega_j^I)_{|I| = r-1, \, 1 \leq j \leq k}: \quad \wedge^2(B \oplus \Sym^r(B,F)) \quad\longrightarrow\quad \Sym^{r-1}(B,F). \]
The pair $(B \oplus \Sym^r(B,F),\Omega_\can)$ is called \textbf{standard metasymplectic space}.
\end{definition}

We remark that we can regard standard metasymplectic space as a vector space endowed with a (vector-valued) linear 2-form, or as a manifold endowed with a translation-invariant differential 2-form. The tangent fibres of the latter are isomorphic to the former. We readily check:
\begin{lemma} \label{lem:metasymplecticStructure}
Fix a point $p \in B \oplus \Sym^r(B,F)$ and vectors $v_i+A_i \in T_p(B \oplus \Sym^r(B,F)) \cong B \oplus \Sym^r(B,F)$. Then:
\[ \Omega_\can(v_0+A_0,v_1+A_1) = \iota_{v_0} A_1 - \iota_{v_1} A_0. \]
\end{lemma}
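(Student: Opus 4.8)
The statement is purely a matter of unwinding the coordinate definition of $\Omega_\can = (\Omega_j^I)$ and organising the bookkeeping of multi-indices cleanly. The plan is to compute $\Omega_j^I(v_0 + A_0, v_1 + A_1)$ for fixed $I$ with $|I| = r-1$ and fixed $j$, and then show that the resulting expression, as $I$ and $j$ vary, is exactly the $\Sym^{r-1}(B,F)$-valued quantity $\iota_{v_0}A_1 - \iota_{v_1}A_0$.

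First I would set up notation: write $v_a = \sum_{c=1}^n v_a^c \,\partial_{x_c}$ for the $B$-components and $A_a \in \Sym^r(B,F)$ for the symmetric-tensor components, expressed in the basis $\frac{\d x^{\odot J}}{J!}\otimes e_j$ with $|J| = r$. Plugging into $\Omega_j^I = \sum_{c=1}^n \d x_c \wedge \d z_j^{(i_1,\dots,i_c+1,\dots,i_n)}$, the wedge $\d x_c \wedge \d z_j^{(K)}$ evaluated on a pair of vectors gives the usual $2\times 2$ determinant; since the $\d x_c$ only see the $B$-part and the $\d z_j^{(K)}$ only see the $\Sym^r$-part, we get
\[ \Omega_j^I(v_0 + A_0, v_1 + A_1) = \sum_{c=1}^n \Big( v_0^c\, A_1\big[{}^{(i_1,\dots,i_c+1,\dots,i_n)}_{\;j}\big] - v_1^c\, A_0\big[{}^{(i_1,\dots,i_c+1,\dots,i_n)}_{\;j}\big] \Big), \]
where $A\big[{}^{(K)}_{\;j}\big]$ denotes the $z_j^{(K)}$-coordinate of $A$. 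The remaining task is to recognise the right-hand side: contracting a symmetric $r$-tensor $A_1$ with the vector $v_0$ in one slot yields a symmetric $(r-1)$-tensor whose $z_j^{(I)}$-coordinate (with $|I| = r-1$) is precisely $\sum_c v_0^c \cdot (\text{the } z_j^{(i_1,\dots,i_c+1,\dots,i_n)}\text{-coordinate of }A_1)$, up to a combinatorial factor coming from the $1/J!$ normalisations in the chosen monomial framing. So I would carry out that one combinatorial check: the symmetrisation multiplicities in $\d x^{\odot J}/J!$ are exactly what makes $\iota_v$ act on coordinates by the ``lower one index'' rule above with coefficient $1$.

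Assembling over all $I$ with $|I| = r-1$ and all $j$ then gives $\Omega_\can(v_0+A_0, v_1+A_1) = \iota_{v_0}A_1 - \iota_{v_1}A_0$ as an element of $\Sym^{r-1}(B,F)$, and translation-invariance of the $\Omega_j^I$ (they have constant coefficients in the $(x,z)$-coordinates) makes the identification $T_p(B\oplus\Sym^r(B,F)) \cong B\oplus\Sym^r(B,F)$ harmless, so the formula holds at every point $p$. The only mildly delicate step — the ``main obstacle'', such as it is — is matching the factorial normalisations so that the contraction $\iota_v$ appears with coefficient exactly $1$ rather than some binomial coefficient; this is entirely routine but is the one place where a sign or a combinatorial constant could slip in, so I would state the relevant $\iota_v\big(\frac{\d x^{\odot J}}{J!}\big)$ identity explicitly and verify it.
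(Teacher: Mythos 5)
Your proof is correct, and since the paper supplies no proof of this lemma at all — it is introduced with ``We readily check'' and the verification is left to the reader — your proposal simply makes explicit the coordinate computation the paper takes for granted; there is no alternative argument in the paper to compare against. One small caution on the step you rightly flag as the only delicate one: the coefficient-$1$ ``raise one index'' rule $z_j^{(I)}(\iota_v A) = \sum_a v^a\, z_j^{(I+e_a)}(A)$ depends not only on the $1/J!$ factors in the framing but also on the conventions for the symmetric product $\odot$ (sum over permutations versus average) and, correspondingly, on whether $\iota_v$ denotes single-slot insertion or the derivation on the symmetric algebra — these two readings of $\iota_v$ differ by a factor of $r$. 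The paper's intended convention is whichever makes the rule hold with coefficient exactly $1$, and this is pinned down a few lines later by the coordinate formula for the Liouville form in Lemma~\ref{lem:Liouville}, so it is worth carrying out the $\iota_v\bigl(\tfrac{\d x^{\odot J}}{J!}\bigr)$ check against that formula rather than against a guessed tensor convention.
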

I.e. the standard metasymplectic structure is precisely the contraction map of tensors with vectors. When $r=k=1$, the standard metasymplectic space $(B \oplus B^*,\Omega_\can)$ is simply $\R^{2n}$ endowed with its linear symplectic form.

\subsubsection{The metasymplectic projection}

We then generalise the lagrangian projection:
\begin{definition} \label{def:metasymplecticProjection}
The \textbf{metasymplectic projection} is the map
\begin{align*}
\pi_\meta: J^r(B,F) \quad\longrightarrow\quad & B \oplus \Sym^r(B,F) \\
(x,y,z)         \quad\mapsto\quad & \pi_\meta(x,y,z) := (x,z^r).
\end{align*}
\end{definition}
By construction, the differential at each point
\[ d_p\pi_\meta: T_pJ^r(B,F) \quad\longrightarrow\quad T_{\pi_\meta(p)} (B \oplus \Sym^r(B,F)) \]
is an epimorphism that restricts to an isomorphism $(\xi_\can)_p \to T_{\pi_\meta(p)} (B \oplus \Sym^r(B,F))$. Furthermore, using the duality between distributions and their annihilators, it readily follows that:
\begin{lemma} \label{lem:metasymplecticProjection}
The differential is an isomorphism of metasymplectic linear spaces:
\[ d_p\pi_\meta: ((\xi_\can)_p,\Omega(\xi_\can)) \to (T_{\pi_\meta(p)} (B \oplus \Sym^r(B,F)),\Omega_\can), \]
where $\Omega(\xi_\can)$ is the curvature of $\xi_\can$.
\end{lemma}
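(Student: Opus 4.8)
The plan is to verify the claim by a direct computation in standard coordinates, combining the fact that $d_p\pi_\meta$ restricts to a linear isomorphism on $(\xi_\can)_p$ (already observed above) with the definition of the curvature of a distribution. Recall that the curvature of $\xi_\can$ at $p$ is the $(TJ^r(B,F)/\xi_\can)$-valued bilinear form on $(\xi_\can)_p$ given by $\Omega(\xi_\can)(u,v) = -[\wtd u,\wtd v]_p \bmod \xi_\can$ for local extensions $\wtd u,\wtd v$ of $u,v$ to sections of $\xi_\can$; equivalently, since $\xi_\can$ is cut out by the Cartan $1$-forms $\alpha_j^I$ with $|I|=r-1$, one has $\Omega(\xi_\can)(u,v) = (d\alpha_j^I(u,v))_{|I|=r-1,\,1\le j\le k}$, where we use these $1$-forms to trivialise the conormal of $\xi_\can$ (and hence identify $TJ^r(B,F)/\xi_\can$ with $\Sym^{r-1}(B,F)$).

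First I would record that the only Cartan forms whose differentials are nonzero on $\xi_\can$ are precisely those with $|I| = r-1$: for $|I| < r-1$, the coordinate $z_j^{(i_1,\dots,i_a+1,\dots,i_n)}$ appearing in $\alpha_j^I$ is itself constrained by a lower Cartan form, so $d\alpha_j^I$ vanishes on $\xi_\can$. This is why the curvature only ``sees'' the top Cartan forms and why it is valued in $\Sym^{r-1}(B,F)$, matching the target of $\Omega_\can$. Then I would compute, for $|I| = r-1$,
\[ d\alpha_j^I = -\sum_{a=1}^n dz_j^{(i_1,\dots,i_a+1,\dots,i_n)} \wedge dx_a = \sum_{a=1}^n dx_a \wedge dz_j^{(i_1,\dots,i_a+1,\dots,i_n)} = \Omega_j^I, \]
which is exactly the $2$-form defining $\Omega_\can$ in the $(I,j)$ component. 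Next I would observe that $\Omega_j^I$ is the pullback under $\pi_\meta$ of the translation-invariant $2$-form on $B \oplus \Sym^r(B,F)$ bearing the same coordinate expression — this is immediate since $\Omega_j^I$ only involves the $dx_a$ and the $dz_j^{(\cdot)}$ with $|\cdot| = r$, i.e. only the coordinates $(x, z^r)$ that $\pi_\meta$ retains.

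Putting these together: for $u,v \in (\xi_\can)_p$ we have
\[ \Omega(\xi_\can)_p(u,v) = \big(d\alpha_j^I(u,v)\big)_{I,j} = \big(\Omega_j^I(u,v)\big)_{I,j} = \big((\pi_\meta^*\Omega_\can)(u,v)\big) = \Omega_\can\big(d_p\pi_\meta(u),\, d_p\pi_\meta(v)\big), \]
which says precisely that $d_p\pi_\meta$ intertwines $\Omega(\xi_\can)$ with $\Omega_\can$; since it is already known to be a linear isomorphism $(\xi_\can)_p \to T_{\pi_\meta(p)}(B\oplus\Sym^r(B,F))$, it is an isomorphism of metasymplectic linear spaces, as claimed. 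The only genuinely delicate point is bookkeeping: one must be careful that the trivialisation of $TJ^r(B,F)/\xi_\can$ used to define the curvature is the one given by the top Cartan forms $\{\alpha_j^I : |I| = r-1\}$, so that the identification of the quotient with $\Sym^{r-1}(B,F)$ is compatible on both sides; with that convention fixed, everything is a formal unwinding of Definition \ref{def:metasymplectic} and the formula for $d\alpha_j^I$.
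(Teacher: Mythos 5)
Your proof is correct and follows the same route the paper takes: the paper dispenses with the statement by saying it ``readily follows using the duality between distributions and their annihilators,'' and your argument is precisely that duality made explicit — identify the curvature with $(d\alpha_j^I|_{\xi_\can})_{I,j}$, observe that only $|I|=r-1$ contributes (the lower $d\alpha_j^I$ vanish on $\xi_\can$ because the monomials $z_j^{(\cdots)}$ they contain are themselves Cartan-constrained), and note that the surviving $2$-forms $\Omega_j^I$ only involve the $(x,z^r)$ coordinates, hence are pullbacks under $\pi_\meta$.
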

We note that $d_p\pi_\meta$ identifies the vertical bundle $(V_\can)_p \subset (\xi_\can)_p$ with $\Sym^r(B,F)$. In light of this, we will say that the directions in $B \oplus \Sym^r(B,F)$ contained in $\Sym^r(B,F)$ are \textbf{vertical}.

\subsection{Isotropic submanifolds and integral lifts} \label{ssec:integralLift}

A vector subspace $V$ of the standard linear metasymplectic space is said to be an \textbf{isotropic element} if $(\Omega_\can)|_V = 0$. An isotropic element is maximal if it is not contained in a larger isotropic subspace. Similarly, a submanifold of standard metasymplectic space is \textbf{isotropic} if all its tangent subspaces are isotropic elements. 
\begin{corollary} \label{lem:lagrangianProjectionImmersion}
Let $f: N \to J^r(B,F)$ be a map. Then:
\begin{itemize}
\item $f$ is integral if and only if $\pi_\meta \circ f$ is isotropic.
\item Suppose $f$ is integral. Then, $f$ is an immersion if and only if $\pi_\meta \circ f$ is an immersion.
\end{itemize}
\end{corollary}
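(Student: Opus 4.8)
The plan is to deduce both bullet points from the infinitesimal statement in Lemma \ref{lem:metasymplecticProjection}, which says that at every point $p \in J^r(B,F)$ the differential $d_p\pi_\meta$ restricts to an isomorphism $(\xi_\can)_p \to T_{\pi_\meta(p)}(B \oplus \Sym^r(B,F))$ intertwining the curvature of $\xi_\can$ with $\Omega_\can$. The key point is that an integral map has all its tangent directions inside $\xi_\can$, so composing with $\pi_\meta$ simply transports the situation, via a pointwise isomorphism, to standard metasymplectic space.

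For the first bullet, I would argue as follows. Suppose $f: N \to J^r(B,F)$ is integral, i.e. $d_qf(T_qN) \subset (\xi_\can)_{f(q)}$ for every $q \in N$. Since $\Omega_\can$ pulls back along $d_{f(q)}\pi_\meta$ to the curvature form $\Omega(\xi_\can)$ on $(\xi_\can)_{f(q)}$, and since the restriction of the curvature of a distribution to the tangent space of an integral submanifold vanishes (this is the defining property relating $\xi_\can$ to holonomicity, and more generally the statement that integral submanifolds are isotropic for the curvature), we get that $(\pi_\meta \circ f)^*\Omega_\can = f^*\Omega(\xi_\can)|_{TN} = 0$, so $\pi_\meta \circ f$ is isotropic. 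Conversely, suppose $\pi_\meta \circ f$ is isotropic; I must show $f$ is integral. Here I would use that $\xi_\can$ is a genuine distribution of the dimension of a metasymplectic space and that $d_p\pi_\meta$ is an isomorphism $(\xi_\can)_p \to T_{\pi_\meta(p)}(B \oplus \Sym^r(B,F))$; the subtlety is that isotropy of the image only controls $f^*\Omega_\can$, not a priori the tangency of $f$ to $\xi_\can$. This is where I would invoke the explicit coordinate description: writing $f = (x(\cdot), y(\cdot), z(\cdot))$, the condition that $\pi_\meta \circ f = (x(\cdot), z^r(\cdot))$ be isotropic for $\Omega_\can$ is precisely that the forms $\sum_a dx_a \wedge dz_j^{(\cdots i_a+1 \cdots)}$ pull back to zero, i.e. that the $d\alpha_j^I$ with $|I|=r-1$ pull back to zero; combined with the fact that $f$ lands in $J^r$ (so the Cartan forms $\alpha_j^I$ with $|I| < r-1$ are automatically controlled by the ones of top weight, by the recursive structure of \eqref{eq:Cartan1Forms}) this forces $f^*\alpha_j^I = 0$ for all $I$, which is exactly integrality. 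I expect this converse direction — carefully running the recursion down from weight $r-1$ to weight $0$ — to be the main obstacle, and it is essentially the same bookkeeping that will underlie Proposition \ref{prop:integralLift}.

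For the second bullet, assume $f$ is integral. If $f$ is an immersion then so is $\pi_\meta \circ f$? Not obviously — one must rule out that a nonzero $v \in T_qN$ with $d_qf(v) \neq 0$ could have $d_{f(q)}\pi_\meta(d_qf(v)) = 0$. But $d_qf(v) \in (\xi_\can)_{f(q)}$ by integrality, and $d_{f(q)}\pi_\meta$ is injective on $(\xi_\can)_{f(q)}$ by Lemma \ref{lem:metasymplecticProjection}; hence $d_{f(q)}\pi_\meta(d_qf(v)) = 0$ would force $d_qf(v) = 0$, a contradiction. Thus $\pi_\meta \circ f$ is an immersion. Conversely, if $\pi_\meta \circ f$ is an immersion and $v \in T_qN$ satisfies $d_qf(v) = 0$, then $d_q(\pi_\meta \circ f)(v) = 0$, so $v = 0$; hence $f$ is an immersion. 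Both directions here are immediate once the pointwise injectivity of $d\pi_\meta|_{\xi_\can}$ is in hand, so the real content of the corollary is entirely in the first bullet.

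In summary: the proof is a two-line consequence of Lemma \ref{lem:metasymplecticProjection} once one grants that integral submanifolds are isotropic for the curvature of $\xi_\can$ and, in the converse, that isotropy in the metasymplectic projection propagates down the Cartan tower back to full integrality. I would present the immersion statement first (it is purely linear-algebraic given the infinitesimal isomorphism) and then the integrality statement, spending most of the writing on the converse implication via the coordinate recursion.
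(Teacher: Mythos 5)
Your instinct that the converse of the first bullet is the delicate part was correct, but the resolution you sketch does not work, and in fact that converse implication is false as a statement about arbitrary maps $f: N \to J^r(B,F)$. Isotropy of $\pi_\meta \circ f$ says only that $f^*\Omega_j^I = f^*(d\alpha_j^I) = 0$ for $|I| = r-1$; this is a condition on $2$-forms and does not force the vanishing of the $1$-forms $f^*\alpha_j^I$, at any weight. The ``recursive structure'' of \eqref{eq:Cartan1Forms} relates $\alpha_j^I$ at level $|I|$ to the coordinates $z^{|I|+1}$, not to the $2$-forms $d\alpha_j^{I'}$ at top weight, so it cannot close this gap: nothing prevents the pullbacks of the individual Cartan $1$-forms from being nonzero while all their exterior derivatives pull back to zero. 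A concrete counterexample: take $r=\ell=n=1$ and $f(t) = \bigl(x=t,\ y=g(t),\ z=t\bigr)$. Then $\pi_\meta\circ f(t)=(t,t)$ is automatically isotropic (the source is $1$-dimensional), yet $f^*(dy - z\,dx) = (g'(t)-t)\,dt$ vanishes only for the specific choice $g(t)=t^2/2+c$. What \emph{is} true --- and is exactly what Proposition \ref{prop:integralLift} provides --- is that an isotropic map into metasymplectic space admits \emph{some} integral lift to $J^r(B,F)$ sitting over it; but a given $f$ with isotropic projection need not be that lift.

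The forward implication of the first bullet and both directions of the second bullet are correct and immediate from Lemma \ref{lem:metasymplecticProjection}, essentially as you argue: if $f$ is integral then $f^*\alpha_j^I=0$ for all $I$, hence $f^*(d\alpha_j^I)=0$, hence $\pi_\meta\circ f$ is isotropic; and pointwise injectivity of $d\pi_\meta$ on $\xi_\can$ gives the immersion equivalence in both directions. The paper itself supplies no written proof --- the corollary is asserted as a consequence of Lemma \ref{lem:metasymplecticProjection} --- so there is no argument to compare yours against, but the statement should be read with the understanding that its content is the forward direction of the first bullet together with the second, with Proposition \ref{prop:integralLift} playing the role of a (constructive, not tautological) converse.
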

We now explore the converse: how to lift isotropic submanifolds to integral ones.

\subsubsection{The standard Liouville form}

First we need an auxiliary concept:
\begin{definition} \label{def:Liouville}
The \textbf{standard Liouville form}
\[ \lambda_\can \in \Omega^1(B \oplus \Sym^r(B,F);  \Sym^{r-1}(B,F)) \]
is defined, at a point $(v,A)$ in standard metasymplectic space, by the following tautological expression:
\[ \lambda_\can(v,A)(w,B) := -\iota_{w}A. \]
\end{definition}

From our explicit description of $\Omega_\can$ it follows that:
\begin{lemma} \label{lem:Liouville}
Then following statements hold:
\begin{itemize}
\item The Liouville form can be explicitly written as:
\[ \lambda_\can(x,z^r) = \left( -\sum_{a=1}^n z_j^{(i_1,\cdots,i_a+1,\cdots,i_n)} dx_a \right)_{|(i_1,\cdots,i_a,\cdots,i_n)| = r-1}. \]
\item The Cartan $1$-forms $\alpha^r \in \Omega^1(J^r(B,F); \Sym^{r-1}(B,F))$ are given by the expression
\[ \alpha_r(x,y,z) = dz_{r-1} + \lambda_\can(x,z^r). \]
\item In particular, $d\lambda_\can = \Omega_\can$.
\end{itemize}
\end{lemma}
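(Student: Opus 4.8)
The three assertions are essentially coordinate bookkeeping built on top of the tautological definitions already in place, so the plan is to verify them one at a time, reusing the explicit formula for $\Omega_\can$ from Lemma \ref{lem:metasymplecticStructure} and the Cartan $1$-forms from Subsection \ref{ssec:metasymplectic}. First I would unwind the definition of $\lambda_\can$ at a point $(x,z^r)$: by Definition \ref{def:Liouville} its value on a tangent vector $(w,B)$ is $-\iota_w A$, where $A \in \Sym^r(B,F)$ is the $\Sym^r$-component of the base point. Writing $A$ in the monomial framing $\frac{dx^{\odot I}}{I!}\otimes e_j$ with $|I|=r$ and expanding the contraction $\iota_w$ against the $\partial_{x_a}$-directions of $w$ gives, componentwise in the target $\Sym^{r-1}(B,F)$, exactly the sum $-\sum_{a=1}^n z_j^{(i_1,\dots,i_a+1,\dots,i_n)}\,dx_a$ indexed by multi-indices $(i_1,\dots,i_n)$ of length $r-1$; this is the first bullet. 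The only subtlety is matching the combinatorial factors coming from the symmetric tensor product and the $1/I!$ normalisation in Definition \ref{def:JetBundlesCoordinates}, but these are arranged precisely so that the dual coordinate $z_j^{(I)}$ reads off the honest partial derivative $\partial^I f$, so contracting one slot of a degree-$r$ symmetric tensor in the $x_a$-direction lands on the coordinate $z_j^{(i_1,\dots,i_a+1,\dots,i_n)}$ with unit coefficient.

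For the second bullet, I would simply compare the displayed expression for $\alpha_j^I$ (with $|I|=r-1$) in Subsection \ref{sssec:CartanInCoordinates}, namely $\alpha_j^I = dz_j^{(I)} - \sum_{a=1}^n z_j^{(i_1,\dots,i_a+1,\dots,i_n)}\,dx_a$, with the formula just obtained: packaging the forms $\alpha_j^I$ over all $|I|=r-1$ and $1\le j\le k$ into the $\Sym^{r-1}(B,F)$-valued form $\alpha_r$, the $dz_j^{(I)}$ terms assemble into $dz_{r-1}$ (the differential of the $\Sym^{r-1}$-block of coordinates) and the remaining terms are, by the first bullet, exactly $\lambda_\can(x,z^r)$. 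Hence $\alpha_r = dz_{r-1} + \lambda_\can$. The third bullet then follows by differentiating: $d\alpha_r = d\lambda_\can$ since $d(dz_{r-1})=0$, and on the other hand the $2$-forms $\Omega_j^I = \sum_a dx_a\wedge dz_j^{(i_1,\dots,i_a+1,\dots,i_n)}$ defining $\Omega_\can$ in Definition \ref{def:metasymplectic} are by construction the differentials of the $\alpha_j^I$; assembling componentwise gives $d\lambda_\can = \Omega_\can$. Alternatively one can check $d\lambda_\can=\Omega_\can$ directly from the coordinate formula in the first bullet by computing $d\left(-\sum_a z_j^{(i_1,\dots,i_a+1,\dots,i_n)}dx_a\right) = -\sum_a dz_j^{(i_1,\dots,i_a+1,\dots,i_n)}\wedge dx_a = \sum_a dx_a\wedge dz_j^{(i_1,\dots,i_a+1,\dots,i_n)}$, which is the $(I,j)$-component of $\Omega_\can$.

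The main obstacle, such as it is, is purely notational: keeping the multi-index shift $(i_1,\dots,i_a,\dots,i_n)\mapsto(i_1,\dots,i_a+1,\dots,i_n)$ straight as one passes between degree $r-1$ and degree $r$, and making sure the identification of $\Sym^r(B,F)$ with the vertical fibre (as recorded after Lemma \ref{lem:metasymplecticProjection}) is used consistently so that the contraction $\iota_w A$ is interpreted in the target $\Sym^{r-1}(B,F)$ rather than as a scalar. There is no analytic or geometric content beyond the tautological definitions, so once the indexing conventions are fixed the three identities drop out immediately.
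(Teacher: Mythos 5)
Your proof is correct and follows the only natural route; the paper does not give an explicit proof here (the lemma is preceded by ``From our explicit description of $\Omega_\can$ it follows that:''), and what you write is precisely the unstated verification. One minor simplification worth noting: you can bypass the combinatorial bookkeeping with the $1/I!$ normalisation entirely by reading off the coordinate expression for $\iota_v A$ from the already-established Lemma \ref{lem:metasymplecticStructure} together with Definition \ref{def:metasymplectic} — comparing $\Omega_\can(v_0+A_0,v_1+A_1)=\iota_{v_0}A_1-\iota_{v_1}A_0$ with the componentwise formula $\Omega_j^I=\sum_a dx_a\wedge dz_j^{(i_1,\dots,i_a+1,\dots,i_n)}$ forces $(\iota_v A)_j^{(I)}=\sum_a v_a A_j^{(i_1,\dots,i_a+1,\dots,i_n)}$, and the first bullet then drops out of Definition \ref{def:Liouville} with no further normalisation check. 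The second and third bullets are handled exactly as you describe.
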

That is, the familiar properties for the Liouville form in the symplectic/contact setting hold as well in more general jet spaces.

\subsubsection{Exact isotropics}

In the contact/symplectic setting, it is possible to produce a lift whenever the isotropic submanifold in question is exact. In this generalised setting, we need exactness at every step. This is guaranteed whenever the submanifold to be lifted is contractible, which is enough for our purposes:
\begin{proposition} \label{prop:integralLift}
Let $N$ be a disc. Given an isotropic map
\[ g: N \to (B \oplus \Sym^r(B,F),\Omega_\can) \] 
there exists an integral map
\[ \Lift(g): N \to J^r(B,F) \]
satisfying $\pi_\meta \circ \Lift(g) = g$. The lift $\Lift(g)$ is unique once we fix $\Lift(g)(x_0)$ for some $x_0 \in N$.
\end{proposition}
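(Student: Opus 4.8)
The plan is to reconstruct the lift $\Lift(g)$ component by component, starting from the top jet and working down, exactly mirroring the way one lifts an exact Lagrangian to a Legendrian, but iterated $r$ times. Write $g = (g_B, g_r) : N \to B \oplus \Sym^r(B,F)$ for the two components of $g$, so that $g_B$ is the ``base'' part and $g_r$ the ``top symbol'' part. The candidate lift $\Lift(g)$ must, by the very definition of $\pi_\meta$, have $x$-coordinate $g_B$ and $z^r$-coordinate $g_r$; the content is to define the intermediate coordinates $z^{r-1}, z^{r-2}, \dots, z^1, y = z^0$ so that all the Cartan $1$-forms \eqref{eq:Cartan1Forms} pull back to zero. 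I would proceed by downward induction on the jet level.

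First I would treat the top level. By Lemma~\ref{lem:Liouville}, the Cartan form $\alpha_r$ on $J^r(B,F)$ equals $dz_{r-1} + \lambda_\can$, and $d\lambda_\can = \Omega_\can$. The hypothesis that $g$ is isotropic means exactly that $g^*\Omega_\can = 0$, so $g^*\lambda_\can$ is a closed $\Sym^{r-1}(B,F)$-valued $1$-form on $N$. Since $N$ is a disc, hence contractible, $g^*\lambda_\can$ is exact: there is a function $\rho_{r-1} : N \to \Sym^{r-1}(B,F)$ with $d\rho_{r-1} = -g^*\lambda_\can$, unique once we fix its value at $x_0$ (to match the prescribed value of $\Lift(g)(x_0)$). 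Setting $z^{r-1} := \rho_{r-1}$ makes $\alpha_r$ vanish on the partial lift $(g_B, z^{r-1}, g_r)$. For the inductive step, suppose $z^{r}, z^{r-1}, \dots, z^{r'+1}$ have already been chosen so that the Cartan forms $\alpha^I$ with $|I| \geq r'+1$ vanish. The remaining Cartan forms at level $r'$ read $\alpha_j^I = dz_j^{(I)} - \sum_a z_j^{(\dots,i_a+1,\dots)} dx_a$ with $|I| = r'$, and the second term is already a determined $1$-form on $N$ once we substitute the coordinates chosen so far; call it $-\mu_{r'}$. The key computation is that $d\mu_{r'}$, pulled back to $N$, vanishes: this follows because $d\mu_{r'}$ is, up to sign, a sum of wedge products of $dx_a$ with the differentials of the already-constructed $z^{r'+1}$-coordinates, and those differentials are, by the inductive Cartan relations, themselves of the form $\sum_b z^{(\dots,b+1,\dots)} dx_b$ — so $d\mu_{r'}$ is a pullback of a symmetrised expression that is manifestly closed (indeed exact, coming from the next Liouville-type primitive). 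Hence $\mu_{r'}$ is closed on the disc $N$, therefore exact, and we define $z^{r'}$ to be a primitive, again unique up to the additive constant pinned down by $\Lift(g)(x_0)$. Iterating down to $r' = 0$ produces $y = z^0$ and completes the construction; by design every Cartan $1$-form restricts to zero, so $\Lift(g)$ is integral, and $\pi_\meta \circ \Lift(g) = (g_B, g_r) = g$ by construction. Uniqueness is immediate: at each level the primitive of an exact form on a connected manifold is determined by one value, and the values at $x_0$ are all dictated by the single choice $\Lift(g)(x_0)$.

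The main obstacle — and the place where the argument genuinely uses more than the $r=1$ symplectic case — is verifying the closedness of the $1$-forms $\mu_{r'}$ at each intermediate level, i.e.\ checking that the ``exactness at every step'' advertised before the statement really does propagate. Concretely, one must show that the obstruction to solving $dz^{r'} = \mu_{r'}$ is not merely the vanishing of $g^*\Omega_\can$ but is automatically killed once the higher-level Cartan relations hold; this is a bookkeeping computation with the symmetric-tensor index combinatorics of the $\alpha_j^I$, and it is where the compatibility between the Liouville forms $\lambda_\can$ at different weights $\Sym^{r'-1}(B,F)$ is used. Everything else — existence of primitives on a disc, the uniqueness statement, and the final check that the resulting section is tangent to $\xi_\can$ and projects correctly — is formal. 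I would organise the write-up so that the index computation showing $d\mu_{r'} = 0$ is isolated as the one substantive lemma, with the rest being a clean downward induction.
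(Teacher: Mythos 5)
Your proposal is correct and takes essentially the same approach as the paper's proof: a downward induction on the jet level, defining $z^{r'}$ as a primitive of an obstruction $1$-form whose closedness follows at the top level from $g^*\Omega_\can = 0$ and at lower levels from the already-established Cartan relations via the symmetric/antisymmetric cancellation $\sum_{a,b} z^{(I+e_a+e_b)}\,dx_a\wedge dx_b = 0$. The paper merely packages the inductive step differently, by observing that the intermediate map $(x,z^{r-1})$ is again isotropic for the next metasymplectic form $\Omega_\can^{r-1}$ so the one-level argument can be reapplied verbatim; the underlying computation is identical to the one you describe.
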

\begin{proof}
Write $g(p) = (x(p),z^r(p))$. By construction, $g^*\Omega_\can = 0$. We deduce that each component of $g^*\lambda_\can$ is closed and thus exact. We choose primitives, which we denote suggestively by 
\[ z^{r-1}: N \to \Sym^{r-1}(B,F). \]
These functions are unique once their value at the point $x_0$ is given.

We put $g$ together with the chosen primitives to produce a map
\[ h := (x,z^{r-1},z^r): N \to B \oplus \Sym^{r-1}(B,F) \oplus \Sym^r(B,F). \]
We can readily check, using Lemma \ref{lem:Liouville}, that, by construction:
\[ h^*\alpha^r = dz^{r-1} + g^*\lambda_\can = 0. \]

Consider now the $2$-form in $B \oplus \Sym^{r-1}(B,F) \oplus \Sym^r(B,F)$ with values in $\Sym^{r-2}(B,F)$:
\[ \Omega_\can^{r-1} :=  \left( \sum_{a=1}^n dx_a \wedge dz_j^{(i_1,\cdots,i_a+1,\cdots,i_n)} \right)_{|(i_1,\cdots,i_a,\cdots,i_n)| = r-2}. \]
It involves differentials on the coordinates $(x,z^{r-1})$ only and its pullback to $J^r(B,F)$ is the curvature of $\xi_\can^{(1)}$. Using our definition of $z^{r-1}$, and the fact that cross derivatives agree, we deduce:
\[ h^*\Omega_\can^{r-1} = h^*\left( -\sum_{a,b=1}^n z_j^{(i_1,\cdots,i_a+1,\cdots,i_b+1,\cdots,i_n)} dx_a \wedge dx_b \right) = 0. \]
This computation tells us that the map
\[ (x,z^{r-1}): N \to B \oplus \Sym^{r-1}(B,F) \]
is isotropic. Therefore, the argument can be iterated for decreasing $r$ to produce a lift.
\end{proof}

\subsection{Lifting curves} \label{ssec:liftingCurves}

Let us particularise now to the case $\dim(B) = 1$. Then, in standard coordinates $(x,y=z^0,z)$ the Cartan $1$-forms read
\[ \alpha^l = dz^l - z^{l+1}dx, \qquad l=0,\dots,r-1. \]
The particular flexibility of curves (compared to higher dimensional integral submanifolds) stems from the fact that any
\[ g(t) = (x(t),z_r(t)) : [0,1] \to B \oplus \Sym^r(B,F) \]
is automatically isotropic. Then, following the recipe given in the proof of Proposition \ref{prop:integralLift}, we solve for the $z^{r-1}$ coordinates using $\alpha^r$:
\[ g^*\alpha^r = z_{r-1}(t)dt - z_r(t)x'(t)dt \]
leading to the integral expression
\[ z_{r-1}(t) = z_{r-1}(0) + \int_0^t  z_r(s)x'(s)ds \]
which uniquely recovers $z_{r-1}$ up to the choice of lift $z_{r-1}(0)$. Proceeding decreasingly in $l$ we can solve for all the $z^l(t)$, effectively lifting $g$ to an integral curve $\Lift(g): [0,1] \to J^r(B,F)$.

According to Lemma \ref{lem:lagrangianProjectionImmersion}, the lift $\Lift(g)$ is immersed if and only if $g$ was immersed. Assuming $g$ is immersed and isotropic, the following loci are in correspondence with one another:
\begin{itemize}
\item The tangencies of $g$ with respect to the vertical directions $\Sym^r(B,F)$.
\item The tangencies of $\Lift(g)$ with respect to the vertical bundle $V_\can$.
\item The singularities of mapping of the front $\pi_f \circ \Lift(g)$.
\end{itemize}
The advantage is that the singularities of $g$ with respect to $\Sym^r(B,F)$ are easier to deal with, since $g$ is a smooth curve in metasymplectic space with no constraints.

\subsection{Principal projections in charts} \label{ssec:principalProjection}

Higher-dimensional isotropic/integral submanifolds are, generally speaking, difficult to manipulate directly due to the differential constraints they have to satisfy. However, this is not the case for submanifolds arising as the graph of a holonomic section of $J^r(B,F)$. The differential constraint associated to being integral is then an automatic consequence of being holonomic. We will now explain how a holonomic section can be manipulated to yield integral embeddings with tangencies with the vertical. These tangencies will be of corank-$1$.

\subsubsection{Principal directions}

The idea is to focus on pure derivatives of order $r$:
\begin{definition} \label{def:principalProjection}
The \textbf{principal projection} associated to the standard coordinates $(x,y,z)$ is the map:
\begin{align*}
\pi_\meta^n: J^r(B,F) \quad\to\quad & B \oplus \Sym^r(\R,F) \\
(x,y,z)           \quad\to\quad & (x,z^{(0,\dots,0,r)}).
\end{align*}
\end{definition}
The aim is to construct integral submanifolds by manipulating the pair $(x_n,z^{(0,\dots,0,r)})$. The reader can think of this as ``modifying a single pure derivative'', but not necessarily in a graphical manner over $B$.

The counterpart of modifying the pure derivative $z^{(0,\dots,0,r)}$ is freezing all other derivatives. This is often formalised as follows; see \cite[p. 170]{Gr86}. We say that two sections of $B \to F$ have the same $\bot$-jet  at $p \in B$, with respect to the principal direction $dx_n$, if, with the exception of the pure $r$-order derivative along $x_n$, their $r$-order Taylor polynomials at $p$ agree. The space of $\bot$-jets is denoted by $J^\bot(B,F)$. We write $j^\bot f: B \to J^\bot(B,F)$ for the $\bot$-jet of a section $f: B \to F$.

$\bot$-Jets play a central role in any $h$-principle relying on manipulating sections ``one derivative at a time''. The most salient example is \emph{convex integration} \cite{Gr73}, but see also \cite{Gav1} for its appearance in holonomic approximation and \cite{ElMiWrinEmb,Gav2} for its appearance in wrinkling.

\subsubsection{A lifting statement}

The upcoming result constructs integral maps that have tangencies with the vertical in the $x_n$-direction. The reader should think of it as a fibered analogue of the case of curves. In order to clarify the statement, let us run through the setup first.

We fix a map
\begin{align*}
g: B                              \quad\longrightarrow\quad & B \oplus \Sym^r(\R,F) \\
(t) = (\widetilde t, t_n) = (t_1,\dots,t_n) \quad\mapsto\quad & (\widetilde x = \widetilde t,x_n(t), z^{(0,\dots,0,r)}(t)).
\end{align*}
It is fibered over the $\widetilde t$ variables. In particular, note that the restriction of $\pi_B \circ g$ to the hypersurface $H := \{t_n = 0\}$ is an embedding into $B$ transverse to the $x_n$ direction.

Our goal is to lift $g$ to an integral map into $J^r(B,F)$. We will do this by integrating $z^{(0,\dots,0,r)}$ with respect to $x_n$ using the Liouville form, as we did for curves. This lift will be unique as long as we fix some initial datum along $H$. Namely, we assume that we are also given an integral map $h: H \to J^r(B,F)$ compatible with $g$ in the sense that $\pi_\meta^n \circ h = g|_H$. Since $h|_H$ is integral, its image is the image of a holonomic section of $J^r(\pi_B \circ g(H),F)$.

\begin{proposition} \label{prop:principalProjection}
Under the assumptions above, there exists a unique integral map 
\[ \Lift(g,h): B \to J^r(B,F) \]
that satisfies:
\begin{itemize}
\item $\pi_\meta^n \circ \Lift(g,h) = g$,
\item $\Lift(g,h)|_H = h$
\item The singularities of mapping of $\Lift(g,h)$ are in correspondence with those of $g$.
\item The singularities of tangency of $\Lift(g,h)$ with respect to the vertical are in correspondence with those of $g$.
\item $\Lift(g,h)$ is immersed if and only if $g$ is immersed.
\item $\Lift(g,h)$ is embedded if $g$ is embedded.
\end{itemize}
Furthermore, the construction of $\Lift(g,h)$ depends smoothly on $g$ and $h$.
\end{proposition}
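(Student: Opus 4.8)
The plan is to mimic the curve case from Subsection \ref{ssec:liftingCurves}, treating the $x_n$ coordinate as the ``time'' variable and the $\widetilde t$ coordinates as parameters, and then to verify that the correspondences of singularities follow from the local form of the lift. First I would set up coordinates so that the principal direction is $\partial_{t_n}$ and view $g$ as a $\widetilde t$-parametrised family of curves $t_n \mapsto (x_n(\widetilde t, t_n), z^{(0,\dots,0,r)}(\widetilde t, t_n))$; each such curve is automatically isotropic for the one-dimensional metasymplectic structure in the $(x_n, z^{(0,\dots,0,r)})$-plane. The initial datum $h$ on $H = \{t_n = 0\}$ provides, by hypothesis, the values of the remaining jet coordinates along $H$; since $h$ is integral and $\pi_\meta^n \circ h = g|_H$, it is the holonomic lift of a section over $\pi_B \circ g(H) \subset B$.

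The core construction is then the same integration as for curves, performed fibrewise over $\widetilde t$. Using the Cartan $1$-form relating $z^{(0,\dots,0,r-1)}$ to $z^{(0,\dots,0,r)}$ and the principal Liouville term, I would write
\[ z^{(0,\dots,0,r-1)}(\widetilde t, t_n) = z^{(0,\dots,0,r-1)}(\widetilde t, 0) + \int_0^{t_n} z^{(0,\dots,0,r)}(\widetilde t, s)\, \partial_s x_n(\widetilde t, s)\, ds, \]
and similarly solve, decreasingly in $l$, for every coordinate $z^{(I)}$ that is ``hit'' by differentiation in the $x_n$-direction — i.e.\ those whose last index can be lowered. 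The coordinates not of this type (pure derivatives in the $\widetilde x$ directions and mixed ones that never reach a pure $x_n$-power) are determined by the requirement that $\Lift(g,h)$ be integral together with the fact that $h$ is holonomic along $H$: concretely, they are obtained by differentiating the already-constructed lower-order coordinates in the $\widetilde x$-directions, which is consistent because equality of cross-derivatives makes the relevant $2$-forms (the analogues of $\Omega_\can^{r-1}$ in the proof of Proposition \ref{prop:integralLift}) pull back to zero. One must check that this prescription is self-consistent, i.e.\ that the order in which one solves does not matter and that the resulting map is genuinely integral; this is where Lemma \ref{lem:Liouville} and the bookkeeping of multi-indices do the work. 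Uniqueness is immediate: any integral lift with the prescribed principal projection and the prescribed restriction to $H$ must satisfy exactly these integral equations. Smooth dependence on $(g,h)$ follows because integration and differentiation are continuous (indeed smooth) operations on the relevant function spaces.

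It remains to establish the four correspondences. Since $\Lift(g,h)$ differs from $g$ only in the ``dependent'' coordinates obtained by $x_n$-integration and $\widetilde x$-differentiation, the differential $d\,\Lift(g,h)$ at a point $p$ is an isomorphism onto its image if and only if $dg$ is: the extra coordinate-components are expressed in terms of $x_n(t)$, $z^{(0,\dots,0,r)}(t)$ and their $t$-derivatives in a triangular way, so the rank of $d\,\Lift(g,h)$ equals the rank of $dg$; this gives the correspondence of singularities of mapping and the immersion statement, exactly as in the curve case via Corollary \ref{lem:lagrangianProjectionImmersion}. For tangencies with the vertical, note that $V_\can = \ker d\pi_{r,r-1}$ and that $\pi_\meta^n$ is a projection to $B \oplus \Sym^r(\R,F)$ whose vertical fibre $\Sym^r(\R,F)$ is contained in $V_\can$; the construction is graphical over $B$ in all directions except $x_n$, so a tangent vector of $\Lift(g,h)$ lies in $V_\can$ precisely when its $\pi_\meta^n$-image lies in $\Sym^r(\R,F)$, which is the tangency condition for $g$. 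Finally, if $g$ is embedded then $\pi_B \circ g$ and the $z^{(0,\dots,0,r)}$-coordinate already separate points, so a fortiori $\Lift(g,h)$, which refines $g$ by additional coordinates, is injective — and being also immersed, it is an embedding.

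The main obstacle I anticipate is not any single hard estimate but the combinatorial verification that ``integrate in $x_n$, differentiate in $\widetilde x$'' produces a coherent and genuinely integral lift: one has to identify precisely which jet coordinates are determined by integration, which by differentiation, and check — using the vanishing of the curvature $2$-forms $\Omega_\can^{(l)}$ exactly as in the proof of Proposition \ref{prop:integralLift} — that all the Cartan $1$-forms pull back to zero simultaneously, with no circular dependencies. Once that bookkeeping is in place, the singularity correspondences and the smooth dependence are essentially formal consequences of the explicit formulas, as they were for curves.
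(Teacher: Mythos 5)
Your proposal matches the paper's proof in strategy: fibrewise integration in $t_n$ seeded by $h$ along $H$, filling in the remaining jet coordinates so as to enforce integrality, a consistency check, and then reading the listed properties off the explicit formulas. You have also correctly identified that the consistency check is where the real work lies.

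There is, however, a genuine gap in the ``filling in'' step. The phrase ``differentiating the already-constructed lower-order coordinates in the $\widetilde x$-directions'' cannot be read as the source partial $\partial/\partial t_a$. Because $x_n = x_n(\widetilde t,t_n)$ depends nontrivially on $\widetilde t$ (this is precisely what makes $g$ non-graphical), pulling back the Cartan $1$-forms forces the corrected prescription
\[ z^{(i_1,\dots,i_a+1,\dots,i_n)} \,=\, \partial_{t_a} z^{(i_1,\dots,i_a,\dots,i_n)} - z^{(i_1,\dots,i_a,\dots,i_n+1)}\,\frac{\partial x_n}{\partial t_a}, \]
and without the second term the map you write down is simply not integral. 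That correction is also what makes the singularity correspondences hold: at a point where $\partial_{t_n}x_n = \partial_{t_n}z^{(0,\dots,0,r)} = 0$, the extra term cancels exactly against the cross-derivative coming from the integral formula for $\partial_{t_n} z^{(J)}$, and this is what forces $\partial_{t_n}\Lift(g,h) = 0$ there; the ``triangular'' rank argument you sketch does not go through without it. Relatedly, the consistency check is not an instance of the $\Omega_\can^{(l)}$-vanishing argument from Proposition~\ref{prop:integralLift}: there an entire layer $z^l$ is produced at once by antidifferentiating the Liouville form, with isotropy of the next projection as the closure condition, whereas here only the pure $x_n$-derivatives come from integration and the rest come from the recursion above. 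What must be verified is that this recursion is order-independent in $a$ and $b$, and the paper does this by a direct cross-derivative computation using the corrected formula, not by a layer-by-layer exactness argument.
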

\begin{proof}
We will prove the stated properties at the end. First we provide a recipe for the claimed integral lift $\Lift(g,h)$. Each of its entries $z^{(i_1,\dots,i_{n-1},i_n)}$ will be a function of $t$ defined uniquely from the given $x_n$, $z^{(0,\dots,0,r)}$, and $h$.

According to the Cartan $1$-form
\[ dz^{(0,\dots,0,l)} - z^{(0,\dots,0,l+1)}dx_n - \sum_{a=1}^{n-1} z^{(0,\dots,1,\dots,0,l)}dx_a, \]
the derivative of $z^{(0,\dots,0,l)}(t)$ in the direction of $t_n$ must agree with $z^{(0,\dots,0,l+1)}(t) \frac{\partial x_n}{\partial t_n}(t)$. It follows that we have to define $z^{(0,\dots,0,l)}(t)$ using the integral expression:
\[ z^{(0,\dots,0,l)}(t) \,:=\, z^{(0,\dots,0,l)}(\widetilde t, 0) +  \int_0^{t_n} z^{(0,\dots,0,l+1)}(\widetilde t,s) \dfrac{\partial x_n}{\partial t_n}(\widetilde t,s) ds, \]
where the initial value $z^{(0,\dots,0,l)}(\widetilde t,0)$ is given by the corresponding entry in $h$. We apply this process inductively for decreasing $l$. Integrating $r$ times defines $y(t) = z^{(0,\dots,0,0)}(t)$.

The remaining entries are defined, morally speaking, using differentiation with respect to the $\widetilde t$ variables. However, this has to be done taking into account the Cartan forms once again:
\[ dz^{(i_1,\dots,i_n)} - \sum_{a=1}^n z^{(i_1,\dots,i_a+1,\dots,i_n)}dx_a. \]
Their pullbacks by the lift read:
\begin{align*}
       & dz^{(i_1,\dots,i_n)} -\sum_{a=1}^{n-1} z^{(i_1,\dots,i_a+1,\dots,i_n)}dt_a - \sum_{b=1}^n z^{(i_1,\dots,i_{n-1},i_n+1)} \dfrac{\partial x_n}{\partial t_b}dt_b \\
=\quad &  dz^{(i_1,\dots,i_n)} - z^{(i_1,\dots,i_{n-1},i_n+1)} \dfrac{\partial x_n}{\partial t_n}dt_n - \sum_{a=1}^{n-1} \left( z^{(i_1,\dots,i_a+1,\dots,i_n)} + z^{(i_1,\dots,i_n+1)} \dfrac{\partial x_n}{\partial t_a} \right) dt_a.
\end{align*}
These expressions force us to define:
\[ z^{(i_1,\dots,i_a+1,\dots,i_n)}(t) \,:=\, (\partial_a z^{(i_1,\dots,i_a,\dots,i_n)})(t) - z^{(i_1,\dots,i_a,\dots,i_n+1)}(t)\dfrac{\partial x_n}{\partial t_a}(t).  \]
This definition is inductive in the size $|(i_1,\dots,i_a,\dots,i_{n-1},0)|$. Note that, a priori, it is not clear whether ``cross-derivatives with respect to the Cartan forms'' commute. We have to verify, for any two given subindices $a$ and $b$, that the inductive definition of $z^{(i_1,\dots,i_a+1,\dots,i_b+1,\dots,i_n)}$ does not depend on the order of the subindices:
\begin{align*}
z^{(i_1,\dots,i_a+1,\dots,i_b+1,\dots,i_n)} \quad=\quad &  \partial_b z^{(i_1,\dots,i_a+1,\dots,i_b,\dots,i_n)} - z^{(i_1,\dots,i_a+1,\dots,i_b,\dots,i_n+1)}\dfrac{\partial x_n}{\partial t_b} \\
\quad=\quad & \partial_b\left[\partial_a z^{(i_1,\dots,i_a,\dots,i_b,\dots,i_n)} - z^{(i_1,\dots,i_a,\dots,i_b,\dots,i_n+1)}\dfrac{\partial x_n}{\partial t_a} \right] - \\
						&	\left[ \partial_a z^{(i_1,\dots,i_a,\dots,i_b,\dots,i_n+1)} - z^{(i_1,\dots,i_a,\dots,i_b,\dots,i_n+2)}\dfrac{\partial x_n}{\partial t_a} \right]\dfrac{\partial x_n}{\partial t_b} \\
\quad=\quad & \partial_b\partial_a z^{(i_1,\dots,i_a,\dots,i_b,\dots,i_n)} - \partial_b\left(z^{(i_1,\dots,i_a,\dots,i_b,\dots,i_n+1)}\dfrac{\partial x_n}{\partial t_a}\right) - \\
						& \partial_a z^{(i_1,\dots,i_a,\dots,i_b,\dots,i_n+1)} \dfrac{\partial x_n}{\partial t_b} + z^{(i_1,\dots,i_a,\dots,i_b,\dots,i_n+2)}\dfrac{\partial x_n}{\partial t_a}\dfrac{\partial x_n}{\partial t_b} \\
\quad=\quad &  \partial_a z^{(i_1,\dots,i_a,\dots,i_b+1,\dots,i_n)} - z^{(i_1,\dots,i_a,\dots,i_b+1,\dots,i_n+1)}\dfrac{\partial x_n}{\partial t_a}.
\end{align*}
The general claim follows inductively.

By construction, the Cartan forms vanish on $\Lift(g,h)$, so the lift is integral. The argument also shows that it depends smoothly on $g$ and $h$ and is unique once these are fixed. Now we check the other properties. The first one is automatic by construction. For the second one we recall that the initial values $z^{(0,\dots,0,l)}|_H$ were given by $h$. Due to integrality, all other entries $z^{(i_1,\dots,i_n)}|_H$ of the lift are obtained from the former by ``differentiation with respect to the Cartan forms''. This applies as well to $h$, since we assumed it was integral. It follows that the two must agree. The last four properties are immediate from the fact that $(\xi_\can,\Omega(\xi_\can))$ projects isomorphically to metasymplectic space.
\end{proof}
Recall that, according to Lemma \ref{lem:zeroSection}, any holonomic section can be identified with the zero section up to a point symmetry. In applications, this will allow us to assume that $h$ is identically zero.

\subsection{Principal projections} \label{ssec:principalProjectionGeneral}

In the previous subsection we used the splitting $B = \R^{n-1} \times \R$ to implement the idea of modifying an integral map using the pure $r$-order derivative associated to the last coordinate. One can reason similarly for more general splittings and, in fact, this will be necessary later on for some of our constructions.

Let $H$ be an $(n-1)$-dimensional manifold. Let $F$ be a $\ell$-dimensional vector space. We consider the split $n$-manifold $H \times \R$. The associated space of $r$-jets splits as
\[ J^r(H \times \R,F) \,\cong\, J^\bot(H \times \R,F) \times \Sym^r(\R,F), \]
where the last term is given by the pure $r$-order derivative along the $\R$ component. This allows us to introduce the projection
\[ \pi: J^r(H \times \R,F) \to H \times \R \times \Sym^r(\R,F). \]
Then, Proposition \ref{prop:principalProjection} generalises to the following statement:
\begin{proposition} \label{prop:principalProjectionGeneral}
Fix a map 
\[ g: H \times \R \to H \times \R \times \Sym^r(\R,F) \]
fibered over $H$ and a map $h: H \times \{0\} \to J^r(H \times \R,F)$ satisfying $\pi_\meta^n \circ h = g|_{H \times \{0\}}$.

Then, there is an integral mapping
\[ \Lift(g,h): H \times \R \to J^r(H \times \R,F) \]
that satisfies:
\begin{itemize}
\item $\pi_\meta^n \circ \Lift(g,h) = g$,
\item $\Lift(g,h)|_{H \times \{0\}} = h$
\item The singularities of mapping of $\Lift(g,h)$ are in correspondence with those of $g$.
\item The singularities of tangency of $\Lift(g,h)$ with respect to the vertical are in correspondence with those of $g$.
\item $\Lift(g,h)$ is immersed if and only if $g$ is immersed.
\item $\Lift(g,h)$ is embedded if $g$ is embedded.
\end{itemize}
Furthermore, the construction of $\Lift(g,h)$ depends smoothly on $g$ and $h$.
\end{proposition}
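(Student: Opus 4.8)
The plan is to reduce Proposition \ref{prop:principalProjectionGeneral} to Proposition \ref{prop:principalProjection} by a patching argument over charts of $H$, since Proposition \ref{prop:principalProjection} is precisely the statement in the case $H = \R^{n-1}$. First I would fix a cover of $H$ by charts $U_\alpha \cong \R^{n-1}$ and note that over each $U_\alpha \times \R$ we are exactly in the situation of Proposition \ref{prop:principalProjection}: the map $g$ restricts to a fibered-over-$U_\alpha$ map into $U_\alpha \times \R \times \Sym^r(\R,F)$, and $h$ restricts to an integral map over $U_\alpha \times \{0\}$ compatible with $g|_{U_\alpha \times \{0\}}$. Applying Proposition \ref{prop:principalProjection} in each chart produces an integral lift $\Lift(g,h)_\alpha : U_\alpha \times \R \to J^r(U_\alpha \times \R, F)$.

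The key point is that these local lifts automatically agree on overlaps, so they glue to a global integral map. This follows from the uniqueness clause of Proposition \ref{prop:principalProjection}: on $(U_\alpha \cap U_\beta) \times \R$ both $\Lift(g,h)_\alpha$ and $\Lift(g,h)_\beta$ are integral lifts of the same $g$ with the same initial datum $h$ along the hypersurface $\{t_n = 0\}$, hence they coincide. Concretely, recall from the proof of Proposition \ref{prop:principalProjection} that the entries $z^{(0,\dots,0,l)}$ are determined by iterated integration in the $\R$-direction starting from $h$, and all remaining entries $z^{(i_1,\dots,i_n)}$ are then determined by ``differentiation with respect to the Cartan forms'' in the $H$-directions; since both $g$ and $h$ are intrinsically defined on $H \times \R$ (not just in a chart), these operations produce the same answer regardless of which chart one computes in. One subtlety to address: the construction in Proposition \ref{prop:principalProjection} a priori uses the linear coordinates on $U_\alpha$, but the recipe only ever differentiates and integrates functions that are already globally defined, and the Cartan forms transform correctly under change of the $\widetilde t$ coordinates (they are intrinsic to $\xi_\can$), so the lift is coordinate-independent. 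Hence $\Lift(g,h)$ is well-defined globally.

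Having glued the lift, the listed properties transfer immediately. Integrality is a local condition and holds in each chart. The correspondence of singularities of mapping, singularities of tangency with the vertical, the immersion characterisation, and the embedding implication are all inherited chartwise from Proposition \ref{prop:principalProjection}, using that $(\xi_\can, \Omega(\xi_\can))$ projects isomorphically to metasymplectic space (Lemma \ref{lem:metasymplecticProjection}) and that $\pi_\meta^n$ factors this projection; one checks that ``embedded'' is genuinely global but follows from $g$ being a global embedding together with the fact that $\Lift(g,h)$ projects to $g$ under $\pi_\meta^n$, so distinct points with distinct $\pi_\meta^n$-images have distinct lifts, while the fibered structure handles points in the same $\pi_\meta^n$-fibre exactly as in the chart case. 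Smooth dependence on $(g,h)$ is again local plus the agreement on overlaps. The conditions $\pi_\meta^n \circ \Lift(g,h) = g$ and $\Lift(g,h)|_{H \times \{0\}} = h$ are chartwise and hence global.

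The main obstacle is essentially bookkeeping rather than mathematical: one must be careful that the notion of ``$\bot$-jet'' and the projection $\pi$ in the general splitting $H \times \R$ are genuinely intrinsic (depending only on the $\R$-factor, not on coordinates on $H$), so that the chartwise applications of Proposition \ref{prop:principalProjection} are consistent. Once this is granted, the gluing via uniqueness is routine. I do not anticipate needing any new ideas beyond those already present in the proof of Proposition \ref{prop:principalProjection}.
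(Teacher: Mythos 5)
Your proposal is correct and follows essentially the same strategy as the paper's own proof: cover $H$ by charts, apply Proposition \ref{prop:principalProjection} in each chart, and use the uniqueness of the local lifts to glue them into a global integral lift. The paper's proof is terser but identical in substance; your additional remarks on coordinate-independence of the iterated integration/differentiation and on the global embeddedness are reasonable elaborations that the paper leaves implicit.
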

\begin{proof}
Covering $H$ by charts allows us to replace the global projection $\pi$ by local principal projections $\pi_i$ between Euclidean spaces. We apply Proposition \ref{prop:principalProjection} to each of these. Due to the uniqueness of the individual lifts given by each $\pi_i$, they all patch together to the claimed global lift.
\end{proof}

%
%
%
%
%
%
\section{Singularities of integral submanifolds} \label{sec:singularities}

In this section we introduce the singularity models for integral submanifolds needed for our $h$-principles. The naming conventions that we follow are introduced in Subsection \ref{ssec:conventions}. In each subsequent subsection we address a concrete singularity/model, which we construct using the lifting ideas from Section \ref{sec:metasymplectic}.

\subsection{Conventions} \label{ssec:conventions}

In this section we introduce integral mapping germs defined along submanifolds. Often, the role of the base space will be played by $H \times \R$, with $H$ some manifold, and the maps in question will be fibered over $H$.

More concretely, our models will be fibered-over-$H$ integral mappings of the form
\[ H \times \R \longrightarrow J^r(H \times \R,\R). \]
To make the distinction between source and target clear (while also emphasising the fibered nature of the constructions), we will use $x = (\widetilde x, x_n)$ to denote a point in the base $H \times \R$ of the target, and $(\widetilde x,t)$ to denote a point in the source. When $H$ is a piece of Euclidean space, a point $\widetilde x$ will often be expanded in coordinates as $(x_1,\dots,x_{n-1})$. We write $y$ for the fibre coordinate in $J^0(H \times \R,\R)$. As in standard coordinates, we write $z$ to denote the rest of the fibre coordinates in $J^r(H \times \R,\R)$.

Due to the fibered nature of the constructions, we will work with the principal projection associated to the $x_n$-coordinate:
\[ (x,z^{(0,\dots,r)}) \in H \times \R \times \Sym^r(\R,\R). \]
All of our models of integral mapping will be lifts of maps into this principal projection (as in Proposition \ref{prop:principalProjectionGeneral}). We write $\SV_\can$ for the subbundle of vectors tangent to the $\Sym^r(\R,\R)$ factor.   

\begin{notation}
Our naming conventions for singularities will reflect the behaviour of the integral maps themselves, not their front projections. The chosen names mostly refer to the singularities of tangency with respect to the vertical distribution. These are equivalent, according to Proposition \ref{prop:principalProjectionGeneral}, to the singularities of tangency of the principal projection with $\SV_\can$. When singularities of mapping are present, we point it out explicitly. \hfill$\triangle$
\end{notation}

\subsubsection{Stabilising singularities} \label{sssec:stabilisingSing}

Once an integral mapping/singularity
\[ g: H \times \R \longrightarrow J^r(H \times \R,\R) \]
is given, we produce stabilised versions of $g$ using the following standard recipe. Note that we stabilise both in domain and target.

Given a manifold $V$ and a vector bundle $Z \to V \times H \times \R$, we consider the vector bundle $\underline{\R} \oplus Z \to V \times H \times \R$, i.e. the stabilisation of $Z$ using the trivial $\R$ bundle. We then introduce the integral section
\begin{align*}
G: V \times H \times \R \quad\longrightarrow\quad & J^r(\underline{\R} \oplus Z) \\
(v,\wtd{x},t) \quad\mapsto\quad & (v,\wtd{x},g(\wtd{x},t),0)
\end{align*}
which has the following properties:
\begin{itemize}
\item It is fibered over $V \times H$.
\item Its components mapping into $J^r(\underline{\R})$ are given by $g$ and therefore do not depend on the coordinate $v \in V$.
\item It maps into $J^r(Z)$ as the holonomic lift of the zero section in $Z$.
\end{itemize}

\begin{notation}
$G$ is said to be the \textbf{$(V,Z)$-stabilisation} of the map $g$. When the precise nature of $Z$ is not important for the argument, we simply talk about $V$-stabilisation. 

We will always treat a singularity and its stabilisations in a unified manner. That is: If a singularity is defined as the germ of $g$ along the subset $A \subset H \times \R$, then exactly the same name will be used for the germ of $G$ along $B \times H$, where $B \subset V$ is any subset. \hfill$\triangle$
\end{notation}

\subsection{The fold} \label{ssec:fold}

Take $H$ to be a single point. Then, the map
\begin{align*}
f: \R \quad\longrightarrow\quad & \R \oplus \Sym^r(\R,\R) \\ 
(t)   \quad\mapsto\quad & \left(x(t) = \dfrac{t^2}{2}, z^{(r)}(t) = t\right)
\end{align*}
has a fold singularity of tangency with respect to $\SV_\can$ at the origin: 
\[ \Sigma(f,\SV_\can) = \Sigma^{1,0}(f,\SV_\can) = \{0\}. \]

We can now apply the lifting Proposition \ref{prop:principalProjection} in order to integrate $f$ to the integral mapping:
\begin{align*}
\fold(t) := \Lift(f,0)(t)  \quad=\quad \left(x(t) = \dfrac{t^2}{2}, \right.
					y(t) 			 \quad=\quad & \dfrac{t^{2r+1}}{(2r+1)(2r-1)\cdots 1}, \dots \\
  				z^{(i)}(t) \quad=\quad & \dfrac{t^{2r-2i+1}}{(2r-2i+1)(2r-2i-1)\cdots 1}, \dots \\
					z^{(r)}(t) \quad=\quad & \left. t \right) .
\end{align*}
\begin{definition} \label{def:fold}
The \textbf{fold} is the germ at the origin of the integral map $\fold$, see Figure \ref{fig:Singularities_Fold}.
\end{definition}
By construction, $\fold$ is a integral embedding with a singularity of tangency with respect to $V_\can \subset \xi_\can$ at the origin $\Sigma(f,\SV_\can)$. This singularity becomes a singularity of mapping for its front projection; namely, an $A_{2r}$-cusp. Concretely, if $r=1$, the front is the usual semi-cubic cusp. If $r=2$, the front is given by the semi-quintic cusp, etc.

A point $p \in M$ is said to belong to the \textbf{fold locus} of the integral mapping
\[ g: M \to (J^r(Y),\xi_\can) \]
if $g$ is point equivalent along $p$ to (a stabilisation of) $\fold$.

\begin{figure}[ht]
\centering
\includegraphics[width = \linewidth ]{./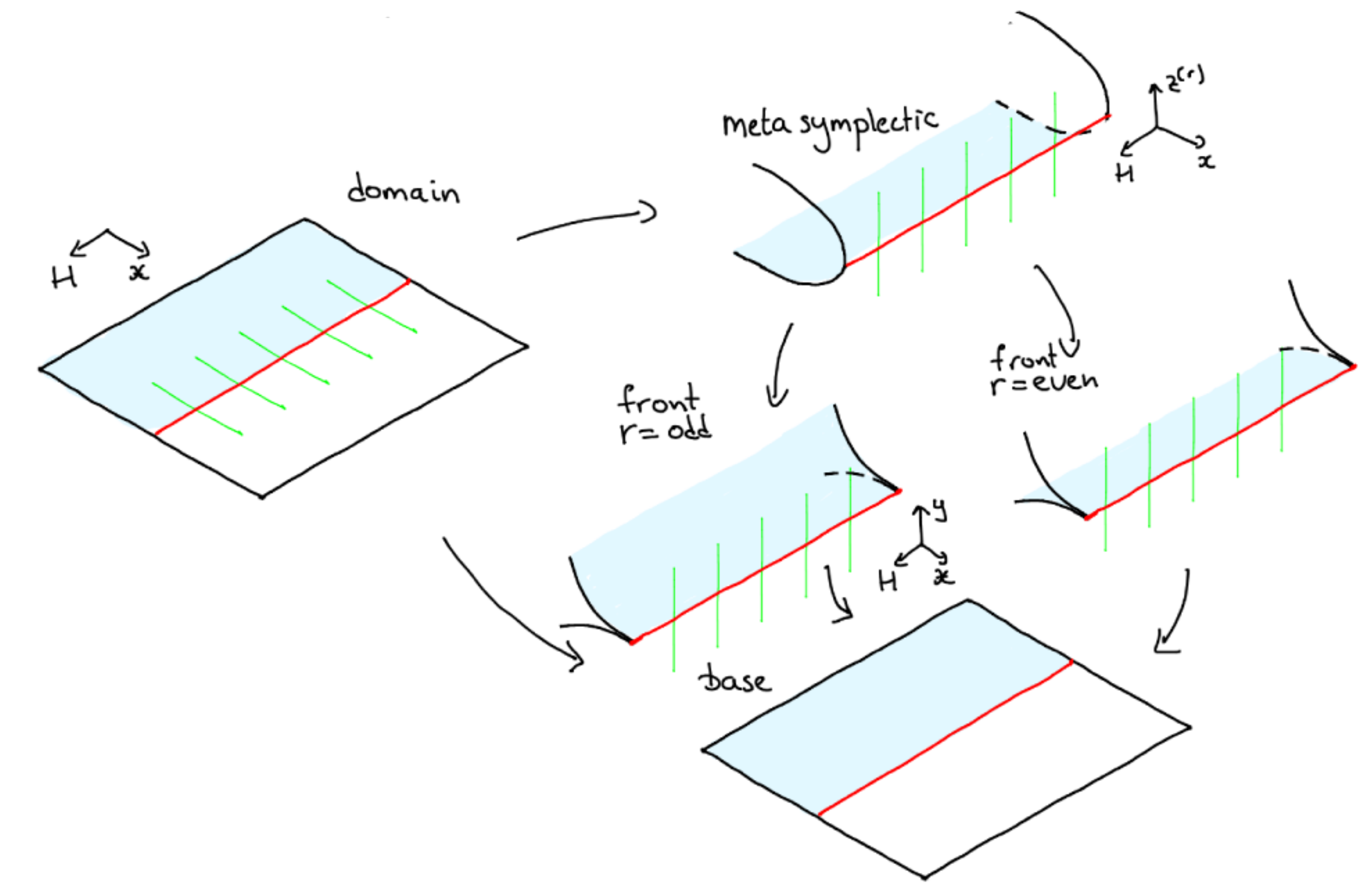}
\caption{Illustration of a fold. The orientation of the front projection changes depending on the parity of $r$. The ``sharpness'' of the cusp seen in the front projection also depends on $r$. The green lines depict the kernel of $d(\pi_b \circ \fold)$ (in the domain), its image by $d\fold$ (in the metasymplectic projection), and the associated vertical direction in $\Vert(Y)$ (in the front).}\label{fig:Singularities_Fold}
\end{figure}

\subsubsection{Line fields along a fold} \label{sssec:lineFields}

In our model fold, we encounter three line bundles: The vertical bundle $\Vert$ (trivialised by $\partial_y$), the line bundle of pure derivatives $V_\can \subset \xi_\can$ (which is isomorphic, using the metasymplectic projection, to the vertical subbundle $\SV_\can$ and thus trivialised by $\partial_{z^{(0,\dots,r)}}$), and lastly $\ker(f)$ (trivialised by $\partial_t$). We claim that there are canonical isomorphisms between these bundles that do not rely on standard coordinates.

Suppose $f: M \to (J^r(Y),\xi_\can)$ is an integral map with fold locus $\Sigma$ passing through $p$. This implies that $d_pf(TM)$ intersects $V_\can$ in a line. Globally, this yields a line subbundle 
\begin{equation}\label{eq:FoldDirectionBundle}
\SL_\Sigma \subset (f|_\Sigma)^*V_\can.
\end{equation}
Furthermore, the base map $\pi_b \circ f$ of $f$ is singular along $\Sigma$ and defines a kernel line field 
\[ \ker_\Sigma \subset TM|_\Sigma. \]
The differential $df|_\Sigma$ defines an isomorphism $\ker_\Sigma \to \SL_\Sigma$.

According to our model, the line bundle $\SL_\Sigma$ defines a principal direction at each point. As such, there are:
\begin{itemize}
\item A codirection field
\[ \lambda_\Sigma: \Sigma \to (\pi_b \circ f)^*T^*X, \]
defined up to $\R^*$-scaling, and spanning the annihilator of $d(\pi_b \circ f)(T\Sigma)$,
\item a line field 
\[ \SJ_\Sigma \subset (\pi^r_0 \circ f)|_\Sigma^*\Vert(Y), \]
\item and a canonical isomorphism
\[ \SL_\Sigma \cong \langle \lambda_\Sigma \rangle^{\otimes r} \otimes \SJ_\Sigma. \]
\end{itemize}
It follows that, for $r$ even, since $\langle \lambda_\Sigma \rangle^{\otimes r}$ trivialises, there is a canonical isomorphism $\SL_\Sigma \cong \SJ_\Sigma$ up to homotopy. For $r$ odd, we may choose $\lambda$ to be outward pointing with respect to $\pi_b \circ f(M)$. This yields an isomorphism between $\SJ_\Sigma$ and $\SL_\Sigma$ that is unique up to $\R^+$-scaling in each fibre and thus unique up to homotopy as well. See Figure \ref{fig:Singularities_Fold}.

\subsection{The double fold} \label{ssec:doubleFold}

Consider the principal mapping:
\begin{align*}
f: \R \quad\longrightarrow\quad & \R \times \Sym^r(\R,\R) \\
t          \quad\mapsto\quad & (x(t) = t^3/3-t; z^{(r)}(t) = t).
\end{align*}
Its tangency singularity locus consists of two fold points:
\[ \{t = \pm 1\} = \Sigma(f,\SV_\can) = \Sigma^{1,0}(f,\SV_\can). \]
We apply Proposition \ref{prop:principalProjectionGeneral} to produce an integral lift:
\begin{definition} \label{def:doubleFold}
The \textbf{double fold} is the germ of $\doubleFold := \Lift(f,0)$ along the interval $[-1,1]$.
\end{definition}
A double fold is thus not just two consecutive folds, but rather two folds that sit in a formally cancelling position with respect to each other, as witnessed by the model around the whole interval $[-1,1]$. We explain this in detail in Subsection \ref{sssec:regularisation} below.

The front of a double fold is the map:
\begin{equation*}
(t) \to \left(t^3/3-t; \int_0^t \int_0^{s_1} \dots \int_0^{s_{r-1}} s_r \prod_j(s_j^2-1)  ds_r\dots ds_1 \right)
\end{equation*}
whose mapping singularity locus consists of two $A_{2r}$-cusps. See Figure \ref{fig:Singularities_DoubleFold}.

Once we stabilise, the situation is as follows: Let $H$ be a closed manifold. Let $A \subset M$ be diffeomorphic to $H \times [-1,1]$. An integral mapping $g: M \to (J^r(Y),\xi_\can)$ has a double fold along $A$ if it is point equivalent along $A$ to (a $H$-stabilisation of) $\doubleFold$ along $H \times [-1,1]$. The annulus $A$ is called the \textbf{membrane}.

\begin{figure}[ht]
\centering
\includegraphics[width = \linewidth ]{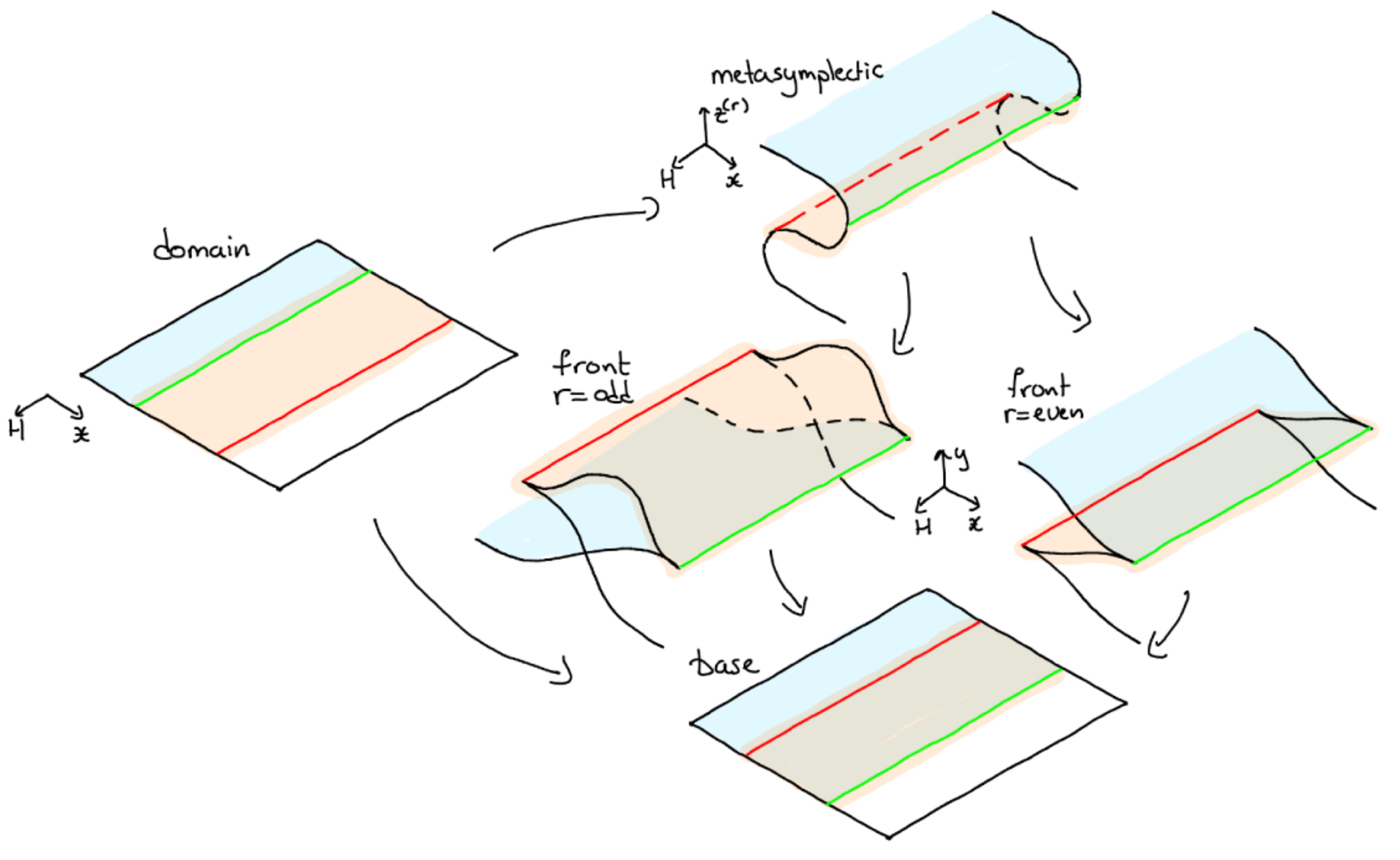}
\caption{A ($[0,1]$-stabilization of a) double fold in various projections. Depending on the parity of $r$, the singularities in the front projection are in an open/closed configuration; see also Figure \ref{fig:Singularities_Switching}.  The membrane is shown in orange and the fold locus in green and red. Note that the double fold is the germ along the membrane; it follows that the self-intersections seen in the front for $r$ odd are not part of the model.
	}\label{fig:Singularities_DoubleFold}
\end{figure}


\begin{figure}[ht]
\centering
\includegraphics[width = \linewidth ]{./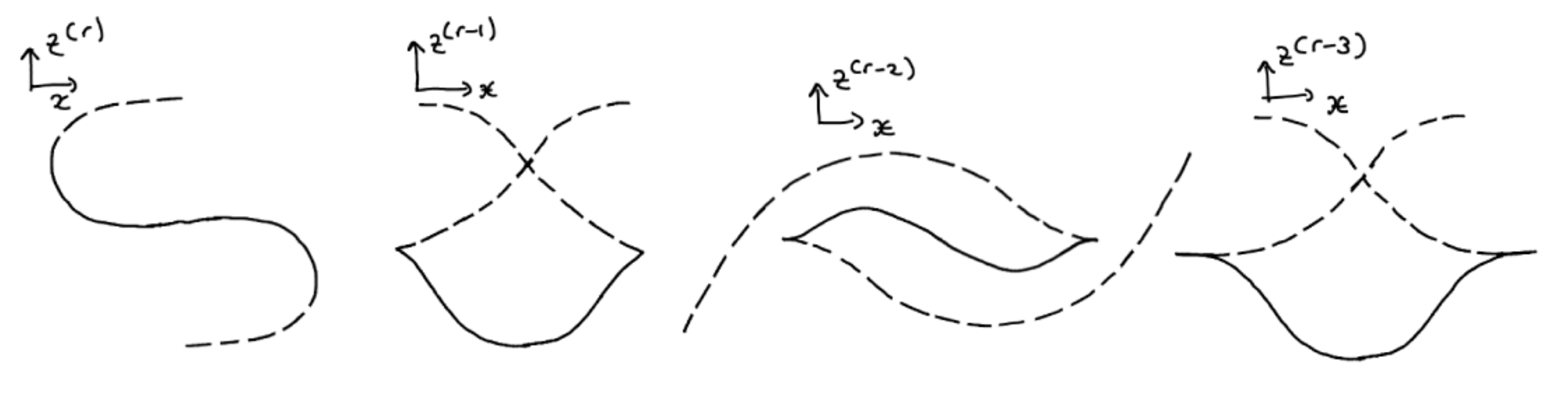}
\caption{On the left, the metasymplectic projection of a double fold. Subsequent pictures depict lower derivatives and the cusps become sharper. The membranes are drawn in solid black. Integrating repeatedly, the two singularities swap from an open to a closed configuration and viceversa; we call this the \emph{switching phenomenon}.}\label{fig:Singularities_Switching}
\end{figure}

\subsubsection{Regularisation of double folds} \label{sssec:regularisation}

Suppose $g: M \to (J^r(Y),\xi_\can)$ is an integral mapping with a double fold along $A \cong H \times [-1,1]$. Denote $H_\pm = H \times \{\pm 1\}$. According to Subsection \ref{sssec:lineFields}, there are line fields $\SL_\pm \subset (g|_{\partial A})^*V_\can$ along $H_\pm$ and an isomorphism:
\[  \Psi_\pm: \ker(\pi_b \circ g|_{\partial A}) \longrightarrow \SL_\pm, \]
In the $1$-dimensional model this isomorphism reads:
\[  \Psi_\pm: \ker(\pi_b \circ \doubleFold|_{\{\pm 1\}}) = \langle \partial_t \rangle \longrightarrow \SL_\pm = \langle \partial_{z^{(r)}} \rangle \]
and, up to homotopy, it takes $\partial_t$ to $\partial_{z^{(r)}}$. It therefore extends to an isomorphism
\[  \Psi_\pm: T[-1,1] = \langle \partial_t \rangle \longrightarrow \doubleFold^*V_\can = \langle \partial_{z^{(r)}} \rangle \]
defined over the whole membrane. Stabilisation defines an analogous isomorphism for any double fold.

Using this isomorphism, and still in the $1$-dimensional model, we can define a homotopy of bundle monomorphisms:
\[ (\rho_s)_{s \in [0,1]}: T[-1,1] \to \doubleFold^*\xi_\can \]
such that:
\begin{itemize}
\item $\rho_0 = d\doubleFold$,
\item $\rho_1(\partial_t)$ is graphical over $\partial_x$,
\item $\rho_s = d\doubleFold$ outside of $\Op([-1,1])$.
\end{itemize}
This homotopy is called the \textbf{regularisation} of $d\doubleFold$; it is depicted in Figure \ref{fig:Singularities_Regularization}. It tells us that the singularities of tangency of the double fold are homotopically inessential.

A stabilised double fold can be regularised using the stabilisation of the regularisation of the $1$-dimensional case. Concretely, let $g: M \to (J^r(Y),\xi_\can)$ be an integral map with a double fold along $A$. The regularisation $\rho_s: \Op(A) \to \Hom(TM,g^*\xi_\can)$ homotopes $dg =\rho_0$, relatively to the boundary of $\Op(A)$, to a monomorphism $\rho_1$ that is transverse to $V_\can$ over $\Op(A)$. Furthermore, the homotopy $\rho_s$ is, by construction, principal. I.e. for a fixed $p \in M$, all the planes $(\rho_s(T_pM))_{s \in [0,1]}$ intersect in a graphical $(n-1)$-plane.

\begin{figure}[ht]
\centering
\includegraphics[width = \linewidth ]{./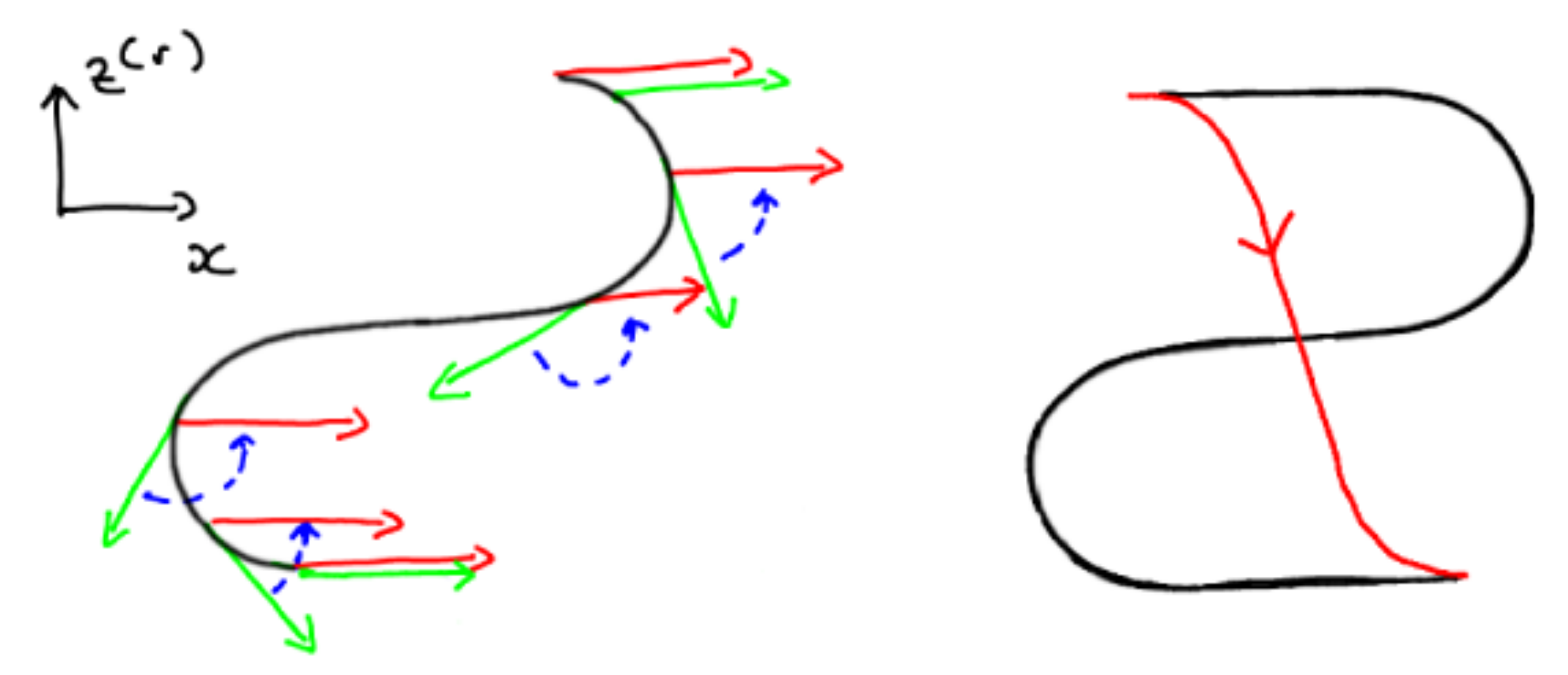}
\caption{On the left, the regularising homotopy of a double fold, as seen in the metasymplectic projection. In green, the image of the Gauss map. In red, the monomorphism corresponding to the regularisation at time $s=1$. The homotopy itself is depicted in blue. On the right, we see a double fold that is ``removable'': i.e. the model extends sufficiently beyond the membrane so that we are able to replace the double fold by a graphical map; see Subsection \ref{ssec:removable}. We think of the process depicted on the left as a formal version of the process depicted on the right.}\label{fig:Singularities_Regularization}
\end{figure}

\begin{remark} \label{rem:Maslov}
In the contact setting, the two folds that form a double fold have opposite Maslov coorientations. For higher jet spaces in which the fibres of $Y$ have dimension $1$, a similar statement can be given. This requires a description of the Grassmannian of \emph{integral elements} (i.e. those subspaces of $\xi_\can$ that may be tangent to an integral submanifold) and its Maslov cycle. For jet spaces in which the fibres of $Y$ are higher dimensional, a similar analysis has to be carried out, but the Maslov homotopical obstructions are of a different nature, due to high-dimensionality (i.e. they take place in higher homotopy groups). We postpone this discussion to the sequel \cite{PT2}. \hfill$\triangle$
\end{remark}


\begin{remark}
Fix an annulus $A \cong H \times [-1,1] \subset M$. Let $g: M \to (J^r(Y),\xi_\can)$ be an integral embedding with folds along $\partial A$. As explained above, $g|_A$ need not be a double fold. A concrete case is of interest is when additional singularities are present in the interior of $A$, as this precludes the possibility of extending the fold models along $\partial A$ to a double fold along $A$.

In Section \ref{sec:wrinkledEmbeddings} we will encounter the following situation: We are given an integral embedding $g$ with a double fold along $A$ which we then modify, within $A$, in order to introduce an additional double fold along $A' \subset A$. We will then speak of \emph{nested singularities}. Unfortunately, Definition \ref{def:doubleFold} is not well-suited for this purpose. Instead of generalising the notion of double fold in order to allow nesting (which is possible, but would require us to elaborate on the idea of regularisation and how certain homotopic data extends from $\partial A$ to $A$), we will rely instead on \emph{wrinkles} (Subsection \ref{ssec:wrinkles}), which are designed to allow for nesting. \hfill$\triangle$
\end{remark}

\subsection{The pleat} \label{ssec:pleat}

Let $H=\R$ and consider the following mapping into the principal projection:
\begin{align*}
f: H \times \R \quad\longrightarrow\quad & H \times \R \times \Sym^r(\R,\R) \\ 
(x_1,t)     \quad\mapsto\quad & \left(x_1, x_2(x_1,t) = t^3/3-x_1t, z^{(0,r)}(x_1,t) = t \right).
\end{align*}
It is the standard pleat (as a singularity of tangency with respect to the vertical), see Figure \ref{fig:Singularities_Pleat}. Its singularity locus reads:
\[ \Sigma^1(f,\SV_\can) = \{t^2 - x_1=0\}, \qquad \Sigma^{1,1}(f,\SV_\can) = \{t,x_1 = 0 \}. \]
\begin{definition} \label{def:pleat}
The \textbf{pleat} is the germ at the origin of the integral mapping $\pleat := \Lift(f,0)$.
\end{definition}
A point $p \in M$ belongs to the \textbf{pleat locus} of an integral mapping 
\[ g: M \to (J^r(Y),\xi_\can) \]
if $g$ is point equivalent along $p$ to (a stabilisation of) the pleat.

It is not particularly enlightening to provide an explicit formula for $\pleat$, but its front projection reads:
\begin{align*}
\pi_f \circ \pleat: H \times \R \quad\longrightarrow\quad & J^0(H \times \R,\R)  \\ 
(x_1,t)        \quad\mapsto\quad & \left(x_1, x_2 = t^3/3 - x_1t, y = \int_0^t \int_0^{s_1} \dots \int_0^{s_{r-1}} s_r \prod_j(s_j^2-x_1)  ds_r\dots ds_1 \right). \nonumber
\end{align*}

\begin{figure}[ht]
\centering
\includegraphics[width = \linewidth ]{./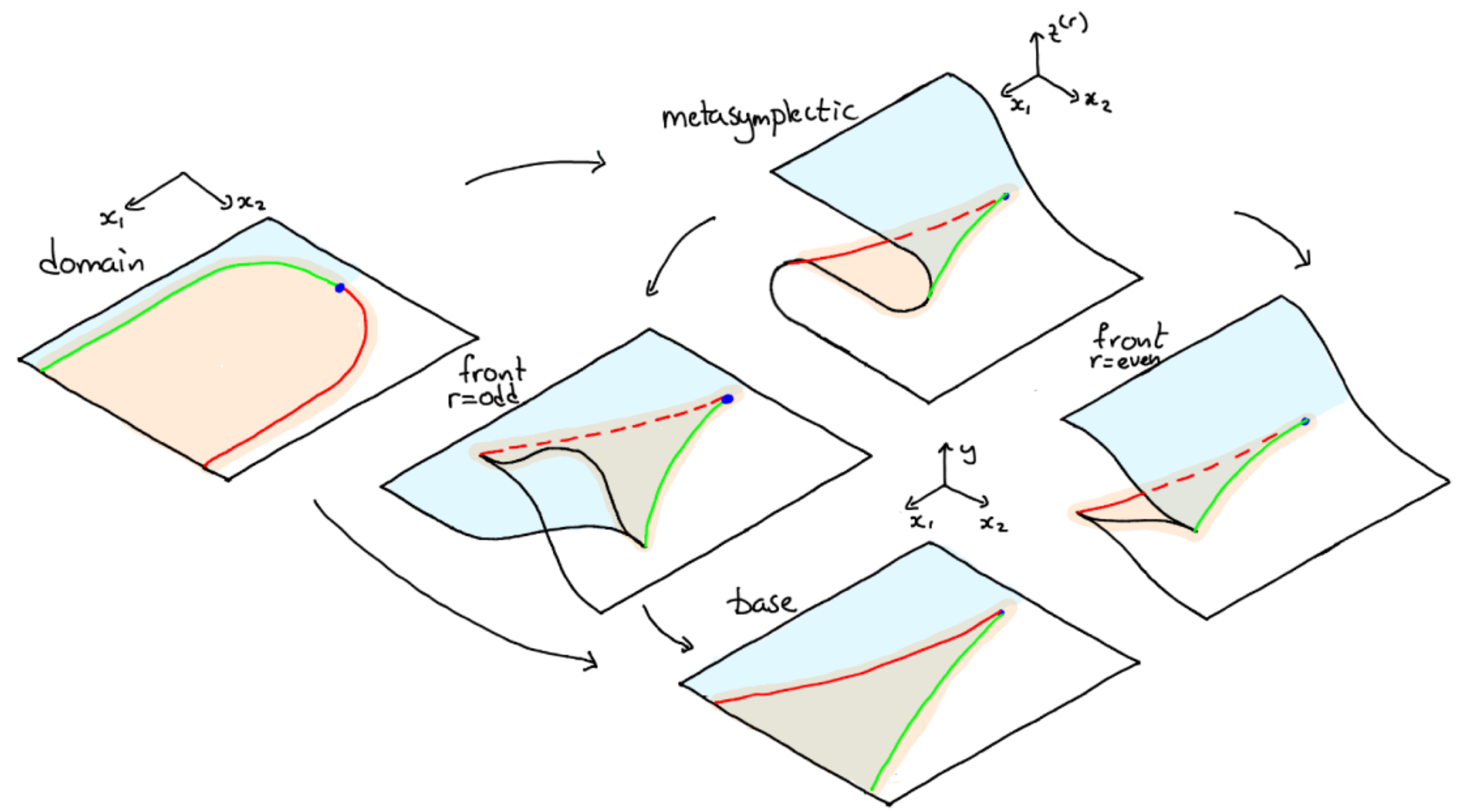}
\caption{Illustration of the pleat in various projections. If $r$ is even, the front is a topological embedding. if $r$ is odd, we see self-intersections of the front in any small neighbourhood of the pleat.}\label{fig:Singularities_Pleat}
\end{figure}

\subsection{The Reidemeister I move} \label{ssec:ReidemeisterI}

Consider the map $f$ introduced in Subsection \ref{ssec:pleat}, which allowed us to define the pleat. Regard its first factor $H = \R$ as a parameter space. Consider then the family of mappings:
\[ f|_{\{x_1\} \times \R}: \R \to J^r(\R,\R) \]
given by freezing the parameter $x_1 \in H$.

If $x_1 > 0$, the map has no singularities and is graphical over its domain $\R$. If $x_1 < 0$, the map is a double fold. At $x_1 = 0$, a birth/death phenomenon takes place:
\[ t \mapsto \left(\dfrac{t^3}{3}, t\right), \]
i.e. a cubic singularity of tangency with respect to the vertical. Its unfolding is the pleat itself. Following the standard terminology in the contact setting we define:
\begin{definition}
The family of integral embeddings 
\[ (\Reid_{x_1} \,:=\, \Lift(f|_{\{x_1\} \times \R},0))_{x_1 \in H} \]
is called the \textbf{first Reidemeister move}.

The lift $\Lift(f|_{\{0\} \times \R},0)$ is called the \textbf{cubic}.
\end{definition}
The front of the cubic reads:
\[ t \mapsto \left(\dfrac{t^3}{3}, \dfrac{t^{3r+1}}{(3r+1)(3r-2)\cdots 1}\right). \]

\subsection{The closed double fold} \label{ssec:closedDoubleFold}

The switching phenomenon seen in Figures \ref{fig:Singularities_DoubleFold} and \ref{fig:Singularities_Switching} says that the front projection of a double fold is in a closed configuration if $r$ is odd. For the purposes of flexibility, this is not very convenient, and we would prefer to have an open configuration. This is well-known in Contact Topology, where the open configuration corresponds to the stabilisation\footnote{Not to be confused with the notion of stabilising a singularity, as in Subsection \ref{sssec:stabilisingSing}.}. We therefore consider the following mapping: 
\begin{align*}
f: \R \quad\longrightarrow\quad & \R \times \Sym^r(\R,\R) \\
t          \quad\mapsto\quad & (x(t) = t^3/3-t; z^{(r)}(t) = t^2).
\end{align*}
It is an immersion with a self-intersection at $t=\pm \sqrt{3}$. Its singularity locus of tangency consists of the two fold points
\[ \{t = \pm 1\} = \Sigma(f,\SV_\can) = \Sigma^{1,0}(f,\SV_\can). \]
We apply Proposition \ref{prop:principalProjectionGeneral} to produce an integral lift:
\begin{definition} \label{def:closedDoubleFold}
The \textbf{closed double fold} is the germ of $\ClosedDoubleFold := \Lift(f,0)$ along the interval $[-1,1]$.
\end{definition}
Which is an embedded integral map. I.e. the double point in the principal projection disappears upon lifting. Recall that the vertical displacement (upon integrating once) of a lifted interval is the area it bounds. Here it is indeed the case that $f([-1,1])$ is a closed curve bounding non-zero area.

Let us emphasise that \emph{closed} here refers to the fact that the two folds are indeed in a closed configuration \emph{in the principal projection}. This is true for the front projection as well if and only if $r$ is even.

In general: Let $H$ be a closed manifold. Let $A \cong H \times [-1,1] \subset M$ be an annulus with base $H$. An integral mapping $g: M \to (J^r(Y),\xi_\can)$ has a closed double fold along $A$ if it is point equivalent along $A$ to (a stabilisation of) $\ClosedDoubleFold$ along $H \times [-1,1]$.

\begin{figure}[ht]
\centering
\includegraphics[width = \linewidth ]{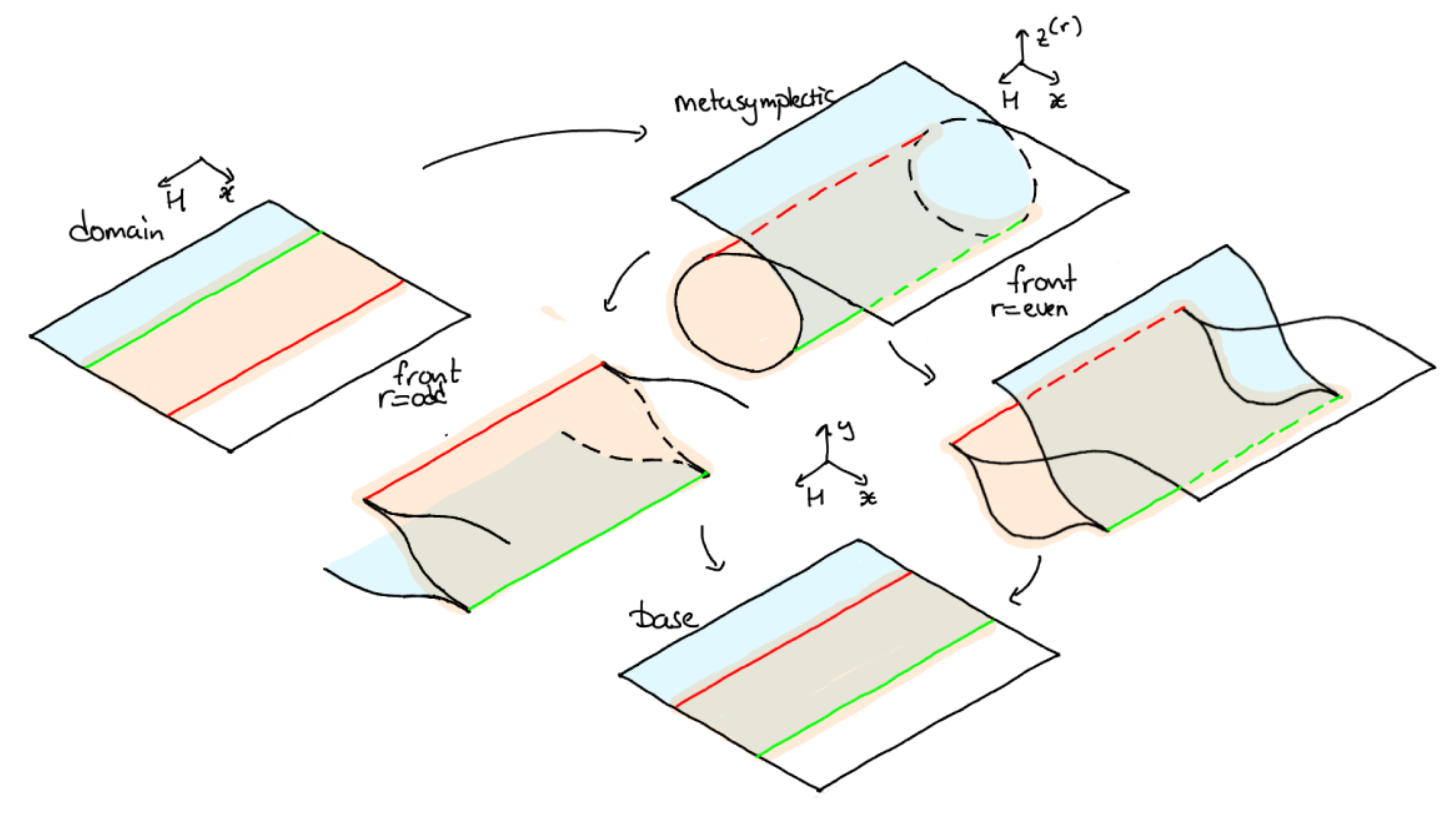}
\caption{The ($[0,1]$-stabilization of a) closed double fold. Depending on the parity of $r$, the front projection is in an open or a closed configuration.}\label{fig:Singularities_ClosedDoubleFold}
\end{figure}

\subsubsection{Regularisation} \label{sssec:regularisationClosed}

Suppose $g: M \to (J^r(Y),\xi_\can)$ is an integral mapping with a closed double fold along $A \cong H \times [-1,1]$. Denote $H_\pm = H \times \{\pm 1\}$. As for the double fold, there are line fields $\SL_\pm \subset (g|_{\partial A})^*V_\can$ along $H_\pm$ and an isomorphism:
\[  \Psi_\pm: \ker(\pi_b \circ g|_{\partial A}) \longrightarrow \SL_\pm. \]
In the $1$-dimensional model, the isomorphism reads
\[  \Psi_\pm: \ker(\pi_b \circ \ClosedDoubleFold|_{\{\pm 1\}}) = \langle \partial_t \rangle \longrightarrow \SL_\pm = \langle \partial_{z^{(r)}} \rangle \]
and, up to homotopy, it takes $\partial_t$ to $\pm \partial_{z^{(r)}}$ at $t=\pm 1$. It follows that the isomorphism does not extend to the membrane.

Nonetheless, we can define a bundle monomorphism
\[ \rho: T[-1,1] \to \ClosedDoubleFold^*\xi_\can \]
satisfying
\begin{itemize}
\item $\rho(\partial_t)$ is graphical over $\partial_x$,
\item $\rho = d\ClosedDoubleFold$ outside of $\Op([-1,1])$.
\end{itemize}
We call this the \textbf{regularisation} of $d\ClosedDoubleFold$; it is depicted in Figure \ref{fig:Singularities_ClosedRegularization}. It can be readily observed that $\rho$ is \emph{not} homotopic to $d\ClosedDoubleFold$.

\begin{figure}[ht]
\centering
\includegraphics[width = \linewidth ]{./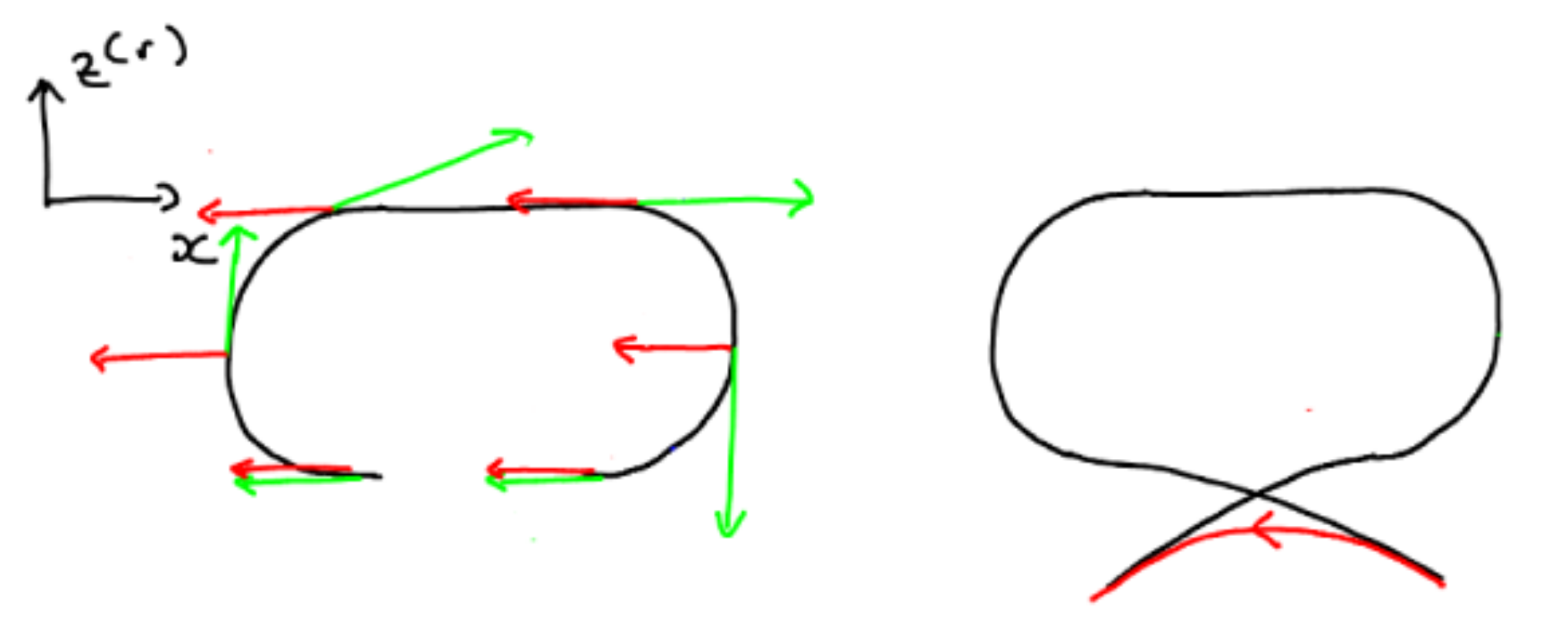}
\caption{On the left, the regularisation of the closed double fold. The Gauss map is shown in green and the regularisation in red. The two are not homotopic to one another, as the latter ``makes an additional turn''. On the right, a closed double fold that is ``removable'' is replaced by a graphical map (in red); see Subsection \ref{ssec:removable}. The Gauss map of this graphical map is precisely in the homotopy class of the regularisation.}\label{fig:Singularities_ClosedRegularization}
\end{figure}

\subsection{The stabilisation} \label{ssec:stabilisation}

We now introduce the analogues of the pleat and the Reidemeister I move for the closed double fold.

\subsubsection{The closed pleat}

Let $H=\R$ and consider the following mapping into the principal projection:
\begin{align*}
f: H \times \R \quad\longrightarrow\quad & H \times \R \times \Sym^r(\R,\R) \\ 
(x_1,t)     \quad\mapsto\quad & \left(x_1, x_2(x_1,t) = t^3/3-x_1t, z^{(0,r)}(x_1,t) = t^2 \right).
\end{align*}
This is the usual unfolding of the planar cusp. Unlike all our previous singularities, we have a singularity of mapping at $(0,0)$, the preimage of the cusp point. The locus of singularities of tangency reads:
\[ \Sigma^1(f,\SV_\can) = \{t^2 - x_1=0\}. \]
We note that the tangent space at the cusp point is well-defined and it is vertical. We define:
\begin{definition} \label{def:closedPleat}
The \textbf{closed pleat} is the germ at the origin of the integral mapping $\ClosedPleat := \Lift(f,0)$.
\end{definition}
A point $p \in M$ belongs to the \textbf{closed pleat locus} of an integral mapping 
\[ g: M \to (J^r(Y),\xi_\can) \]
if $g$ is point equivalent at $p$ to (a stabilisation of) the closed pleat.

\begin{figure}[ht]
\centering
\includegraphics[width = \linewidth ]{./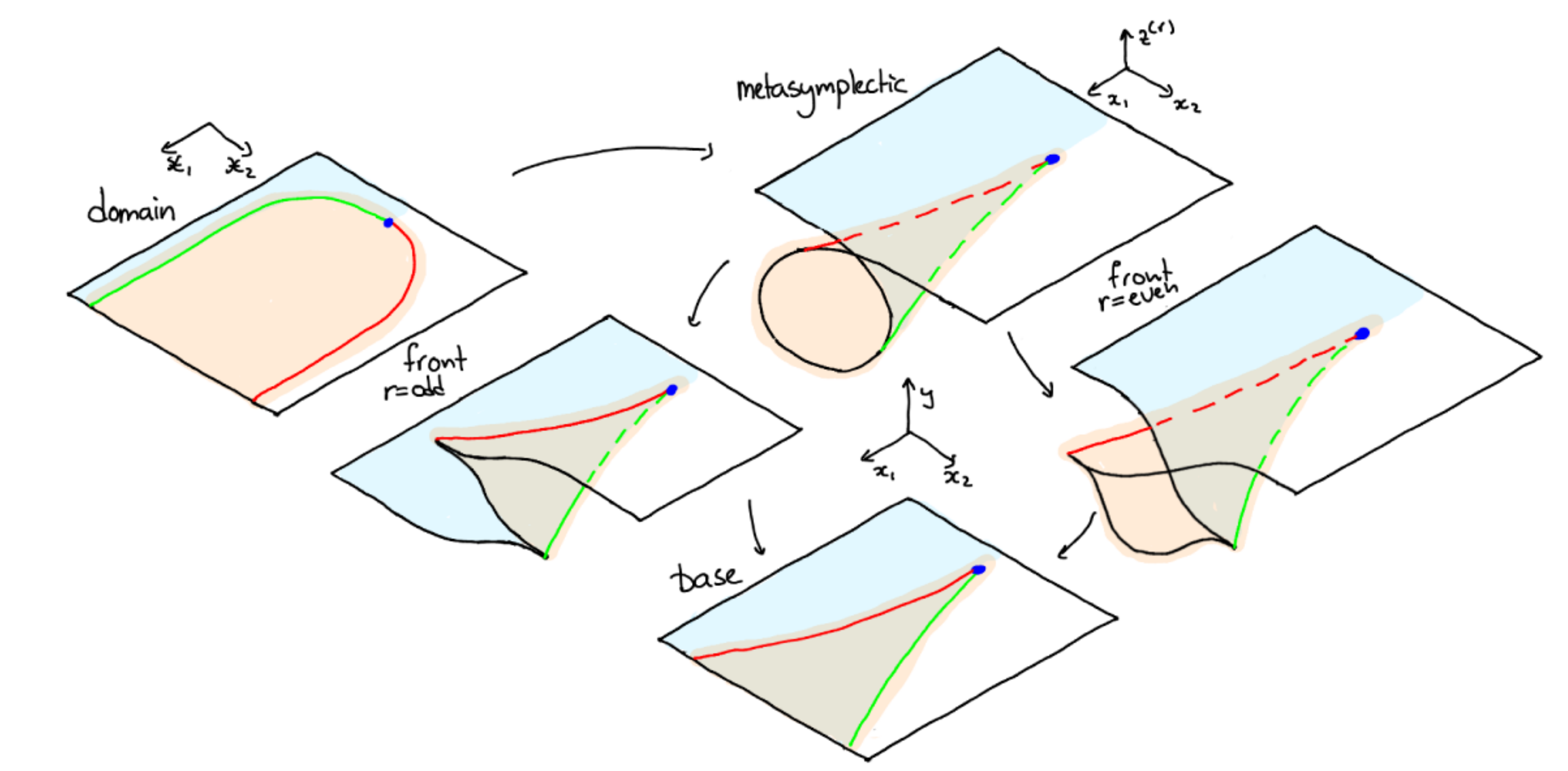}
\caption{The closed pleat. When $r$ is odd, the front is a topological embedding.}\label{fig:Singularities_ClosedPleat}
\end{figure}

\subsubsection{The stabilisation} \label{sssec:stabilisation}

We now study the family of mappings
\[ f|_{\{x_1\} \times \R}: \R \to J^r(\R,\R) \]
given by freezing $x_1 \in H$, seen as a parameter. The maps have no singularities of tangency if $x_1 > 0$, they have closed double folds if $x_1 < 0$, and it is given by 
\[ t \mapsto \left(\dfrac{t^3}{3}, t^2\right), \]
when $x_1 = 0$. Following standard notation in Contact Topology, we define:
\begin{definition} \label{def:stabilisation}
The \textbf{stabilisation} is the family of integral maps 
\[ (\Stab_{x_1} \,:=\, \Lift(f|_{\{x_1\} \times \R},0))_{x_1 \in H}. \]

The \textbf{vertical cusp} or \textbf{closed double fold embryo} is the integral map $\Lift(f_{\{0\} \times \R},0)$.
\end{definition}
Much like the closed pleat, the vertical cusp is an integral map, but it is singular. Its front reads:
\[ (t) \mapsto \left(\dfrac{t^3}{3}, \dfrac{t^{3r+2}}{(3r+2)(3r-1)\cdots 1}\right). \]

\begin{remark} \label{rem:stabilisation}
In the Contact Topology setting ($r=1$ and the fibres of $Y$ are $1$-dimensional) this is well-known: even though the stabilisation is not a homotopy through embedded legendrians, it is a homotopy through singular legendrians (i.e. topologically embedded maps tangent to the contact structure). \hfill$\triangle$
\end{remark}

\subsection{Wrinkles} \label{ssec:wrinkles}

Pleats and Reidemeister I moves allow us to introduce double folds in domain and parameter directions, respectively. We now go further and consider self-cancelling families of maps containing double folds and pleat-like singularities that disappear in Reidemeister-like events. In Subsection \ref{ssec:closedWrinkles} we will consider the analogous notion for the closed double fold; these will go under the name of \emph{closed wrinkles}.

\begin{remark} \label{rem:comparisonWrinklesI}
We warn the reader that our definition of a wrinkle, even though inspired by the wrinkled embeddings introduced by Eliashberg and Mishachev \cite{ElMiWrinEmb}, differs from theirs in various key aspects.

A first (rather cosmetic) difference is that we allow the base of the wrinkle to be an arbitrary submanifold with boundary, as opposed to a ball. One of the mottos of this paper is that it is sometimes more convenient to work with other bases, as it simplifies some arguments (e.g. our main theorem). Further, it is always possible to carry out a surgery to replace a wrinkle with wrinkles whose base is a ball; see Appendix \ref{sec:surgeries}. Similar ideas are present already in the wrinkling saga.

A more fundamental difference is that our wrinkles are smooth maps. The name ``wrinkle'' is meant to reflect the nature of the singularities of tangency instead. In particular, upon projecting to the base, our model wrinkle matches the (equidimensional) wrinkled submersions from \cite{ElMiWrinI}.

The front projections of our wrinkles do have wrinkle-type singularities of mapping. These are thus closer to classic wrinkled embeddings, which may be regarded as front projections of integral mappings into $J^1(Y)$. However, as explained by Murphy \cite{Mur} (in the contact case), the lift of a wrinkled embedding is a singular integral map (in the contact case, a singular legendrian) in which the singularity corresponds to the birth of a pair of folds with the same Maslov coorientation (i.e. a closed pleat, as introduced in Definition \ref{def:closedPleat}). These objects are thus not wrinkles (according to our definition), but \emph{closed wrinkles} (to appear in Subsection \ref{ssec:closedWrinkles}). \hfill$\triangle$
\end{remark}

The setup is as follows. We fix compact manifolds $K$ and $H$, the former serving as parameter space. We also fix $D \subset K \times H$, a submanifold with boundary and non-empty interior. We write $H_k := H \cap (\{k\} \times H)$ and $D_k := D \cap H_k$. Lastly, we pick a function $\rho: K \times H \to [-1,1]$ that is strictly negative in the interior of $D$, strictly positive outside, and has $\partial D$ as a regular level set. With this data we define a map
\begin{align*}
f: K \times H \times \R \quad\longrightarrow\quad & K \times H \times \R \times \Sym^r(\R,\R) \\ 
(k,\widetilde x,t)      \quad\mapsto\quad & \left(k,\widetilde x,\dfrac{t^3}{3}+\rho(k,\widetilde x)t,t \right)
\end{align*}
which we think of as a family parametrised by $K$. This family can then be lifted, parametrically in $K$, with respect to the $x$-coordinates:
\[ \Lift(f,0): K \times H \times \R \quad\longrightarrow\quad K \times J^r(H \times \R,\R). \]

\begin{definition} \label{def:wrinkles}
The \textbf{model wrinkle} $\Wrin$ with base $D$ and height $\rho$ is the germ of $\Lift(f,0)$ along $A = \{t^2 + \rho(k,\widetilde x) \leq 0\}$.
\end{definition}
The region $A$ is said to be the \textbf{membrane} of the wrinkle. Its boundary $\partial A = \{t^2 + \rho(k,\widetilde x) = 0\}$ is the tangency locus $\Sigma(\Wrin,V_\can)$;  it is diffeomorphic to the double of $D$. The singularity locus $\partial A$ is tangent to the $t$-coordinate along the submanifold $(\partial D) \times \{0\}$; we call this the \textbf{equator} of the wrinkle. Even though $\Wrin$ depends on $D$ and $\rho$, we leave them implicit to keep notation light. If we want to emphasise them, we write $\Wrin^{D,\rho}$.

\begin{remark}
The definition given is coherent with our naming conventions. Namely, a stabilisation (with respect to the domain) of a model wrinkle is still a model wrinkle. \hfill$\triangle$
\end{remark}

We write $\Wrin_k$ for the restriction of $\Wrin$ to $\Op(A_k)$, where $A_k := A \cap (S_k \times \R)$. For all $k \in K$, $\Wrin_k$ has a double fold over the interior of $D_k$ (but do note that $D_k$ may be empty). Furthermore, whenever $\partial D_k$ is a regular level set of $\rho|_{D_k}$, we have that $\partial A_k$ is diffeomorphic to the double of $D_k$; in this case the equator $\partial D_k \times \{0\} \subset A_k$ is the pleat locus. If $\partial D_k$ is not a regular level set, the singularities of $\Wrin_k$ along the equator may be complicated; still, they are as stable as the tangencies of $D$ itself with the fibres $S_k$.

\begin{figure}[ht]
\centering
\includegraphics[width = \linewidth]{./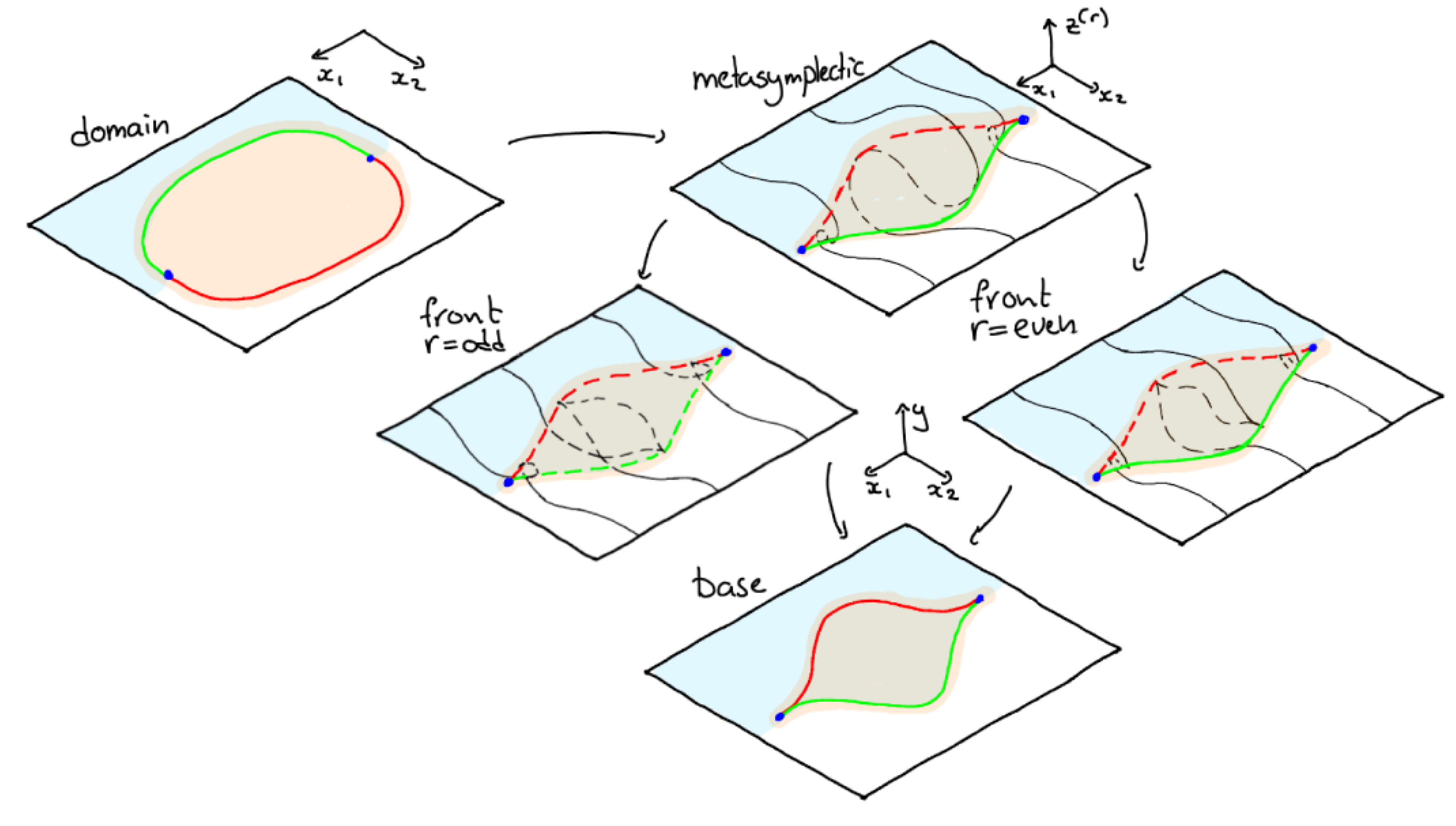}
\caption{Illustration of a wrinkle with ball base. The two fold loci (the red and green curves) change position when mapped to the base of the target jet space. They meet at the equator, shown as the two blue dots.}\label{fig:Singularities_Wrinkle}
\end{figure}

In order to allow nesting, we introduce:
\begin{definition}
Fix a model wrinkle $\Wrin$ with base $D$, membrane $A$, and height $\rho$. Let $\Sigma(\Wrin)$ be the singularity locus.

Fix a subset $A' \subset K \times M$. A fibered-over-$K$ family of integral embeddings
\[ g = (g_k)_{k \in K}: M \to (J^r(Y),\xi_\can) \]
has a \textbf{wrinkle} of base $D$ and height $\rho$ along $A'$ if $g$ and $\Wrin$ are point equivalent along $\partial A'$ and $\partial A$. It is a model wrinkle if this equivalence can be extended to the interior.
\end{definition}
A wrinkle has a well-defined membrane $A'$ and a well-defined equator $E \subset \partial A'$. Close to $E$, the wrinkle has double folds. Do note that the two components of $\partial A' \setminus E$ may not form a double fold due to the presence of additional singularities within $A'$.

In practice, we always require that $D$ has non-empty boundary. This implies that the equator $E$ is non-empty and therefore that there is a region in which we have double folds.

\subsubsection{Some relevant examples}

Definition \ref{ssec:wrinkles} is very general. Some concrete choices of $K$, $H$ and $D$ will keep appearing in our constructions. For instance:
\begin{itemize}
\item A double fold is a model wrinkle in which $D = K \times H = \{\text{point}\}$.
\item A pleat is a model wrinkle for which $K = \{\text{point}\}$, $H = \R$, and $D =  \{\text{point}\} \times \R^{\geq 0}$.
\item A Reidemeister I move is a model wrinkle with $K = \R$, $H = \{\text{point}\}$, and $D = \R^{\geq 0} \times  \{\text{point}\}$.
\end{itemize}

Furthermore, we denote:
\begin{definition}
A \textbf{wrinkle with cylinder base} is a wrinkle with $H=\NS^{n-1}$, $K=\D^k$, and $D = K \times H$.
\end{definition}
One could generalise this notion slightly by letting $H$ be a closed manifold and $D$ be $K' \times H$, where $K' \subset K$ is a submanifold with boundary and non-empty interior. Such a wrinkle is, up to point symmetries, the $H$-stabilisation of a wrinkle with base $K' \times \{\text{point}\}$. Along the equator, the singularities are a $\partial K'$-family of ($H$-stabilisations of) the cubic.

Another interesting case is:
\begin{definition}
A \textbf{wrinkle with ball base} is a wrinkle with $K$ and $H$ Euclidean spaces and $D$ the unit ball.
\end{definition}
If $\Wrin$ is a wrinkle with ball base, each $\Wrin_k$, $k \in K$, is either a wrinkle with ball base, a smooth map, or has a single birth/death singularity located at the origin. This birth/death germ at the origin is called the embryo of wrinkle with ball base.

\subsubsection{Regularisation} \label{sssec:regularisationWrin}

Let $g: M \to (J^r(Y),\xi_\can)$ be an integral mapping with a model wrinkle with membrane $A$. Along $\partial A$, the differential $dg$ provides an isomorphism between $\ker(\pi_b \circ g|_{\partial A})$ and the image line field $\SL \subset g^*V_\can$. In terms of the standard coordinates of the model, the isomorphism between the two extends to the membrane as $\Psi(\partial_t) = \partial_{z^{(r)}}$.

There is then a homotopy $(\rho_s)_{s \in [0,1]}$ of bundle monomorphisms, supported in $\Op(A)$, that connects $dg$ with a monomorphism $\rho_1$ with image transverse to $V_\can$. This homotopy modifies only the component $dg(\partial_t)$, which rotates using $\Psi$. We call this the \textbf{formal regularisation} of the model wrinkle. See Figure \ref{fig:Singularities_WrinkleRegularization}.

\begin{figure}[ht]
\centering
\includegraphics[width = \linewidth ]{./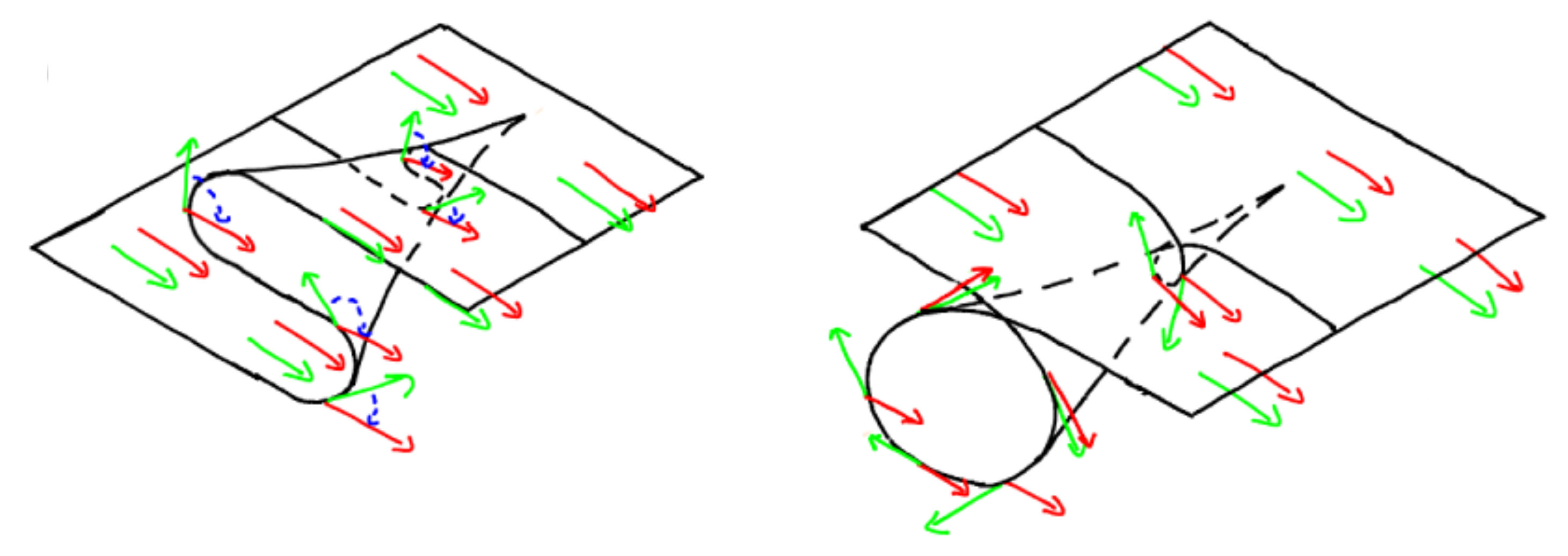}
\caption{Regularisation of (one half of) the model wrinkle (on the left) and (one half of) the model closed wrinkle (on the right); as shown in the metasymplectic projection. The Gauss maps and the non-singular monomorphisms are shown in green and red, respectively. For the model wrinkle, the two are homotopic, as shown in blue. This is not the case for the closed wrinkle, as they differ by one turn.}\label{fig:Singularities_WrinkleRegularization}
\end{figure}

\subsection{Closed wrinkles} \label{ssec:closedWrinkles}

We continue using the setup and notation from Subsection \ref{ssec:wrinkles}. Consider the lift $\Lift(f,0)$ of the $K$-family
\begin{align*}
f: K \times H \times \R \quad\longrightarrow\quad & K \times H \times \R \times \Sym^r(\R,\R) \\ 
(k,\widetilde x,t)      \quad\mapsto\quad & \left(k,\widetilde x,\dfrac{t^3}{3}+\rho(k,\widetilde x)t,t^2 \right)
\end{align*}
with respect to the $x$-coordinates.
\begin{definition}
The \textbf{model closed wrinkle} $\ClosedWrin$ with base $D$ and height $\rho$ is the germ of $\Lift(f,0)$ along the membrane $A = \{t^2 + \rho(k,\widetilde x) \leq 0\}$.

A fibered-over-$K$ family of integral embeddings
\[ g = (g_k)_{k \in K}: M \to (J^r(Y),\xi_\can) \]
has a \textbf{closed wrinkle} along $A'$ if $g$ and $\ClosedWrin$ are point equivalentt along $\partial A'$ and $\partial A$. It is a model closed wrinkle if this equivalence can be extended to the interior.
\end{definition}
A closed wrinkle has a well-defined membrane and equator. Away from the equator, it has closed double folds. Furthermore, whenever $\partial D_k$ is cut transversely, the restriction $\ClosedWrin_k$ has closed pleats along the corresponding equator.

As for wrinkles, we single out some cases of interest:
\begin{itemize}
\item A closed double fold, a closed pleat, and a stabilisation are all model closed wrinkles.
\item A closed wrinkle has cylinder base if $H=\NS^{n-1}$, $K=\D^k$ and $D = K \times H$. Then, the germ of $\ClosedWrin_k$ along the equator is an $H$-stabilisation of the vertical cusp.
\item  A closed wrinkle has ball base if $D$ is the unit ball in the Euclidean space $K \times H$. Then $\ClosedWrin_k$ is smooth if $|k| > 1$, a closed wrinkle with base a ball if $k < 1$, and a birth/death event located at the origin (called the embryo of closed wrinkle with ball base) if $|k|=1$.
\end{itemize}

\begin{remark} \label{rem:comparisonWrinklesII}
We continue the discussion started in Remark \ref{rem:comparisonWrinklesI}. If we restrict to first order jet spaces and impose that $D$ is a ball (as in the third item above), the front projection of a closed wrinkle will be precisely the standard \emph{wrinkle embedding} introduced by Eliashberg and Mishachev in \cite{ElMiWrinEmb}.

If additionally $Y$ has one-dimensional fibres (i.e. we restrict to the contact setting), the closed wrinkle itself is the model \emph{wrinkled legendrian} defined by Murphy in \cite{Mur}.

Lastly, if we pass to second order jets, take $Y$ with one-dimensional fibres, and still take $D$ to be a ball, the projection to $J^1$ of a closed wrinkle will be the model \emph{wrinkled legendrian} defined by \'Alvarez-Gavela in \cite{Gav2}. Do note that this is different from Murphy's definition.\hfill$\triangle$
\end{remark}

\begin{figure}[ht]
\centering
\includegraphics[width = \linewidth ]{./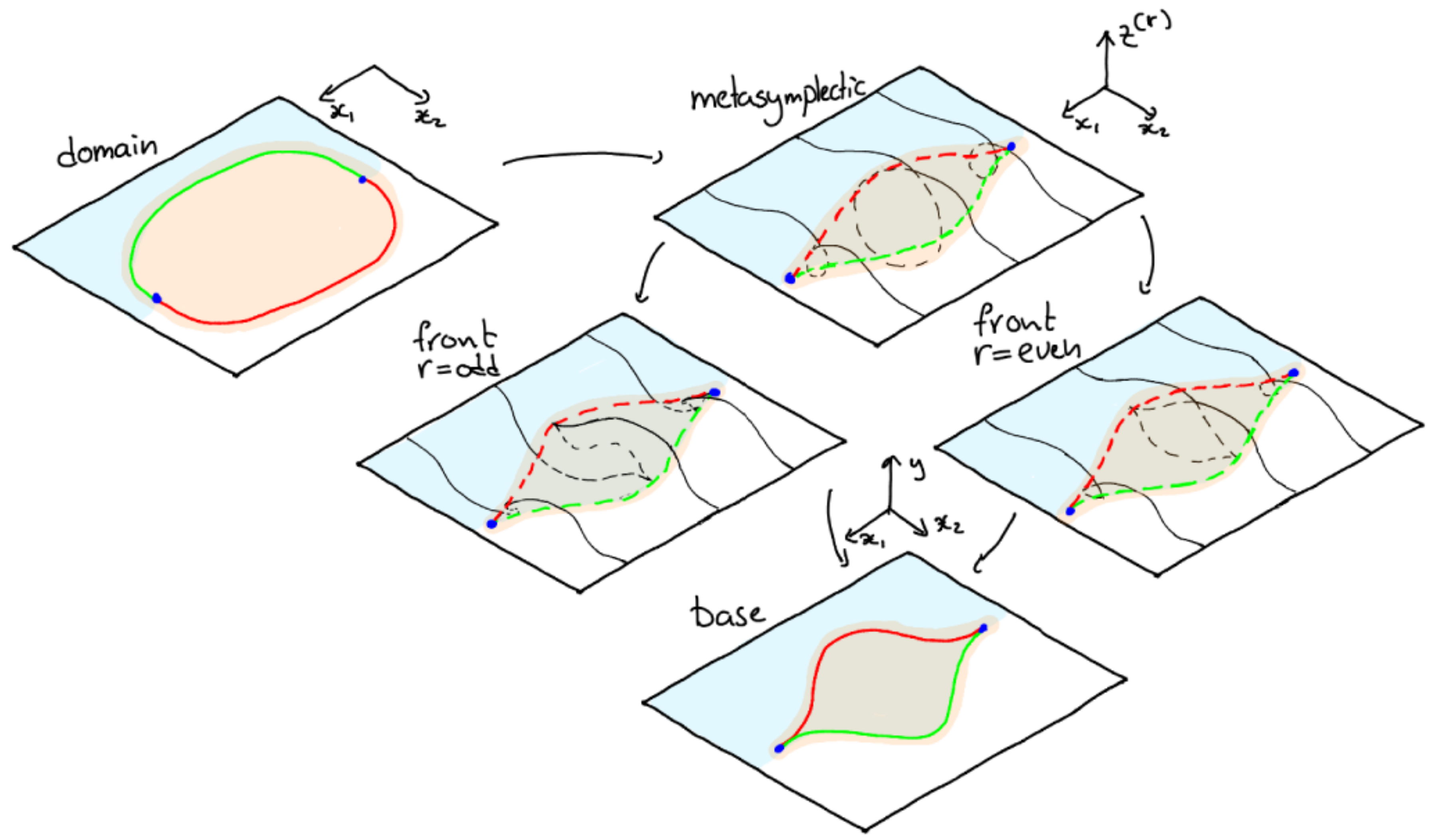}
\caption{Illustration of a closed wrinkle. Note that the configuration of the front (open or closed) is precisely opposite to that of the wrinkle, see Figure \ref{fig:Singularities_Wrinkle}.}\label{fig:Singularities_ClosedWrinkle}
\end{figure}

\subsubsection{Regularisation}

Let $g: M \to (J^r(Y),\xi_\can)$ be an integral mapping with a model closed wrinkle with membrane $A$. Reasoning as in Subsection \ref{sssec:regularisationClosed} we can replace $dg|_{\Op(A)}$, yielding a bundle monomorphism $\rho: TM \to \xi_\can$ with $\rho(TM|_{\Op(A)})$ transverse to $V_\can$. The two are not homotopic relative to the boundary of the model. We call this the \textbf{formal regularisation}. See Figure \ref{fig:Singularities_WrinkleRegularization}.

%
%
%
%
%
%
\section{Holonomic approximation by multi-sections} \label{sec:holonomicApproxZZ}

The main result of this section (Theorem \ref{thm:holonomicApproxZZ}, Subsection \ref{ssec:holonomicApproxZZ}) is an $h$-principle with PDE flavour. It states that holonomic approximation applies to closed manifolds as long as we are willing to allow some extra flexibility and consider multi-sections. A particular consequence is that any open partial differential relation admits a solution in the class of multi-sections (Corollary \ref{cor:holonomicApproxZZ}). Furthermore, following the wrinkling philosophy, it turns out to be sufficient to work with multi-sections with simple singularities.

A precise description of what a multi-section is and of the singularities that we use is given in Subsection \ref{ssec:multisections}. The key object behind our arguments, the zig-zag bump function, is introduced in Subsection \ref{sssec:zigzagBump}. Its properties are described in Subsection \ref{ssec:interpolating} and their construction is explained in Subsection \ref{ssec:zigzagBumpConstruction}. We complete the proof of Theorem \ref{thm:holonomicApproxZZ} in Subsection \ref{ssec:holonomicApproxProof}. Its parametric and relative analogues will be presented in Section \ref{sec:holonomicApproxParam}.

As in previous sections, we fix a smooth fibre bundle $Y \to X$. We work on the jet space $J^r(Y)$. In order to quantify how close two sections of $J^r(Y)$ are, we fix an auxiliary metric.

\subsection{Multi-sections with zig-zags} \label{ssec:multisections}

We dedicated Section \ref{sec:singularities} to the study of integral maps, relying particularly on principal projections. We now change our viewpoint and favour instead the front projection:
\begin{definition} \label{def:multisection}
Let $M$ be a manifold of dimension $\dim(X)$. A map $f: M \to Y$ is a ($r$-times differentiable) \textbf{multi-section} if:
\begin{itemize}
\item[a.] There is a dense subset $U \subset M$ such that $f|_U$ is an immersion transverse to the fibres of $Y$.
\item[b.] There is a smooth integral lift $j^rf: M \to J^r(Y)$.
\end{itemize}
\end{definition}
First note that being immersed and transverse to the fibres of $Y$ is an open condition. It follows that the dense subset $U \subset M$ given in Property (a) may be assumed to be open. In general, $\pi_b \circ f|_U: U \to X$ will be an immersion and not an embedding, even if we restrict to path-components of $U$. Nonetheless, given $p \in U$, there exists an open $p \in V \subset U$ such that $\pi_b \circ f|_V$ is an embedding of $V$ into $X$ that factors through $\Image(f|_V) \subset Y$. It follows that $\Image(f|_V)$ can be uniquely parametrised by an actual section $g_V: \pi_b \circ f(V) \to Y$, which has a unique holonomic lift $j^rg_V: \pi_b \circ f(V) \to J^r(Y)$. This provides (the unique) integral lift of $f|_V$ to $J^r(Y)$. Invoking the density of $U$, we deduce:
\begin{lemma}
Let $f$ be a multi-section. Then, its lift $j^rf$ is unique.
\end{lemma}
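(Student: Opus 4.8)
The plan is to show uniqueness of the integral lift $j^r f$ by a standard ``agreement on a dense set plus continuity'' argument, leveraging the two pieces of structure that Definition \ref{def:multisection} hands us: the smoothness of the lift and the density of the good locus $U$.

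First I would fix a multi-section $f \colon M \to Y$ and suppose we are given two smooth integral lifts $\phi_1, \phi_2 \colon M \to J^r(Y)$, both satisfying $\pi_f \circ \phi_i = f$. The goal is to prove $\phi_1 = \phi_2$. The key observation, already made in the text preceding the statement, is that over the open dense set $U$ the front projection $\pi_f$ restricted to a small enough neighbourhood $V$ of any point $p \in U$ is invertible onto $\Image(f|_V)$: indeed $f|_U$ is an immersion transverse to the fibres of $Y$, hence $\pi_b \circ f|_V$ embeds $V$ into $X$ and $\Image(f|_V)$ is the graph of an honest section $g_V \colon \pi_b \circ f(V) \to X \to Y$. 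An integral submanifold that projects to $\Image(f|_V)$ under $\pi_f$ and lies over the embedded base $\pi_b \circ f(V)$ must be the image of a holonomic section, and a holonomic lift of a given section is unique (it is literally $j^r g_V$). Therefore $\phi_1|_V = \phi_2|_V = j^r g_V \circ (\pi_b \circ f|_V)$, and since $p \in U$ was arbitrary, $\phi_1|_U = \phi_2|_U$.

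Next I would upgrade this to all of $M$ using continuity: $\phi_1$ and $\phi_2$ are both continuous (indeed smooth) maps $M \to J^r(Y)$ which agree on the dense subset $U \subset M$, and $J^r(Y)$ is Hausdorff, so the locus $\{\phi_1 = \phi_2\}$ is closed and dense, hence all of $M$. This completes the proof.

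I do not anticipate a serious obstacle here; the only point requiring a little care is the local identification of an integral submanifold (projecting correctly under $\pi_f$ and graphical over an embedded piece of $X$) with the holonomic lift of a section — but this is precisely the universal property characterising holonomic sections as the integral sections of $\xi_\can$, recalled in Subsection \ref{sssec:CartanInCoordinates}, combined with the fact that $\pi_b \circ f|_V$ is an embedding so that $\phi_i|_V$ really is (the reparametrisation of) a section of $J^r(Y) \to X$. One should also note that the statement is about the lift as a \emph{map} $M \to J^r(Y)$, so no reparametrisation ambiguity enters: fixing $f$ on the nose pins down $j^r f$ on the nose.
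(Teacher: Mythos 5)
Your proof is correct and follows essentially the same route as the paper: over a small chart $V \subset U$ the map factors as an honest section $g_V$, any integral lift over $V$ must be the holonomic lift $j^r g_V$ (pulled back along $\pi_b \circ f|_V$), and then density of $U$ together with continuity of the lifts forces agreement on all of $M$. The paper leaves the final density/continuity step implicit; you spell it out via the Hausdorff argument, which is a reasonable elaboration rather than a different method.
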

That is, the integral lift is defined automatically from Property (a.). However, it may fail to be smooth (or even continuous) in the complement of $U$, which is why we need Property (b.).


\begin{figure}[ht]
\centering
\includegraphics[width = 0.8\linewidth ]{./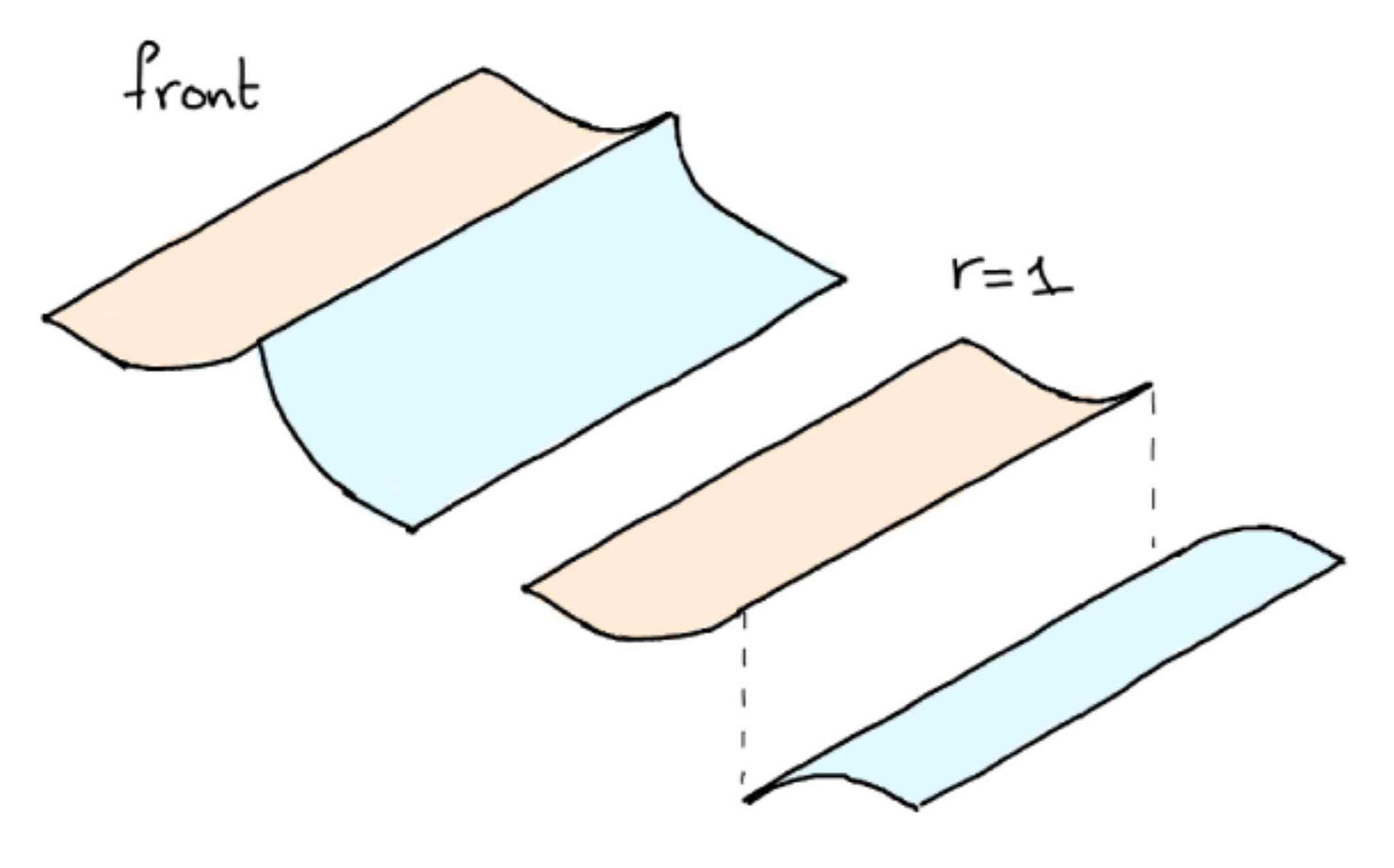}
\caption{A ridge singularity in the front projection (left) and its lift to $1$-jet space (right). In the front, the two branches meet each other with different slopes. Together they form a topological embedding. However, since the lift is discontinuous, they do not form a differentiable multi-section.}\label{fig:HolonomicApprox_ridge}
\end{figure}

\begin{remark}
Recall that our goal is the construction/classification of integral maps. A sufficiently rich collection of models (as established in Section \ref{sec:singularities}) allows us to deal with their singularities of tangency. Everywhere else, it is simpler to work using the front, as this boils down to manipulating sections of $Y$. This reduces the study of integral maps to the study of multi-sections. \hfill$\triangle$
\end{remark}

\subsubsection{Zig-zags} \label{sssec:zigZags}

The singularities of mapping of a general multi-section may be extremely complicated. This is already apparent in the study of frontal singularities of smooth legendrians. Due to this, we restrict ourselves to the following simple model:
\begin{definition} \label{def:zigzag}
A multi-section $f: M \to Y$ has a \textbf{zig-zag} along $A \subset M$ if its lift to $J^r(Y)$ has
\begin{itemize}
\item a double fold, if $r$ is even,
\item a closed double fold, if $r$ is odd,
\end{itemize}
along $A$. The base of a zig-zag is the base of the corresponding (closed) double fold. The same applies to the membrane $A$.
\end{definition}
Equivalently, a zig-zag consists of a pair of $A_{2r}$-cusps in the front projection in an open configuration. We note that this is indeed the simplest singularity lifting to $J^r(Y)$ that one may consider. For instance, the cusps $A_{2r'}$, $r'< r$, do not lift to continuous maps into $J^r(Y)$.

\begin{figure}[ht]
\centering
\includegraphics[width = \linewidth ]{./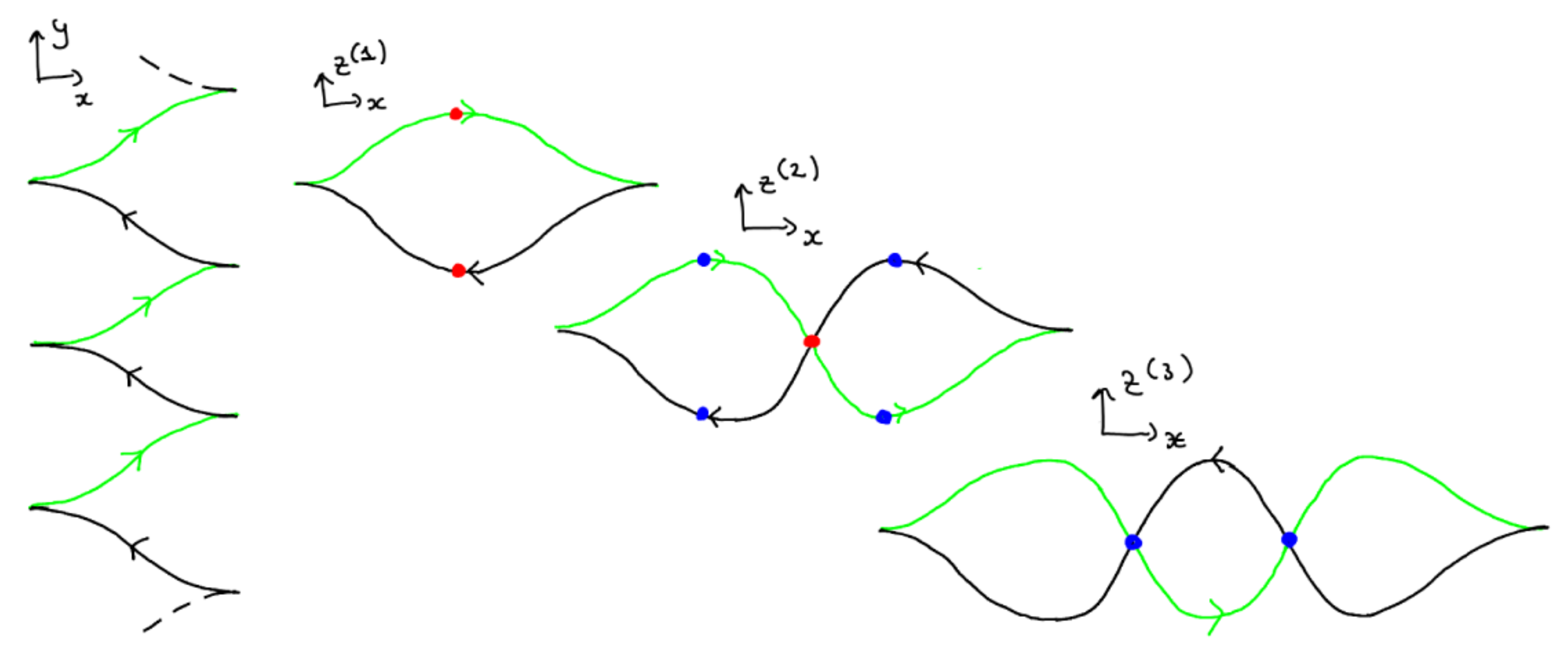}
\caption{On the left, a multi-section $f: \R \to J^0(\R,\R)$. It zig-zags upwards in a periodic manner. It follows that its derivatives are periodic; these are shown in the subsequent pictures. All the curves shown have cusps; these become milder as we pass to larger derivatives, becoming eventually smooth singularities of tangency in the metasymplectic projection. We see the switching phenomenon again: as we pass from one jet space to the next, we see that the cusp(s) on the right-hand side ``switch'', but the ones on the left do not. In particular, as we pass from $r=0$ to $r=1$, we see the zig-zag in the front, which is a sequence of cusps in an open configuration, become a closed configuration of two cusps.}\label{fig:Singularities_FigureEight}
\end{figure}

\begin{remark}
We used the name \emph{vertical cusp} to refer to the birth/death event associated to the stabilisation; see Subsection \ref{sssec:stabilisation}. This is a singularity for integral maps. We will henceforth use the name \emph{cusp} to refer to the $A_{2r}$-cusp (i.e. the front projection of the fold). This is a singularity for multi-sections. It should be clear from context which one is meant. \hfill$\triangle$
\end{remark}

\begin{definition} \label{def:sectionZigzag}
A \textbf{multi-section with zig-zags} is a pair $(f,\{A_i\})$ consisting of
\begin{itemize}
\item a multi-section $f: M \to Y$,
\item and a locally finite collection of disjoint annuli $A_i \subset M$
\end{itemize}
satisfying:
\begin{itemize}
\item $f$ is a topological embedding.
\item the restrictions $f|_{A_i}$ are zig-zags.
\item $f$ has no other singularities.
\end{itemize}
\end{definition}
Let us observe that a zig-zag is indeed a topological embedding. Furthermore:
\begin{lemma} \label{lem:liftZigzag}
Let $(f: M \to Y,\{A_i\})$ be a multi-section with zig-zags. Then the lift $j^rf: M \to J^r(Y)$ is an integral embedding.
\end{lemma}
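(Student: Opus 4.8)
The plan is to check that $j^rf\colon M\to J^r(Y)$ is a smooth injective immersion which is, in addition, a homeomorphism onto its image; since a map with those properties is an embedding, and since integrality (tangency to $\xi_\can$) is built into the notion of the integral lift, this will give that $j^rf$ is an integral embedding.

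First I would dispose of the formal ingredients. Smoothness and integrality of $j^rf$ are exactly Property (b.) of Definition \ref{def:multisection}, and by the uniqueness of the integral lift of a multi-section this lift is the one with $\pi_f\circ j^rf=f$. Injectivity of $j^rf$ is then immediate, since $f$ is a topological embedding and hence injective. For the homeomorphism property I would argue directly, so as not to worry about properness of $M$: if $j^rf(p_m)\to j^rf(p)$ in $J^r(Y)$, then applying the continuous map $\pi_f$ gives $f(p_m)\to f(p)$ in $Y$, and continuity of $f^{-1}$ on $\Image(f)$ forces $p_m\to p$; metrizability of manifolds then yields continuity of $(j^rf)^{-1}$ on $\Image(j^rf)$.

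The only step with content is showing that $j^rf$ is an immersion. Differentiating $\pi_f\circ j^rf=f$ shows $\ker d_p(j^rf)\subseteq\ker d_pf$ at every point, so $j^rf$ is automatically an immersion on the open set $M\setminus\bigcup_i A_i$, where $f$ has no singularities (Definition \ref{def:sectionZigzag}). Near each membrane $A_i$, by Definition \ref{def:zigzag} the lift $j^rf$ is point equivalent to (a stabilisation of) $\doubleFold$ for $r$ even, or of $\ClosedDoubleFold$ for $r$ odd; both are immersions, for instance because by Proposition \ref{prop:principalProjectionGeneral} they are lifts of the immersed principal-projection curves $t\mapsto(t^3/3-t,t)$ and $t\mapsto(t^3/3-t,t^2)$ (the pure-derivative coordinate $z^{(0,\dots,r)}$ of the lift always has nonzero derivative). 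Immersivity being pointwise and point-equivalence-invariant, $j^rf$ is an immersion along $\bigcup_i A_i$ too, hence everywhere. Putting everything together, $j^rf$ is an injective immersion and a homeomorphism onto its image, so it is a smooth embedding, with integral image.

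I expect the proof to be essentially formal; the one genuine input is the immersivity of the (closed) double-fold models along their membranes, which is handed to us by Section \ref{sec:singularities}. The place where the precise statement of Definition \ref{def:zigzag} is really used is exactly here: a pair of $A_{2r'}$-cusps with $r'<r$ would not even lift continuously to $J^r(Y)$, whereas the $A_{2r}$-cusps of a zig-zag lift precisely to the smooth immersed (closed) double fold. The remaining care is purely point-set: the membranes form a locally finite family of closed sets, so their complement is open, and the topological-embedding property of $f$ transfers to $j^rf$ through $\pi_f$.
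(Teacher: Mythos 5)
Your argument is correct and follows essentially the same route as the paper's own (very terse) proof: immersivity comes from the fold/double-fold models along the membranes together with $f$ being non-singular elsewhere, and embeddedness is transferred from the topological embedding property of $f$ via $\pi_f\circ j^rf=f$. You simply spell out the point-set details (injectivity and homeomorphism onto image) that the paper leaves implicit.
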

\begin{proof}
Integrality follows from the fact that each cusp in a zig-zag lifts to a fold of $j^rf$, which has no singularities of mapping. Embeddedness follows from $f$ being a topological embedding.
\end{proof}

\begin{remark}
The reader may wonder why we require embeddedness, as this is irrelevant from a PDE perspective. The answer is that our results are meant to be used to construct and classify submanifolds tangent to the Cartan distribution. Embeddedness in the front projection allows us to ensure embeddedness in jet space upon lifting, as shown in Lemma \ref{lem:liftZigzag}. \hfill$\triangle$
\end{remark}

\subsection{The statement} \label{ssec:holonomicApproxZZ}

We now state the natural multi-section analogue of the holonomic approximation Theorem \ref{thm:holonomicApprox}. It reads:
\begin{theorem} \label{thm:holonomicApproxZZ}
Let $\sigma: X \to J^r(Y)$ a formal section. Then, for any $\varepsilon > 0$, there exists a multi-section with zig-zags $f: X \to Y$ satisfying $|j^rf - \sigma|_{C^0} < \varepsilon$.
\end{theorem}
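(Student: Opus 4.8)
The plan is to reduce Theorem \ref{thm:holonomicApproxZZ} to the classical holonomic approximation Theorem \ref{thm:holonomicApprox} by the usual ``wiggling plus skeleton'' mechanism, and then to pay the price of closedness by inserting zig-zags along the transverse directions we are missing. Concretely: first I would fix a triangulation $\ST$ of $X$ (Theorem \ref{thm:Thurston}) whose simplices are small enough that on each top-dimensional simplex the bundle $Y$, the jet space $J^r(Y)$, and the Cartan structure look essentially standard, i.e. we may work in $J^r(B,F)$ with standard coordinates. Let $A := \ST^{(n-1)}$ be the codimension-one skeleton. By Theorem \ref{thm:holonomicApprox}, applied with a choice of transverse vector field $V$ along $A$, there is a $C^0$-small isotopy $\phi_\ss$ of $X$ (a flow of $V$, hence supported near $A$ and moving only in the normal direction to $A$) and a $C^0$-small homotopy $\sigma_\ss$ of formal sections with $\sigma_0 = \sigma$ and $\sigma_1$ holonomic on $\Op(\phi_1(A))$. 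Set $\tau := \sigma_1$, which is holonomic near the wiggled skeleton $A' := \phi_1(A)$ and satisfies $|j^r(\text{underlying section}) - \sigma|_{C^0} < \varepsilon/2$ on $\Op(A')$.

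The remaining task is global: extend the honest section defined near $A'$ over the rest of $X$ while staying $C^0$-close to $\tau$ (equivalently $\sigma$) in jet space, at the cost of introducing zig-zags. Here I would work simplex by simplex: let $\sigma$ be a top simplex, with $\Op(\sigma)$ a small ball $B^n$. On $\partial \Op(\sigma)$ (a neighbourhood of $\ST^{(n-1)} \cap \bar\sigma$) we already have an honest section $g$; we must fill it in over the interior, matching $g$ near the boundary and with $|j^rf - \tau|_{C^0}$ controlled. The obstruction to doing this with an honest section is exactly the usual one (the jet of the interior extension need not track $\tau$), and this is where the \emph{zig-zag bump function} of Subsection \ref{sssec:zigzagBump} enters: using the principal projection $\pi_\meta^n$ associated to one coordinate direction $x_n$ on the simplex, I would modify the pure $r$-th derivative $z^{(0,\dots,0,r)}$ of $g$ along the $x_n$-direction by a profile that zig-zags — i.e. in the principal projection, a curve in $B \oplus \Sym^r(\R,F)$ that oscillates back and forth so that after integrating $r$ times (Proposition \ref{prop:principalProjection}, with $h \equiv 0$ after applying Lemma \ref{lem:zeroSection}) its $C^0$-displacement in all coordinates $y, z^1, \dots, z^{r-1}$ is as small as we like, while its $z^{(0,\dots,0,r)}$-coordinate can sweep whatever values $\tau$ requires. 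Each ``turn'' of this oscillation is precisely a fold of the lift, hence a cusp of the front; arranging the turns in cancelling pairs produces double folds for $r$ even and closed double folds for $r$ odd (using the closed model of Subsection \ref{ssec:closedDoubleFold} in the odd case, so that the front is in an open configuration and remains a topological embedding), i.e. zig-zags in the sense of Definition \ref{def:zigzag}. Because the construction is carried out one direction at a time and is local in the simplex, the zig-zag annuli $A_i$ can be kept disjoint and supported in the interiors of the top simplices, away from $A'$; Lemma \ref{lem:liftZigzag} then guarantees that the resulting $j^rf$ is an integral embedding, and Property (b.) of Definition \ref{def:multisection} holds by the smoothness of the lift in Proposition \ref{prop:principalProjection}.

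The main obstacle — and the technical heart of the argument, which I would isolate as the construction of the zig-zag bump function in Subsections \ref{ssec:interpolating}--\ref{ssec:zigzagBumpConstruction} — is the following quantitative interpolation problem: given a target profile for $z^{(0,\dots,0,r)}$ over an interval, one must produce a zig-zagging curve in the principal projection whose $r$-fold integral is $\varepsilon$-close in $C^0$ to the prescribed lower-order data everywhere, including at the endpoints where it must match $g$ to infinite order. The subtlety, absent in the first-order case of \cite{ElMiWrinEmb}, is that a single semicubic cusp will not do: to control the $(r-1)$ successive integrals one needs ``sharper'' $A_{2r}$-cusps (the fold model of Subsection \ref{ssec:fold}, whose lift has leading behaviour $t^{2r+1}, t^{2r-1}, \dots$), and one must check that stacking many such sharp cusps with appropriately shrinking amplitudes keeps all $r$ integrated coordinates uniformly small. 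A second, more bookkeeping-style obstacle is compatibility across the faces of the triangulation: the honest section coming from the wiggled skeleton must be extended consistently over adjacent simplices, which is handled by the relative form of Theorem \ref{thm:holonomicApprox} (taking $B$ a subcomplex) together with the fact that the zig-zag modifications are supported in simplex interiors, so no matching conditions are violated. Finally, one checks the $C^0$-estimate $|j^rf - \sigma|_{C^0} < \varepsilon$ by combining the $\varepsilon/2$ from holonomic approximation with the $\varepsilon/2$ budget allotted to the zig-zag interpolation.
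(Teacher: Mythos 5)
Your macro-structure matches the paper's exactly: triangulate $X$, apply classical holonomic approximation along the codimension-one skeleton, and then extend over the interiors of the top simplices by inserting zig-zags. Where things go wrong is in the local extension step, on three interrelated points.

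First, you propose to do the interpolation ``one derivative at a time'': work in the principal projection $\pi_\meta^n$, modify the pure $r$-th derivative $z^{(0,\dots,0,r)}$ along a single direction, and integrate via Proposition \ref{prop:principalProjection}. This is precisely the strategy the paper goes out of its way \emph{not} to use; the remark at the end of Section \ref{sec:holonomicApproxZZ} contrasts the paper's route with the ``reduce to simple tangential homotopies along a single derivative'' approach of \cite{ElMiWrinEmb}. The paper instead works directly in the front: the zig-zag bump function $\rho_N$ is a $J^0$-valued multi-section constructed from an explicit periodic ``infinite zig-zag'' (Subsection \ref{ssec:zigzagBumpConstruction}), and the interpolation $f_N = A(\rho_N e_1)$ is obtained by scaling it through a frame $A$ of $F$, not by any manipulation at the metasymplectic level. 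You have not filled in how the one-derivative-at-a-time route would hit all the jet coordinates of the target $\tau$ (there are far more than one pure derivative), so this is a genuine gap, not merely a different-but-equivalent phrasing.

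Second, and more critically, your description of what the bump function is supposed to accomplish has the roles of $y$ and $z^{(r)}$ reversed. You write that after integrating, ``its $C^0$-displacement in all coordinates $y, z^1, \dots, z^{r-1}$ is as small as we like, while its $z^{(0,\dots,0,r)}$-coordinate can sweep whatever values $\tau$ requires.'' In the actual construction it is the \emph{front} $y$ that must drift by a definite $O(\varepsilon)$ amount across the collar (so as to pass from the boundary section $g$ to the interior holonomic reference over the simplex), while \emph{all} the positive-order coordinates $z^{(r')}$, $1 \le r' \le r$, are kept $O(1/N)$-small --- that is the content of the estimate $|z^{(r')}\circ j^r\rho_N| = O(1/N)$ in Definition \ref{def:zigzagBump} and of the Leibniz-rule computation at the end of Proposition \ref{prop:zigzagBump}. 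If the $y$-displacement were small, as you write, the bump could not bridge the gap between $g$ and the interior section, and the interpolation would fail.

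Third, the ``pushing trick'' is entirely absent, and it is not optional in the paper's argument. After the reduction step one must interpolate between two sections $s_0$ and $s_1$ (the boundary data and the interior holonomic reference $f_\Delta$) that are $C^r$-close, and therefore may coincide or cross; the first step of Proposition \ref{prop:zigzagBump} shifts $s_1$ by a constant in $F$ so that $\wtd s_1$ is everywhere disjoint from $s_0$, which is what allows the fiber framing $A = (s, e_2, \dots, e_k)$ to be nondegenerate and hence makes $f_N = A(\rho_N e_1)$ a multi-section with zig-zags rather than something degenerate. Related to this, you never specify the interior holonomic reference section over the simplex: the paper's Preliminaries step constructs, via Theorem \ref{thm:holonomicApprox} applied pointwise, a finite cover $\{(U_i, f_i)\}$ of $X$ by holonomic $\varepsilon/10$-approximations, from which $f_\Delta$ is chosen; your proposal refers only to the formal section $\tau$ on the interior, which is not a section of $Y$ and cannot be plugged into the interpolation. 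These are the concrete ideas you would need to supply before the extension step closes.
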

In fact, by construction, all the zig-zags of $f$ will have spherical base. The rest of the section is dedicated to the proof of this statement. It will be immediate to the reader experienced in $h$-principles that a parametric and relative (in the domain and the parameter) version follows with minor adaptations. This is explained in detail in Section \ref{sec:holonomicApproxParam}. 

Instead of $r$-jets of sections, one can study $r$-jets of submanifolds and prove holonomic approximation for them. We will do this in Section \ref{sec:wrinkledEmbeddings}: Theorem \ref{thm:wrinkledEmbeddings} states that, using submanifolds with (potentially nested) zig-zags, one can approximate any formal homotopy of the $r$-jet of a submanifold. This generalises to higher jets the wrinkled embeddings of Eliashberg and Mishachev \cite{ElMiWrinEmb}. Theorem \ref{thm:holonomicApproxZZ} can be seen as a local version of Theorem \ref{thm:wrinkledEmbeddings}.

\subsubsection{Multi-solutions of PDRs}

Let $\SR \subset J^r(Y)$ be an open differential relation, not necessarily Diff-invariant. We will say that a multi-section $f: M \to Y$ is a \textbf{multi-solution} if $j^rf$ takes values in $\SR$.
\begin{corollary} \label{cor:holonomicApproxZZ}
Let $\SR \subset J^r(Y)$ be an open differential relation. Suppose $\SR$ admits a formal solution $\sigma: X \to \SR$. Then $\SR$ admits a multi-solution $f: X \to Y$. 
\end{corollary}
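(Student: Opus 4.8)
The plan is straightforward: this corollary is an immediate consequence of Theorem \ref{thm:holonomicApproxZZ}, using only the openness of the relation. First I would recall that $\sigma: X \to \SR$ is in particular a formal section $\sigma: X \to J^r(Y)$. Since $\SR$ is open and $\Image(\sigma)$ is compact (here one uses that $X$ is closed; in the non-compact case one should work with a suitable exhaustion, but the standing assumption is that $X$ is closed), there is a $\varepsilon > 0$ such that the $\varepsilon$-neighbourhood of $\Image(\sigma)$, measured with respect to the fixed auxiliary metric on $J^r(Y)$, is still contained in $\SR$.

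Next I would apply Theorem \ref{thm:holonomicApproxZZ} to the formal section $\sigma$ with this $\varepsilon$. This produces a multi-section with zig-zags $f: X \to Y$ whose holonomic lift $j^rf: X \to J^r(Y)$ satisfies $|j^rf - \sigma|_{C^0} < \varepsilon$. By the choice of $\varepsilon$, the image of $j^rf$ lies in the $\varepsilon$-neighbourhood of $\Image(\sigma)$ and hence in $\SR$. Therefore $j^rf$ takes values in $\SR$, which is exactly the definition of $f$ being a multi-solution of $\SR$.

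There is no real obstacle here: the only point requiring a word of care is the uniformity of $\varepsilon$, i.e.\ that openness of $\SR$ together with compactness of $X$ (and hence of $\Image(\sigma)$) yields a single positive $\varepsilon$ that works at every point. This is the standard ``compactness + openness'' argument and is exactly the role played by working over a closed base. Everything else is a direct invocation of Theorem \ref{thm:holonomicApproxZZ}, and no properties of zig-zags beyond those already packaged into that theorem are needed. I would simply remark that, as in the theorem, one may additionally arrange that all zig-zags of $f$ have spherical base.

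\begin{proof}
Fix the auxiliary metric on $J^r(Y)$ used in Theorem \ref{thm:holonomicApproxZZ}. Since $\SR \subset J^r(Y)$ is open and $X$ is closed, the image $\sigma(X)$ is compact and contained in $\SR$, so there exists $\varepsilon > 0$ such that the $\varepsilon$-neighbourhood of $\sigma(X)$ is contained in $\SR$. Applying Theorem \ref{thm:holonomicApproxZZ} to the formal section $\sigma$ and this $\varepsilon$, we obtain a multi-section with zig-zags $f: X \to Y$ with $|j^rf - \sigma|_{C^0} < \varepsilon$. Then $j^rf(x)$ lies in the $\varepsilon$-neighbourhood of $\sigma(x) \in \sigma(X)$ for every $x \in X$, hence $j^rf$ takes values in $\SR$. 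That is, $f$ is a multi-solution of $\SR$.
\end{proof}
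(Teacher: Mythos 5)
Your proof is correct and follows essentially the same argument as the paper's: compactness of the closed base $X$ together with openness of $\SR$ yields a uniform $\varepsilon>0$, after which Theorem \ref{thm:holonomicApproxZZ} directly produces the desired multi-solution. The paper's proof likewise notes the non-compact case is handled by an exhaustion by compacts, which you also mention.
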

\begin{proof}
If $X$ is compact, there is some $\varepsilon > 0$, such that the $\varepsilon$-neighbourhood $U$ around $\sigma$ is contained in $\SR$. Then Theorem \ref{thm:holonomicApproxZZ} produces a multi-section with zig-zags $f$ whose lift $j^rf$ takes values in $U \subset \SR$, as desired. Otherwise, we apply the previous reasoning using an exhaustion by compacts.
\end{proof}
Observe that the constructed $f$ is topologically embedded, even though this is not important for the statement.

\begin{remark} \label{rem:DiffInvariance}
Given a differential relation $\SR_0$, we can always naturally associate to it a Diff-invariant counterpart $\SR_1$. Indeed, the solutions of $\SR_1$ should be integral mappings into $\SR_0$ lifting some diffeomorphism of the base manifold $X$. Diff-invariance is then automatically built-in. If $\SR_0$ was already Diff-invariant, the space of solutions of the latter is a trivial fibration over the space of solutions of the former with fibre $\Diff(X)$.

We can further consider the relation $\SR_2$ whose solutions are arbitrary integral mappings into $\SR_0$. Multi-solutions of $\SR_0$ are actual solutions of $\SR_2$. It is not difficult to produce examples where $\SR_2$ has solutions but $\SR_1$ and $\SR_0$ do not (in fact, they may not even have formal solutions). \hfill$\triangle$
\end{remark}

\subsubsection{The main ingredient of the proof} \label{sssec:zigzagBump}

We now introduce the simple observation that constitutes the basis of our work:
\begin{definition} \label{def:zigzagBump}
Let $I=[a,b]$ be an interval. A \textbf{zig-zag bump function} interpolating between $y_a$ and $y_b$ is a sequence of maps
\[ (\rho_N)_{N \in \N}: [a,b] \to J^0([a,b],\R) \]
satisfying:
\begin{itemize}
\item $\rho_N|_{\Op(a)}(t) = (x(t)=t, y(t)=y_a)$.
\item $\rho_N|_{\Op(b)}(t) = (x(t)=t, y(t)=y_b)$.
\item Each $\rho_N$ is a multi-section with zig-zags.
\item $|z^{(r')} \circ j^r\rho_N| = O(\frac{1}{N})$ for all $r'>0$.
\end{itemize}
\end{definition}

\begin{proposition} \label{prop:zigzagBumpConstruction}
A zig-zag bump function exists for any given $a$, $b$, $y_a$, $y_b$.
\end{proposition}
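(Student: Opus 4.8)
The plan is to make each $\rho_N$ into a ``climbing sawtooth'' multi-section whose slopes, and more generally whose $r$-jet coordinates, are all $O(1/N)$ precisely because the front traverses $[a,b]$ many ($\sim N$) times, gaining only a $1/N$-fraction of the total height $y_b-y_a$ on each traverse. After an affine change of coordinates in domain and target we may assume $[a,b]=[0,1]$ and $y_a=0$; the constant section handles $y_b=0$, and replacing $y$ by $-y$ reduces us to $\Delta:=y_b>0$. Fix a small $\delta>0$. On $[0,\delta]$ and $[1-\delta,1]$ one lets $\rho_N$ be the constant germs $(x,0)$ and $(x,\Delta)$; on $[\delta,1-\delta]$ its front will consist of $M:=2N$ almost-horizontal affine ``traverses'' of $[\delta,1-\delta]$, sitting at heights $0=h_0<h_1<\dots<h_M=\Delta$ with $h_{i+1}-h_i=\Delta/M$, joined alternately at the two endpoints by small ``turn-around'' blocks. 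Consecutive pairs of blocks, together with the traverse between them, will be the zig-zag annuli $A_j$ ($j=1,\dots,N$), while the traverses separating different $A_j$ and the two flat ends are graphical.

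Concretely, I would build $\rho_N$ as the integral lift of Subsection~\ref{ssec:liftingCurves} (Proposition~\ref{prop:principalProjection} with $n=1$) of an explicit smooth curve $g_N(t)=(x_N(t),z^{(r)}_N(t))$ into the principal projection $[0,1]\times\Sym^r(\R,\R)$. On the traverses $x_N$ is affine and $z^{(r)}_N\equiv 0$, so there $z^{(1)}=O(1/N)$ (the height $\Delta/M$ gained over the fixed length $1-2\delta$) and $z^{(r')}\equiv 0$ for $2\le r'\le r$; near the $i$-th turning point, $(x_N,z^{(r)}_N)$ equals $\lambda_N$ resp.\ $\nu_N$ times the corresponding coordinates of the model double fold $\doubleFold$ if $r$ is even, and of the model closed double fold $\ClosedDoubleFold$ if $r$ is odd. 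Switching to the closed model for odd $r$ is exactly what keeps the two cusps of a turn-around block in an open configuration in the front (the switching phenomenon), so that the block is a zig-zag in the sense of Definition~\ref{def:zigzag}. The two regimes are joined by short graphical interpolations of length $O(1)$ along which $z^{(r)}_N$ stays $O(1/N)$, and the integration constants in the lift are fixed so that each traverse gains exactly height $\Delta/M$, which forces the boundary values $0$ and $\Delta$. One takes $\lambda_N\sim\delta/N$ (so that the $\sim N$ turn-around blocks accumulating near each endpoint, each of horizontal size $O(\lambda_N)$, fit disjointly inside $[\delta,1-\delta]$) and $\nu_N\sim 1/N$; the rescaling $(x,z^{(r)})\mapsto(\lambda_N x,\nu_N z^{(r)})$ is the principal shadow of the point symmetry lifting $(x,y)\mapsto(\lambda_N x,\nu_N\lambda_N^{\,r}y)$, under which the jet coordinate $z^{(r')}$ scales to $\nu_N\lambda_N^{\,r-r'}z^{(r')}$.

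Finally one reads off the four conditions of Definition~\ref{def:zigzagBump}. Condition (a) holds by construction near $0$ and $1$. For condition (b): each turn-around block is, by construction, point-equivalent along its membrane to $\doubleFold$ (resp.\ $\ClosedDoubleFold$), hence a zig-zag; off these blocks $\rho_N$ is an affine graph, so there are no other singularities; and $\rho_N$ is a topological embedding because its traverses sit at strictly increasing height bands separated by $\Theta(1/N)$ whereas each turn-around block has vertical size $O(\nu_N\lambda_N^{\,r})=O(1/N^{r+1})$ and is itself embedded. Integrality and smoothness of $j^r\rho_N$ are automatic from the lifting construction (compare Lemma~\ref{lem:liftZigzag}). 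For condition (c): on the traverses $z^{(1)}=O(1/N)$ and $z^{(r')}=0$ for $r'\ge 2$; near a turning point the rescaled model yields $z^{(r')}=\nu_N\lambda_N^{\,r-r'}\cdot O(1)=O(1/N)$ for every $1\le r'\le r$ since $\lambda_N\le 1$; and the interpolation regions, of length $O(1)$ and connecting jet values that are all $O(1/N)$, keep all $z^{(r')}$ of size $O(1/N)$. There is nothing to check in $N$, the family being discrete.

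The one step needing genuine (though entirely elementary) care is the second: choosing the scales $\lambda_N,\nu_N$, the sizes and positions of the $\sim N$ turn-around blocks and of the interpolations, and the integration constants, so that the estimates of condition~(c) hold simultaneously for all $r'\le r$, the blocks fit disjointly inside $[\delta,1-\delta]$, and the total front is embedded. Each of these requirements holds for any choice within bounded factors of the ones indicated, so I do not expect any conceptual obstruction; once one decides to realise the height gain $\Delta$ by $\sim N$ shallow traverses rather than by one steep ramp, the construction is essentially forced.
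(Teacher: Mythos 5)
Your overall idea---a climbing sawtooth that traverses the interval $\sim N$ times, each time gaining only a $1/N$--fraction of the total height, so that all jet coordinates stay $O(1/N)$---is the same as the paper's, and the reduction to a standard interval with $y_a=0$ by a point symmetry is also the same. The implementation is genuinely different, though, and it is worth comparing. The paper builds a single period of an \emph{infinite zig-zag} $Z$ satisfying $Z(t)=Z(t-4)+4$, observes that this periodicity makes every $z^{(r')}\circ j^rZ$ with $r'\ge 1$ genuinely periodic and hence uniformly bounded, then rescales $J^0$ \emph{only in the vertical direction} by $1/(4N)$---so every $z^{(r')}$ is $O(1/N)$ in one stroke---and finally cuts out $4N$ periods, reparametrises the domain, and flattens the ends. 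Your route assembles the curve block by block in the principal projection, with an anisotropic scaling $(\lambda_N,\nu_N)$ applied to each turn-around model, placed near the two endpoints. This yields a very concrete picture, but it shifts the burden of the key estimates onto bookkeeping that the paper dispenses with by one periodicity remark.

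Two points need repair before that bookkeeping actually closes. First, for $r=1$ the sentence ``on the traverses $z^{(r)}_N\equiv 0$, so there $z^{(1)}=O(1/N)$'' is self-contradictory: when $r=1$ one has $z^{(r)}=z^{(1)}$, and a traverse with $z^{(1)}\equiv 0$ is flat and gains no height. You must instead allow $z^{(r)}_N$ to be a small \emph{nonzero} constant (of alternating sign with the traverse direction) when $r=1$. Second, and more seriously, once the curve $g_N=(x_N,z_N^{(r)})$ is fixed, its lift is a \emph{single} iterated integral starting from chosen initial data at $t=0$: you cannot ``fix the integration constants so that each traverse gains exactly $\Delta/M$'' as an independent choice at each traverse. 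The intermediate derivatives $z^{(r')}$, $1\le r'<r$, can in principle drift by $O(1)$ across the $\sim N$ turn-around blocks and interpolation regions unless the per-block and per-traverse contributions cancel. They can be made to cancel---because the model (closed) double fold is symmetric, because $\int z^{(r)}\,dx=0$ across a double-fold membrane, and because consecutive traverses have opposite $x$-orientation---but these cancellations and the compatible design of the interpolations must be stated and checked. That verification is precisely what the paper encapsulates by building a periodic model and only then scaling; it is the one place where ``the construction is essentially forced'' hides real work.
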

We postpone the proof to Subsection \ref{ssec:zigzagBumpConstruction}. It is based on the multi-section depicted in Figure \ref{fig:Singularities_FigureEight}.

We first explain (Subsection \ref{ssec:interpolating}) how the sequence $(\rho_N)_{N \in \N}$ may be used to interpolate between two actual sections. It is precisely their role in interpolating that motivates us to call them ``bump functions''. However, unlike normal bump functions, the $\rho_N$, for $N$ sufficiently large, allow us to interpolate in a controlled manner without introducing big derivatives.

\subsection{Interpolating using zig-zags} \label{ssec:interpolating}

Our statement about quantitatively controlled interpolation takes place in a chart. We work with a vector space $B$ of dimension $n$ as the base and we let $F$, a $k$-dimensional vector space, play the role of the fibre. We fix standard coordinates $(x,y,z)$ and use the resulting Euclidean metric in $J^r(B,F)$ to measure how close sections are. We write $\D_r$ for the $r$-ball; $\D$ denotes the unit ball.
\begin{proposition} \label{prop:zigzagBump}
Let $\varepsilon, \delta > 0$ be given. Consider sections 
\[ s_0,s_1: \D \subset B \to J^0(B,F) \]
satisfying $|j^rs_0(x) - j^rs_1(x)| < \varepsilon$ whenever $|x| \in [1-\delta,1]$.

Then, there exists a multi-section with zig-zags $f: \D \to F$ satisfying:
\begin{itemize}
\item $f|_{\Op(\partial\D)} = s_0$.
\item $f|_{\D_{1-\delta}} = s_1$.
\item $|j^rs_1 - j^rf|_{C^0} < 5\varepsilon$.
\end{itemize}
\end{proposition}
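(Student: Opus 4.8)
The plan is to reduce the $n$-dimensional interpolation to the $1$-dimensional zig-zag bump function of Proposition \ref{prop:zigzagBumpConstruction} by working one radial direction at a time, exploiting the lifting machinery of Proposition \ref{prop:principalProjectionGeneral}. Concretely, I would foliate the annular region $\{1-\delta \le |x| \le 1\}$ by radial segments and use these as the ``principal direction''. Over this annulus the two sections $s_0$ and $s_1$ are $\varepsilon$-$C^r$-close, so the difference of their pure top derivatives in the radial direction is small. The idea is to build an integral map that coincides with $j^rs_1$ on $\D_{1-\delta}$, with $j^rs_0$ near $\partial \D$, and in between performs, in each radial segment, a scaled and tilted copy of the zig-zag bump function connecting the relevant jet data of $s_1$ to that of $s_0$, with the number $N$ of zig-zags chosen large enough that all the extra derivatives introduced are $O(1/N) < \varepsilon$.

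\textbf{Key steps.} First I would pass to polar-type coordinates $(\omega, \tau)$ on $\Op(\partial \D) \setminus \D_{1-\delta} \cong \NS^{n-1}\times[1-\delta,1]$, so that $\tau$ plays the role of the distinguished coordinate $x_n$ and $\omega \in \NS^{n-1}$ the role of $\widetilde x$; apply Proposition \ref{prop:principalProjectionGeneral} with $H = \NS^{n-1}$. Second, I would write both $s_0$ and $s_1$ in the associated principal projection: each determines a map into $H\times\R\times\Sym^r(\R,F)$, namely the pure $r$-th radial derivative as a function of $(\omega,\tau)$, together with a compatible $\bot$-jet. By hypothesis these two principal projections differ by less than (a constant times) $\varepsilon$ over the annulus. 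Third, for each fixed $\omega$, I would interpolate between the two radial ``profiles'' using a zig-zag bump function from Proposition \ref{prop:zigzagBumpConstruction}: it interpolates the value of $y$ (hence, after the prescribed number $N=N(\varepsilon)$ of zig-zags, the value of every lower derivative in the radial direction up to error $O(1/N)$) while keeping the $z^{(r')}$ small; I would rescale it to the interval $[1-\delta,1]$ and add to it an honest section interpolating the $\bot$-jets of $s_0$ and $s_1$ (which costs no zig-zags, only genuinely differentiating in the $\omega$-directions, and is $C^r$-small by the closeness hypothesis). Fourth, I would assemble these fibrewise zig-zag multi-sections into a single multi-section on the annulus — here the smooth dependence on parameters in Propositions \ref{prop:principalProjectionGeneral} and \ref{prop:zigzagBumpConstruction} is used, and one arranges the zig-zag membranes to vary smoothly with $\omega$, forming annuli $A_i$ diffeomorphic to $\NS^{n-1}\times[-1,1]$ (spherical base). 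Fifth, I would glue: on $\D_{1-\delta}$ take $f=s_1$, on $\Op(\partial\D)$ take $f=s_0$, and on the annulus the interpolating multi-section, checking the matching is $C^\infty$ at $\{|x|=1-\delta\}$ and near $\partial\D$ by arranging the bump function to be constant ($=y_a$ resp. $=y_b$) near the endpoints. Finally I would track the $C^0$-error on $j^rf$: the $\omega$-derivatives are controlled by the $C^r$-closeness of $s_0,s_1$ (contributing at most a couple of $\varepsilon$'s), the radial lower derivatives introduced by the zig-zag are $O(1/N)<\varepsilon$, and the top radial derivative $z^{(0,\dots,0,r)}$ of $f$ equals (near the ends) that of $s_0$ or $s_1$ and in between stays within the convex-hull-type bound of the two plus $O(1/N)$; summing these gives the stated $5\varepsilon$.

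\textbf{Main obstacle.} The delicate point is not the $1$-dimensional zig-zag itself but the bookkeeping of the full $r$-jet when one tilts the zig-zag into the radial direction and lets its membrane vary over $\NS^{n-1}$: one must verify that the mixed derivatives (radial $\times$ spherical) of the lifted map remain $5\varepsilon$-controlled, which is exactly what Proposition \ref{prop:principalProjectionGeneral} guarantees (mixed derivatives are obtained by ``differentiating with respect to the Cartan forms'', and this operation is Lipschitz in the data), but making the constants add up to the claimed $5\varepsilon$ rather than some larger multiple requires care in how one splits the difference $s_0-s_1$ into a ``$\bot$-jet part'' (handled by an honest $C^r$-small homotopy of sections) and a ``pure radial derivative part'' (handled by the zig-zag bump). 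A secondary subtlety is ensuring the resulting $f$ is a genuine topological embedding with disjoint zig-zag annuli of spherical base — this follows because the zig-zags for different $\omega$ are nested inside a thin shell and the front projection stays graphical outside the (thin) membranes, but it should be stated explicitly.
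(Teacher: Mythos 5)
There is a genuine gap in the proposal: the proposed decomposition of the interpolation into a ``zig-zag the pure $r$-th radial derivative'' part plus an ``honest interpolation of the $\bot$-jet'' part does not work. The $\bot$-jet with respect to the radial direction contains \emph{all} radial derivatives of order $<r$ (including the $y$-value), so interpolating it honestly across the thin shell $[1-\delta,1]$ costs $\tau$-derivatives of size $\sim\varepsilon/\delta$, not $\varepsilon$. Your claim that this interpolation ``costs no zig-zags, only genuinely differentiating in the $\omega$-directions'' misreads $\bot$-jets: they are not merely the spherical part of the jet. Moreover, Proposition \ref{prop:zigzagBumpConstruction} interpolates between two \emph{constants} $y_a,y_b$; it does not interpolate general radial jet profiles, and the ``scaled and tilted copy'' you invoke is never constructed. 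Finally, your assertion that embeddedness ``follows'' for the assembled multi-section is unjustified, since nothing in the proposal guarantees the zig-zags for different $\omega$ stay disjoint when $s_0$ and $s_1$ cross.

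The actual mechanism of the paper's proof is quite different: it \emph{multiplies} rather than decomposes, and it avoids the principal projection entirely. After the \emph{pushing trick} (replacing $s_1$ by $\widetilde s_1 = s_1 + (2\varepsilon,0,\dots,0)$ so that $s := \widetilde s_1 - s_0$ is pointwise nonzero with $\varepsilon < |y_1\circ s| < 3\varepsilon$), one uses $s$ to build a frame $A(x)=(s,e_2,\dots,e_k)$ of $F$; this reduces the vector-valued problem to the scalar one and is what makes the result work for $\dim F > 1$. One then sets $f_N := A(\rho_N\, e_1)$, the \emph{product} of the scalar zig-zag bump $\rho_N$ by the section $s$ (Remark \ref{rem:multiplication}). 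The $C^0$-bound on $j^rf_N$ follows from Leibniz's rule: $\partial^{I'}\rho_N$ is bounded by $1$ for $I'=0$ and by $1/N$ otherwise, while $|s|_{C^r} < 3\varepsilon$, so the cross-terms are $O(\varepsilon/N)$. Embeddedness of $f_N$ is then automatic since $\rho_N e_1$ is a topological embedding and $A$ is a fibrewise diffeomorphism. These three ingredients — pushing, framing, multiplication — are the heart of the argument, and none of them appear in your proposal; in their absence the key estimate and the embeddedness claim do not go through.
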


\begin{figure}[ht]
\centering
\includegraphics[width = \linewidth ]{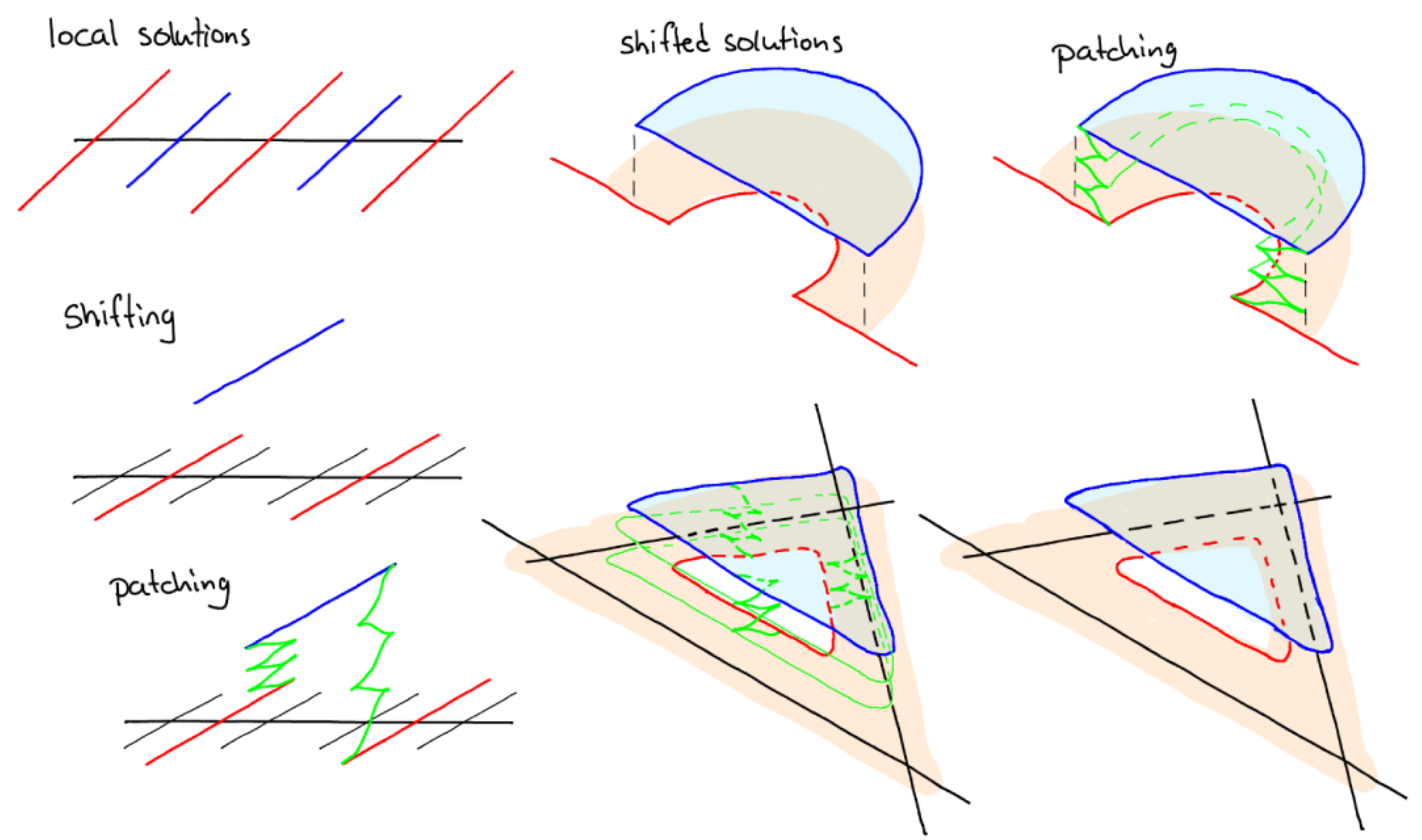}
\caption{On the left, from top to bottom, the steps in the proof of Proposition \ref{prop:zigzagBump}. First, we find solutions over each open cell (in blue) and on a neighbourhood of the codimension-one skeleton (in red). Then, the solutions on the cells are shifted so that their images are disjoint. Lastly, they are patched together using the zig-zag bump function. On the right, the shifted and patched solutions are drawn in higher dimensions, and in a triangulated space. The construction yields accordion-looking configurations made up of several concentric spheres of zig-zags.}\label{fig:HolonomicApprox_Mainproof}
\end{figure}

As long as we set $f|_{\partial\D_{1-\delta}} = s_1$, we can simply extend $f$ to the interior as $s_1$. Due to this, we henceforth restrict the domain of $s_0$ and $s_1$ to the region of interest $\NS^{n-1} \times [1-\delta,1]$. We break down the proof into steps. The argument is depicted in Figure \ref{fig:HolonomicApprox_Mainproof}.

\subsubsection{The pushing trick} We shift $s_1$ by adding a constant in $F$:
\[ \wtd s_1(x) := s_1(x) + (2\varepsilon,0,\dots,0). \]
Since $|s_0 - s_1|_{C^0} < \varepsilon$, replacing $s_1$ by $\wtd s_1$ guarantees that:
\[ \wtd s_1(x) \neq s_0(x), \qquad\textrm{ for every $x \in \NS^{n-1} \times [1-\delta,1]$,} \]
while retaining a bound $|\wtd s_1 - s_0|_{C^r} < 3\varepsilon$. 

\subsubsection{First simplification} We can simplify the setup by applying the fibrewise translation:
\begin{align*}
J^0(\NS^{n-1} \times [1-\delta,1],F) \quad\longrightarrow\quad & J^0(\NS^{n-1} \times [1-\delta,1],F) \\
p                                       \quad\mapsto\quad & p - s_0(\pi_b(p)),
\end{align*}
It preserves the $C^r$-distance and maps $s_0$ to the zero section. The section $\wtd s_1$ is mapped to $s := \wtd s_1 - s_0$. Consequently, we just need to explain how to interpolate between the zero section and some arbitrary section $s$ satisfying $|s|_{C^r} < 3\varepsilon$ and $s(x) \neq 0$ for all $x$, as the radius goes inwards from $1$ to $1-\delta$.

\subsubsection{Second simplification} A second symmetry allows us to put $s$ in normal form. Indeed, due to the nature of the shift we performed, we have that 
\[ \varepsilon < |y_1 \circ s(x)| < 3\varepsilon \]
for all $x$. This allows us to define a framing 
\begin{align*}
A: \NS^{n-1} \times [1-\delta,1] \quad\longrightarrow\quad & \GL(F) \\
x                             \quad \mapsto \quad & A(x) := (s,e_2,e_3,\dots,e_k),
\end{align*}
where $\{e_j\}_{j=1,\dots,k}$ is the framing dual to the coordinates $y_i$ in $F$. The framing $A$ defines a fibre-preserving transformation of the $F$-bundle by left multiplication. By construction $Ae_1 = s$.

\subsubsection{Main construction} We apply Proposition \ref{prop:zigzagBumpConstruction} to produce a zig-zag bump function 
\[ (\rho_N)_{N \in \N}: [1-\delta,1] \quad\to\quad J^0([1-\delta,1],\R) \]
interpolating between $1$ and $0$. The expression $\rho_N(t)e_1$ denotes then a sequence of multi-sections of $F$.

\begin{remark} \label{rem:multiplication}
In general, given a section $a: X \to Y$ and a multiply-valued function $b: M \to J^0(X,\R)$, we can define their multiplication to be multi-section $c: \quad M \to J^0(Y)$ given by 
\[c(t) := (x \circ b(t), y \circ b(t)a(x \circ b(t)).\]
This generalises the usual notion of multiplication of a section by a function. \hfill$\triangle$
\end{remark}

We can use $A$ to produce another sequence of multi-sections:
\begin{align*}
f_N: \NS^{n-1} \times [1-\delta,1] \quad\longrightarrow\quad & J^0(\NS^{n-1} \times [1-\delta,1],F) \\
(\tilde x, t)                      \quad\mapsto\quad & A(\rho_N(t)e_1)
\end{align*}
interpolating between $s$ and $0$. We claim that, for $N$ large enough, the mapping $f_N$ satisfies the properties given in the statement.

\subsubsection{Checking the claimed properties} We claim that $f_N$ is a multi-section with zig-zags. Indeed, $\rho_N$ is a multi-section with zig-zags and $\rho_Ne_1$ is its stabilisation. We conclude by noting that $j^rf_N$ is obtained from $j^r(\rho_Ne_1)$ by applying the point symmetry $j^rA$.

The second claim is that each $f_N$ agrees with $s$ in a neighbourhood of $\NS^{n-1} \times \{1-\delta\}$ and with $0$ in a neighbourhood of $\NS^{n-1} \times \{1\}$. This is clear by construction, since $Ae_1 = s$ and $\rho_N$ is identically $0$ when the radius is close to one and identically $1$ when the radius is close to $1-\delta$.

The final claim is that 
\[ |j^rf_N|_{C^0} < 4\varepsilon \]
if $N$ is large enough. From this bound the desired estimate will follow:
\[ |j^rs - j^rf_N|_{C^0} \leq |j^rs|_{C^0} + |j^rf_N|_{C^0} < 5\varepsilon. \]
Since $\rho_N$ is the graph of an actual function over a dense set, we can carry out our computations using said function; we abuse notation and still denote it by $\rho_N$. In this manner, $f_N$ itself can be regarded, over a dense set, to be the function $\rho_Ns$.

For each multi-index $I$ with $|I| \leq r$:
\[ |\partial^I(\rho_Ns)|^2 = \left| \sum_{I'+I'' = I} (\partial^{I'}\rho_N)(\partial^{I''}s)\right|^2 \leq 
                             \sum_{I'+I'' = I}|\partial^{I'}\rho_N|^2 |\partial^{I''}s|^2  \]
Now, each derivative $|\partial^{I'}\rho_N|$ is smaller than $1/N$, with the exception of $|\rho_N| = 1$.

Let $K_1$ be the maximum number of decompositions $I'+I'' = I$ that a multi-index $|I| \leq r$ in $n$ variables and $k$ outputs may have. Let $K_2$ be the number of multi-indices $|I| \leq r$. Then:
\begin{align*}
|\partial^I(\rho_Ns)|^2 < \quad&\quad |\partial^I s|^2 + \frac{9K_1}{N^2}\varepsilon^2 \\
|\rho_Ns|_{C^r}^2 <       \quad&\quad \sum_I \left(|\partial^I s|^2 + \frac{9K_1}{N^2}\varepsilon^2 \right) < |s|_{C^r}^2 + \frac{9K_1K_2}{N^2}\varepsilon^2.
\end{align*}
Therefore, by setting $N^2 > 9K_1K_2$, we conclude:
\[ |j^rf_N|_{C^0}^2 = |\rho_Ns|_{C^r}^2 < |s|_{C^r}^2 + \varepsilon^2 < 16\varepsilon^2, \]
as desired. \hfill $\Box$

\begin{remark} \label{rem:randomExtensionInterior}
Observe that the restriction $s_0|_{\D_{1-\delta}}$ plays absolutely no role in the statement or the proof. In practice (e.g. in the proof of Theorem \ref{thm:holonomicApproxZZ}) $s_0$ will only be given to us on a neighbourhood of $\partial\D$, so we will choose $\delta$ sufficiently small so that $s_0$ is defined for radii $|x| \geq 1-\delta$. An arbitrary extension of $s_0$ to the interior will then allow us to apply the proposition. \hfill$\triangle$
\end{remark}

\begin{remark}
An important feature of the proof is that the sections with zig-zags we construct are obtained from the ``standard'' sections with zig-zags $\rho_Ne_1$ by applying fibre-preserving diffeomorphisms. This implies that we do not need any form of stability for singularities in order to carry out our arguments (as we put the desired models ``by hand''). We do this intentionally, as we want to avoid stability discussions (particularly once we introduce birth/death events or wrinkles with more general bases). 

Indeed: the issue of stability for singularities that lift to jet space was first studied by V. Lychagin \cite{Ly80b} (for $1$-jet spaces in more than one variable). General jet spaces were later studied by A. Givental \cite{Giv}. In these works they already observe that even Whitney-type singularities may not be stable if the fibre has dimension at least $2$. \hfill$\triangle$
\end{remark}

\subsection{Constructing zig-zag bump functions}  \label{ssec:zigzagBumpConstruction}

We now provide a proof for Proposition \ref{prop:zigzagBumpConstruction}. Once again, we break it into steps, see Figure \ref{Fig:HolonomicApprox_Zigzag}.

\subsubsection{Reduction to a standard square}

We claim that it is sufficient to prove the claim for $I=[-1,1]$. Indeed, let $\Psi$ be a diffemorphism between two closed intervals. Due the compactness of the interval, the point symmetry $j^r\Psi$ distorts jet space in a bounded manner. It follows that this distortion can be absorbed by taking a larger $N$. We conclude that $\Psi$ maps zig-zag bump functions to zig-zag bump functions.

Furthermore, it is enough to prove the case $y_a=0$ and $y_b=1$. The general case follows by applying a point symmetry.

\subsubsection{One level in the infinite zig-zag}

Let
\begin{align*}
Z: \Op([1,3]) \quad\longrightarrow\quad J^0(\R,\R)
\end{align*}
be a germ of zig-zag with membrane $[1,3]$. This implies that $Z([1,3])$ is graphical over $\R$. Up to applying a point symmetry, we may assume that:
\begin{itemize}
\item The horizontal coordinate $x \circ Z$ is decreasing for $t \in (1,3)$ and $x \circ Z([1,3]) = [-1,1]$.
\item The vertical coordinate $y \circ Z$ is increasing.
\end{itemize}
We claim that we can extend $Z$ to a map with domain $\Op([-1,3])$, still denoted by $Z$, that satisfies:
\begin{itemize}
\item It is a topological embedding.
\item It is graphical away from $\{-1,1,3\}$.
\item $Z|_{\Op(0)}(t) = (t,0)$.
\item $Z(t) = Z(t-4)+4$ for $t \in \Op(3)$.
\end{itemize}
Indeed, we define $Z$ over $\Op(-1)$ using the last condition. Similarly we define it in $\Op(0)$ using the third property. Lastly, we extend over $(-1,1)$ in a graphical manner avoiding $Z([-1,1])$.

\begin{figure}[ht]
\centering
\includegraphics[width = \linewidth ]{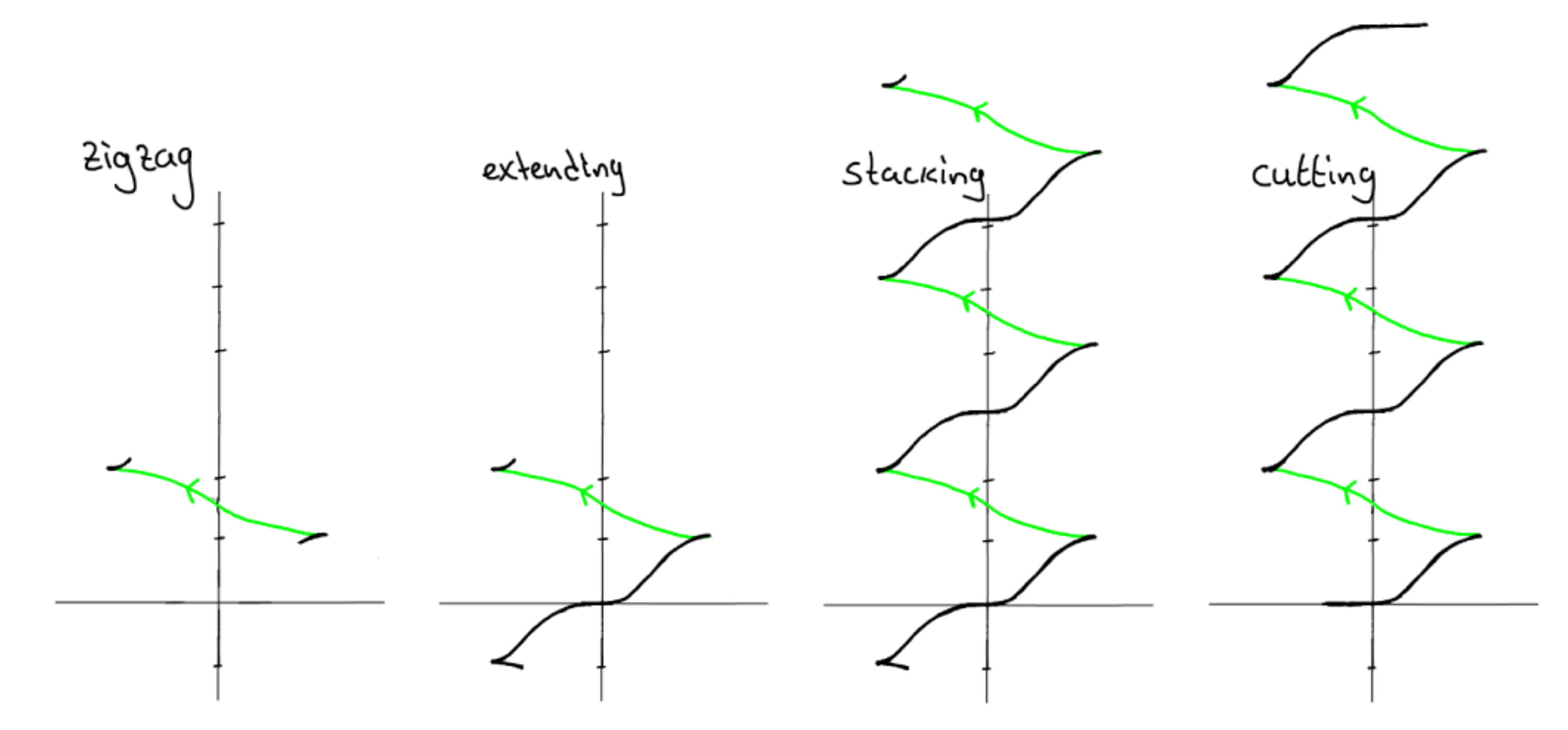}
\caption{From left to right, the various steps in the construction of the zig-zag bump function. We start with a zigzag (defined as a germ around its membrane). Then we extend it to obtain one full period of the infinite zig-zag. Stacking these levels on top of each other produces the infinite zig-zag. Finally, we cut out a finite piece and flatten the ends. The one step we do not picture is the rescaling, that takes the finite piece and compresses it so that it interpolates between $0$ and $1$.}\label{Fig:HolonomicApprox_Zigzag}
\end{figure}

\subsubsection{The infinite zig-zag}

Thanks to its defining properties, we can extend $Z$ to a topological embedding with domain $\R$ and satisfying the periodicity condition $Z(t) = Z(t-4)+4$. We call it the \textbf{infinite zig-zag bump}. It has a zig-zag in each interval $[4l+1,4l+3]$, $l \in \Z$.

Its lift $j^rZ: \R \to J^r(\R,\R)$ is an integral mapping with fold singularities. Since the front $Z$ is topologically embedded, $j^rZ$ is also embedded. With the exception of $r'= 0$, all other derivatives $z^{(r')} \circ j^rZ$ are truly periodic and thus bounded.

\begin{remark}
Depending on the choices made during the construction, the resulting infinite zig-zag (and its derivatives) will look slightly different. One possible choice is shown in Figure \ref{fig:Singularities_FigureEight}.\hfill$\triangle$
\end{remark}

\subsubsection{Rescaling}

Given a positive integer $N$, we let $\Psi_N$ be the point symmetry corresponding to rescaling $J^0(\R,\R)$ vertically by an amount $\frac{1}{4N}$. We write $Z_N(t) := \Psi_N \circ Z(t)$. By construction, $Z_N(0) = (0,0)$ and $Z_N(4N) = (0,1)$.

It follows from the nature of scaling that 
\[ |z^{(r')} \circ j^rZ_N| = O(1/N). \]
Do observe that these bounds do not depend on the parametrisation of an integral mapping, only on its image.

\subsubsection{A piece of the infinite zig-zag}

Lastly, we explain how to produce the desired $\rho_N$ from the map $Z_N$.

We pick a diffeomorphism $\phi: [-1,1] \to [-1,1+4N]$. The map $Z_N \circ \phi$ satisfies the desired bound on derivatives. In order to introduce the claimed boundary conditions we observe that $(Z_N \circ \phi)|_{\Op(\phi^{-1}(0))}$ maps to the zero section. This means that we can replace the map over the interval $[0,\phi^{-1}(0)]$ by a map into the zero section that is the zero section itself in $\Op(0)$. Similarly, $(Z_N \circ \phi)|_{\Op(\phi^{-1}(4N))}$ maps to the constant section with value one so we perform a similar replacement over $[\phi^{-1}(4N),1]$. This concludes the proof. \hfill$\Box$

\subsection{Proof of the Theorem} \label{ssec:holonomicApproxProof}

The proof of Theorem \ref{thm:holonomicApproxZZ} follows the standard structure of an $h$-principle argument. 

In Subsection \ref{sssec:holonomicApproxReduction} we prove the \emph{reduction step}. Its output is a holonomic section $g$, defined along the codimension-$1$ skeleton of $X$ and approximating the given formal section $\sigma$.

In Subsection \ref{sssec:holonomicApproxExtension} we provide the \emph{extension argument}: we extend $g$ to the interior of the top dimensional cells. In order to obtain a good approximation of $\sigma$, the extension to the interior must be a multi-section, as presented in Proposition \ref{prop:zigzagBump}.

\subsubsection{Preliminaries} \label{sssec:holonomicApproxPreliminaries}

We must fix some auxiliary data first. Depending on the constant $\varepsilon >0$ we fix a finite collection of pairs $\{(U_i,f_i)\}$ such that
\begin{itemize}
\item $\{U_i\}$ is a covering of $X$ by balls,
\item $f_i: U_i \to J^r(Y|_{U_i} \to U_i)$ is a holonomic section satisfying $|f_i - \sigma|_{U_i}|_{C^0} < \varepsilon/10$.
\end{itemize}
The existence of such a collection follows from the standard holonomic approximation Theorem \ref{thm:holonomicApprox} applied to each point in $X$. By compactness of $X$ we get a finite refinement.

We then triangulate $X$, yielding a triangulation $\ST$. We assume that this triangulation is fine enough to guarantee that each simplex is contained in one of the $U_i$. Given a top-simplex $\Delta \in \ST$, we choose a preferred $U_i$ and we denote the corresponding section $f_i$ by $f_\Delta$.

We remark that $Y|_{U_i}$ is trivial, so we can choose an identification $J^r(Y|_{U_i}) \cong J^r(\D,\F)$. We can then relate the $C^0$-distance in the former with the standard $C^0$-distance in the latter. By finiteness of the cover there is a constant bounding one in terms of the other. We assume this constant is $1$ to avoid cluttering the notation.

\subsubsection{Reduction} \label{sssec:holonomicApproxReduction}

The codimension-$1$ skeleton of $X$ is a CW-complex of positive codimension. Thus, according to Theorem \ref{thm:holonomicApprox}, there exists:
\begin{itemize}
\item a wiggled version $\wtd\ST$ of $\ST$,
\item a holonomic section $g: \Op(\wtd\ST) \to Y$ satisfying $|\sigma-j^rg|_{C^0} < \varepsilon/10$.
\end{itemize}
The wiggling can be assumed to be $C^0$-small, so each top-simplex $\Delta \in \wtd\ST$ is contained in the same $U_i$ as the original simplex. I.e., we have sections $g$ (defined over $\Op(\partial\Delta)$) and $f_\Delta$ (defined over the whole of $\Delta$), both of them $\varepsilon$-approximating $\sigma$.

\subsubsection{Extension} \label{sssec:holonomicApproxExtension}

We focus on a single top-simplex $\Delta \in \wtd\ST$; the argument is the same for all of them. We apply Proposition \ref{prop:zigzagBump} and Remark \ref{rem:randomExtensionInterior} to $g$ and $f_\Delta$ over an inner collar of $\partial\Delta$. This produces the desired multi-section extension $f$ of $g$ to the interior of $\Delta$. The Proposition guarantees that:
\[ |j^rf-\sigma|_{C^0} < |j^rf-j^rf_\Delta|_{C^0} + |j^rf_\Delta-\sigma|_{C^0} < \varepsilon \]
over each cell. This concludes the proof of Theorem \ref{thm:holonomicApproxZZ}. \hfill$\Box$

\begin{remark}
Here is an extremely biased comment about the proof of Theorem \ref{thm:holonomicApproxZZ}: The strategy presented (zig-zag bump functions together with the pushing trick) is simpler than the path followed in \cite{ElMiWrinEmb} (reducing to simple tangential homotopies along a single derivative and approximating them with wrinkles). Furthermore, the connection to classic holonomic approximation is more transparent with our implementation.

However, the idea of working one direction at a time translates nicely to the contact setting. This is precisely how \'Alvarez-Gavela proceeds in \cite{Gav2}. It is not clear to us whether our proof may be adapted to also reprove \cite{Gav2}, but it seems challenging. The reason is that our argument resembles classic holonomic approximation, but the statements in \cite{Gav2} require holonomic approximation relative to first order (as proven in \cite{Gav1}).  \hfill$\triangle$
\end{remark}

%
%
%
%
%
%
\section{Parametric holonomic approximation by multi-sections} \label{sec:holonomicApproxParam}

In this section we state and prove the parametric and relative version of Theorem \ref{thm:holonomicApproxZZ}. We denote our $k$-dimensional parameter space by $K$. We also use $k \in K$ to denote elements of $K$; this should not cause any major confusion. As in previous sections, we write $J^r(Y) \to Y \to X$ for the space of jets under study.

\subsection{Families of zig-zags} \label{ssec:zigzagParam}

In parametric families, double folds (Definition \ref{def:doubleFold}) can (dis)appear in Reidemeister I moves (Definition \ref{ssec:ReidemeisterI}). The same applies to closed double folds (Definition \ref{def:closedDoubleFold}), that disappear in stabilisation events (Definition \ref{def:stabilisation}). Now we need to extend the notion of a multi-section with zig-zags (Definition \ref{def:sectionZigzag}) to include the appropriate birth-death behavior. We will require that this happens in a wrinkly self-cancelling manner.

\subsubsection{Zig-zag wrinkles}

We recall notation from Subsections \ref{ssec:wrinkles} and \ref{ssec:closedWrinkles}. We fix a $(n-1)$-manifold $H$, a submanifold $D \subset K \times H$ with non-empty interior (and possibly with boundary), and a function $\rho$ that is negative in the interior of $D$, positive in the complement, and cuts $\partial D$ transversely. This data defines for us the sublevel set
\[ A := \{ t^2 + \rho(k,\wtd x) \leq 0 \} \subset K \times H \times \R. \]
\begin{definition} \label{def:zigZagParam}
The \textbf{model zig-zag wrinkle} with base $D$, height $\rho$, and membrane $A$
\[ \Op(\partial A) \longrightarrow K \times J^0(H \times \R,\R) \]
is the germ along $A$ of the front projection of:
\begin{itemize}
\item The model wrinkle with base $D$ and height $\rho$, if $r$ is even.
\item The model closed wrinkle with base $D$ and height $\rho$, if $r$ is odd.
\end{itemize}

A fibered-over-$K$ family of multi-sections
\[ g = (g_k)_{k \in K}: M \to J^0(Y) \]
has a \textbf{zig-zag wrinkle} along $A' \subset K \times M$ if it is point equivalent along $\partial A'$ to the model zig-zag wrinkle along $\partial A$. The region $A'$ is said to be the membrane of the wrinkle. The map $g|_{A'}$ is said to be a model zig-zag wrinkle if this equivalence can be extended to the interior of $A'$. 
\end{definition}
The equator of a zig-zag wrinkle is the corresponding object in the (closed) wrinkle lifting it. 

\subsubsection{Swallowtails and swallowtail moves}

Some simple instances of zig-zag wrinkles that are relevant for us are the following:
\begin{definition}\label{def:swallowtail}
A zig-zag wrinkle is said to be a \textbf{swallowtail} if it is (equivalent to) the front projection of:
\begin{itemize}
\item The pleat, if $r$ is even.
\item The closed pleat, if $r$ is odd.
\end{itemize}
\end{definition}

\begin{definition}
A zig-zag wrinkle is said to be a \textbf{swallowtail move} if it is (equivalent to) the front projection of:
\begin{itemize}
\item The Reidemeister I move, if $r$ is even.
\item The stabilisation, if $r$ is odd.
\end{itemize}
The projection of a (closed) double fold embryo (up to equivalence) will be called a zig-zag embryo.
\end{definition}
In particular, a zig-zag wrinkle whose base is a submanifold of $H$ (i.e. $K$ is just a point) has swallowtails along the equator. Similarly, a zig-zag wrinkle whose base is a submanifold of $K$ (i.e. $H$ is a point) has swallowtail moves along the equator.

\begin{definition}
A zig-zag wrinkle is said to have ball (resp. cylinder) base if it is the front projection of a (closed) wrinkle with ball (resp. cylinder) base.
\end{definition}

\subsubsection{Families of multi-sections with zig-zag wrinkles}

The parametric analogue of Definition \ref{def:sectionZigzag} reads:
\begin{definition}\label{def:sectionZigzagParam}
A \textbf{wrinkle family of multi-sections} is a pair $(f,\{A_i\})$ consisting of:
\begin{itemize}
\item A $K$-family of multi-sections $f = (f_k)_{k \in K}: M \to Y$.
\item A locally finite collection of disjoint submanifolds $A_i \subset K \times M$ (possibly with boundary).
\end{itemize}
Satisfying:
\begin{itemize}
\item Each $f_k$ is a topological embedding.
\item $f$ has a model zig-zag wrinkle with membrane $A_i$, for each $i$. 
\item $f$ has no other singularities.
\end{itemize}
\end{definition}
Do note that a model zig-zag wrinkle is indeed a topological embedding. We write $j^rf$ for the family of fibrewise lifts $(j^rf_k)_{k \in K}$. Then:
\begin{lemma} \label{lem:liftZigzagWrinkle}
Let $(f: K \times M \to K \times Y,\{A_i\})$ be a wrinkle family of multi-sections. Then:
\begin{itemize}
\item If $r$ is even, the maps $j^rf_k$ are embeddings.
\item If $r$ is odd, the maps $j^rf_k$ are topological embeddings.
\end{itemize}
\end{lemma}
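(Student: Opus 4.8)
The plan is to reduce the statement to the local models already analysed in Section \ref{sec:singularities}, using the fact that a wrinkle family of multi-sections is, by Definition \ref{def:sectionZigzagParam}, a topological embedding whose only singularities of mapping (for each fixed $k$) are confined to the membranes $A_i$ and are modelled on the front projections of the (closed) wrinkle. First I would recall that away from $\bigcup_i A_i$, each $f_k$ is an immersion transverse to the fibres of $Y$, so its lift $j^rf_k$ is locally the holonomic lift of an honest section and is therefore an embedding there. Hence the whole issue is local near each $A_i$, and by the definition of ``model zig-zag wrinkle'' it suffices to check the claim for the front projections of the model wrinkle (when $r$ is even) and the model closed wrinkle (when $r$ is odd), since point equivalence intertwines the lifts via point symmetries, which are diffeomorphisms of jet space.

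Next I would treat the two parities separately. For $r$ even: by Definition \ref{def:wrinkles} and Proposition \ref{prop:principalProjectionGeneral}, $j^rf_k$ restricted to $\Op(A_{i,k})$ is (a stabilisation of) $\Lift(f,0)$ for the cubic-type map $f$, whose principal projection is an embedding (the map $(t)\mapsto(t^3/3+\rho t,\,t)$ is injective on each fibre because the second coordinate is $t$); by the last bullet of Proposition \ref{prop:principalProjectionGeneral}, the lift is then embedded. Combining with embeddedness away from the membranes and with the global hypothesis that $f_k$ is a topological embedding (so the pieces cannot collide), one concludes $j^rf_k$ is an embedding — this is exactly the argument of Lemma \ref{lem:liftZigzag}, now carried out fibrewise over $K$. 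For $r$ odd: the local model is the closed wrinkle, whose principal projection $(t)\mapsto(t^3/3+\rho t,\,t^2)$ is \emph{not} injective (it has the self-intersection at $t=\pm\sqrt{3+\text{stuff}}$ seen in Subsection \ref{ssec:closedDoubleFold}), so $j^rf_k$ need not be a smooth embedding; however $f_k$ itself is assumed to be a topological embedding and $j^rf_k$ is its lift, hence continuous and injective, and since $M$ is (locally) compact and the target Hausdorff, $j^rf_k$ is a topological embedding. That gives the second bullet.

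The main obstacle I anticipate is the odd case, specifically making precise why $j^rf_k$ remains injective even though the local model's principal projection self-intersects. The point is that the self-intersection disappears upon lifting: as noted after Definition \ref{def:closedDoubleFold}, the closed curve $f([-1,1])$ bounds non-zero area, so the first integration (the $z^{(r-1)}$ coordinate) already separates the two branches — i.e. $\ClosedDoubleFold$ is an \emph{embedded} integral map. One must make sure this ``opening up'' survives the stabilisation and the passage to a family, which is immediate from the smooth dependence clause in Proposition \ref{prop:principalProjectionGeneral} and the fact that stabilisation only pads with zero components. After this local analysis, the global conclusion is a routine patching: the membranes $A_i$ are disjoint, $f$ has no other singularities, and $f_k$ being a topological embedding prevents distinct membranes (or a membrane and a regular region) from overlapping in the image, so the locally-defined embeddings assemble into a global (topological, resp. smooth) embedding $j^rf_k$. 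I would then remark that the argument is uniform in $k$, which is all that the statement asks.
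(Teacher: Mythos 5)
Your overall plan is the paper's: read off topological embeddedness of $j^rf_k$ from the hypothesis that $f_k$ is a topological embedding, and in the even case additionally note that the model zig-zag wrinkle lifts to a model wrinkle, which is a smooth embedding --- the latter via Proposition \ref{prop:principalProjectionGeneral} applied to the embedded principal projection $(t)\mapsto(t^3/3+\rho t,\,t)$. Most of your write-up is a faithful, more detailed expansion of this two-line argument.

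There is, however, a genuine gap in your odd-case deduction. You argue that $j^rf_k$ is continuous and injective and conclude that it is a topological embedding ``since $M$ is (locally) compact and the target Hausdorff.'' This implication fails for a merely locally compact domain --- $[0,2\pi)\to\NS^1$, $t\mapsto e^{it}$, is a continuous injection from a locally compact space into a Hausdorff space that is not an embedding --- and $M$ need not be compact in the setting of this lemma (cf.\ Corollary \ref{cor:holonomicApproxZZ}, which explicitly handles noncompact $X$ by exhaustion). The deduction actually intended is purely formal and needs no compactness: since $\pi_f\circ j^rf_k = f_k$ and $f_k$ is a topological embedding, the inverse $(j^rf_k)^{-1}$ equals $f_k^{-1}\circ\pi_f$ restricted to the image, a composition of continuous maps, so $j^rf_k$ is a topological embedding. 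This same factoring observation is also what makes your phrase ``the pieces cannot collide'' precise in the even case --- it is what upgrades local smooth embeddedness plus global injectivity to a global smooth embedding --- and it renders your side-worry about whether the ``opening up'' of the closed double fold survives stabilisation moot: embeddedness of the lift is forced by embeddedness of the front, independently of any analysis of the principal projection.
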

\begin{proof}
The topological embedding condition for $j^rf$ follows from the corresponding assumption on $f$. Moreover, if $r$ is even, the model zig-zag wrinkle lifts to a model wrinkle, which it itself a smooth embedding. 
\end{proof}

\begin{remark}
Let us emphasise: For $r$ odd, a zig-zag wrinkle lifts to a family of integral maps with singularities of mapping (which may be closed pleats, stabilisations, or generalisations thereof). In Appendix \ref{sec:desingularisationOdd} we will address this issue. Namely, we will introduce a surgery procedure that gets rid of these singularities, yielding a smooth integral embedding (at the price of introducing self-intersections in the front projection).

Observe that such a surgery procedure cannot exist in contact topology, where we know that legendrians exhibit rigidity, but it will work in any other odd jet space. A more sophisticated version of this surgery is precisely the main ingredient behind the loose legendrians of E. Murphy \cite{Mur}. The analogue of Murphy's surgery, in the context of general jet spaces, will appear in the sequel paper \cite{PT2}, where it will be used to prescribe the singularities of tangency of integral submanifolds. \hfill $\triangle$
\end{remark}

\subsection{The statement} \label{ssec:holonomicApproxParam}

The parametric version of holonomic approximation for multi-sections states:
\begin{theorem}\label{thm:holonomicApproxParam}
Fix a constant $\varepsilon > 0$. Let 
\[ (\sigma_k)_{k \in K}: X \longrightarrow J^r(Y) \]
be a $K$-family of formal sections. Then, there exists a wrinkle family of multi-sections $(f_k)_{k \in K}: X \to Y$ satisfying 
\[ |j^rf_k - \sigma_k|_{C^0} < \varepsilon. \]

Moreover, if the family $(\sigma_k)_{k \in K}$ is holonomic on a neighborhood of a polyhedron $L \subset K \times M$, then we can arrange $j^rf = \sigma$ over $\Op(L)$.
\end{theorem}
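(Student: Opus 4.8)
The strategy is to upgrade the non-parametric argument of Theorem \ref{thm:holonomicApproxZZ} in the same way one upgrades classical holonomic approximation: everything should be carried out simultaneously over the parameter space $K$, so that births and deaths of zig-zags are forced to occur in wrinkle configurations. First I would invoke the parametric version of the classical holonomic approximation Theorem \ref{thm:holonomicApprox} (which is already parametric and relative in the statement quoted) over the codimension-$1$ skeleton of $K \times X$, relative to the part $\Op(L)$ where the $\sigma_k$ are already holonomic. But the key point is that one should triangulate $K \times X$ in general position with respect to the vertical foliation coming from the projection $K \times X \to K$, using Thurston's Theorem \ref{thm:Thurston}; this guarantees that over each top-simplex the fibration over $K$ looks like a trivial slab $\Delta^{k} \times \Delta^{n}$, and the combinatorics of how cells appear as $k$ varies is controlled. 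The output of the reduction step is a family $g = (g_k)$ of sections, holonomic over (a wiggled copy of) the codimension-$1$ skeleton of $K \times X$, with $|j^rg_k - \sigma_k|_{C^0}$ small, and equal to $\sigma$ over $\Op(L)$.

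The extension step is where parametricity really enters. Over a top-dimensional cell of $K \times X$ we must fill in with a family of multi-sections interpolating between the prescribed boundary datum $g$ and an auxiliary holonomic model $f_\Delta$, just as in Proposition \ref{prop:zigzagBump}; but now the interpolation must be a family, and as $k$ moves across the cell the zig-zags used in the fibre interpolation will have to be born and die. The natural way to organise this is to produce a \emph{parametric zig-zag bump function}: a $K$-family of maps $\rho_{N,k}: [a,b] \to J^0([a,b],\R)$, agreeing with the constant sections near the endpoints, with all higher derivatives of size $O(1/N)$ uniformly in $k$, and such that the family has only model zig-zag wrinkles as singularities (swallowtail moves when a zig-zag is born/dies, swallowtails when the base of the wrinkle meets a cell boundary). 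Concretely one takes the infinite zig-zag of Subsection \ref{ssec:zigzagBumpConstruction} and lets the number of zig-zag ``teeth'' depend on $k$, inserting new teeth through Reidemeister I moves (for $r$ even) or stabilisation events (for $r$ odd); the self-cancelling, wrinkly nature is built in by construction, using the models of Subsections \ref{ssec:wrinkles} and \ref{ssec:closedWrinkles} rather than any stability theorem. The pushing trick and the two normal-form reductions from the proof of Proposition \ref{prop:zigzagBump} go through verbatim in families: one shifts $f_\Delta$ fibrewise in $F$ by a fixed vector to make its image disjoint from that of $g$, translates $g$ to the zero section, and uses a $K$-family of framings $A_k \in \GL(F)$ to bring the difference section to the form $s_k e_1$; then $f_{N,k} := A_k(\rho_{N,k}(t) e_1)$ is the desired family, a $(V,Z)$-stabilisation composed with the point symmetry $j^rA_k$, hence still a wrinkle family of multi-sections. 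The relative clause is handled by the relative clauses already available in Theorem \ref{thm:holonomicApprox} and by taking the triangulation to contain $L$ as a subcomplex, so that nothing is changed over $\Op(L)$.

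The step I expect to be the genuine obstacle is the \emph{matching of zig-zag wrinkles across cell boundaries}. Over the interior of a single top cell the fibre interpolation is easy; the subtlety is that when two adjacent top cells of $K \times X$ meet along a codimension-$1$ face, the families of multi-sections produced on either side must glue to a globally defined wrinkle family, meaning in particular that any zig-zag wrinkle whose membrane straddles the face has its equator, base, and height data agreeing from both sides. This is exactly the place where general position of the triangulation with respect to the $K$-fibration is used: it ensures that each face is transverse to the vertical directions so that the birth/death loci (which live over the base) intersect faces cleanly, and it lets one arrange the auxiliary models $f_\Delta$ and the number of teeth $N$ to be locally constant enough near faces that the gluing is literal. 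One also has to be slightly careful that, for $r$ odd, the lifted family $j^rf_k$ is only a topological embedding (Lemma \ref{lem:liftZigzagWrinkle}); but that is acceptable for the statement as phrased, and the promised desingularisation is deferred to Appendix \ref{sec:desingularisationOdd}. A secondary but routine obstacle is bookkeeping the constants: the point symmetries $j^r\Psi$ coming from the triangulation and from the rescaling $\phi$ distort jet space, but by compactness of $K$ and finiteness of the cover these distortions are uniformly bounded and can be absorbed into a larger choice of $N$, exactly as in the non-parametric proof.
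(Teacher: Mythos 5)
Your proposal matches the paper's proof essentially step for step: a Thurston triangulation of $K \times X$ in general position with respect to the projection to $K$, classical parametric holonomic approximation over the (wiggled) codimension-one skeleton, and a parametric extension over each top cell via a family of zig-zag bump functions whose birth/death events are Reidemeister I moves for $r$ even and stabilisations for $r$ odd --- this is exactly the swallowtail family of Definition \ref{def:zigzagBumpParam} and Proposition \ref{prop:swallowtailBump}, fed into the interpolation Proposition \ref{prop:interpolationParam} over each cell.

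The one place where you misdiagnose the difficulty is the ``matching of zig-zag wrinkles across cell boundaries''. In the paper's argument no such matching arises: Proposition \ref{prop:interpolationParam} produces a family agreeing with the holonomic boundary datum $g_0$ on a full collar of $\partial(\D^k\times\D^n)$, so every wrinkle membrane lies strictly inside a single top cell, and over a neighbourhood of the codimension-one skeleton the multi-section coincides with the single-valued holonomic section coming from the reduction step. General position is used not to coordinate wrinkle data across faces, but to ensure that each top cell minus a collar is a fibered polydisc $\D^k\times\D^n$ over $K$ --- which is precisely the hypothesis of the interpolation proposition. Once that is in place, neither the auxiliary cell models $f_\Delta$ nor the tooth count $N$ need be made locally constant near faces; adjacent cells glue trivially because both restrict to $g_0$ there.
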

Even further:
\begin{corollary} \label{cor:holonomicApproxCylinder}
The wrinkles of the family $(f_k)_{k \in K}: X \to Y$ produced by Theorem \ref{thm:holonomicApproxParam} may be assumed to have cylinder base.
\end{corollary}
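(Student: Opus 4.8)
The statement to prove is Corollary~\ref{cor:holonomicApproxCylinder}: the wrinkles produced by Theorem~\ref{thm:holonomicApproxParam} may be taken to have cylinder base, i.e. with $H = \NS^{n-1}$, $K = \D^k$, and $D = K \times H$. My strategy is to revisit the proof of Theorem~\ref{thm:holonomicApproxParam} and observe that the zig-zag wrinkles it produces already come in a very constrained form, so that a surgery/chopping argument converts them to cylinder-base wrinkles. The key point is that in the non-parametric case (Theorem~\ref{thm:holonomicApproxZZ}), the construction via Proposition~\ref{prop:zigzagBump} produces, over each top-dimensional simplex, an ``accordion'' of concentric spheres of zig-zags — that is, precisely wrinkles with $H = \NS^{n-1}$, $K = \{\text{pt}\}$, $D = H$ (so already cylinder base, trivially, in the non-parametric setting). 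In the parametric case, the zig-zags are introduced in the domain in the same spherical fashion but now are allowed to be born and die as $k$ varies, via swallowtail moves along the equator; the base $D$ one gets this way is $D = K' \times \NS^{n-1}$ for $K' \subset K$ a submanifold with boundary (the locus over which the sphere of zig-zags is present). So the output is, up to point symmetries, an $\NS^{n-1}$-stabilisation of a wrinkle with base $K' \times \{\text{pt}\}$.

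**Main steps.** First I would recall that a wrinkle with base $K' \times \NS^{n-1}$ (for $K'$ a submanifold of $K$ with nonempty interior and boundary) is, up to point symmetries, the $\NS^{n-1}$-stabilisation of a wrinkle with base $K' \subset K$, as already noted in Subsection~\ref{ssec:wrinkles} (and its closed analogue in Subsection~\ref{ssec:closedWrinkles}). So the task reduces to the ``core'' problem: replacing a wrinkle with an arbitrary base $K' \subset K$ (a submanifold with boundary and nonempty interior) by a collection of wrinkles with ball base — and then a ball-base wrinkle is exactly the $k$-parameter slice that, after the $\NS^{n-1}$-stabilisation, is a cylinder-base wrinkle. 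Second, the replacement of an arbitrary base by ball bases is the \emph{chopping} surgery discussed in the paper (Appendix~\ref{sec:surgeries}); I would invoke it to decompose $K'$ into a union of small balls $\D^k_j$ (via a fine enough handle/cell decomposition of $K'$, or simply by cutting $K'$ along hypersurfaces into pieces each diffeomorphic to a ball), and apply chopping to replace the single wrinkle over $K'$ by a locally finite family of wrinkles each with base a ball $\D^k_j$. Stabilising each of these by $\NS^{n-1}$ yields wrinkles with cylinder base. Third, I would check that this surgery is compatible with the $C^0$-estimate $|j^r f_k - \sigma_k|_{C^0} < \varepsilon$ — chopping is a $C^0$-small modification supported near the original membrane, so the estimate is preserved (possibly after shrinking the initial $\varepsilon$ by a controlled factor in the application of Theorem~\ref{thm:holonomicApproxParam}), and that it respects the relative condition over $\Op(L)$, since the surgery can be localised away from $L$.

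**The main obstacle.** The delicate point is bookkeeping the \emph{topology} of the bases produced by the proof of Theorem~\ref{thm:holonomicApproxParam} and confirming that they are indeed of the form $K' \times \NS^{n-1}$ (or the relevant stabilised/nested versions) — the proof introduces zig-zags over spheres in the domain simplex-by-simplex, and when one passes to the parametric case the combinatorics of how these spheres appear/disappear as $k$ varies needs to be controlled so that the resulting $D$ is a genuine submanifold with boundary of $K \times \NS^{n-1}$ on which chopping applies. A related subtlety is that for $r$ odd, the wrinkles are closed wrinkles whose lifts are singular (closed pleats/stabilisations), so chopping must be carried out at the level of the (smooth, equidimensional) base projection — which is legitimate since the base projection of a closed wrinkle matches that of an ordinary wrinkle — and then lifted back via Proposition~\ref{prop:principalProjectionGeneral}. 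Provided one grants the chopping surgery of Appendix~\ref{sec:surgeries} and the stabilisation identities already recorded in Section~\ref{sec:singularities}, the corollary follows without further difficulty; the write-up is essentially an exercise in citing the right ingredients.
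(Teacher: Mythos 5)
The paper's proof of this corollary is a one-liner recorded at the very end of the proof of Theorem~\ref{thm:holonomicApproxParam}: ``By construction, the zig-zag wrinkles of the family $f$ are all wrinkles with cylinder base.'' The reason is already visible in the proof of Proposition~\ref{prop:interpolationParam}: over each top simplex $\Delta'$, parametrised as $\D^k\times\D^n$ in a fibered manner, the multi-function $\nu_N(\chi(|k|),r)$ is a zig-zag wrinkle whose base is a concentric \emph{ball} $\D^k_{\leq c}\subset K$, and introducing the $\theta$-coordinates stabilises it by $\NS^{n-1}$, yielding $D \cong \D^k_{\leq c}\times\NS^{n-1}$. After rescaling $K$ this is exactly a cylinder base. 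No surgery of any kind is needed.

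Your proposal correctly identifies that the wrinkles emerging from the extension step have a spherical factor $\NS^{n-1}$ and a parameter factor $K'\subset K$, but it misses the essential point that in this construction $K'$ is \emph{already} a ball: the swallowtail moves in $\nu_N(\chi(|k|),\cdot)$ are born and killed as $|k|$ crosses a fixed threshold, so the $K$-support is a concentric disc by design. The chopping/surgery you propose therefore solves a problem the construction does not have. Worse, the chopping result you cite (Theorem~\ref{thm:ModifyingSingularitiesCuttingWrinkles} of Appendix~\ref{sec:surgeries}) produces wrinkles with \emph{ball} base, i.e.\ $D$ a round ball inside $K\times H$ with $K,H$ Euclidean; this is a different class of wrinkle from cylinder base, and converting ball base to cylinder base requires the further surgery of Subsection~\ref{ssec:wrinklesCylinderBase}, which you do not invoke. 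Your remark that a ball-base wrinkle ``after the $\NS^{n-1}$-stabilisation, is a cylinder-base wrinkle'' conflates the paper's notion of ball base (which has no $\NS^{n-1}$ factor to peel off) with a wrinkle whose $K$-support happens to be a ball. In the paper's logical ordering it is precisely the opposite of what you assume: the cylinder-base Corollary~\ref{cor:holonomicApproxCylinder} falls out of the proof for free, and it is the ball-base Corollary~\ref{cor:holonomicApproxBall} that requires the chopping machinery. Your ``main obstacle'' (controlling the topology of the bases in the parametric construction) is thus not an obstacle at all — it is answered explicitly in the proof of Proposition~\ref{prop:interpolationParam}, and rereading that proof would have closed the gap immediately.
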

We will prove this together with Theorem \ref{thm:holonomicApproxParam}. In Appendix \ref{sec:surgeries} we will introduce a surgery procedure to replace arbitrary zig-zag wrinkles by wrinkles with ball base (as often encountered in other papers on wrinkling), proving:
\begin{corollary}  \label{cor:holonomicApproxBall}
The wrinkles of the family $(f_k)_{k \in K}: X \to Y$ produced by Theorem \ref{thm:holonomicApproxParam} may be assumed to have ball base.
\end{corollary}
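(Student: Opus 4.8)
Corollary \ref{cor:holonomicApproxBall} asks us to upgrade the cylinder-base wrinkles produced by Theorem \ref{thm:holonomicApproxParam} (via Corollary \ref{cor:holonomicApproxCylinder}) to ball-base wrinkles. The plan is to apply a local surgery — the \emph{chopping} procedure mentioned in the introduction and to be developed in Appendix \ref{sec:surgeries} — which replaces each wrinkle whose base is a cylinder (or, more generally, $K' \times H$ for $K'$ a manifold with boundary and $H$ closed) by finitely many wrinkles whose bases are balls. The surgery is purely local in a neighbourhood of the membrane, so it does not affect the $C^0$-estimate $|j^rf_k - \sigma_k|_{C^0} < \varepsilon$ (one only needs the neighbourhood to be small enough that the integral maps stay within the $\varepsilon$-ball, and this is arranged by scaling the surgery region), and it is supported away from $\Op(L)$, so the relative conclusion is preserved as well.

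First I would recall the combinatorial model. A wrinkle with cylinder base has $H = \NS^{n-1}$, $K = \D^k$, and $D = K \times H$; its equator is $\partial D \times \{0\} = \NS^{k-1} \times \NS^{n-1} \times \{0\}$, and along the equator the map is a $\NS^{n-1}$-stabilisation of the cubic (for $r$ even) resp. of the vertical cusp (for $r$ odd). The idea of chopping is to cover $\NS^{n-1}$ (the ``closed'' factor of the base) by finitely many balls $B_1, \dots, B_p$ and to split the single wrinkle into $p$ wrinkles with bases contained in $K \times B_j$. Concretely, one interpolates the height function $\rho$ so that near the boundary of each $B_j$ the sublevel set $\{t^2 + \rho \leq 0\}$ closes up, turning the product cylinder into a union of ball-shaped membranes glued along their equators in a controlled way; the new equators pick up births and deaths (swallowtail moves / zig-zag embryos) that are themselves part of the new, ball-based wrinkles. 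This is exactly the standard chopping manoeuvre from the wrinkling literature (Eliashberg–Mishachev), adapted here to the metasymplectic/lifting framework: since all our models are lifts of maps into the principal projection (Proposition \ref{prop:principalProjectionGeneral}), it suffices to perform the chopping at the level of the principal projection, where it amounts to a homotopy of the pair $(x_n(t), z^{(0,\dots,r)}(t))$ supported near the membrane, and then lift.

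The key steps, in order, are: (1) reduce to a single model cylinder-base wrinkle, working fibrewise over $K$ and locally in $M$, using that point symmetries let us assume the wrinkle is in standard form; (2) cover the closed base factor $H = \NS^{n-1}$ by balls and choose a partition-of-unity-type modification of the height $\rho$ realising the split into $p$ ball-based pieces, checking that the new singularity loci along the new equators are model swallowtail moves / embryos, so that the output is again a \emph{wrinkle family of multi-sections} in the sense of Definition \ref{def:sectionZigzagParam}; (3) lift the modified principal-projection data via Proposition \ref{prop:principalProjectionGeneral} and verify embeddedness (for $r$ even) resp. topological embeddedness (for $r$ odd) and the $C^0$-bound, both of which follow because the modification is $C^0$-small and supported in a thin neighbourhood of the membrane; (4) observe the whole surgery is supported away from $\Op(L)$, giving the relative statement. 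The details of steps (2) and (3) are precisely the content of Appendix \ref{sec:surgeries}.

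The main obstacle is step (2): making the chopping compatible with the integral/tangency structure. In the purely topological wrinkling setting the chopping is a soft manipulation of submanifolds, but here we must ensure that at every stage the object remains the front projection of an honest integral map, that the newly created singularities along the cut equators are exactly the allowed models (swallowtails, swallowtail moves, embryos — not something wilder), and that nesting of the resulting membranes, if it occurs, is handled by the wrinkle formalism rather than the more rigid double-fold formalism (cf. the remarks at the end of Subsection \ref{sssec:regularisation}). Because our models are all lifts of principal-projection data, the obstruction is manageable: one designs the chopping homotopy of $(x_n, z^{(0,\dots,r)})$ explicitly so that its restriction to each fibre $\{k\} \times H$ is a generic family of the standard cuspidal pictures, and then invokes the uniqueness and smooth dependence in Proposition \ref{prop:principalProjectionGeneral} to conclude. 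The estimate and the relative part are then routine.
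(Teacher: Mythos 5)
Your high-level plan coincides with the paper's: Corollary \ref{cor:holonomicApproxBall} is proved simply by applying the chopping result Theorem \ref{thm:ModifyingSingularitiesCuttingWrinkles} of Appendix \ref{sec:surgeries} to the wrinkled family produced by Theorem \ref{thm:holonomicApproxParam}, and you correctly identify that the $C^0$ bound and the relative-to-$L$ conclusion survive because the surgery is local. But your sketch of how the chopping itself works is not what the paper does, and as written would not go through. You propose to modify the height function $\rho$ so that the membrane $\{t^2 + \rho \leq 0\}$ pinches off near the boundaries of a collection of balls $B_j \subset H$, thereby splitting the cylinder-shaped membrane directly into ball-shaped pieces. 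Along any such pinch the modified $\rho$ must cross zero in the interior of the base, so the hypersurface $\{t^2 + \rho = 0\}$ becomes singular there, and turning these degenerations into honest model swallowtail moves or embryos is a nontrivial local construction that your proposal leaves unspecified; your remark that ``one designs the chopping homotopy explicitly so that its restriction to each fibre is a generic family of the standard cuspidal pictures'' names the difficulty rather than resolving it.

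The paper's chopping avoids the pinch entirely, and two ingredients it uses are missing from your argument. First, Proposition \ref{prop:WrinkledEmbeddings_MovingSingularLocus} never splits the membrane: it moves the upper cusp locus of the existing wrinkle inward by subtracting a small bump from its height, while simultaneously creating one auxiliary zig-zag wrinkle with ball base off to the side, using the explicit local model $F$ of Lemma \ref{lem:choppingModel}. A partition of unity on the base does appear — to cut the vertical height $2\sqrt{\rho}$ into small bumps $\psi_{l,m}$ — but it drives an iteration of this cusp-moving lemma rather than pinching the membrane. Second, and crucially, after the iteration the original wrinkle still has base $D$ (cylinder); it has merely become arbitrarily thin. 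The proof then invokes the notion of a \emph{removable} zig-zag wrinkle (Definition \ref{def:removableZigzag} and Lemma \ref{lem:removable}) to erase this residual piece inside a small graphical chart. Removability is absent from your write-up, yet without it the cylinder-base wrinkle you started with would still be present at the end, so the conclusion you want would not hold. In short: the right appendix result is cited, but the surgery mechanism you describe is not the one the paper proves, and making your pinching idea rigorous would require reinventing Proposition \ref{prop:WrinkledEmbeddings_MovingSingularLocus}, Lemma \ref{lem:choppingModel}, and the removability machinery of Subsection \ref{ssec:removable}.
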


\subsubsection{h-Principle for multi-solutions}

Let us spell out some concrete consequences of the result: If $K=[0,1]$ and $\sigma_0$ and $\sigma_1$ are holonomic, the statement says that it is possible to approximate any formal homotopy between the two by a formal homotopy through multi-sections. Analogous statements for higher-dimensional families follow similarly.

From this, a parametric and relative version of Corollary \ref{cor:holonomicApproxZZ} may be proven:
\begin{corollary}
Any formally trivial family of solutions of an open differential relation is also trivial as a family of multi-solutions.
\end{corollary}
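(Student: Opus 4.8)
The plan is to deduce this corollary from Theorem \ref{thm:holonomicApproxParam} in essentially the same way that Corollary \ref{cor:holonomicApproxZZ} was deduced from Theorem \ref{thm:holonomicApproxZZ}, but carried out parametrically and relative to the boundary of the parameter space. So suppose $\SR \subset J^r(Y)$ is an open differential relation and we are given a family of solutions $(f_k)_{k \in \partial\D^{p+1}}$ together with a formal nullhomotopy, i.e. a family of formal solutions $(\sigma_k)_{k \in \D^{p+1}}$ with $\sigma_k = j^rf_k$ for $k \in \partial\D^{p+1}$. We want to produce a family of multi-solutions $(g_k)_{k \in \D^{p+1}}$ with $g_k = f_k$ on $\partial\D^{p+1}$.

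First I would set $K := \D^{p+1}$ and $L := \partial\D^{p+1} \times X \subset K \times X$; by hypothesis the family $(\sigma_k)$ is already holonomic on $\Op(L)$ (it equals $j^rf_k$ there). By compactness of $K \times X$ and openness of $\SR$, there is an $\varepsilon>0$ such that the $\varepsilon$-neighbourhood of the image of $(\sigma_k)$ in $J^r(Y)$ is contained in $\SR$ (if $X$ is non-compact, exhaust by compacts as in the proof of Corollary \ref{cor:holonomicApproxZZ}). Now apply Theorem \ref{thm:holonomicApproxParam} with this $\varepsilon$ and this $L$: it yields a wrinkle family of multi-sections $(g_k)_{k \in K}: X \to Y$ with $|j^rg_k - \sigma_k|_{C^0} < \varepsilon$ and, crucially, $j^rg_k = \sigma_k$ over $\Op(L)$. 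The first property forces $j^rg_k$ to take values in $\SR$, so each $g_k$ is a multi-solution; the second property gives $g_k = f_k$ (as maps, recovering the section from its holonomic lift) for $k \in \partial\D^{p+1}$. Thus $(g_k)$ is the desired family of multi-solutions restricting to the given one on the boundary, which is exactly triviality as a family of multi-solutions. Running this for all $p \geq 0$ (and $p = -1$, which recovers Corollary \ref{cor:holonomicApproxZZ}) gives the weak homotopy statement.

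The only genuine subtlety is bookkeeping: one must phrase ``formally trivial family of solutions'' precisely as a relative lifting/extension problem over $(\D^{p+1},\partial\D^{p+1})$ and check that the relative clause in Theorem \ref{thm:holonomicApproxParam} (holonomicity preserved on $\Op(L)$ for a polyhedron $L \subset K \times X$) is exactly what is needed — here $L = \partial\D^{p+1} \times X$ is indeed a polyhedron. No new geometry is required: the main obstacle, if any, is purely the translation between the ``space of solutions versus space of formal solutions'' language of the $h$-principle and the explicit parametric-relative statement of Theorem \ref{thm:holonomicApproxParam}, together with noting that a multi-section is recovered unambiguously from its holonomic lift so the boundary condition $j^rg_k = j^rf_k$ upgrades to $g_k = f_k$.
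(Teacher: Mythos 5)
Your proof is correct and matches the paper's intended argument: the paper gives no explicit proof, merely indicating that the corollary is the parametric and relative adaptation of Corollary \ref{cor:holonomicApproxZZ}, and that is exactly what you carry out by applying Theorem \ref{thm:holonomicApproxParam} over $(\D^{p+1},\partial\D^{p+1})$ with $L = \partial\D^{p+1}\times X$. The only loose thread is that the relative clause of Theorem \ref{thm:holonomicApproxParam} requires holonomicity on an open \emph{neighbourhood} of $L$, whereas the hypothesis a priori gives it only on $L$ itself; this is resolved by the standard collar reparametrisation of $\D^{p+1}$ that makes $\sigma_k$ radially constant near $\partial\D^{p+1}$ before applying the theorem.
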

We leave the details of the proof to the reader.

\subsubsection{The main ingredient of the proof}

The proof of Theorem \ref{thm:holonomicApproxParam} will follow closely the proof of Theorem \ref{thm:holonomicApproxZZ}. The additional ingredient that we need is a concrete model for the birth/death of a zig-zag bump function.

We fix coordinates $(\ss,t)$ in the square $[c,d] \times [a,b] \subset \R^2$, where the first term is thought of as a parameter space. In order to distinguish source and target, we use coordinates $(x,y)$ in $J^0([a,b],\R)$.
\begin{definition}\label{def:zigzagBumpParam}
A \textbf{swallowtail family of bump functions} (over $[c,d] \times [a,b]$) is a sequence of fibered-over-$[c,d]$ maps
\[ (\nu_N)_{N \in \N}: [c,d] \times [a,b] \to [c,d] \times J^0([a,b],\R)\]
interpolating between the constant section
\[ \nu_N(c,t) = (x(t) = t, y(t)= 0) \in J^0([a,b],\R) \]
and a zig-zag bump function $(\nu_N(d,\cdot))_{N \in \N}$ that itself interpolates between $1$ and $0$.

Furthermore, we require that the singularity locus satisfies:
\begin{itemize}
\item Each $\nu_N$ is a wrinkle family of multi-sections.
\item All the singularities of the lift  
\[ j^r\nu_N: [c,d] \times [a,b] \to [c,d] \times J^r([a,b], \R) \]
are swallowtail moves.
\end{itemize}
Morever, the following boundary conditions should hold:
\begin{itemize}
\item $\nu_N(\ss,t) = (\ss,t,0)$ for all $t \in \Op(b)$ and all $\ss$. 
\item $\nu_N(\ss,t) = (\ss,t,h_N(\ss))$ for all $t \in \Op(a)$ and all $\ss$, for some function $h_N: [c,d] \to [0,1]$.
\end{itemize}
As well as the estimate:
\begin{itemize}
\item $|z^{r'} \circ j^r\nu_N| = O\left(\frac{1}{N}\right)$ for all $0 < r' \leq r$,
\end{itemize}
\end{definition}
We refer the reader to Figure \ref{fig:swallowtailBump} for an illustration. We remark that it is certainly possible to define families of bump functions with wrinkles subject to other boundary conditions (for instance, families that interpolate between values other than $1$ and $0$). However, the stated constraints are tailored to the proof of Theorem \ref{thm:holonomicApproxParam}.

\begin{figure}[ht]
\centering
\includegraphics[width =\linewidth ]{./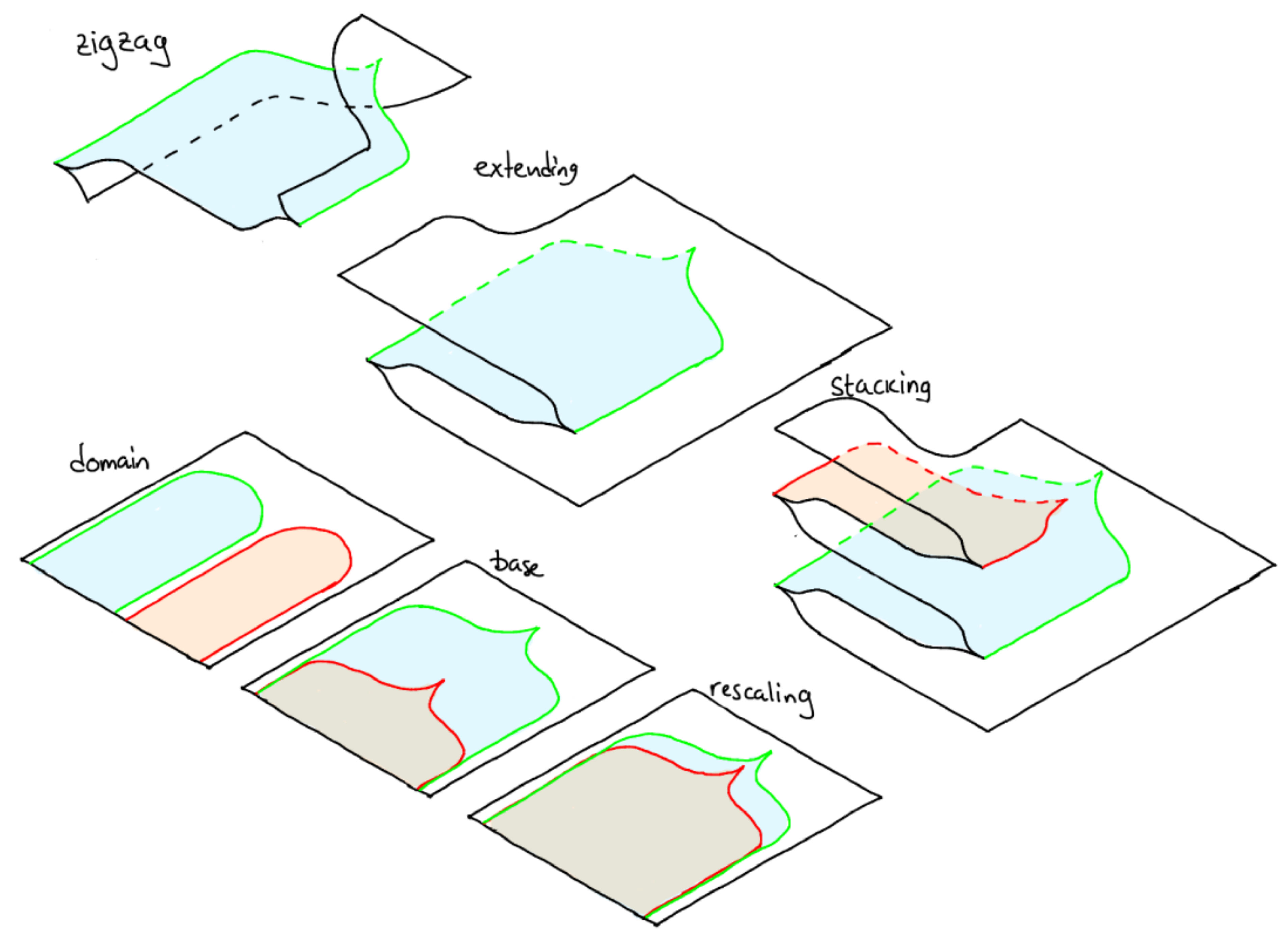}
\caption{Construction of the swallowtail family of bump functions; as explained in Proposition \ref{prop:swallowtailBump}. We start with a single swallowtail, given as a germ along the membrane. We then extend and flatten it. Thanks to this flattening, the resulting map can be stacked.} \label{fig:swallowtailBump}
\end{figure}

\begin{proposition}\label{prop:swallowtailBump}
A swallowtail family of bump functions exists on any square $[c,d] \times [a,b]$.
\end{proposition}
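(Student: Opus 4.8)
The plan is to follow the blueprint used for Proposition \ref{prop:zigzagBumpConstruction}, upgrading each step to the one-parameter setting and replacing ``zig-zag'' by ``zig-zag wrinkle'' and ``infinite zig-zag'' by a $[c,d]$-family of infinite zig-zags that is born out of the trivial section. Concretely: first reduce to standard squares, i.e. $[c,d] = [0,1]$ and $[a,b] = [-1,1]$, since a point symmetry coming from an interval reparametrisation distorts jet space in a bounded manner and this distortion is absorbed by enlarging $N$ (as in the non-parametric case). Next, as the parametric analogue of ``one level in the infinite zig-zag'', start from the swallowtail move (Definition \ref{def:zigzagBumpParam}'s ingredient): the Reidemeister I move if $r$ is even, the stabilisation if $r$ is odd. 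Both are, by their very definitions in Subsections \ref{ssec:ReidemeisterI} and \ref{sssec:stabilisation}, fibered-over-$[0,1]$ families of integral maps, trivial for $\ss$ in one half of the parameter interval and equal to a (closed) double fold in the other half, with a single birth/death (a cubic, resp.\ a vertical cusp) at the transition. Taking the front projection produces a fibered family $W \colon \Op([1,3]) \to [0,1] \times J^0(\R,\R)$, with membrane $[1,3]$ over the relevant parameter range, which is graphical over $\R$ away from the singular locus and which is the trivial section for small $\ss$.

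Then, exactly as in the proof of Proposition \ref{prop:zigzagBumpConstruction}, extend this $W$ fibrewise over $\Op([-1,3])$ so that: it is a fibered topological embedding; it is graphical away from the (parametrised) fold locus; $W(\ss,t) = (\ss,t,0)$ for $t \in \Op(0)$ and all $\ss$; and $W(\ss,t) = W(\ss,t-4) + 4$ (in the $J^0$ factor) for $t \in \Op(3)$. The key point, which is exactly why Definition \ref{def:zigzagBumpParam} asks for the flattening illustrated in Figure \ref{fig:swallowtailBump}, is that after this extension the ends of $W$ are horizontal (slope matching the zero section at $t \approx -1$ and its translate at $t \approx 3$) uniformly in $\ss$, so the levels can be stacked: by the periodicity condition $W$ extends to a fibered topological embedding over all of $\R$, the \textbf{family infinite zig-zag bump}. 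Over the slice $\ss = 1$ this is the infinite zig-zag bump of Subsection \ref{ssec:zigzagBumpConstruction}; over $\ss = 0$ it is the trivial section; in between each period undergoes a single swallowtail move. Lifting fibrewise to $J^r(\R,\R)$ via Proposition \ref{prop:principalProjectionGeneral} keeps everything integral and fibered, the singularities being precisely swallowtail moves, and with the exception of $r' = 0$ all derivatives $z^{(r')}$ are genuinely periodic in $t$, hence bounded uniformly in $\ss$.

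Finally, rescale and cut: apply the point symmetry $\Psi_N$ that scales the $J^0$ fibre by $\tfrac{1}{4N}$, obtaining $W_N$, which by the scaling behaviour of derivatives satisfies $|z^{(r')} \circ j^r W_N| = O(1/N)$ for $r' > 0$, uniformly in $\ss$ (these bounds depend only on the image, not the parametrisation). Precompose the $t$-variable with a diffeomorphism $\phi \colon [-1,1] \to [-1, 1 + 4N]$, then flatten near the two ends: near $\phi^{-1}(0)$ the map already lands in the zero section, so replace it there by the zero section itself, giving the boundary condition $\nu_N(\ss,t) = (\ss,t,0)$ for $t \in \Op(b)$; near $\phi^{-1}(4N)$ the map lands in a horizontal (constant) section whose height is some $h_N(\ss) \in [0,1]$ depending on how many complete zig-zags the family has produced by parameter $\ss$ — this defines the function $h_N \colon [c,d] \to [0,1]$ with $h_N(c) = 0$ and $h_N(d) = 1$ required by the statement — and replace it there by that constant section. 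The resulting $\nu_N$ is a wrinkle family of multi-sections all of whose lifted singularities are swallowtail moves, meets the prescribed boundary conditions, and satisfies the derivative estimate, as required. The main obstacle I anticipate is precisely the flattening-and-stacking step: one must arrange the extension of the swallowtail move to $\Op([-1,3])$ so that the $C^\infty$-flat matching at the ends holds \emph{uniformly in the parameter $\ss$} (including at the birth/death value where the membrane appears), so that stacking produces a genuinely fibered, smooth, integral family; once the one-level extension is set up carefully this is routine, but it is where the parametric nature really bites. \hfill$\Box$
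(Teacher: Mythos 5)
Your approach is genuinely different from the paper's, but as written it contains a contradiction that is not just a notational slip. You propose to build the family by extending a single swallowtail move to a fibered ``level'' over $\Op([-1,3])$ and then stacking in the $t$-direction via a $t$-periodicity condition $W(\ss,t) = W(\ss,t-4)+4$, where (as in the non-parametric construction you are mimicking) ``$+4$'' denotes a shift by $4$ in the $y$-coordinate. But this condition forces every fibre $W(\ss,\cdot)$ to climb by $4$ units per period, and in particular $W(c,4\ell)$ has $y$-coordinate $4\ell$, which is incompatible with the required boundary condition $\nu_N(c,t) = (t,0)$: the $\ss=c$ fibre must be the flat zero section. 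To salvage the idea you would have to make the shift $\ss$-dependent, say $W(\ss,t) = W(\ss,t-4) + (0,4g(\ss))$ with $g(c)=0$ and $g(d)=1$, and then verify that: (i) the single-level extension can be flattened at its ends by an $\ss$-dependent amount consistently with the birth/death model; (ii) for $\ss$ near $c$, where $g(\ss)$ and the zig-zag amplitude both degenerate, the interior of the level is actually the zero section (and not merely some wiggling periodic curve); and (iii) the singularity loci of all $N$ simultaneously-born swallowtails remain disjoint model membranes. You flag this as the ``routine'' part, but these are exactly the points where your construction is not yet correct.

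The paper proceeds differently: it stacks in the parameter direction $\ss$ rather than in $t$. The inductive step takes the constructed $F_N$, which is the zero section for $\ss$ near $-1$ and a constant section at height $N$ over $[1/2,1]\times[-1,-3/4]$, and replaces that last region by a vertically shifted, $\ss$-reparametrised copy of $F_1$; then a diffeomorphism $\beta$ in $\ss$ re-establishes the inductive hypothesis. Zig-zags are thus born sequentially (one per $\ss$-subinterval), each landing above the previous one, and the derivative bound follows because only $t$-translations and $t$-reparametrisations (never $t$-scalings) are used. This sidesteps the issue you run into: the zero-section boundary condition at $\ss = c$ is the trivial base case of the induction, not a constraint that must be reconciled with a global periodicity. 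If you want your $t$-periodic scheme to work you must adopt the $\ss$-dependent shift and carry out the checks above; alternatively, the paper's $\ss$-stacking is a cleaner route.
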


\subsection{Construction of swallowtail families}

We will present $\nu_N$ as a $1$-parameter family of maps 
\[ (\nu_{N,\ss})_{\ss \in [c,d]}: [a,b] \longrightarrow J^0([a,b], \R). \]
We will follow the strategy used in the non-parametric case, defining first a single stage and then introducing $\nu_N$ as a concatenation of multiple copies of this first stage.

\subsubsection{Passing to a standard square}

As in the non-parametric case, we can assume that $[a,b] = [-1,1]$ by applying a point symmetry and absorbing the resulting distortion by taking a larger $N$.

We claim that we may also assume $[c,d] = [-1,1]$. Indeed, any diffeomorphism $[c,d] \to [-1,1]$ does the trick. The reason is that none of the claimed properties are affected by reparametrising in the $\ss$-coordinate. More concretely, the only derivatives entering the definition are along $t$, not along the parameter direction $\ss$. This flexibility is crucial and will be exploited in the upcoming construction.
 
\subsubsection{The swallowtail move} 

Extend the $(\ss,t)$ coordinates to the plane $\R \times \R$. Similarly, extend the coordinates $(x,y)$ to the whole of $J^0(\R,\R)$.

Looking at the definitions of the Reidemeister I move (Subsection \ref{ssec:ReidemeisterI}) and the stabilisation (Subsection \ref{sssec:stabilisation}), we see that the swallowtail move is the germ along $A = \{t^2 \leq \ss\} \subset \R \times \R$ of some mapping
\[ f: \Op(A) \to \R \times J^0(\R,\R) \]
that is fibered over the $\ss$-variable and whose singularity locus is the parabola $\partial A = \{t^2 = \ss\}$. We write $f_\ss = f(\ss,-)$. Then, $f_0$ has a zig-zag embryo at the origin. If $\ss<0$, the map $f_\ss$ is graphical over $t$. If $\ss>0$, the map $f_\ss$ is a zig-zag with cusps at $\pm\sqrt{\ss}$.

We will modify the family $f$, using point symmetries and reparametrisations, in order to produce $\nu_1$. This will take several steps. Abusing notation, the result of each step will still be denoted by $f$. We restrict to the region $\ss \in [-1,1]$. 

\subsubsection{Construction of the first stage} 

Applying $\ss$-dependent point symmetries we may assume that:
\begin{itemize}
\item $x \circ f_\ss$ is strictly increasing if $\ss < 0$.
\item $x \circ f_\ss|_{[-\sqrt{\ss},\sqrt{\ss}]}$ is strictly decreasing if $\ss>0$.
\item $y \circ f_\ss$ is strictly decreasing in $\Op(\{\pm\sqrt{\ss}\})$ if $\ss>0$.
\item Each $f_\ss$ has image in $[-1,1] \times (0,1)$.
\item $x \circ f_1([-1,1]) = [-1,1]$.
\end{itemize}
Namely: The first two items are equivalent and follow by possibly reverting the orientation of $x$. The third one by reverting the orientation of $y$. The third one by translating and rescaling first in $x$ and then in $y$. The last one by scaling in $x$ (only for $\ss$ close to $1$) and recalling that $[-1,1]$ is the membrane of $f_1$.

We reparametrise the $f_s$ so that their membranes are contained in $[-1/4,1/4]$. We still use $A \subset [0,1] \times [-1/4,1/4] \subset [-1,1]^2$ to denote the membrane of the resulting map.

In the complement of $A$, the maps $f_s$ are graphical over $x$. We can then extend $f$ to the square $[-1,1]^2$ so that it verifies the boundary conditions:
\begin{itemize}
\item $f_s|_{[1/2,1])}(t) = (t,0)$.
\item $f_{-1}(t) = (t,0)$.
\item $f_1(-3/4) = (3/4,1)$ and $y \circ f_1|_{[-1,-3/4]}(t) = 1$.
\end{itemize}
This can be done while keeping $f_s$ a topological embedding, with image in $[-1,1] \times [0,1]$, and graphical in the complement of the membrane $A$ of the swallowtail move.

Let $\chi: [-1,1] \to [-1,1]$ be a map that restricts to an orientation-preserving diffeomorphism $(-1/2,1/2) \to (-1,1)$ and is constant in the complement. Then, the claimed family 
\[ \nu_1: [-1,1]^2 \longrightarrow [-1,1] \times J^0([-1,1], \R) \]
is given by the reparametrisation $\nu_1(\ss,t) := (\ss,f_{\chi(\ss)}(t))$.

\subsubsection{Stacking}

The boundary conditions of $\nu_1$ were designed so that multiple copies of $\nu_1$ may be stacked on top of one other, as we explain next. We will construct a sequence of maps $F_N$ and then obtain $\nu_N$ by scaling down (the reader may think of the $F_N$ as the parametric analogue of the infinite zig-zag). The argument is inductive in nature.

We set $F_1 = \rho_1$. Our induction hypothesis says that we have constructed $F_N$ satisfying:
\begin{itemize}
\item $(F_N)|_{[-1,-1/2] \times [-1,1] \cup [-1,1] \times [1/2,1]}(\ss,t) = (\ss,t,0)$, i.e. it is the zero section.
\item $(F_N)|_{[1/2,1] \times [-1,-3/4]}$ is a reparametrisation of the constant section
\[ (\ss,x) \in [1/2,1] \times [-1,3/4] \quad\mapsto\quad (\ss,x,N) \in [1/2,1] \times J^0(\R,\R). \]
\end{itemize}
$F_1$ does satisfy these properties, so the base case holds. The idea for the inductive step is to replace $(F_N)|_{[1/2,1] \times [-1,-3/4]}$ with a vertically shifted copy of $F_1$, yielding $F_{N+1}$.

Let $\alpha: [1/2,1] \to [-1,1]$ be an orientation-preserving diffeomorphism. Then we set
\[ G(\ss,t) := (\ss,F_1(\alpha(\ss),t)). \]
This map can then be vertically translated by an amount $N$ (which is a point symmetry); we still denote the result by $G$. We can then restrict its domain to $[1/2,1] \times [-1,3/4]$ and apply a diffeomorphism $[-1,-3/4] \to [-1,3/4]$ in $t$. Our induction hypotheses say that (up to matching the two parametrisations in a neighbourhood of the glueing region) we can replace $(F_N)|_{[1/2,1] \times [-1,-3/4]}$ by $G$.

The resulting map $H$ satisfies the first property in the induction hypothesis. We can then choose a diffeomorphism $\beta: [-1,1] \to [-1,1]$ mapping $[1/2,1]$ to $\Op(1)$. This dilates (in $\ss$) the region in which the ``upper branch'' of $H$ has value $N+1$. It follows that
\[ F_{N+1}(\ss,t) := (\ss,H(\beta(\ss),t)) \]
satisfies both inductive properties, concluding the construction.

\subsubsection{Size estimates and the end of the proof}

We built $F_N$ out of copies of $F_1$. In the $t$-coordinate we applied translations and reparametrisations to each copy of $F_1$, but not scalings. It follows that the derivatives $|z^{r'} \circ F_N|$ are bounded independently of $N$, for all $0 < r' \leq r$.

Let $\Psi: J^0(\R,\R) \to J^0(\R,\R)$ be the point symmetry associated to scaling $y$ by an amount $1/N$. We can then set $\nu_N := j^r\Psi(F_N)$ and it immediately follows that
\[ |z^{r'} \circ \nu_N| =  O\left(\dfrac{1}{N}\right), \quad \text{for all $0 < r' \leq r$}. \]
The rest of the claimed properties have to do with the boundary values of $\nu_N$, which hold by construction. This concludes the proof of Proposition \ref{prop:swallowtailBump}.\hfill$\Box$

\begin{remark}
We were rather careful in our construction and provided a ``periodic'' model for $\nu_N$, for which the desired estimates followed easily. However, we could have gotten away with a sloppier construction using the following observation:

Suppose we have constructed a sequence $(\beta_N)_{N \in \N}$ satisfying all the claimed properties except for the key estimate. Observe that $\beta_N(1,-)$ is still a zig-zag, so the estimate holds nonetheless for $\ss=1$. Then, for each $N$, we can find an $\ss$-dependent family of point symmetries $\Psi_\ss$ that are the identity close to $\ss=1$ but that outside of $\{\ss \in \Op(1)\}$ scale down the $y$-coordinate by a constant $C_N$. If $C_N$ is chosen to be sufficiently large, the key estimate will hold for $\Psi_\ss(\beta_N)$, which will be a swallowtail family. \hfill$\triangle$
\end{remark}

\subsection{Interpolation using swallowtail families} \label{ssec:interpolationParam}

We now state and prove the analogue of Proposition \ref{prop:zigzagBump} in the parametric setting. We work locally in parameter and domain. The role of the parameter space is played by a $k$-dimensional vector space still denoted by $K$. The base manifold will be a vector space $B$ of dimension $n$. The fibre will be a vector space $F$, its dimension $\ell$ is not relevant for the argument. We write $\D_r$ for the $r$-ball; $\D$ denotes the unit ball.

We fix standard coordinates $(x,y,z)$ and use the resulting Euclidean metric in $J^r(B,F)$ to measure how close sections are. In particular, whenever we use the $C^r$-distance, this takes into account only derivatives with respect to $B$ and not $K$.

\begin{proposition} \label{prop:interpolationParam}
Let $\varepsilon,\delta>0$ be given. Fix two $K$-families of sections 
\[ g_0,g_1 : \D^k \times \D^n \subset K \times B \to K \times J^0(B,F) \]
that satisfy $|j^rg_0(k,x) - j^rg_1(k,x)| < \varepsilon$ in the complement $A$ of $\D_{1-\delta}^k \times \D_{1-\delta}^n$.

Then, there is a wrinkle family of multi-sections
\[ f: \D^k \times \D^n \subset K \times B \to K \times J^0(B,F) \]
satisfying:
\begin{itemize}
\item $f|_{\Op(\partial(\D^k \times \D^n)} = g_0$;
\item $f|_{\D_{1-\delta}^k \times \D_{1-\delta}^n} = g_1$;
\item $|j^rf - j^rg_1|_{C^0} < 4\varepsilon$.
\end{itemize}
\end{proposition}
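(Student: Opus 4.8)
The plan is to mimic the non-parametric argument (Proposition \ref{prop:zigzagBump}) closely, replacing the zig-zag bump function by the swallowtail family of Proposition \ref{prop:swallowtailBump}, and replacing the single ``pushing'' shift by a parametric one. The only essentially new feature is that now the region over which $g_0$ and $g_1$ disagree can be an arbitrary neighbourhood of $\partial(\D^k\times\D^n)$, so we must run the interpolation both in base directions and in parameter directions; this is exactly what the swallowtail family is built to handle.

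First I would pass to the region of interest: as in the non-parametric case, over $\D_{1-\delta}^k\times\D_{1-\delta}^n$ we simply set $f=g_1$, and on the complementary collar $A$ (which we identify with $\NS^{k-1}\times\NS^{n-1}$ cross an interval, or more precisely with a collar of $\partial(\D^k\times\D^n)$) we interpolate. Then I would perform the \emph{parametric pushing trick}: replace $g_1$ by $\wtd g_1(k,x):=g_1(k,x)+(2\varepsilon,0,\dots,0)$ so that $\wtd g_1(k,x)\neq g_0(k,x)$ for all $(k,x)\in A$, while $|\wtd g_1-g_0|_{C^r}<3\varepsilon$ (the added constant has no derivatives, so the $C^r$-distance in base directions is unchanged up to $2\varepsilon$). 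Next, apply the fibrewise translation $p\mapsto p-g_0(\pi_b(p))$, which is a ($K$-family of) point symmetries preserving the $C^r$-distance and sending $g_0$ to the zero section and $\wtd g_1$ to $s:=\wtd g_1-g_0$ with $|s|_{C^r}<3\varepsilon$ and $s$ nowhere zero on $A$; using that $\varepsilon<|y_1\circ s|<3\varepsilon$, build the framing $A(k,x)=(s,e_2,\dots,e_\ell)\in\GL(F)$ as before. So we are reduced to interpolating, over the collar $A$, between the zero section and $s$, with $A(k,x)e_1=s$.

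The heart of the argument is then the interpolation over the collar. I would coordinatise a neighbourhood of $\partial(\D^k\times\D^n)$ so that one of the collar coordinates $t\in[1-\delta,1]$ plays the role of the interval $[a,b]$ in Definition \ref{def:zigzagBumpParam}, the radial parameter direction plays the role of $[c,d]$, and the remaining directions (the spheres $\NS^{k-1},\NS^{n-1}$) are spectator directions that we stabilise over. Apply Proposition \ref{prop:swallowtailBump} to get a swallowtail family $(\nu_N)$ over $[c,d]\times[1-\delta,1]$ interpolating between the zero section and a zig-zag bump function going from $1$ to $0$; stabilise it over $\NS^{k-1}\times\NS^{n-1}$ (and over the interior of the base cell), and compose with the framing $A$ via the multiplication of Remark \ref{rem:multiplication}, i.e. set $f_N(k,\wtd x,t):=A(k,\wtd x)(\nu_N(k,t)e_1)$. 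As in the non-parametric proof, $f_N$ is a wrinkle family of multi-sections because the $(\nu_N)$ are (and applying the point symmetry $j^rA$ preserves this); it equals $s$ near the inner boundary and $0$ near the outer boundary by the boundary conditions on $\nu_N$; and over the dense set where it is graphical it equals the honest function $\rho_N\,s$ (where $\rho_N=\nu_N$ restricted to that value of the parameter), so the Leibniz-rule estimate from the non-parametric proof gives $|j^rf_N|_{C^0}^2<|s|_{C^r}^2+\varepsilon^2/\text{(const)}$ for $N$ large. Undoing the two point symmetries (which preserve $C^r$-distance), and recalling $|j^rs|_{C^0}<3\varepsilon$, yields $|j^rf-j^rg_1|_{C^0}\le|j^rf-0|+|0-j^rs|<4\varepsilon$ as claimed.

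The main obstacle I anticipate is purely bookkeeping rather than conceptual: arranging the collar coordinates so that the roles of ``interpolation interval'', ``parameter interval'', and ``spectator spheres'' are globally consistent around all of $\partial(\D^k\times\D^n)$, and checking that the swallowtail family's boundary conditions (constant section near one end, constant height near the other, flat in the spectator directions) match up so that the construction glues to $g_1$ on $\D_{1-\delta}^k\times\D_{1-\delta}^n$ without introducing any extra singularities. There is also a minor subtlety that the swallowtail family lives over an interval of parameters $[c,d]$ whereas here the parameter cell $\D^k$ must be collared toward its boundary $\NS^{k-1}$; since none of the estimates involve parameter derivatives (only derivatives along $B$), any reparametrisation in the $K$-directions is harmless, exactly as noted in the construction of $\nu_N$. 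Beyond that, the relative-in-$L$ clause (if needed here) follows by taking $\delta$ small enough that the already-holonomic region is untouched, exactly as in Remark \ref{rem:randomExtensionInterior}.
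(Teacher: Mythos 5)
Your proposal follows essentially the same route as the paper: the parametric pushing trick, the two reductions by fibrewise translation and framing, and the use of the swallowtail family $(\nu_N)$ with the radial $K$-direction playing the role of the parameter $[c,d]$, the radial $B$-direction playing the role of $[a,b]$, and the spheres $\NS^{k-1}\times\NS^{n-1}$ stabilised over.

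One point you flag as ``bookkeeping'' is in fact where the parametric structure of the swallowtail family is genuinely used, and it deserves an explicit sentence: the collar $A$ of $\partial(\D^k\times\D^n)$ is \emph{not} diffeomorphic to $\NS^{k-1}\times\NS^{n-1}$ times an interval, so your formula $f_N=A\bigl(\nu_N(\chi(|k|),|x|)\,e_1\bigr)$ only makes sense where $|x|\in[1-\delta,1]$. Over $\D^k\times\D^n_{1-\delta}$ with $|k|\in[1-\delta,1]$, one must instead set $f_N=g_1\cdot h_N(\chi(|k|))$, using precisely the function $h_N$ appearing in the boundary condition $\nu_N(\ss,t)=(\ss,t,h_N(\ss))$ for $t\in\Op(a)$ of Definition \ref{def:zigzagBumpParam}; this is the content that makes the three defining regions overlap consistently and makes $f$ graphical in the interior. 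It is worth stating because that boundary condition is the one new ingredient of the swallowtail family relative to the non-parametric zig-zag bump, and without it the construction does not actually extend past the collar in $B$. (Minor: your final arithmetic $|j^rf-0|+|0-j^rs|<4\varepsilon$ does not follow from $|j^rf_N|<4\varepsilon$ and $|j^rs|<3\varepsilon$; one should propagate the same triangle-inequality budget as in Proposition \ref{prop:zigzagBump}, and the stated constant is whatever that budget yields.)
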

\begin{proof}
The proof follows closely the argument of Proposition \ref{prop:zigzagBump}, but replacing the zig-zag bump function by a swallowtail family. The first steps in the proof (the ``pushing trick" and the ``first and second simplifications") go through word for word. After those, we have reduced the statement to the following special case:
\begin{itemize}
\item $g_0$ is the zero section.
\item $g_1$ is $\varepsilon$-disjoint from the zero section and $|g_1|_{C^r} < 3\varepsilon$ holds.
\end{itemize}

We restrict our attention to the collar $A$. We fix a swallowtail family of bump functions $(\nu_N)_{N \in \N}$ with domain $[1-\delta,1]^2$; recall the associated family of functions $h_N: [1-\delta,1] \to [0,1]$ appearing in Definition \ref{def:zigzagBumpParam}. We also fix a bump function $\chi:  [0,1] \to [-1,1]$ that is $1$ in $\Op([0,1-\delta])$ and $-1$ in $\Op(1)$. 

Replace the coordinate $x$ by $(\theta,r)$ in the region $C = \{|x| \in [1-\delta,1]\} = \NS^{n-1} \times [1-\delta,1]$. Introduce the multiply-valued function
\[ F_N(k,\theta,r) := (k,\theta,\nu_N(\chi(|k|),r)): \D^k \times C \to K \times J^0(B,\R). \]
It allows us define:
\begin{itemize}
\item $f_N(k,x) = g_0(k,x) = 0$ in $\Op(\partial(\D^k \times \D^n))$.
\item $f_N(k,x) = g_1(k,x)h_N(\chi(|k|))$ in $\D^k \times \D_{1-\delta}^n$.
\item $f_N = g_1F_N$ in $\D^k \times C$. Recall that the multiplication of a section by a multi-valued function was defined in Remark \ref{rem:multiplication}.
\end{itemize}
The three defining regions overlap with each other but, when they do, the definitions of $f_N$ agree. Indeed, for the last two items, this follows from the fact that $h_N$ is the boundary condition of $F_N$ at $r=1-\theta$. For the first two items, we recall that $h_N$ is zero close to $-1$. Lastly, for the first and last, we note that $g_1F_N$ is identically zero for $|k|$ close to one, due to the definition of a swallowtail family. These verifications also show that $f_N$ interpolates between $g_1$ (in the interior) and $g_0$ (in the boundary).

We observe that non-graphicality for $f_N$ happens only in the region $\D^k \times C$ and there it follows from the use of $\nu_N$, which is a topological embedding (as is $F_N$). The singularities of $f_N$ are also located in $\D^k \times C$. We claim that they are zig-zag wrinkles. Indeed, the multi-function $\nu_N(\chi(|k|),r)$ by itself is a zig-zag wrinkle whose base is a ball in parameter space. Introducing the $\theta$-coordinates stabilises it by $\NS^{n-1}$. 

The proof concludes if we prove that $f_N$, for $N$ large enough, satisfies the claimed bound on size. This is proven as in Proposition \ref{prop:zigzagBump} (see the step ``Checking the claimed properties''), relying on the corresponding estimate for $\nu_N$.
\end{proof}

\begin{figure}[ht]
\centering
\includegraphics[width =\linewidth ]{./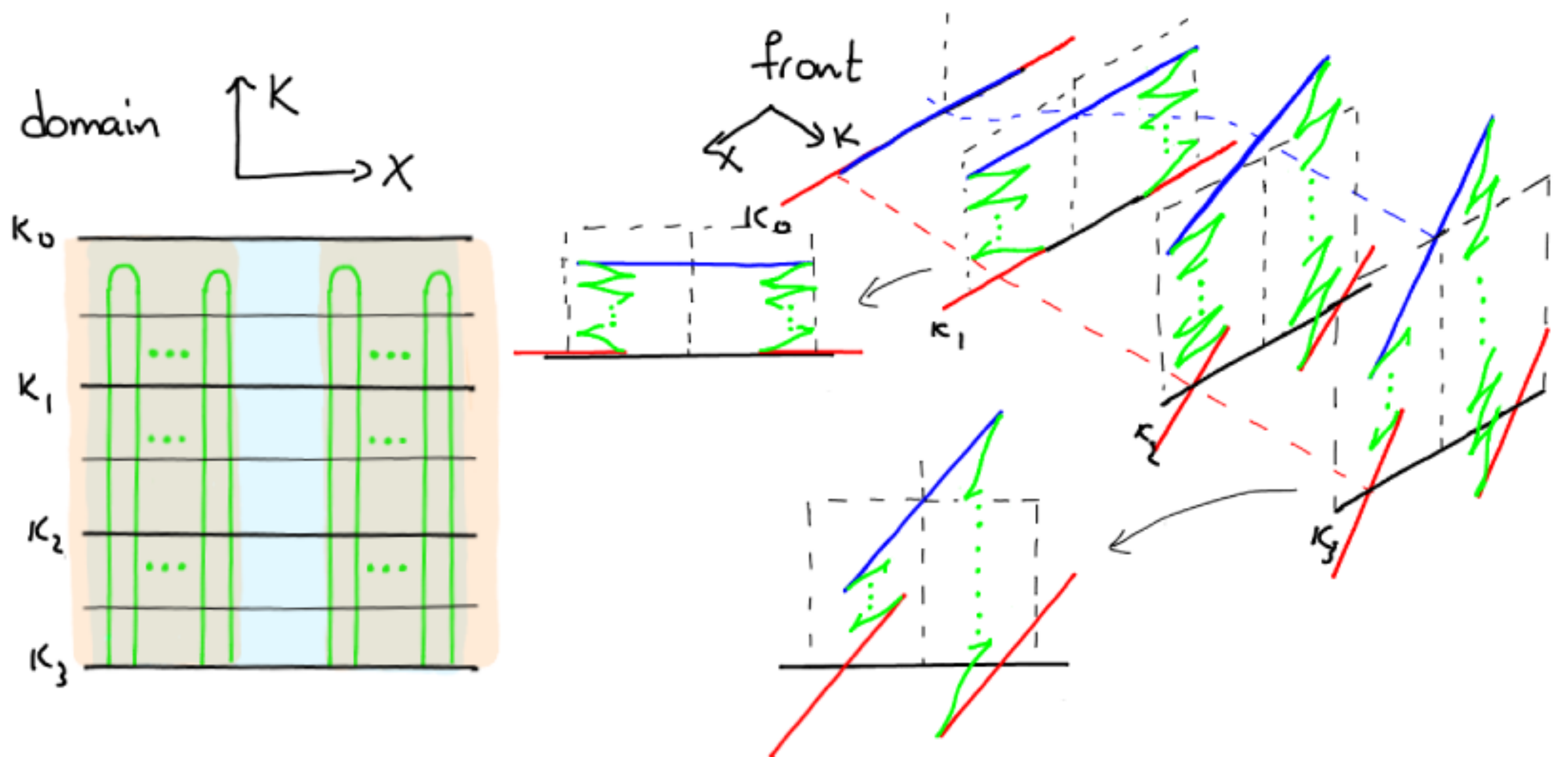}
\caption{The parametric version of Figure \ref{Fig:HolonomicApprox_Zigzag}. We are given a homotopy of formal data in which the zero jet stays fixed, but the formal first derivative is increasing over time (parametrised by $k \in K$). We approximate this using a wrinkled family of multi-sections, all whose zig-zag wrinkles have cylinder base. We see this as a movie in which various zig-zags are born. The singularity locus, on the product $K \times X$, is depicted on the left.} \label{Fig:HolonomicApprox_ZigzagParam}
\end{figure}

\subsection{Proof of Theorem \ref{thm:holonomicApproxParam}}

Much like for Theorem \ref{thm:holonomicApproxZZ}, the proof consists of a reduction step and a extension step, now taking place in the product manifold $K \times X$. We work directly in the relative setting, letting $L \subset K \times X$ be a polyhedron such that $\sigma$ is holonomic on a neighbourhood $U \supset L$.

We choose a triangulation $\wtd \ST$ of $K \times X \setminus U'$, where $U' \subset U$ is a smaller neighbourhood of $L$. In order to deal with the parametric setting we need a triangulation that is in general position with respect to the fibres of $K \times X \to K$. That this is possible follows from Theorem \ref{thm:Thurston}: the desired triangulation $\ST$ is obtained from $\wtd \ST$ by subdivision and jiggling.

We require $\ST$ to be fine enough, so that a $\varepsilon$-holonomic approximation $s_\Delta$ of $(\sigma_k)_{k \in K}$ can be chosen over each top cell $\Delta \in \ST$. 

For the reduction step: An application of Theorem \ref{thm:holonomicApprox} yields a holonomic approximation $s_\ST$ of $(\sigma_k)_{k \in K}$ along the codimension-one skeleton of some wiggled version $\ST'$ of $\ST$. The triangulation $\ST'$ can be guaranteed to remain in general position with respect to the fibres if we wiggle along the $X$-direction. Furthermore, making the wiggling sufficiently $C^0$-small ensures that $s_\Delta$ is still defined over the wiggled copy $\Delta' \in \ST'$ of $\Delta$.

For the extension step we use Proposition \ref{prop:interpolationParam} in each top-cell $\Delta' \in \ST'$. Namely, note that $\Delta' \setminus \partial \Delta'$ is indeed a fibered open polydisc in $K \times X$, which implies that we can parametrise $\Delta'$ (up to an arbitrary small neighbourhood of the boundary) by $\D^k \times \D^n$ in a fibered manner. The proposition then allows us to interpolate between $s_\Delta$ and $s_\ST$ in the region $\Op(\partial\Delta')$, yielding the desired family $f = (f_k)_{k \in K}$ of multi-sections approximating $(\sigma_k)_{k \in K}$. 

By construction, the zig-zag wrinkles of the family $f$ are all wrinkles with cylinder base. Since the triangulation was chosen relative to $L$, the family $j^rf$ agrees with $\sigma$ close to $L$. This concludes the proof and proves additionally Corollary \ref{cor:holonomicApproxCylinder}. \hfill$\Box$

%
%
%
%
%
%
\section{Holonomic approximation for submanifolds}\label{sec:wrinkledEmbeddings}

In this Section we will prove the analogue of Theorems \ref{thm:holonomicApproxZZ} and \ref{thm:holonomicApproxParam} in the setting of jet spaces of submanifolds. Our main result, Theorem \ref{thm:wrinkledEmbeddings}, is a generalisation to jet spaces of arbitrary order of the wrinkled embeddings h-principle due to Eliashberg and Mishachev \cite{ElMiWrinEmb}.

We state Theorem \ref{thm:wrinkledEmbeddings} in Subsection \ref{ssec:wrinkledEmbeddings}. Before we get there, we need to introduce the singularities that we allow our submanifolds to develop in order to prove holonomic approximation; this is done in Subsection \ref{ssec:submanifoldsZigzag}.

Recall the notation introduced in Subsection \ref{ssec:jetsSubmanifolds}. We fix an ambient manifold $Y$. We write $J^r(Y,n)$ for the space of $r$-jets of $n$-dimensional submanifolds of $Y$. Given an $n$-submanifold $X \subset Y$, its holonomic lift to $J^r(Y,n)$ is denoted by $j^rX$.

\subsection{Wrinkled submanifolds} \label{ssec:submanifoldsZigzag}

\begin{definition}
A ($r$-times differentiable) \textbf{singular submanifold} of $Y$ is a subset $X \subset Y$ satisfying:
\begin{itemize}
\item[a.] There is a dense subset $U \subset X$ that is an $n$-dimensional submanifold of $Y$.
\item[b.] There is a subset $j^rX \subset J^r(Y,n)$ lifting $X$ and extending the holonomic lift $j^rU$. By lifting we mean that $\pi_f: j^rX \to X$ is a homeomorphism.
\end{itemize}
\end{definition}
As for multi-sections, we note that the set $U$ in condition (a.) may be assumed to be open. Furthermore, due to density, the lift $j^rX$ is unique.

We remark that this definition is extremely general and not particularly useful by itself. Our next goal is to single out particular classes of singular submanifolds with controlled singularities. Recall that $J^r(Y,n)$ is locally modelled on a jet space of sections. Namely, given a submanifold $X \subset Y$ with normal bundle $\nu(X)$, we can find an embedding of $J^r(\nu(X))$ into $J^r(Y,n)$ preserving the Cartan distribution. This motivates us to consider singular submanifolds of $Y$ whose singularities are modelled on zig-zag wrinkles, mimicking the case of multi-sections.

\subsubsection{Singular embeddings}

We will go back and forth between (nicely behaved) singular submanifolds and the mappings parametrising them. Namely:
\begin{definition}
A ($r$-times differentiable) \textbf{singular embedding} is a topological embedding $f: M \to Y$ satisfying:
\begin{itemize}
\item[a.] There is a dense subset $U \subset M$ such that $f|_U$ is non-singular.
\item[b.] There is a smooth integral lift $j^rf: M \to J^r(Y,n)$.
\end{itemize}
\end{definition}
As above, the set $U$ may be assumed to be open and the lift is unique due to density.

\subsubsection{Wrinkled embeddings and submanifolds}

Let us observe that a multi-section is already a non-graphical map into the total space of the bundle. By considering general diffeomorphisms of the total space (not necessarily preserving the fibration structure), we can talk about maps that are locally modelled on a multi-section singularity:
\begin{definition}
Let $f = (f_k)_{k \in K}: M \to Y$ be a family of singular embeddings. Let $g$ be a multi-section zig-zag wrinkle (Definition \ref{def:sectionZigzagParam}) with membrane $A$.

We say that $f$ has a \textbf{zig-zag wrinkle} along $A' \subset K \times M$ if $f$ and $g$ are equivalent along $\partial A'$ and $\partial A$. If the equivalence extends to the interior, we say that $f|_{A'}$ is a model zig-zag wrinkle.
\end{definition}
We say that $A'$ is the membrane of $f$. We can similarly talk about the base, height, and equator of $f$.

\begin{definition}
A \textbf{wrinkled family of singular embeddings} is a pair $(f = (f_k)_{k \in K},\{A_i\})$ consisting of:
\begin{itemize}
\item a family of singular embeddings $f: K \times M \to K \times Y$,
\item a collection of disjoint submanifolds with boundary $A_i \subset K \times M$.
\end{itemize}
such that all the singularities of $f$ are zig-zag wrinkles with membranes $A_i$.
\end{definition}

In order to talk about submanifolds and not about embeddings, we introduce:
\begin{definition}
A \textbf{wrinkled family of singular submanifolds} of $Y$ is a pair $(X,\{f_i\})$ satisfying:
\begin{itemize}
\item $X = (X_k \subset Y)_{k \in K}$ is a family of singular submanifolds.
\item Each $f_i: \Op(A_i) \subset K \times M \to Y$ is a zig-zag wrinkle with image contained in $X$.
\item The images of the $f_i$ are disjoint.
\item The complement in $X$ of the images of the $f_i$ is a family of smooth submanifolds of $Y$.
\end{itemize}
\end{definition}

\subsection{Statement of the theorem} \label{ssec:wrinkledEmbeddings}

Suppose we are given a smooth manifold $Y$, an integer $n$, and an $n$-dimensional submanifold $X \subset Y$. An \textbf{$r$-jet homotopy} of $X$ is a homotopy $(F_s)_{s \in [0,1]}: X \to J^r(Y,n)$ satisfying $F_0 = j^rX$ and lifting the inclusion $X \to Y$. In the case $r=1$ these were introduced by Eliashberg and Mishachev under the name of \textbf{tangential homotopies}, since $J^1(Y,n)$ is simply the grassmannian of $N$-planes in $TY$.

The holonomic approximation question in $J^r(Y,n)$ asks: Is it possible to follow a given $r$-jet homotopy $(F_s)_{s \in [0,1]}$ by a homotopy of the submanifold $X$ itself? That is, given $\varepsilon > 0$, is there a family of embeddings $(f_s)_{s \in [0,1]}: X \to Y$ satisfying $|j^rf_s - F_s| < \varepsilon$ and with $f_0$ equal to the inclusion? 

If $X$ is closed, the answer to this question is, in general, negative (much like for sections). The main result of this section says that the statement can be salvaged if we work with singular submanifolds instead.

\subsubsection{Statement without parameters}

For simplicity, and in order to make the ideas transparent, we will address the non-parametric case first:
\begin{theorem} \label{thm:wrinkledEmbeddings}
Fix a smooth manifold $Y$, an integer $n$, a submanifold $X \subset Y$ and a $r$-jet homotopy $(F_s)_{s \in [0,1]}: X \to J^r(Y,n)$.

Then, there is a wrinkled family of singular embeddings 
\[ (f = (f_s: X \to Y)_{s \in [0,1]},\{A_i \subset [0,1] \times X\}) \]
satisfying:
\begin{itemize}
\item $|j^rf_s - F_s| < \varepsilon$.
\item $f_0: X \to Y$ is the inclusion.
\item All the singularities $f|_{A_i}$ are zig-zag wrinkles with cylinder base.
\end{itemize}

Additionally: If $F$ is already holonomic in a neighbourhood of a polyhedron $L \subset [0,1] \times X$, the singularities of $f$ may be taken to be disjoint from $L$.
\end{theorem}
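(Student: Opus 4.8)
The plan is to reduce Theorem \ref{thm:wrinkledEmbeddings} to the already-proven parametric holonomic approximation for multi-sections (Theorem \ref{thm:holonomicApproxParam} together with Corollary \ref{cor:holonomicApproxCylinder}), by working in a tubular neighbourhood of $X$ and using Lemma \ref{lem:zeroSection} to pass from jet spaces of submanifolds to jet spaces of sections. The point is that the $r$-jet homotopy $(F_s)_{s\in[0,1]}$ is, by definition, a family of formal sections of a jet space that starts holonomic; Theorem \ref{thm:holonomicApproxParam} then produces a wrinkle family of multi-sections approximating it, and the smallness estimate guarantees that this multi-section family is $C^0$-close enough to $F$ that it stays inside the tubular neighbourhood and hence defines a genuine family of singular embeddings into $Y$. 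The parameter interval $[0,1]$ plays the role of $K$, and the relative part over the polyhedron $L$ is inherited directly from the relative statement in Theorem \ref{thm:holonomicApproxParam}.

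First I would set up the local model. Fix an auxiliary metric on $Y$ and consider the tubular neighbourhood $\Op(X)$ of $X$, identified with the normal bundle $\nu(X)\to X$, with $X$ itself the zero section. By Lemma \ref{lem:zeroSection} there is an isocontact embedding $(J^r(\nu(X)),\xi_\can)\hookrightarrow (J^r(Y,n),\xi_\can)$ acting on $X$ as the identity, and under this embedding $j^rX$ corresponds to the holonomic lift of the zero section. Since $F_0=j^rX$ and $s\mapsto F_s$ is continuous, for $s$ in a neighbourhood of $0$ — and, after subdividing $[0,1]$ if needed, throughout, shrinking $\Op(X)$ as necessary — the image of $F_s$ lands inside $J^r(\nu(X))$. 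Thus $(F_s)$ becomes a $[0,1]$-family of formal sections $(\sigma_s)_{s\in[0,1]}: X \to J^r(\nu(X))$ with $\sigma_0$ holonomic; if $F$ is holonomic near $L\subset[0,1]\times X$, then so is $\sigma$.

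Next I would invoke Theorem \ref{thm:holonomicApproxParam} (in its relative form, relative to $\Op(L)$ and also relative to $\Op(\{0\})\subset[0,1]$ since $\sigma_0$ is already holonomic) to obtain, for a suitably small $\varepsilon'>0$, a wrinkle family of multi-sections $(g_s)_{s\in[0,1]}: X \to \nu(X)$ with $|j^rg_s-\sigma_s|_{C^0}<\varepsilon'$, with $g_0$ equal to the zero section (i.e.\ the inclusion), and with $j^rg=\sigma$ over $\Op(L)$; by Corollary \ref{cor:holonomicApproxCylinder} all the zig-zag wrinkles of this family have cylinder base. Choosing $\varepsilon'$ small enough (depending on the given $\varepsilon$ and on the distortion constant relating the metric on $J^r(Y,n)$ to the one on $J^r(\nu(X))$ on the relevant compact region), the composition of $g_s$ with the tubular neighbourhood embedding is a family of topological embeddings into $Y$; since each $g_s$ is a multi-section with zig-zag wrinkles, each $f_s$ is a singular embedding, the membranes $A_i\subset[0,1]\times X$ are carried over verbatim, and $|j^rf_s-F_s|<\varepsilon$ holds. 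Finally, because $j^rg=\sigma$ near $L$, no singularities lie over $L$, giving the additional relative conclusion.

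The main obstacle, and the step requiring the most care, is the bookkeeping of constants and of the containment $F_s(X)\subset J^r(\nu(X))$: one must argue that the parameter family $F$, which a priori could wander far from the zero section in $J^r(Y,n)$, can be arranged (by a preliminary $C^0$-small homotopy of $F$ rel $s=0$, or by the continuity/compactness argument above) to stay inside a fixed tubular neighbourhood, and then that the $\varepsilon'$-approximation produced downstairs translates to an $\varepsilon$-approximation upstairs uniformly over the compact domain $[0,1]\times X$. Everything else — the identification of $r$-jet homotopies with families of formal sections that begin holonomic, the transfer of wrinkle-type singularities through the isocontact embedding, and the relative statements — is formal once Lemma \ref{lem:zeroSection} and Theorem \ref{thm:holonomicApproxParam} are in hand.
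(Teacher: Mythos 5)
Your reduction works only for the special case in which the homotopy $(F_s)$ remains graphical over the fixed smooth submanifold $X$ for all $s\in[0,1]$, i.e.\ when $\pi_{r,1}\circ F_s$ stays transverse to the normal bundle $\nu(X)$ throughout. In that case the identification of Lemma~\ref{lem:zeroSection} does turn $(F_s)$ into a family of formal sections of $J^r(\nu(X))$ and a single application of Theorem~\ref{thm:holonomicApproxParam} finishes the job. But in general the $1$-jet part of $F_s$ rotates arbitrarily: $\pi_{r,1}\circ F_1$ need not be transverse to $\nu(X)$ at all (think of a tangential homotopy that turns the Gauss map through $90^\circ$ at some point of $X$). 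Shrinking the tubular neighbourhood does nothing to fix this, since the obstruction sits in the first-jet coordinates, not the zero-jet ones: there is simply no formal section of $J^r(\nu(X))$ whose $1$-jet is non-graphical over $X$. So the claim ``after subdividing $[0,1]$ if needed, the image of $F_s$ lands inside $J^r(\nu(X))$'' is not true with a fixed $X$ as reference.

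The natural repair --- chop $[0,1]$ into subintervals over each of which the homotopy \emph{is} graphical relative to the output of the previous step, and then induct --- is exactly what the paper does, but the crucial difficulty that your proposal misses is that after the first application of holonomic approximation by multi-sections, the ``reference submanifold'' $f_{a_1}$ is no longer a smooth submanifold of $Y$: it is a singular embedding with zig-zags. You can no longer take a tubular neighbourhood of it and invoke Lemma~\ref{lem:zeroSection}. The paper deals with this by splitting the inductive step into two pieces: near the cusp/singular locus $\Sigma$ it builds an auxiliary smooth embedded hypersurface $\SS$ that serves as a graphical reference, applies \emph{classic} holonomic approximation (Theorem~\ref{thm:holonomicApprox}) there, and transports the result by a family of point symmetries; away from $\Sigma$ it applies the multi-section holonomic approximation (Theorem~\ref{thm:holonomicApproxParam}) relative to the boundary and to the work already done near $\Sigma$. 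This second half is essentially your argument, but it must be preceded by the first half, and the whole thing must be wrapped in the chopping-plus-induction scaffold. So you have the right ingredients (tubular neighbourhood model, Lemma~\ref{lem:zeroSection}, Theorem~\ref{thm:holonomicApproxParam}, Corollary~\ref{cor:holonomicApproxCylinder}) but the reduction as stated does not go through without the inductive mechanism for handling the accumulating singular locus, which introduces the nesting phenomenon visible in Figure~\ref{fig:wrinkledEmbeddings}.
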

The idea of the proof is the following: We can reduce to the case that $F_s$ is graphical over $F_0$ using a sufficiently fine partition of the $s$-interval. Seeing $F_0$ as the holonomic lift of the zero section and $F_s$ as a homotopy of formal sections allows us to invoke Theorem \ref{thm:holonomicApproxZZ}. This effectively introduces spheres of zig-zags, that are born in spherical embryos.

It is apparent that, after the first application of Theorem \ref{thm:holonomicApproxZZ}, we will not be dealing anymore with submanifolds, stopping us from reducing to the case of sections. We address this by applying (classic) holonomic approximation close to the wrinkle locus and arguing as above in the complement (which is indeed smooth).

We illustrate the argument in Figure \ref{fig:wrinkledEmbeddings}. This strategy to pass from graphical to non-graphical is standard and was already used in \cite{ElMiWrinEmb}.

\begin{figure}[ht]
\centering
\includegraphics[width =\linewidth ]{./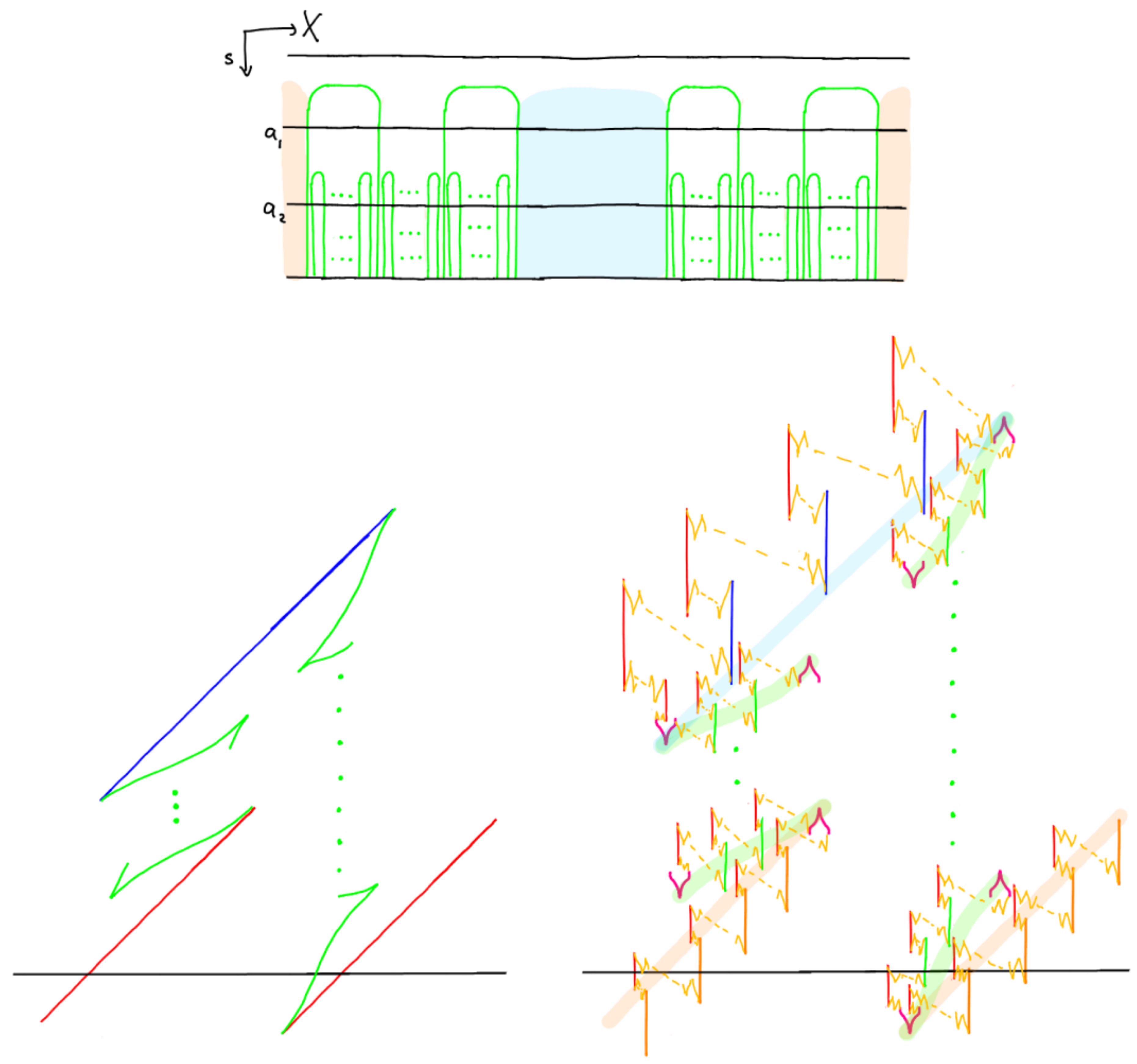}
\caption{The wrinkled submanifold version of Figures \ref{Fig:HolonomicApprox_Zigzag} and \ref{Fig:HolonomicApprox_ZigzagParam}. We are given a $1$-jet homotopy in which the zero jet (the horizontal line) stays fixed, but the Gauss map turns counterclockwise over time. The top picture represents the domain; the wrinkle singularity locus is shown in green. A time $a_1$ is shown in the bottom left; the formal Gauss map is still graphical and the wrinkled submanifolds constructed up to this point do not exhibit nesting. A later time $a_2$ is shown in the bottom right; the desired Gauss map is now vertical. This is graphical over the Gauss map at time $a_1$, but not graphical over the Gauss map we started with. It follows that new zig-zags have to be added within the membranes of the previous ones; this nesting is shown in the top picture as well.} \label{fig:wrinkledEmbeddings}
\end{figure}

\begin{remark}
Since the singularities of $f$ have cylinder base, the singularities of $f_s$, for most $s$, are zig-zags. At discrete times $s_i$, the map $f_{s_i}$ will also have spherical embryos.

Moreover, even though there may be nested singularities within the membrane of a given wrinkle, it is the case that all the zig-zags of a given $f_s$ come in pairs. Indeed, the membranes tell us what the pairings are. See Figure \ref{fig:wrinkledEmbeddings}.\hfill$\triangle$
\end{remark}

\subsubsection{Parametric statement}

We now consider a smooth compact manifold $K$ serving as parameter space. The result will deal with a fibered-over-$K$ family $X = (X_k)_{k \in K}$ of submanifolds of $Y$. As a family of abstract manifolds, this is simply a locally trivial fibre bundle over $K$. However, it need not be globally trivial.

\begin{theorem} \label{thm:wrinkledEmbeddingsParam}
Fix a smooth manifold $Y$, an integer $n$, a $K$-family of submanifolds $X = (X_k \subset Y)_{k \in K}$ and a family of $r$-jet homotopies 
\[ F = ((F_{k,s})_{s \in [0,1]}: X_k \to J^r(Y,n))_{k \in K}. \]

Then, there is a wrinkled family of singular embeddings
\[ (f=(f_{k,s}: X_k \to Y)_{k \in K, s \in [0,1]},\{A_i \subset [0,1] \times X\}) \]
satisfying:
\begin{itemize}
\item $|j^rf_{k,s} - F_{k,s}| < \varepsilon$.
\item $f_{k,0}: X_k \to Y$ is the inclusion.
\end{itemize}

Additionally: If $F$ is already holonomic in a neighbourhood of a polyhedron $L \subset [0,1] \times X$, the singularities of $f$ may be taken to be disjoint from $L$.
\end{theorem}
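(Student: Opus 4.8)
\textbf{Proof plan for Theorem \ref{thm:wrinkledEmbeddingsParam}.}

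The plan is to reduce the parametric, submanifold-valued statement to the parametric holonomic approximation for multi-sections (Theorem \ref{thm:holonomicApproxParam}, together with Corollary \ref{cor:holonomicApproxCylinder}), exactly mirroring how the non-parametric Theorem \ref{thm:wrinkledEmbeddings} is deduced from Theorem \ref{thm:holonomicApproxZZ}. The extra ingredient relative to the non-parametric case is that the $r$-jet homotopy is now itself a family over $K$, and that the ambient family of submanifolds $X = (X_k)_{k\in K}$ need only be a locally trivial bundle over $K$, not a product; so care is needed to globalise the local constructions over $K$.

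First I would set up the local model near the initial data. For each fixed $(k,s)$, $F_{k,0} = j^rX_k$ is the holonomic lift of $X_k$; using Lemma \ref{lem:zeroSection} (and the model around holonomic sections of Subsection \ref{sssec:zeroSection}) we identify a neighbourhood of $j^rX_k$ in $J^r(Y,n)$ with a neighbourhood of the zero section in $J^r(\nu(X_k))$, compatibly with $\xi_\can$ and with the front projection. Next, subdivide $[0,1]$ into finitely many subintervals $[a_{j-1},a_j]$, fine enough (using compactness of $K$ and of $X$, and the continuity of $F$) that on each subinterval the homotopy $F_{k,s}$, $s \in [a_{j-1},a_j]$, is graphical over $F_{k,a_{j-1}}$ — i.e. lies in the region of $J^r(Y,n)$ identified with a neighbourhood of $j^r(\text{image of }f_{k,a_{j-1}})$ in a jet space of sections. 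On this subinterval the homotopy becomes a $K \times [a_{j-1},a_j]$-family of formal sections over the submanifold-so-far, holonomic at time $a_{j-1}$, and Theorem \ref{thm:holonomicApproxParam} (with the relative-in-the-parameter refinement, keeping things fixed near $L$ and near $s=a_{j-1}$) produces a wrinkle family of multi-sections $\varepsilon$-approximating it, whose zig-zag wrinkles, by Corollary \ref{cor:holonomicApproxCylinder}, have cylinder base. Lifting via $j^r$ and re-embedding through Lemma \ref{lem:zeroSection} gives a wrinkled family of singular embeddings on $[a_{j-1},a_j]$.

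The inductive step is the crux. After the first subinterval we are no longer working with honest submanifolds: the previous stage produced a singular submanifold with zig-zag wrinkles along some membranes $A_i$. To iterate, I would work separately on a neighbourhood of the existing wrinkle locus and on its (smooth) complement. On the complement, which is a genuine smooth family of submanifolds, the above argument applies verbatim. Near the existing wrinkles we cannot reduce to sections of a single jet space directly; instead, as in \cite{ElMiWrinEmb}, apply the classical parametric holonomic approximation Theorem \ref{thm:holonomicApprox} in a thin neighbourhood of the wrinkle locus — there the perturbation is supported away from the singular set, so it does not interfere with the zig-zag structure — and then patch with the construction on the complement. This is where \emph{nesting} appears: the new zig-zags introduced on a later subinterval may have to live inside the membranes of earlier ones (since the target $r$-jet need not be graphical over the original one, only over the one from the previous stage), which is precisely why wrinkles (Subsection \ref{ssec:wrinkles}), rather than bare double folds, are the right objects — they are designed to allow nesting. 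The general-position triangulation of $K \times X$ (Theorem \ref{thm:Thurston}) is used so that the wiggling in the extension step is carried out in $X$-directions only, keeping everything fibered over $K$; globalising over the bundle $X \to K$ is handled by working simplex-by-simplex in a triangulation of the total space adapted to the projection to $K$, exactly as in the proof of Theorem \ref{thm:holonomicApproxParam}.

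Finally I would assemble the pieces: concatenating the finitely many sub-homotopies (smoothing the $s$-corners in the standard way), the relative clauses from Theorems \ref{thm:holonomicApprox} and \ref{thm:holonomicApproxParam} ensure that everything stays equal to $F$ near $L$ and that $f_{k,0}$ is the inclusion; the $C^0$-bounds add up (choosing the $\varepsilon$ at each stage small relative to the final tolerance); and the singularities are, by construction, zig-zag wrinkles. The main obstacle I anticipate is the bookkeeping in the inductive step — making precise the interaction between "apply classical holonomic approximation near the old wrinkles" and "apply Theorem \ref{thm:holonomicApproxParam} on the smooth complement", and checking that the resulting nested configuration is genuinely a disjoint union of model zig-zag wrinkles with well-defined membranes. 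Everything else is a routine, if lengthy, adaptation of the non-parametric argument together with the standard parametric machinery already in place.
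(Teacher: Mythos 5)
Your plan mirrors the paper's strategy closely: subdivide the time interval into graphical pieces using compactness, invoke Theorem \ref{thm:holonomicApproxParam} (with Corollary \ref{cor:holonomicApproxCylinder}) on the smooth part of the singular embedding produced so far, and fall back on classical holonomic approximation (Theorem \ref{thm:holonomicApprox}) near the existing singularity locus via an auxiliary reference submanifold $\SS$ and a point-symmetry lift; the nesting of wrinkles, the relative clauses, and the assembly of the sub-homotopies are all handled as you say.

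The one place where your outline would stall if executed as written is the treatment of the singularity locus in the inductive step, which you propose to handle monolithically (``apply classical holonomic approximation in a thin neighbourhood of the wrinkle locus''). In the non-parametric Theorem \ref{thm:wrinkledEmbeddings}, the singular locus of $g_{a_j}$ at a fixed time slice is a disjoint union of stabilised cusps, so a single thickening $\SS \simeq (-1,1) \times \Sigma$ of the cusp locus supports a uniform fold model over which one can then do holonomic approximation. In the parametric case the cylinder-base wrinkles have non-empty equator, so the singular locus of $g_{a_j}$ is stratified as $\Sigma^{1,1} \subset \Sigma^{1,0}$, and the time-slice map degenerates to a (closed) pleat/swallowtail at the equator rather than a stabilised cusp. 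There is therefore no single thickening $\SS$ of $\Sigma^1$ with a uniform cusp model, and your set-up does not go through at once. The paper resolves this by processing strata from deepest to shallowest: first a thickening of $\Sigma^{1,1}$ (diffeomorphic to $\NS^{k-1}\times\NS^{n-1}$ with trivial normal bundle) is built, together with a front-symmetry chart into which classical holonomic approximation is applied; then a thickening of the open cusp stratum $\Sigma^{1,0}$ is built relative to what was already done near $\Sigma^{1,1}$; and only then is Theorem \ref{thm:holonomicApproxParam} applied on the smooth complement relative to $\Sigma^1$. With that three-stage refinement inserted, the rest of your argument reproduces the paper's proof.
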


As in the case of multi-sections, we will prove:
\begin{corollary} \label{cor:wrinkledEmbeddingsCylinder}
The wrinkles of the family $f$ produced by Theorem \ref{thm:wrinkledEmbeddingsParam} may be assumed to have cylinder base.
\end{corollary}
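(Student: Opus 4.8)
The plan is to reduce the parametric, submanifold-valued statement to the parametric holonomic approximation for multi-sections, namely Theorem \ref{thm:holonomicApproxParam} together with Corollary \ref{cor:holonomicApproxCylinder}. The mechanism for this reduction is the same ``graphical $\Rightarrow$ non-graphical'' bootstrapping used in the non-parametric case (Theorem \ref{thm:wrinkledEmbeddings}) and, originally, in \cite{ElMiWrinEmb}. First I would fix, by compactness of $K\times X\times[0,1]$, a subdivision $0 = b_0 < b_1 < \dots < b_P = 1$ of the homotopy parameter so fine that, on each subinterval $[b_p,b_{p+1}]$, the $r$-jet homotopy $(F_{k,s})$ stays $C^0$-close to $F_{k,b_p}$; since $\Gr(TY,n)$-valued data depends continuously on $(k,s)$ and the holonomic condition is closed, one can arrange that over each slab the relevant formal data is graphical over the one at the left endpoint of the slab, in the sense of the local models of Subsection \ref{sssec:zeroSection}. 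This lets us treat the induction slab by slab, assuming at the start of slab $p$ that we already have a wrinkled family of singular embeddings $(f_{k,b_p})$ realising $F_{k,b_p}$ up to $\varepsilon$.

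The inductive step is where Theorem \ref{thm:holonomicApproxParam} enters. Having $f_{k,b_p}$ in hand, cover $K\times X$ by two regions: a neighbourhood $W$ of the current wrinkle locus $\bigcup A_i$ (together with $\Op(L)$, where $L$ is the relative subpolyhedron), and the complement, on which $f_{k,b_p}$ is a genuine family of smooth submanifolds. On the smooth complement, the normal-bundle model of Lemma \ref{lem:zeroSection} identifies a tubular neighbourhood of $f_{k,b_p}(X_k)$ in $J^r(Y,n)$ with a jet space of sections $J^r(\nu)$, under which $f_{k,b_p}$ becomes the zero section and the remaining part of the $r$-jet homotopy $(F_{k,s})_{s\in[b_p,b_{p+1}]}$ becomes a $K$-family of formal sections. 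Applying Theorem \ref{thm:holonomicApproxParam} (relative to the boundary of the complement and to $\Op(L)$, and invoking Corollary \ref{cor:holonomicApproxCylinder}) produces a wrinkle family of multi-sections, $C^0$-approximating that formal data, with all new zig-zag wrinkles having cylinder base. Lifting back through the isocontact embedding of Lemma \ref{lem:zeroSection}, these become new zig-zag wrinkles of singular embeddings, possibly \emph{nested} inside the membranes $A_i$ that were already present — this is exactly the nesting phenomenon illustrated in Figure \ref{fig:wrinkledEmbeddings}. Over the region $W$ near the old wrinkle locus, where we no longer have smooth submanifolds, one instead invokes the classical parametric holonomic approximation Theorem \ref{thm:holonomicApprox} to follow $(F_{k,s})$ there without introducing new singularities (the old wrinkle locus has positive codimension in $K\times X$, so classical holonomic approximation applies along a wiggled copy of it); one then patches the two constructions using a partition-of-unity/interpolation argument, which is legitimate since over the overlap both give $\varepsilon$-close approximations of the same $F$. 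Iterating over $p=0,\dots,P-1$ and keeping the cumulative $C^0$-error below $\varepsilon$ (rescale $\varepsilon\mapsto \varepsilon/P$ slab by slab) yields $(f_{k,s})_{s\in[0,1]}$ with $f_{k,0}$ the inclusion, all singularities zig-zag wrinkles with cylinder base, and the relative condition preserved because every application of Theorems \ref{thm:holonomicApproxParam} and \ref{thm:holonomicApprox} was made relative to $\Op(L)$. This simultaneously establishes Corollary \ref{cor:wrinkledEmbeddingsCylinder}.

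The main obstacle — and the step that needs genuine care rather than routine bookkeeping — is the interface between the smooth complement and the neighbourhood $W$ of the pre-existing wrinkle locus, together with the nesting it forces. One must check that the interpolation between the Theorem \ref{thm:holonomicApproxParam} output (on the complement) and the Theorem \ref{thm:holonomicApprox} output (near $W$) can be carried out through \emph{singular embeddings} whose only singularities remain zig-zag wrinkles: the new zig-zags created on the complement must be allowed to sit inside, and be compatible with, the membranes $A_i$ already present, and the definition of a wrinkled family of singular embeddings (via point equivalence along membrane boundaries, Subsection \ref{ssec:submanifoldsZigzag}) is precisely what makes nesting permissible. A secondary technical point is ensuring that all the triangulations and wigglings invoked inside Theorem \ref{thm:holonomicApproxParam} can be taken in general position with respect to the fibres of $K\times X\to K$ \emph{and} transverse to the existing wrinkle locus; this follows from Thurston's Theorem \ref{thm:Thurston} applied to the relevant stratification, but it must be stated carefully so that the induction closes. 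Everything else — the slab subdivision, the $\varepsilon/P$ error accounting, and the lifting through Lemma \ref{lem:zeroSection} — is standard $h$-principle bookkeeping.
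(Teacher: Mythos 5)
Your overall strategy (chop the time interval into graphical slabs, induct, treat the singularity locus with classic holonomic approximation and the smooth complement with Theorem \ref{thm:holonomicApproxParam}, then invoke Corollary \ref{cor:holonomicApproxCylinder} so that all new wrinkles have cylinder base) is the same as the paper's, and the way the cylinder-base conclusion is extracted is correct. But there are two genuine gaps in the way you close the inductive step.

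First, the patching. You propose to produce one approximation near the old wrinkle locus $W$ and another on the smooth complement, and then glue them by a ``partition-of-unity/interpolation argument'' justified by the fact that both are $\varepsilon$-close to $F$. Being $C^0$-close does not permit interpolation in this setting: a convex combination of two multi-sections with different cusp loci (or, worse, a multi-section and a genuine section) is not a multi-section with controlled zig-zag wrinkle singularities. The paper sidesteps this entirely: the work near $\Sigma$ is done first, and then Theorem \ref{thm:holonomicApproxParam} is applied \emph{relative} to $\{a_j\}\times X \cup [a_j,a_{j+1}]\times\Op(\Sigma)$, so that the output agrees with the already-constructed data on the overlap by construction. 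No interpolation step exists in the paper's argument, and none should.

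Second, your treatment of the region near $\Sigma$ is too vague to be correct as stated. You say that classic holonomic approximation ``applies along a wiggled copy'' of the wrinkle locus, but its output is a holonomic section near a polyhedron, not a deformation of the singular family $g_{a_j}$. The paper's mechanism is: build an auxiliary thickening $\SS$ of the cusp locus $\Sigma$, embed it into $Y$ as a reference $\Psi:\SS\hookrightarrow Y$ matching $j^r g_{a_j}$ along $\Sigma$, apply classic HA in $J^r(\nu(\SS))$ to get $(\rho_s,h_s)$, and convert this into a family of \emph{point symmetries} $\Lambda_s$ of $J^r(\nu(\SS))$ lifting $\rho_s$ and carrying the zero section to $h_s$; the deformation is then $g_s := \Lambda_s(g_{a_j})$, which manifestly preserves the singularity structure of $g_{a_j}$. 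That reinterpretation is the content of the step, and your plan omits it. Finally, in the parametric case the singularity locus is stratified ($\Sigma^{1,1}$, the equator, and $\Sigma^{1,0}$, the cusp locus) and the paper applies the above in two stages, first along $\Sigma^{1,1}$ and then, relative to that, along $\Sigma^{1,0}$; a single two-region decomposition of $K\times X$ does not account for this.
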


The surgery methods from appendix \ref{sec:surgeries} will also show:
\begin{corollary} \label{cor:wrinkledEmbeddingsBall}
The wrinkles of the family $f$ produced by Theorem \ref{thm:wrinkledEmbeddingsParam} may be assumed to have ball base.
\end{corollary}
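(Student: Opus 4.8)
The plan is to deduce this from Theorem \ref{thm:wrinkledEmbeddingsParam} --- or rather from its refinement Corollary \ref{cor:wrinkledEmbeddingsCylinder} --- by repeatedly applying the \emph{chopping} surgery developed in Appendix \ref{sec:surgeries}. First I would invoke Corollary \ref{cor:wrinkledEmbeddingsCylinder} to put the family $f = (f_{k,s})$ in the normal form in which every zig-zag wrinkle has cylinder base; thus each wrinkle has base $D \cong \D^k \times \NS^{n-1} \subset K \times X$ compatibly with the projection onto the parameter-and-normal directions, and along its membrane the singularity is, up to a point symmetry, the $\NS^{n-1}$-stabilisation of a zig-zag wrinkle with base $\D^k \times \{\ast\}$. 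Working within an exhaustion of $K \times X$ by compacts, there are only finitely many such wrinkles to treat, with pairwise separated membranes, all disjoint from a neighbourhood of $L$.

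The key input is the chopping lemma of Appendix \ref{sec:surgeries}: given a model zig-zag wrinkle $g$ with base $D$ and a coorientable hypersurface $\Sigma \subset D$ (with boundary) that meets $\partial D$ transversally and is, together with $\partial\Sigma$, transverse to the fibres of the $K$-projection, one can replace $g$, by a modification supported in an arbitrarily small neighbourhood of its membrane and $C^0$-arbitrarily small, by a pair of zig-zag wrinkles whose bases are collar-thickenings of the two closures of $D\setminus\Sigma$, with the expected zig-zag embryos born over $\Sigma$. For $r$ even the underlying integral models are wrinkles $\Wrin$; for $r$ odd they are closed wrinkles $\ClosedWrin$, and the lifts remain topological embeddings exactly as in Lemma \ref{lem:liftZigzagWrinkle}. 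Granting this, fix one cylinder-base wrinkle and cover its base $\D^k\times\NS^{n-1}$ by a finite family of $K$-fibered charts, each identified --- via a fibrewise rescaling --- with the round ball $\D\subset\R^k\times\R^{n-1}$; this is possible since $\D^k\times\NS^{n-1}$ admits an atlas adapted to the projection onto the $\D^k$-factor, obtained e.g. from a two-hemisphere cover of $\NS^{n-1}$. Picking hypersurfaces $\Sigma_1,\dots,\Sigma_{N-1}$ that successively cut $D$ along the boundaries of these charts and applying the chopping lemma once along each $\Sigma_j$ replaces the single cylinder-base wrinkle by $N$ zig-zag wrinkles, each of whose bases is carried by a $K$-fibered diffeomorphism --- hence by a point symmetry of the ambient jet space --- to the round ball, i.e. each a wrinkle with ball base. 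Performing this for every wrinkle of the disjoint, locally finite family, and taking the cumulative surgery parameter small enough, yields a new wrinkled family of singular embeddings $f'$ all of whose wrinkles have ball base; since all surgeries are supported away from $s=0$ and from $\Op(L)$, one keeps $f'_{k,0}$ equal to the inclusion and $f'$ honest over $L$, and smallness of the parameter preserves $|j^rf'_{k,s} - F_{k,s}| < \varepsilon$.

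I expect the only genuine obstacle to be the chopping lemma itself, which is precisely where Appendix \ref{sec:surgeries} does the work: one must exhibit an explicit homotopy of the zig-zag-wrinkle model near $\Sigma$ that nucleates an additional cubic-embryo arc separating the ``sphere of cusps'' into two ball-based families, while simultaneously (i) keeping the front a topological embedding, (ii) keeping the integral lift smooth for $r$ even and a topological embedding for $r$ odd, (iii) keeping the modification $C^0$-small, and (iv) respecting the fibration over $K$ and the prescribed behaviour near $L$. Once that local model is in hand, the content of the present corollary is the purely combinatorial bookkeeping above, entirely parallel to the multi-section argument that yields Corollary \ref{cor:holonomicApproxBall}. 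A minor point to verify along the way is that chopping an outer wrinkle never interferes with wrinkles nested inside it, which is automatic because the membranes $A_i$ are disjoint and each surgery is localised near the relevant $\partial A_i$.
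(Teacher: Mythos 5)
There is a genuine gap, and it is exactly the one the paper flags at the start of its own proof. Your plan post-processes the family produced by Theorem \ref{thm:wrinkledEmbeddingsParam} (or Corollary \ref{cor:wrinkledEmbeddingsCylinder}) by applying the chopping surgery of Appendix \ref{sec:surgeries} to each of its wrinkles. But the chopping results (Proposition \ref{prop:WrinkledEmbeddings_MovingSingularLocus} and Theorem \ref{thm:ModifyingSingularitiesCuttingWrinkles}) apply only to \emph{model} zig-zag wrinkles, i.e.\ wrinkles for which the equivalence with the model extends over the whole membrane. The wrinkles coming out of Theorem \ref{thm:wrinkledEmbeddingsParam} are not of this kind: the inductive construction over the time intervals $[a_j,a_{j+1}]$ forces \emph{nesting}, so the interior of an outer membrane contains further zig-zag wrinkles and the outer wrinkle is only equivalent to the model along $\partial A$. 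Your closing remark that non-interference with nested wrinkles is ``automatic because the membranes $A_i$ are disjoint and each surgery is localised near the relevant $\partial A_i$'' is where the argument breaks: chopping is \emph{not} supported near $\partial A$. It sweeps the cusp locus $\partial^+A$ across the membrane, plants small wrinkles throughout its interior, and ends by removing a residual wrinkle, a step that needs a region $U$ in the front whose fibrewise convex hull meets no other branches of the map (Definition \ref{def:removableZigzag}); nested zig-zags sit precisely in that region. You are implicitly transferring to chopping the locality property that only the fish surgery of Appendix \ref{sec:desingularisationOdd} enjoys (and which the paper uses, in the proof of Theorem \ref{thm:fishSurgerySubm}, exactly because that surgery takes place in $\Op(\partial A)$ and is therefore unaffected by nesting).

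The paper's proof avoids this by not post-processing at all: one re-runs the proof of Theorem \ref{thm:wrinkledEmbeddingsParam} and, inside each graphical step, follows every application of the multi-section result Theorem \ref{thm:holonomicApproxParam} immediately by the chopping Theorem \ref{thm:ModifyingSingularitiesCuttingWrinkles}. At that moment the freshly created wrinkles are genuine model zig-zag wrinkles of multi-section type in the local chart $J^r(\nu(L))$, so chopping applies; the nesting produced in later steps then takes place inside membranes of wrinkles that already have ball base. If you want to salvage your write-up, this is the adjustment to make; as stated, the reduction from cylinder base to ball base by cutting the base $D$ along hypersurfaces also does not match the mechanism of the appendix (which chops the height function into small bumps rather than slicing the base), but that is cosmetic compared with the model/nesting issue.
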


\subsection{Proof of the non-parametric case} \label{ssec:wrinkledEmbeddingsProof}

As stated above, the core of the proof consists in reducing to the case of sections. We first explain what a graphical $r$-jet homotopy is and how one subdivides the time interval (Subsection \ref{sssec:wrinkledEmbeddingsProof1}). This turns the proof into an induction argument on the number of time subdivisions (Subsection \ref{sssec:wrinkledEmbeddingsProof2}). In each step (except for the base case), we deal with the singularity locus $\Sigma$ first (Subsection \ref{sssec:wrinkledEmbeddingsProof4}) and then with the complement (Subsection \ref{sssec:wrinkledEmbeddingsProof5}). In order to work around $\Sigma$, we need to produce an auxiliary submanifold $\SS$ close to it to serve as a reference and play the role of the zero section (Subsection \ref{sssec:wrinkledEmbeddingsProof3}).

Fix a metric in $Y$ and a metric in $J^r(Y,n)$. During the proof we will provide size bounds using constants $\varepsilon_l>0$. These will depend only on $\varepsilon$, the $r$-jet homotopy $(F_s)_{s \in [0,1]}: X \to J^r(Y,n)$, and the chosen metrics. Instead of giving a precise value for them, we will just indicate what is the reasoning behind their choice.

\subsubsection{Chopping the time interval} \label{sssec:wrinkledEmbeddingsProof1}

Given $(F_s)_{s \in [0,1]}$, we can consider the associated homotopy of Gauss maps given by projection to the $1$-jet 
\[ (\pi_{r,1} \circ F_s)_{s \in [0,1]}: X \to J^1(Y,n). \]
Using the metric we obtain a homotopy of formal normal bundles $\nu(F_s) := (\pi_{r,1} \circ F_s)^\bot$.

We will say that $(F_s)_{s \in [a,b]}$ is a \textbf{graphical} homotopy if for all $s \in [a,b]$ it holds that $\pi_{r,1} \circ F_s$ is transverse to $\nu(F_s)$. This depends on the choice of metric.

We fix a collection of closed consecutive intervals $\{[a_j,a_{j+1}]\}$ covering $[0,1]$ such that each $(F_s)_{s \in [a_j,a_{j+1}]}$ is graphical. This is possible if the submanifold $X$ is compact. In the non-compact case one needs to carry out the upcoming arguments using a exhaustion by compacts, invoking the relative nature of the statement.

\subsubsection{Setup for induction} \label{sssec:wrinkledEmbeddingsProof2}

We will argue inductively on the number of time subintervals. The inductive hypothesis at step $j$ reads: There is a wrinkled family of singular embeddings 
\[ (g: [0,a_j] \times X \to [0,a_j] \times Y,\{D_i \subset [0,a_j] \times X\}) \]
satisfying
\begin{itemize}
\item $g$ is a holonomic approximation. I.e. $|j^rg_s - F_s| < \varepsilon_0$ for each $g_s = g|_{s \times X}$.
\item $g_0$ is the inclusion of $X$.
\item All the singularities of $g$ are zig-zag wrinkles with cylinder base $D_i$.
\item Each $\partial D_i$ is transverse to $\{a_j\} \times X$. The singularity locus $\Sigma$ of $g_{a_j}$ is a disjoint collection of $\NS^{n-1}$-stabilised cusps.
\end{itemize}
See Figure \ref{fig:wrinkledEmbeddings}.

The base case of the induction $j=0$ holds by letting $[0,a_j]$ be the degenerate interval $0$ and $g$ be the inclusion $X \to Y$. Then, there are no singularities present.

If $j \neq 0$, we shift our focus from $(F_s)_{[a_j,a_{j+1}]}$ to a homotopy:
\[ (G_s)_{[a_j,a_{j+1}]}: [0,1] \times X \to [0,1] \times J^r(Y,n) \]
with better properties. We ask that $G_s$ lifts $g_{a_j}$ and satisfies $|G_s-F_s| < \varepsilon_1$. Indeed, each $G_{s_0}$ may be constructed from $F_{s_0}$ by lifting the homotopy $(g_s)_{[0,a_j]}$ using parallel transport in $J^r(Y,n)$. Note that can estimate a priori the distortion introduced by parallel transport in terms of the length of the paths involved. We can then choose $\varepsilon_0$ so that $\varepsilon_1$ is sufficiently small (much smaller than $\varepsilon$ times the number of time intervals).

Moreover, we can homotope $(G_s)_{s \in \Op(a_j)}$ in order to impose $G_{a_j} = j^rg_{a_j}$. We can do this while keeping a bound of $|G_s-F_s| < 2\varepsilon_1$. We can henceforth focus on approximating $G_s$ instead of $F_s$. We are now effectively attempting to achieve holonomic approximation relative to a singular embedding instead of relative to an embedding. We leave it to the reader to restate Theorem \ref{thm:wrinkledEmbeddings} in such generality.

\subsubsection{The cusp locus} \label{sssec:wrinkledEmbeddingsProof3}

Our inductive assumption is that the singular locus $\Sigma$ of $g_{a_j}$ consists of a disjoint union of codimension-one spheres mapping to cusps. Note that $g_{a_j}|_\Sigma$ itself is a smooth embedding.

A zig-zag wrinkle with cylinder base is given by a model. Particularising this model to the zig-zags in the time slice $a_j \times X$, we deduce that there are:
\begin{itemize}
\item a smooth manifold $B \simeq (-1,1) \times \Sigma$,
\item an embedding of $B$ into $Y$,
\item lifting to a front symmetry embedding $J^r(\nu(B))$ into $J^r(Y,n)$.
\end{itemize}
such that $g|_\Sigma$ maps into $\nu(B)$, $j^rg_{a_j}|_\Sigma$ maps into $J^r(\nu(B))$, and $g: \Op(\Sigma) \to \nu(B)$ is described by a collection of multi-section cusp models.

It is then possible to take a thickening of the singular locus
\[\SS := \Op(\Sigma) \simeq (-1,1) \times \Sigma \subset X \]
and produce an embedding $\Psi:\SS \hookrightarrow Y$, such that $j^r\Psi = j^rg_{a_j}$ along $\Sigma$. Indeed: close to $\Sigma$, the lift $j^rg_{a_j}$ is a fold mapping into $J^r(\nu(B))$. It is the lift of a map $\psi$ into a principal projection. We can then take $\psi|_\Sigma$, extend it to an embedding of $\SS$ that is graphical over the base $B$, and then lift to a non-singular integral map using $j^rg_{a_j}|_\Sigma$ as initial datum. The front projection of this map is the claimed $\Psi$.

Lastly, we extend the embedding $\Psi:\SS \hookrightarrow Y$ to an embedding of the normal bundle $\nu(\SS)$. This provides us with an front symmetry embedding $J^r(\nu(\SS)) \to J^r(Y,n)$. In these new coordinates, $g_{a_j}$ is the $\NS^{n-1}$-stabilisation of a cusp tangent to the zero section along $\Sigma$.

\subsubsection{Holonomic approximation around the cusps} \label{sssec:wrinkledEmbeddingsProof4}

The goal now is to apply holonomic approximation to the submanifold $\SS$ and use this to achieve holonomic approximation for $g_{a_j}$ close to $\Sigma$. We work in $J^r(\nu(\SS))$.

We take the family $(G_s|_\Sigma)_{[a_j,a_{j+1}]}$ and extend it arbitrarily to $\SS$; denote the resulting homotopy by 
\[ (H_s: \SS \to J^r(Y,n))_{[a_j,a_{j+1}]}. \]
Our assumption on the graphicality of $(F_s)_{s \in [a_j,a_{j+1}]}$ translates now into the fact that $H_s$ takes values in $J^r(\nu(\SS))$. We can think of it as a homotopy of formal sections.

A further homotopy allows us to impose $H_{a_j} = 0$. By possibly shrinking $\SS$ we can ensure $|H_s-F_s| < 4\varepsilon_1$. This follows from the fact that $G_{a_j} = j^rg_{a_j}$ was already holonomic and $\Psi$ approximates $g_{a_j}|_\Sigma$. 

We now apply the classic holonomic approximation Theorem \ref{thm:holonomicApprox} to find:
\begin{itemize}
\item an isotopy $(\rho_s)_{s \in [a_j,a_{j+1}]}: \SS \to \SS$,
\item a homotopy of sections $(h_s)_{s \in [a_j,a_{j+1}]}: \SS \to J^r(\nu(\SS)) \subset J^r(Y,n)$,
\end{itemize}
satisfying:
\begin{itemize}
\item $h_{a_j}$ is the zero section.
\item $\rho_{a_j}$ is the identity.
\item Holonomic approximation $|j^rh_s - H_s|_{C^0} < \varepsilon_1$ holds on $\Op(\wtd{\Sigma}_s)$ where $\wtd{\Sigma}_s := \rho_s(\Sigma)$.
\end{itemize} 
This data defines a homotopy of point symmetries $(j^r\Lambda_s)_{s \in [a_j,a_{j+1}]}$ of $J^r(\nu(\SS))$. Namely, $\Lambda_s: \nu(\SS) \to \nu(\SS)$ is the unique lift of $\rho_s$ taking the zero section to $h_s$.

On a neighborhood of $\Sigma$ we can define $(g_s)|_{s \in [a_j,a_{j+1}]}$ to be given by $\Lambda_s(g_{a_j})$. We then observe that $j^rg_s = j^rh_s$ along $\Sigma$. It follows that, by making the neighbourhood is sufficiently thin and combining our previous bounds, we can obtain a bound $|j^rg_s - F_s| < 8\varepsilon_1$ for all $s \in [a_j,a_{j+1}]$ over $\Op(\Sigma)$.

\subsubsection{Conclusion of the inductive step} \label{sssec:wrinkledEmbeddingsProof5}

We now observe that $g_{a_j}$ is an honest embedding in the complement of $\Sigma$. We let $L$ be its image and $\nu(L)$ be the normal bundle. As above, we obtain a local point symmetry embedding $J^r(\nu(L))$ into $J^r(Y,n)$. By construction, $j^rg_{a_j}$ takes values in the image and is in fact the zero section. Furthermore, graphicality implies that $(G_s)_{s \in [a_j,a_{j+1}]}$ takes values in $J^r(\nu(L))$.

Since Theorem \ref{thm:holonomicApproxParam} is relative in the domain and the parameter, it can be applied to $(G_s)_{s \in [a_j,a_{j+1}]}$ relative to 
\[ \{a_j\} \times X \quad\bigcup\quad [a_j,a_{j+1}] \times \Op(\Sigma). \]
This yields a wrinkled family of singular embeddings $4\varepsilon_1$-approximating $G_s$. By construction, some wrinkles are nested in the membranes of the wrinkles we already had. 

A suitable choice of $\varepsilon_1$, roughly of size $\varepsilon$ over $20$ times the number of time intervals, completes the inductive step and thus the proof of Theorem \ref{thm:wrinkledEmbeddings}. \hfill$\Box$

\subsection{Proof of the parametric case} \label{ssec:wrinkledEmbeddingsProofParam}

The proof follows the strategy explained in the non-parametric setting, Subsection \ref{ssec:wrinkledEmbeddingsProof}. The only difference is that the singularities appearing now are slightly more complicated. Namely, the intersection of the singularity locus with a given time slice has now a non-empty equator.

In order to deal with this, we first produce a holonomic approximation along the equator, then along the cusps, and lastly along the smooth part\footnote{In arguments involving only $\Sigma^1$ singularities it is standard to proceed inductively from the worse singularities to the best, as we do here.}.

\subsubsection{Setup}

We work with a family of manifolds $X \to K$ whose fibre over $k \in K$ is denoted by $X_k$. They are presented to us as submanifolds of an ambient manifold $Y$. Furthermore, we are given a family of $r$-jet homotopies
\[ F = (F_s := (F_{k,s}: X_k \to J^r(Y,n))_{k \in K}))_{s \in [0,1]} \]
lifting the inclusions.

As before, we can produce a collection of intervals $\{[a_j,a_{j+1}]\}$ covering $[0,1]$ such that the family $F$ is graphical over each $[a_j,a_{j+1}]$, meaning that it is graphical for each individual $k \in K$. This relies on compactness of $X$; otherwise we argue using an exhaustion by compacts.

We consider then the following inductive statement at step $j$: A family 
\[ (g = (g_s = (g_{k,s})_{k \in K})_{s \in [0,a_j]}, \{B_i \subset [0,a_j] \times X\}) \]
 of holonomic approximations of $F$ has been produced, additionally satisfying:
\begin{itemize}
\item The singularities of a given $g_s$ are zig-zag wrinkles with cylinder base or embryos thereof.
\item Each $\partial B_i$ is transverse to $\{a_j\} \times X$. In particular, the singularities of $g_{a_j}$ are zig-zag wrinkles with cylinder base.
\end{itemize}
The base case $j=0$ holds by letting $g$ be the family of smooth inclusions $(X_k \to Y)_{k \in K}$.

We write $\Sigma^{1,0}$ for the cusp locus of $g_{a_j}$ and $\Sigma^{1,1}$ for the equator. It follows that each component of $\Sigma^{1,1}$ is a smooth submanifold diffeomorphic, in a fibered manner over $K$, to $\NS^{k-1} \times \NS^{n-1}$. Similarly, each component of $\Sigma^{1,0}$ is diffeomorphic to $\overset{\circ}{\D^k} \times \NS^{n-1}$.

We can moreover assume, using appropriate homotopies of the formal data, that instead of $(F_s)_{s \in [a_j,a_{j+1}]}$ we are interested in approximating 
\[ (G_s)_{s \in [a_j,a_{j+1}]}: X \to J^r(Y,n) \]
lifting $g_{a_j}$ and satisfying $G_{a_j} = j^rg_{a_j}$.

\subsubsection{Holonomic approximation around the equator}

As stated above, each component of $\Sigma^{1,1}$ is diffeomorphic to $\NS^{k-1} \times \NS^{n-1}$. Furthermore, since it is part of a wrinkle, its normal bundle is trivial. It follows that we can restrict the model around the wrinkle to the time slice $a_j \times X$ to yield:
\begin{itemize}
\item a smooth manifold 
\[ B \simeq (-1,1) \times \Sigma^{1,1} \times (-1,1) \simeq [(-1,1) \times \NS^{k-1}] \times [\NS^{n-1} \times (-1,1)], \]
where the first component is regarded as a parameter space,
\item which embeds into $K \times Y$ in a fibered manner,
\item and a extension to a front symmetry embedding $J^r(\nu(B))$ into $K \times J^r(Y,n)$, where $\nu(B)$ is the fibrewise normal bundle along the $Y$ component,
\end{itemize}
such that $j^rg_{a_j}|_{\Sigma^{1,1}}$ maps into $J^r(\nu(B))$.

It is then possible to take a thickening of $\Sigma^{1,1}$:
\[\SS := \Op(\Sigma^{1,1}) \simeq B \subset X \]
and produce a fibered embedding $\Psi: \SS \hookrightarrow K \times Y$, such that $j^r\Psi = j^rg_{a_j}$ along $\Sigma^{1,1}$. We do this as in Subsection \ref{sssec:wrinkledEmbeddingsProof3}. There is a fibered map $\psi$ into a principal projection whose lift to jet space yields $j^rg_{a_j}|_{\Op(\Sigma^{1,1})}$. We can then extend $\psi|_{\Sigma^{1,1}}$ to $\SS$ as a map graphical over $B$. Lifting to $J^r(\nu(B))$ and taking the front projection yields $\Psi$.

We can extend $(G_s|_{\Sigma^{1,1}})_{[a_j,a_{j+1}]}$ to a homotopy $(H_s)_{[a_j,a_{j+1}]}$ of formal sections over $B$ with values in $J^r(\nu(B))$. As such, we can apply classic holonomic approximation to follow $H_s$ in a holonomic manner in a neighbourhood of a wiggled version of $\Sigma^{1,1}$. This defines a homotopy of point symmetries that we apply to $g_{a_j}$ in a neighbourhood $\Sigma^{1,1}$. The resulting maps are precisely $(g_s|_{\Op(\Sigma^{1,1})})_{[a_j,a_{j+1}]}$.

\subsubsection{Conclusion of the inductive step}

The same reasoning can then be applied in a neighbourhood of $\Sigma^{1,0}$, relative to the process carried out close to $\Sigma^{1,1}$. This still relies on classic holonomic approximation. Lastly, we apply holonomic approximation for multi-sections (Theorem \ref{thm:holonomicApproxParam}) in the smooth locus, relative to $\Sigma^1$. This completes the inductive step and thus the proof of Theorem \ref{thm:wrinkledEmbeddingsParam}. \hfill$\Box$

\appendix

%
%
%
%
%
%
\section{Desingularisation in odd jet spaces} \label{sec:desingularisationOdd}


The results in this section apply to jet spaces both of sections and of submanifolds. To keep the discussion grounded, we deal with sections first. The case of submanifolds is addressed in Subsection \ref{sssec:fishSurgerySubm}.

Consider a submersion $Y \longrightarrow X$ with $n$-dimensional base and $\ell$-dimensional fibres. Holonomic approximation for multi-sections, as stated in Theorem \ref{thm:holonomicApproxZZ}, produces topologically embedded multi-sections $X \to Y$ that lift to integral embeddings $X \to J^r(Y)$. Unfortunately, this is not true anymore in the parametric setting (Theorem \ref{thm:holonomicApproxParam}). More precisely, when $r$ is odd, the lift of a wrinkled family of multi-sections (Definition \ref{def:zigZagParam}) will have birth/death events modelled on a stabilisation-type event. When $r$ is even this issue does not arise since the Reidemeister I move is non-singular. 

In this section we explain a surgery procedure for zig-zag wrinkles (and hence for wrinkled families of multi-sections) that addresses this problem. The upshot is that, even in the presence of parameters, we can produce multi-sections whose integral lifts are smooth embeddings; see Figure \ref{fig:WrinkledSolutions_OddSurgery}. However, there are two caveats. First: in the contact case ($r=1$, $\ell=1$), the procedure does not apply. Secondly, for odd $r>1$ but $\ell=1$, the resulting multi-sections are not topological embeddings themselves (but their self-intersections are controlled and given by a concrete model).

The required singularity models are explained in Subsection \ref{ssec:fish}. The key construction behind them is provided in Subsection \ref{ssec:wrinkleObjects}. The main results of this section, including the surgery statement, are given in Subsection \ref{ssec:fishSurgery}. Their proofs are presented in Subsection \ref{ssec:fishSurgeryProof}.

\begin{figure}[ht]
\centering
\includegraphics[width = \linewidth ]{./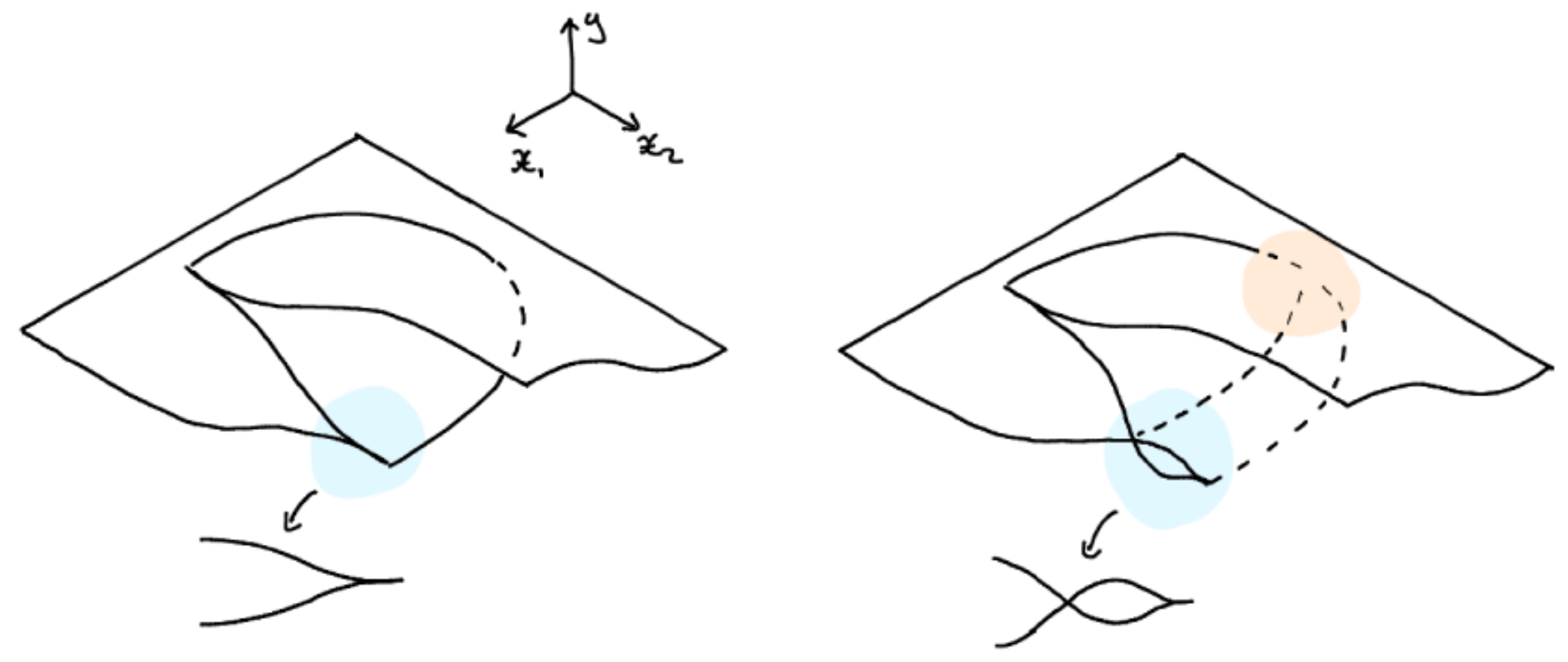}
\caption{On the left, a zig-zag wrinkle with $r$ odd. On the right, the fish wrinkle that replaces it thanks to the surgery procedure given in Proposition \ref{prop:fishSurgery}. Close to the equator, the swallowtail gets replaced by a more complicated birth/death phenomenon that we call the fish swallowtail; see Definition \ref{def:fishWrinkle}. We mark it in orange; it is drawn in detail in Figure \ref{fig:Surgery_FishSurgery}. It is based on the Reidemeister I move, yielding thus an embedded integral lift.}\label{fig:WrinkledSolutions_OddSurgery}
\end{figure}

\subsection{Wrinkle-type objects} \label{ssec:wrinkleObjects}

As explained in Subsection \ref{ssec:wrinkles}, the wrinkle is fibered in nature. We think of it as a self-cancelling configuration naturally arising from the Reidemeister I move. Similarly, the closed wrinkle can be thought as a self-cancelling family defined from the stabilisation. In this section we explain how a wrinkle-type object may be constructed from a given $1$-parameter family of integral curves in $J^r(\R,F)$. Here $F$ is a vector space locally modelling the fibres of $Y$.

\subsubsection{Setup}

We are given a pair $(f,A)$. Here
\[ f: \R^2 \to \R \times J^r(\R,F) \]
is an $\R$-family of integral curves. We use coordinates $(\ss,t)$ in the source and coordinates $(\ss,x)$ in the base of the target. The $\ss$-coordinate is the parameter variable. The fibre $F$ has coordinates $y$. These determine a set of standard coordinates $(x,y,z)$ in $J^r(\R,F)$.

$A \subset [-1,1]^2$ is a closed subset, playing the role of the membrane of $f$. Only the germ of $f$ along $A$ is relevant to us, and it will determine what the membrane of the corresponding wrinkle is. We assume that:
\begin{itemize}
\item $A$ is disjoint from $\{\ss < 0\}$.
\item $A$ contains the singularity locus of $f$.
\end{itemize}
We think of $f$ somewhat distinctly compared to previously introduced birth/death events (e.g. the Reidemeister I move or the stabilisation): Whereas the latter consist of a single embryo event in which the relevant singularity appears immediately, the former is thought of as a whole time interval providing a transition between a holonomic section (at $\ss < 0$) and the lift of a non-trivial multi-section (at $\ss=1$). This transition may include multiple inequivalent singularities.

In particular, the boundary of $A$ need not be the singularity locus of $f$, as was the case in our previous constructions.

\subsubsection{The construction}

Fix a compact manifold $K$ to serve as parameter space and an $(n-1)$-dimensional manifold $H$. We use coordinates $(k,\widetilde x,t)$ in $K \times H \times \R$ when regarded as the source manifold. When seen as the target of our maps, we use instead $(k,x) = (k,\widetilde x,x_n)$. The wrinkle will be fibered over $(k,\widetilde x)$.

Fix a full-dimensional submanifold $D \subset K \times H$ with boundary and let $C = (-1,0] \times \partial D$ be an inner collar of $\partial D$. We write $a$ for the radial coordinate in the collar. This allows us to define a function $\rho: \Op(D) \to [-1,1]$ that is identically $1$ in $D \setminus C$, zero on $\partial D$, negative outside of $D$, and has each level $\{a\} \times \partial D$ as a regular level set.

We can now consider the principal projection of $f$ with respect to the variable $x$. It is a fibered-over-$\R$ map:
\[ \psi: \R^2 \quad\longrightarrow\quad \R \times [\R \times \Sym^r(\R,F)]. \]
Applying the lifting Proposition \ref{prop:principalProjectionGeneral} to $\psi$, using $f|_{t=0}$ as initial datum, recovers $f$. We can then define an integral mapping:
\[ g: K \times H \times \R \quad\longrightarrow\quad K \times J^r(H \times \R,F) \]
as the integral lift of the map
\[  (k,\widetilde x,t) \quad\mapsto\quad (k,\widetilde x, \psi(\rho(k,\widetilde x),t),0) \in K \times [H \times \R \times \Sym^r(\R \times H,F)] \]
with initial datum $(f|_{t=0},0)$. We write $A' := \{(\rho(k,\widetilde x),t) \,\in\, A \}$. 
\begin{definition}
The \textbf{model $(f,A)$--wrinkle} with membrane $A'$, base $D$, and height $\rho$ is the germ of $g$ along $A'$.
\end{definition}
A concrete case of interest is when $K$ is just a point, $H$ is $\R$, and $D$ is $\R^\geq$. The resulting $(f,A)$-wrinkle is then said to be an $(f,A)$-pleat.

\begin{remark}
If we let $f$ be the Reidemeister I move and $A$ be its membrane, the resulting $(f,A)$-wrinkle is a model wrinkle. Keeping the same $f$ but letting $A$ be the singularity locus we obtain a wrinkle, allowing therefore for nesting.

Similarly, if $f$ is the stabilisation and $A$ is the membrane, an $(f,A)$-wrinkle is a model closed wrinkle. If we take $A$ to be the singularity locus, we obtain a closed wrinkle. \hfill$\triangle$
\end{remark}

\subsection{The fish zig-zag} \label{ssec:fish}

Fix a pair $(r,\ell)$ with $r$ odd and at least one of them different from $1$.

In this subsection we provide two pairs $(f,A_m)$ and $(f,A_n)$ as in Subsection \ref{ssec:wrinkleObjects}. The map $f: \R^2 \to \R \times J^r(\R,F)$ is the same for both. It is obtained from the standard Reidemeister I move and is meant to interpolate between a holonomic section and the lift of a zig-zag with a loop close to its right cusp. We call this multi-section the fish zig-zag. See the last image in Figure \ref{fig:fishL1}.

The loci $A_m \subset [-1,1]^2$ and $A_n \subset [-1,1]^2$ are different. The former has a single boundary component, bounding a ``full membrane''. The resulting $(f,A_m)$-wrinkle model is supposed to be a ``model'' wrinkle object. In contrast, the set $A_n$ has both an outer boundary and an inner boundary; we think of $A_n$ as a neighbourhood of the singularity locus. It follows that $(f,A_n)$-wrinkles can be nested.

\subsubsection{Constructing the model for $r>1$ and $\ell=1$} \label{sssec:fishL1}

We begin with a Reidemeister I move $(g_\ss)_{\ss \in \R}$ with membrane $M = \{t^2 \leq \ss\}$. It is given as the lift of the map 
\[ \phi_\ss(t) = (x(t)=t^3/3-t\ss, z_1^{(r)}(t) = t) \in \R \times \Sym^r(\R,\R). \]
Since both entries are odd on $t$, the front projection
\[ \pi_f \circ g_\ss(t) = (x(t)=t^3/3-t\ss, y(t)). \]
has $y(t)$ even. In particular, the points $t=\pm\sqrt{3\ss}$ are mapped to a self-intersection of the front. The front is otherwise topologically embedded. The desired $A_m \subset \R^2$ will be $\{t^2 \leq 3s\} \cap [-1,1]^2$.

Let $\chi: \R \to (-\infty,1/4]$ be a smooth map that is the identity over $(-\infty,1/8]$ and restricts to an orientation-preserving diffeomorphism $[-\infty,1) \to [-\infty,1/4)$. We define a new family $(h_\ss)_{\ss \in \R}$ by setting $h_\ss = g_{\chi(\ss)}$. It follows that each $h_\ss$, $\ss \in [1/4,\infty)$, is a double fold that does not depend on $\ss$. Furthermore, the self-intersections of $(\pi_f \circ h_\ss)_{\ss \in \R}$ take place within $A_m$.

\begin{figure}[ht]
\centering
\includegraphics[width = \linewidth]{./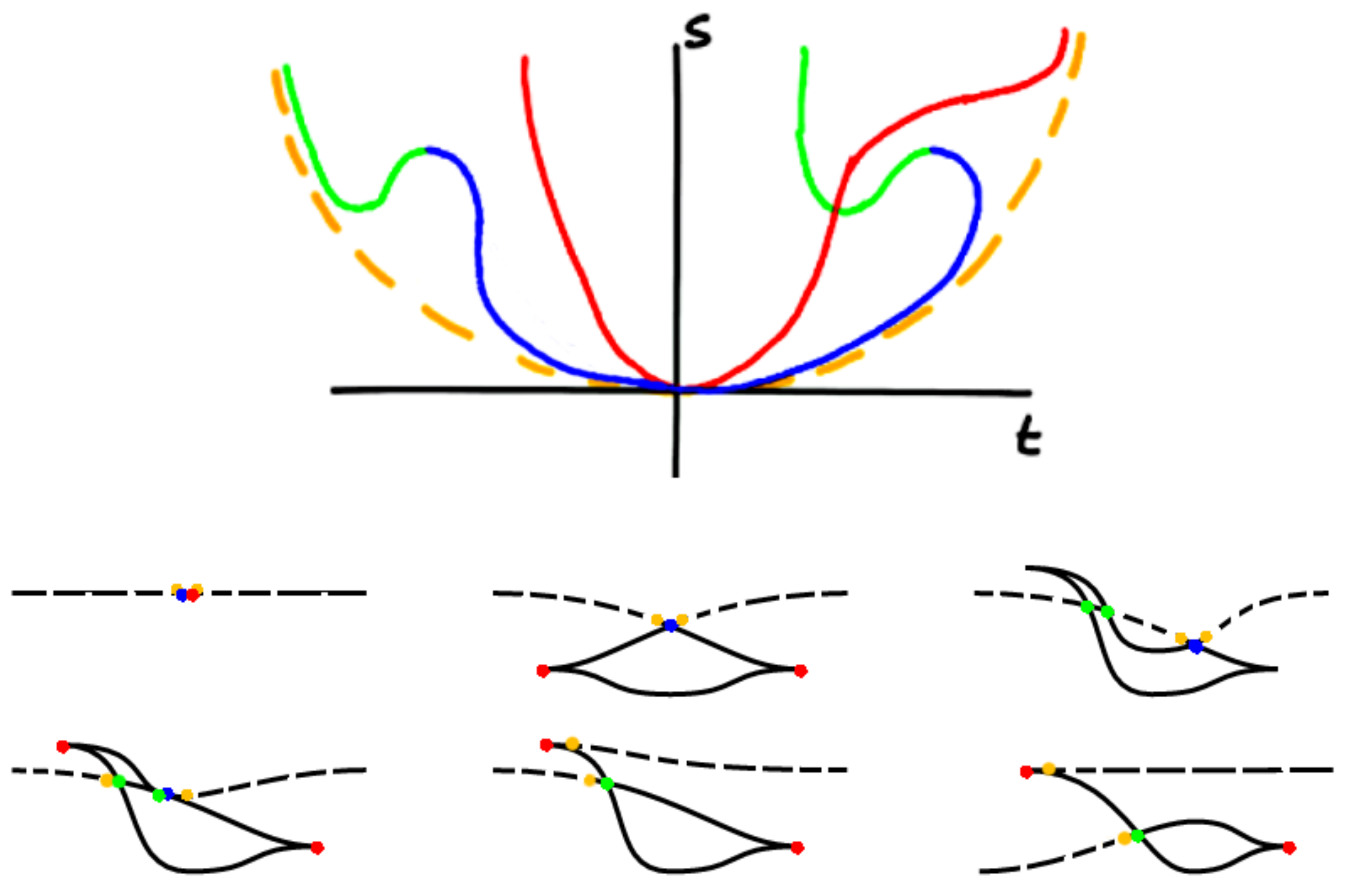}
\caption{The $1$-parametric family of integral curves used in Subsection \ref{sssec:fishL1}, seen in the front projection. The colour-coding identifies the singular loci in the domain and the corresponding singularities in the target. In blue, the front self-intersection appearing at the Reidemeister I event. In red, the two cusps born with the Reidemeister I. In green, the front self-intersections appearing later in the homotopy. The dotted orange line represents the boundary of the membrane $A_m$. For each instant, the image of the membrane is depicted as a solid black line and the complement of the membrane as dotted black.}\label{fig:fishL1}
\end{figure}

We now use Figure \ref{fig:fishL1} to replace $(\pi_f \circ h_\ss)_{\ss \in [1/2,1]}$. The first picture corresponds to $\ss = 1/2$ and the last one to $\ss \in \Op(1)$. This homotopy is meant to push the left-most cusp across the upper branch, as shown. It is possible to define it as the result of applying a fibrewise isotopy to the piece $\pi_f \circ h_\ss|_{t \in \Op([0,\sqrt{3\ss}])}$, while keeping the rest of the curve fixed. In particular, we do not change the singularity type of the left-most cusps.

Two properties of the homotopy are crucial: First, the self-intersections of the front are all transverse, with the exception of a single time $\ss_0$ (depicted in the 4th picture) in which a tangency appears. Secondly, the curvatures of the two branches meeting at this tangency point are different. 

\begin{remark}
As long as $r>1$, any generic family interpolating between the second and last pictures in Figure \ref{fig:fishL1} will have transverse intersections except for a finite collection of times in which a tangency will appear. Generically, the two curvatures at such a tangency will be distinct. For our purposes, any homotopy satisfying these properties would work equally well. \hfill$\triangle$
\end{remark}

One last reparametrisation of $t$, depending on $\ss \in \Op(1)$, allows us to assume that the point $(\ss,\sqrt{3\ss})$ maps to the right-most cusp appearing in the last picture and that the point $(\ss,-\sqrt{3\ss})$ is the first point mapping to the self-intersection. The result of this reparametrisation is the desired $f$.

We let $A_n$ be $A_m \setminus O$, where $O$ is a small ball, centered at the point $(1,0)$ and disjoint from the singularity locus.

\subsubsection{Constructing the model for $\ell>1$} \label{sssec:fishLnot1}

We now use the extra fiber dimensions to avoid self-intersections. Our starting point is a deformation of the principal projection of the Reidemeister I move:
\[ \phi_\ss(t) = (x(t)=t^3/3-t\ss ,z_1^{(r)}(t) = t, z_2^{(r)}(t),0,\cdots,0). \]
We denote its integral lift by $(g_\ss)_{\ss \in [-1,1]}$. A generic choice of $z_2^{(r)}$ will remove the front self-intersections along $\{\ss=t^2/3\}$ that are otherwise present. The same is true if we pick $z_2^{(r)}$ identically zero for $t \geq 0$ and strictly positive for $t$ negative.

The rest of the construction follows the steps given above with minor adjustments. The partial front projection $(x \circ f, y_1 \circ f)$ will be exactly as in the previous case but the $y_2$-coordinate will separate the self-intersections seen in $(x,y_1)$.

Namely: We use $\chi: \R \to (-\infty,1/4]$ to reparametrise in $\ss$ and yield $(h_\ss)_{\ss \in \R}$, as in the previous case. We then consider the homotopy depicted in Figure \ref{fig:fishLnot1}. It is given to us in the partial front projection $(x,y_1)$ but we can take derivatives in order to obtain a map into the principal coordinates $(x,z^{(r)}_1)$. Together with our previous choice for $z_2^{(r)}$, this yields a map into the principal projection. It can be lifted to a homotopy of integral curves that replaces $(h_\ss)_{\ss \in [1/2,1]}$. Their fronts have no self-intersections by construction. We then reparametrise in $t$, in an $\ss$-dependent manner, so that $(\ss,\sqrt{3\ss})$ maps to the right-most cusp if $\ss \in \Op(1)$. Similarly, we make $(\ss,-\sqrt{3\ss})$ be the first point mapping to the $(x,y_1)$-self-intersection.

There is now an additional step compared to the previous subsection. We reparametrise in $\ss$ by setting $f(\ss,t) = h_{\ss-\delta}(t)$, where $\delta>0$ is a small constant. If $\delta$ is sufficiently small, it follows that 
\[ x \circ f(1,\sqrt{3}) < x \circ f(1,-\sqrt{3}). \]
As before, we set $A_m = \{t^2 \leq 3\ss\} \cap [-1,1]^2$ and $A_n = A_m \setminus O$, where $O$ is a small ball, centered at the point $(1,0)$. 

Lastly, we impose $y_2 \circ f|_{\Op(\partial A_n)} = 0$ using a point symmetry supported close to $\partial A_n$. No self-intersections of the front will be introduced as long as $(x \circ f, y_1 \circ f)|_{\Op(\partial A_n)}$ is an embedding graphical over $x$. This is the case as long as $\delta$ and $O$ are sufficiently small. See Figure \ref{fig:fishLnot1}.

\begin{figure}[ht]
\centering
\includegraphics[width = \linewidth ]{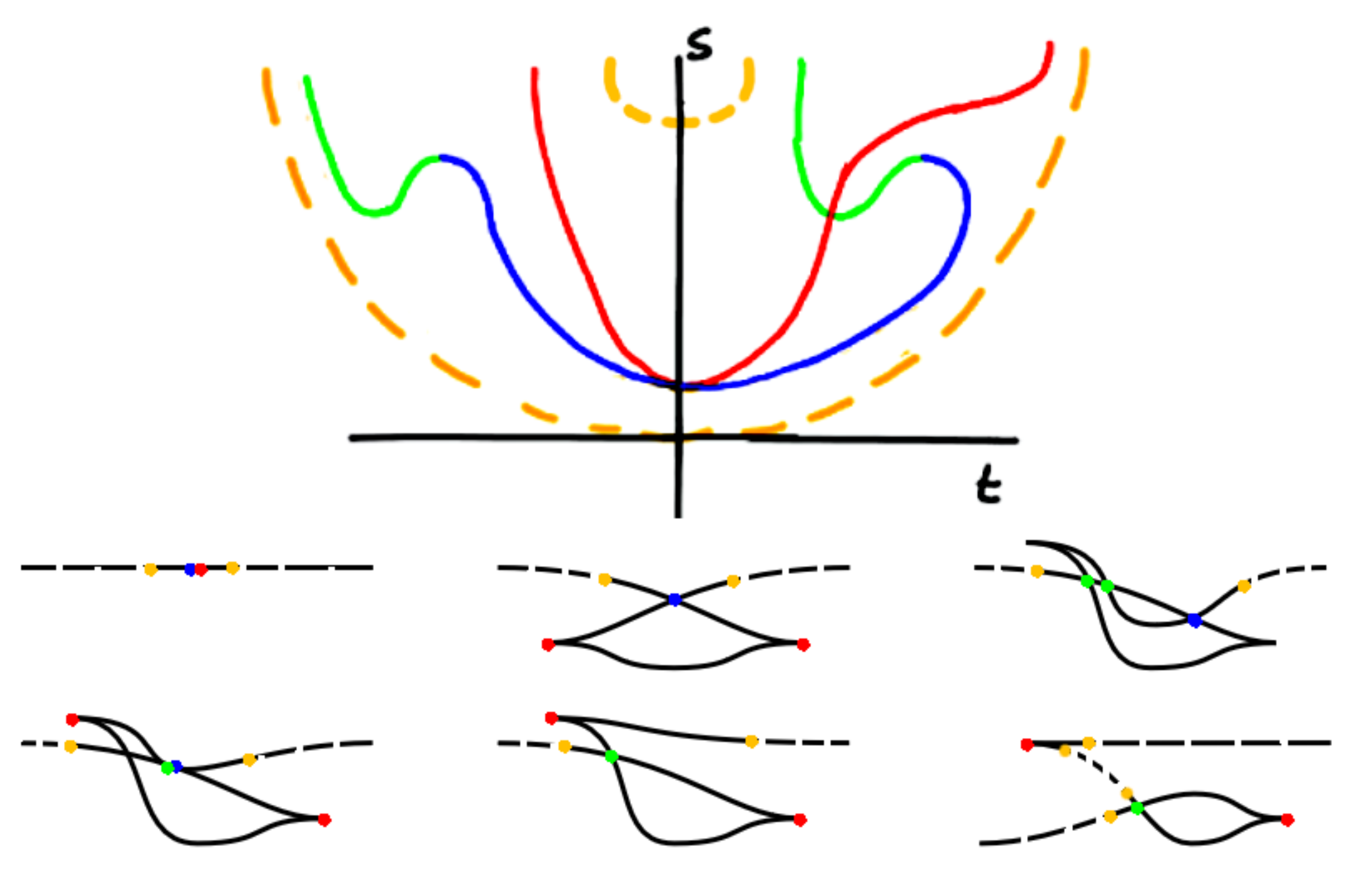}
\caption{The $1$-parametric family of multi-sections used in Subsection \ref{sssec:fishLnot1}. The dotted orange line represents the two boundary components of the membrane $A_n$. It is disjoint from the other curves, which represent various other singularities. The singularities in blue and green are not actual front singularities; they are self-intersections of the partial front associated to the $y_1$ direction.}\label{fig:fishLnot1}
\end{figure}

\subsubsection{The fish wrinkle}

We now associate to $(f,A_m)$ and $(f,A_n)$ various maps with values in the front projection. One could name the integral maps that lift them as well, but this is unnecessary for our purposes.
\begin{definition}\label{def:fishWrinkle}
The \textbf{fish swallowtail move} is the germ of $\pi_f \circ f$ along $A_m$.

The \textbf{model fish wrinkle} is the front projection of the model $(f,A_m)$-wrinkle.

The \textbf{fish wrinkle} is the front projection of the model $(f,A_n)$-wrinkle.

The \textbf{fish swallowtail} is the front projection of the $(f,A_m)$-pleat.
\end{definition}
Do note that these singularity models depend on our choice of height function $\rho$, which in turn depends on our choice of collar along the boundary of the base $D$ of the wrinkle. We leave these implicit unless it is important to our arguments.


\begin{lemma}
The singularities presented in Definition \ref{def:fishWrinkle} are (families of) multi-sections. They additionally satisfy:
\begin{itemize}
\item Their lifts are (families of) integral embeddings.
\item If $\ell \neq 1$, they themselves are topological embeddings.
\end{itemize}
\end{lemma}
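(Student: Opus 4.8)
The plan is to verify the three bullet points essentially by unwinding the constructions of Subsections \ref{ssec:wrinkleObjects} and \ref{ssec:fish}, reducing everything to properties of the one-parameter family $f: \R^2 \to \R \times J^r(\R,F)$ and its membranes $A_m, A_n$. First I would recall that every singularity in Definition \ref{def:fishWrinkle} is, by construction, the front projection of a model $(f,A)$--wrinkle (or of an $(f,A)$--pleat), which is itself the integral lift produced by Proposition \ref{prop:principalProjectionGeneral} applied to the map $(k,\widetilde x,t) \mapsto (k,\widetilde x,\psi(\rho(k,\widetilde x),t),0)$. Since $\psi$ is the principal projection of a \emph{family of integral curves} (the Reidemeister I move, suitably modified), the lift is an honest integral map whose only non-graphical points occur where $f$ itself fails to be graphical over $x$. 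To check that these are multi-sections in the sense of Definition \ref{def:multisection}, I would argue that $f$ (by its construction in \ref{sssec:fishL1}, resp. \ref{sssec:fishLnot1}) is graphical over $x$ on a dense open set — away from the Reidemeister cusps and the finitely many front self-intersections — and that the vertical $t$-reparametrisations and the fibered isotopies used do not destroy this. The stabilisation along $H$ (and the parametrisation over $K \times H$ via $\rho$) preserves density of the graphical locus, so condition (a.) holds; condition (b.) is immediate since the integral lift is produced as a smooth map by Proposition \ref{prop:principalProjectionGeneral}.

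For the first bullet (lifts are integral embeddings), the integrality is automatic: Proposition \ref{prop:principalProjectionGeneral} guarantees the lift is integral and that its singularities of mapping are in bijection with those of $\psi$, hence with those of $f$. The crucial point is that $f$ is built from the Reidemeister I move, which is \emph{non-singular} as a family of integral curves (its embryo is the cubic $t \mapsto (t^3/3, t)$, whose lift is an immersion — unlike the stabilisation's vertical cusp $t \mapsto (t^3/3, t^2)$). All subsequent modifications of $f$ (the reparametrisation $\chi$, the fibered isotopies in Figures \ref{fig:fishL1}/\ref{fig:fishLnot1}, the $t$- and $\ss$-reparametrisations) preserve the non-singularity of the integral curves because they are fibrewise isotopies of the front and reparametrisations, which lift to isotopies/reparametrisations in jet space. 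Therefore the lift of each $(f,A)$--wrinkle is everywhere a non-singular integral map; combined with the topological embedding property of the front (to be checked next, or with the control on self-intersections), Lemma \ref{lem:liftZigzag}-type reasoning gives that the lift is an embedding. I would state this as: the lift is an immersion because $\pi^n_\meta$-lifting preserves immersedness (Proposition \ref{prop:principalProjectionGeneral}) and the base curve family is immersed; and it is injective because distinct points of the front either have distinct $x$-coordinates or are separated after lifting (the self-intersections of the front in the $\ell=1$ case lift apart, precisely as in the closed double fold, because the enclosed loop bounds nonzero area — cf. the discussion after Definition \ref{def:closedDoubleFold}).

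For the second bullet, when $\ell \neq 1$ I would point to the construction in \ref{sssec:fishLnot1}: the extra fibre coordinate $z_2^{(r)}$ (chosen zero for $t \geq 0$ and strictly positive for $t < 0$, then adjusted) was introduced precisely to separate, in the $y_2$-direction, every self-intersection of the partial front $(x, y_1)$. The final point-symmetry making $y_2 \circ f$ vanish near $\partial A_n$ does not reintroduce self-intersections as long as $(x, y_1)|_{\Op(\partial A_n)}$ is an $x$-graphical embedding, which holds for $\delta$ and $O$ small. Stabilising along $H$ and parametrising by $\rho$ over $K \times H$ preserves injectivity. So the front itself is a topological embedding. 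I expect the main obstacle to be the bookkeeping in the $\ell = 1$ case: there the fish zig-zag genuinely has front self-intersections, so one must be careful to phrase the first bullet correctly — the \emph{lift} embeds even though the front does not — and to confirm that the single tangential self-intersection at time $\ss_0$ (with distinct curvatures of the two branches, as emphasised in \ref{sssec:fishL1}) still lifts to an embedding. The distinct-curvature condition is exactly what ensures the two branches have different $r$-jets at the tangency point, so their lifts to $J^r$ are disjoint; this is the one place where one uses $r > 1$ essentially. I would isolate this as a short lemma about tangential crossings of planar curves and their jet lifts, and everything else is routine verification that the operations used (reparametrisation, fibered isotopy, point symmetry, $H$-stabilisation) preserve the relevant properties.
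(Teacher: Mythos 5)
Your proof is correct and unwinds the same chain of observations that the paper's very terse proof rests on: $f$ was constructed as an \emph{embedded} integral lift of an immersion into the principal projection with controlled tangencies, Proposition \ref{prop:principalProjectionGeneral} propagates immersedness, integrality, and embeddedness to the $(f,A)$--wrinkles, and the $\ell \ne 1$ embeddedness of the front comes straight from the $y_2$-separation arranged in Subsection \ref{sssec:fishLnot1}.

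One correction worth making: your mid-paragraph appeal to the ``enclosed loop bounds nonzero area'' argument from the closed double fold is not the right mechanism here. That argument applies when the \emph{principal projection} $(x,z^{(r)})$ has a double point that a single integration step separates; in the fish model the principal projection $t\mapsto(t^3/3-t\ss,\,t)$ is injective, and the self-intersections live in the \emph{front} $(x,y)$. The reason they lift apart --- which you do identify in your closing sentence, but only for the tangential crossing --- is that at each self-intersection the two branches have distinct $j$-jets for some $j\le 2$: distinct tangent directions ($j=1$) at the transverse crossings, and distinct curvatures ($j=2$) at the single tangency at time $\ss_0$. This is exactly what the construction in Subsection \ref{sssec:fishL1} is careful to arrange, and it is the whole reason the argument requires $r\ge 2$ when $\ell=1$. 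A smaller imprecision: front self-intersections do not obstruct the dense-graphicality condition (a.) of Definition \ref{def:multisection}; only the cusps do, since at a (possibly tangential) crossing both branches remain transverse to the fibres of $Y$.
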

\begin{proof}
Recall that $f$ was constructed as the integral lift of an immersion with values in the principal projection and with discrete singularities of tangency. It follows that the given singularities are multi-sections lifting to integral immersions. Since $f$ itself was embedded, the first claim holds. The second claim follows from the analogous property for $\pi_f \circ f$.
\end{proof}

\subsection{The surgery result and its corollaries} \label{ssec:fishSurgery}

The following surgery procedure allows us to replace a wrinkled family of multi-sections by a family with fish wrinkles. We let $K$ be the parameter space.
\begin{proposition} \label{prop:fishSurgery}
Suppose $r$ is odd. Fix a constant $\varepsilon > 0$. Let $g$ be the model zig-zag wrinkle with base $D \subset K \times H$, height $\rho$, and membrane $A$.

Write $U_\varepsilon$ for the $\varepsilon$-neighbourhood of $\partial A$. Let $\partial^+ A$ and $\partial^- A$ be its outer and inner boundaries, respectively.

Then, there is a multi-section $h$, defined as a germ along $U_\varepsilon$, such that:
\begin{itemize}
\item $h = g$ in $\Op(\partial^\pm A)$.
\item $|j^rh - j^rg| < \varepsilon$.
\item All of the singularities of $h$ are contained in a subset $B \subset U_\varepsilon$.
\item $h|_B$ is a fish wrinkle with base $D$.
\end{itemize}
\end{proposition}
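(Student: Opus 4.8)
The plan is to exhibit $h$ as a one-parameter family interpolating, in the normal collar direction of $\partial A$, between the zig-zag wrinkle $g$ (kept rigidly near $\partial^{\pm}A$) and the fish wrinkle model built in Subsection \ref{ssec:fish}. The key observation is that both the zig-zag wrinkle and the fish wrinkle, when restricted to a thin collar $U_\varepsilon$ of $\partial A$, are $(f',A')$-wrinkles in the sense of Subsection \ref{ssec:wrinkleObjects}: the zig-zag wrinkle comes from the stabilisation (for $r$ odd) as its generating family of integral curves, while the fish wrinkle comes from the pair $(f,A_n)$ constructed in Subsection \ref{sssec:fishL1} (if $\ell=1$) or \ref{sssec:fishLnot1} (if $\ell>1$). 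So the surgery amounts to a homotopy, \emph{in the space of $1$-parameter families of integral curves in $J^r(\R,F)$}, between the generating family of the stabilisation (reparametrised so that its singularity locus and membrane match $A_n$) and the family $f$; this homotopy must be the identity near the two ends $\partial^{\pm}A$ of the collar, and it must stay $C^0$-small when lifted.

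First I would set up the collar $U_\varepsilon \cong (-1,1) \times \partial A$ using $\rho$ as radial coordinate, and restrict attention to a single principal direction, reducing (via Proposition \ref{prop:principalProjectionGeneral}) to a statement about families of plane curves in the principal projection $\R \times \Sym^r(\R,F)$. The generating families to be connected are both obtained by lifting maps $\phi_\ss(t)$ that agree near $\ss$-values corresponding to $\partial^{\pm}A$: near $\partial^+A$ both sides are the holonomic section $\phi_\ss(t)=(x=t,z^{(r)}=0)$ (after a point symmetry), and near $\partial^-A$ both sides are (a reparametrisation of) a double fold $\phi_1(t)=(t^3/3-t, t)$. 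Next I would invoke the flexibility of curves: in the interior of the collar, any two such families with matching ends are homotopic through families of \emph{immersed} curves in the principal projection, simply because immersions into $\R^2$ (or $\R\times\Sym^r$) with prescribed fold-type singularities of tangency with the vertical and prescribed ends form a connected space, after a possibly large reparametrisation in $t$ — and in the $\ell>1$ case one uses the extra $z_2^{(r)}$-directions exactly as in Subsection \ref{sssec:fishLnot1} to keep the lifted fronts embedded. Applying $\Lift$ (Proposition \ref{prop:principalProjectionGeneral}) to the homotopy, and then $\pi_f$, produces $h$.

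The $C^0$-control is obtained, as throughout the paper, by rescaling: after fixing the homotopy of generating families abstractly, a vertical rescaling point symmetry $\Psi_N$ compresses the integral lift (and hence all derivatives of order $\geq 1$ that are not already $O(1)$) into an $\varepsilon$-neighbourhood of $j^rg$; here one uses that $h$ and $g$ agree near $\partial^{\pm}A$, so $\Psi_N$ can be taken supported away from those regions and the $C^0$-distance of the fronts is then bounded by the area bounded by the curves, which the rescaling makes arbitrarily small. The singularity set $B$ is by construction the image of the singularity locus of the homotopy of generating families, a compact subset of $U_\varepsilon \setminus \Op(\partial^\pm A)$, and $h|_B$ is point-equivalent to the fish wrinkle model with base $D$ since the homotopy was arranged to terminate at $(f,A_n)$.

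The main obstacle I anticipate is not the existence of the homotopy of generating families — that is soft, and the paper has already done the delicate part by explicitly drawing the fish family in Figures \ref{fig:fishL1} and \ref{fig:fishLnot1} — but rather verifying that the resulting $h$ is still a multi-section with the claimed singularity structure, i.e. that the only singularities of tangency are those of $f$ (folds, plus the birth/death of the fish swallowtail) and that for $\ell>1$ no front self-intersections sneak in during the interpolation. This forces the homotopy to be chosen carefully (keeping the curves immersed in the principal projection with only fold tangencies, and keeping the $y_1$-partial-front from acquiring non-transverse self-tangencies except at controlled isolated moments, as emphasised in the Remark following Figure \ref{fig:fishL1}), and to match the prescribed height function $\rho$ and collar along $\partial D$ so that $h|_B$ is genuinely the model fish wrinkle and not merely something equivalent to it away from $\partial D$.
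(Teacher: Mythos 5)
Your route is genuinely different from the paper's, and its key step has a gap. You want to realise $h$ by homotoping, in the space of one--parameter families of integral curves, between the generating family of $g$ and the fish family $f$, keeping the homotopy rel $\Op(\partial^{\pm}A)$, and then invoking connectedness of ``immersions with prescribed fold--type tangencies and prescribed ends.'' But for $r$ odd the zig--zag wrinkle is, by Definition~\ref{def:zigZagParam}, the front of the \emph{closed} wrinkle, so its generating family is the stabilisation with $\phi_\ss(t)=(t^3/3-\ss t, t^2)$ and embryo the vertical cusp $\phi_0(t)=(t^3/3,t^2)$ --- not, as you assert, a reparametrisation of the (open) double fold $(t^3/3-t,t)$. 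The vertical cusp is a singularity of mapping, not an immersion, so the stabilisation family simply is not a family of immersed curves, and your connectedness claim cannot even be applied to one of the two endpoints. Beyond that, even away from the embryo the two families differ topologically (the stabilisation's curves at $\ss>0$ enclose a loop in the principal projection, the fish's curves are built on the Reidemeister move with $z^{(r)}=t$ and carry a front loop that is carefully threaded past the other branches). Passing from one to the other while keeping every intermediate stage a valid multi--section with only fold tangencies is exactly the hard content that Subsection~\ref{ssec:fish} achieves by hand; declaring the existence of the homotopy ``soft'' skips the step that actually matters.

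The paper sidesteps the whole homotopy question. Its proof takes the fish wrinkle $h'$ as a finished object, observes that both $g$ and $h'$ pass through the zero section at the equator $E$ (so that, after a fibrewise compression of $y_1$, $j^rg$ and $j^rh'$ are $2\eta$--close over $V_\delta$), then uses two successive point symmetries to approximately match the $x_n$--coordinate and the base section of $h'$ to those of $g$ away from $E$, and finally interpolates between the two over the annular region $U_\varepsilon\setminus B$ where both maps are graphical; the $\varepsilon$--estimate comes from the ordering $\eta\ll\varepsilon$ and $\delta,\nu\ll\eta$. Your vertical rescaling idea is close in spirit to the $y_1$--compression, but it does nothing to control the $x_n$--coordinate (a scaling in $y$ cannot correct a discrepancy in $x_n$), and your argument offers no replacement for the equator--matching and the two point symmetries that make the interpolation $C^0$--small. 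If you want to pursue the ``$(f,A)$--wrinkle'' framing, the viable version of your idea is not a homotopy of generating families but exactly this: take the fish model, conjugate by point symmetries until it agrees approximately with $g$ on $\Op(\partial^{\pm}A)$, and patch.
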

We provide a proof in Subsection \ref{ssec:fishSurgeryProof}.

\subsubsection{Holonomic approximation for multi-sections} \label{sssec:fishSurgerySec}

The first corollary of Proposition \ref{prop:fishSurgery} is that it is possible to approximate families of formal sections of $J^r(Y)$ by families of multi-sections whose $r$-th order lifts are embedded integral submanifolds:
\begin{theorem} \label{thm:fishSurgerySec}
Fix a constant $\varepsilon > 0$. Let 
\[ (\sigma_k)_{k \in K}: X \longrightarrow J^r(Y) \]
be a $K$-family of formal sections. Then, there exists a family of multi-sections $(f_k)_{k \in K}: X \to Y$ satisfying:
\begin{itemize}
\item the lifts $j^rf_k$ are integral embeddings.
\item If $\ell>1$ or $r$ is even, the multi-sections themselves are topological embeddings.
\item $|j^rf_k - \sigma_k|_{C^0} < \varepsilon$.
\end{itemize}

Moreover, if the family $(\sigma_k)_{k \in K}$ is holonomic on a neighborhood of a polyhedron $L \subset K \times M$, then we can arrange $j^rf = \sigma$ over $\Op(L)$.
\end{theorem}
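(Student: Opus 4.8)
The plan is to obtain Theorem \ref{thm:fishSurgerySec} as a direct consequence of the parametric holonomic approximation result (Theorem \ref{thm:holonomicApproxParam}, together with Corollary \ref{cor:holonomicApproxCylinder}) followed by a fibrewise application of the surgery Proposition \ref{prop:fishSurgery}. First I would run Theorem \ref{thm:holonomicApproxParam} on the given family $(\sigma_k)_{k \in K}$ to produce a wrinkle family of multi-sections $(f'_k)_{k \in K}: X \to Y$ with $|j^rf'_k - \sigma_k|_{C^0} < \varepsilon/2$, and relative to $L$ if $\sigma$ is already holonomic near $L$. By Corollary \ref{cor:holonomicApproxCylinder} we may assume all the zig-zag wrinkles of this family have cylinder base, so their membranes $\{A_i\}$ form a locally finite disjoint collection of submanifolds (with boundary) of $K \times X$, each carrying a model zig-zag wrinkle. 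If $r$ is even we are already done: by Lemma \ref{lem:liftZigzagWrinkle} the lifts $j^rf'_k$ are smooth embeddings, so we set $f_k = f'_k$ and both bullet points hold. If $\ell > 1$ but $r$ is odd we will need the surgery to arrange embedded integral lifts, but the resulting $f_k$ will still be topological embeddings by the last bullet of the Lemma attached to the fish wrinkle models (Subsection \ref{sssec:fishLnot1}); if $\ell = 1$ and $r$ odd we only get the embedded integral lift, not topological embeddedness of $f_k$ itself, consistent with the statement.

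The heart of the argument is the case $r$ odd. For each membrane $A_i$, let $U_{i,\varepsilon}$ be the $\varepsilon'$-neighbourhood of $\partial A_i$ for a sufficiently small $\varepsilon'$ (chosen so that these neighbourhoods remain disjoint, avoid $L$, and so that the $C^0$-error introduced below is bounded by $\varepsilon/2$). Proposition \ref{prop:fishSurgery}, applied with parameter space $K$ to the model zig-zag wrinkle sitting on $A_i$, produces a multi-section $h_i$ defined as a germ along $U_{i,\varepsilon}$, agreeing with $f'$ near $\partial^\pm A_i$, with $|j^rh_i - j^rf'|< \varepsilon'$, and whose singularities are all contained in a set $B_i \subset U_{i,\varepsilon}$ on which $h_i$ restricts to a fish wrinkle. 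Since $h_i = f'$ near the two boundary components of $U_{i,\varepsilon}$, we may glue: define $f_k$ to equal $f'_k$ outside $\bigcup_i U_{i,\varepsilon}$ and to equal $(h_i)_k$ on each $U_{i,\varepsilon}$. The membranes of the new wrinkled family are the $B_i$ (inside the old $A_i$), so what was a zig-zag wrinkle is now a fish wrinkle with the same base $D_i$ (cylinder base, if we started from Corollary \ref{cor:holonomicApproxCylinder}). Combining the estimates, $|j^rf_k - \sigma_k|_{C^0} \le |j^rf_k - j^rf'_k|_{C^0} + |j^rf'_k - \sigma_k|_{C^0} < \varepsilon/2 + \varepsilon/2 = \varepsilon$. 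The relative clause follows because both Theorem \ref{thm:holonomicApproxParam} and the surgery can be made supported away from $\Op(L)$ (the surgery only modifies things near $\partial A_i$, and we discard any $A_i$ meeting $L$).

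It remains to check that the new family has the three asserted properties. The $C^0$-estimate was just verified. For the lift: away from the $B_i$, the map $f_k$ is a genuine multi-section with zig-zags (the $A_i$-parts outside $B_i$ are honest zig-zags or smooth), so $j^rf_k$ there is a smooth integral embedding by the reasoning of Lemma \ref{lem:liftZigzag}; along each $B_i$, the fish wrinkle model was constructed precisely so that its integral lift is a (family of) integral embedding — this is the content of the Lemma at the end of Subsection \ref{ssec:fish}, which records that the fish wrinkle, fish swallowtail move, etc.\ lift to integral embeddings because $f$ was built as the lift of an immersion into the principal projection with only discrete tangency singularities and $f$ itself is embedded. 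Patching integral embeddings over overlaps where they agree yields a global integral embedding $j^rf_k: X \to J^r(Y)$. For topological embeddedness of $f_k$ when $\ell > 1$: outside the $B_i$ this is the topological embeddedness of a multi-section with zig-zags, and on each $B_i$ it is the embeddedness of the fish wrinkle, which holds when $\ell \neq 1$ by the same Lemma; the two pieces agree near the glueing locus and their images are separated there, so the union is a topological embedding.

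The main obstacle I anticipate is bookkeeping rather than conceptual: one must check that the surgeries on the various membranes, and the fish-wrinkle models themselves (which depend on auxiliary choices of height function $\rho$ and collars), can be performed coherently over the whole parameter space $K$ and patched with the ambient wrinkle family without creating new self-intersections of the front when $\ell > 1$ — this is exactly the delicate point addressed in the construction of Subsection \ref{sssec:fishLnot1}, where the extra fibre coordinate $y_2$ is used to separate the self-intersections of the partial front, and one must ensure the corresponding estimates survive the gluing. Once Proposition \ref{prop:fishSurgery} is in hand, however, this is routine, and the proof of Theorem \ref{thm:fishSurgerySec} is essentially a one-paragraph deduction.
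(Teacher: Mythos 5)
Your proposal is correct and follows essentially the same route as the paper: apply Theorem \ref{thm:holonomicApproxParam} to get a wrinkled family, note the even case is already done, and in the odd case use Proposition \ref{prop:fishSurgery} to replace each model zig-zag wrinkle by a fish wrinkle glued in near $\partial A$. Your invocation of Corollary \ref{cor:holonomicApproxCylinder} is unnecessary (Proposition \ref{prop:fishSurgery} applies to arbitrary model zig-zag wrinkles, and in any case the output of Theorem \ref{thm:holonomicApproxParam} already has cylinder-base wrinkles by construction), but it is harmless; and where you write that patching integral embeddings over agreeing overlaps yields a global embedding, the argument actually needs the $C^0$-closeness of $j^rh$ to $j^rg$ to rule out collisions between the new fish-wrinkle piece and distant parts of the image — this is exactly the sentence the paper inserts, and your choice of small $\varepsilon'$ implicitly carries that load, so it is a matter of phrasing rather than a gap.
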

\begin{proof}
When $r$ is even this follows immediately from Theorem \ref{thm:holonomicApproxParam}.

If $r$ is odd, we let $g$ be the family of multi-sections resulting from Theorem \ref{thm:holonomicApproxParam}. We then apply Proposition \ref{prop:fishSurgery} to each model wrinkle $g|_A$. This yields a fish wrinkle $h$ that agrees with $g$ slightly outside of $A$ and can therefore be used to replace $g$ in $\Op(A)$. Denote the resulting map by $\wtd g$.

Since $j^rh|_{\Op(A)}$ approximates $j^rg|_{\Op(A)}$ and $h$ is a fish wrinkle, there are no self-intersections of $j^r\wtd g$ involving the region $\Op(A)$. The same statement holds for $\wtd g$ itself if $\ell>1$. The three claimed properties hold once all the wrinkles have been surgered.
\end{proof}

\subsubsection{Holonomic approximation for singular submanifolds} \label{sssec:fishSurgerySubm}

The second corollary of Proposition \ref{prop:fishSurgery} reads:
\begin{theorem} \label{thm:fishSurgerySubm}
Suppose $r$ is even or $\ell>1$. Let
\[ F = ((F_{k,s})_{s \in [0,1]}: X_k \to J^r(Y,n))_{k \in K}. \]
be a $K$-family of $r$-jet homotopies. 

Then, there is a family of singular embeddings
\[ (f=(f_{k,s})_{k \in K, s \in [0,1]},\{A_i \subset [0,1] \times X\}) \]
satisfying:
\begin{itemize}
\item $f_{k,0}: X_k \to Y$ is the inclusion.
\item the lifts $j^rf_{k,s}$ are integral embeddings.
\item $|j^rf_{k,s} - F_{k,s}| < \varepsilon$.
\end{itemize}

Additionally: If $F$ is already holonomic in a neighbourhood of a polyhedron $L \subset [0,1] \times X$, the singularities of $f$ may be taken to be disjoint from $L$.
\end{theorem}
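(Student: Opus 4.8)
The plan is to mirror the proof of Theorem \ref{thm:wrinkledEmbeddings} (non-parametric) and Theorem \ref{thm:wrinkledEmbeddingsParam} (parametric), but to run the surgery of Proposition \ref{prop:fishSurgery} on every zig-zag wrinkle that appears, so that the resulting singular submanifolds lift to \emph{smooth} integral embeddings in $J^r(Y,n)$ even when $r$ is odd. Concretely: if $r$ is even, Theorem \ref{thm:fishSurgerySubm} is literally Theorem \ref{thm:wrinkledEmbeddingsParam}, since there the zig-zag wrinkles are model (closed, if we were to talk $r$ odd; but here) wrinkles built from Reidemeister I moves, whose lifts are already smooth embeddings by Lemma \ref{lem:liftZigzagWrinkle}. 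So the content is entirely in the case $r$ odd with $\ell>1$.

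For $r$ odd and $\ell>1$: first I would invoke Theorem \ref{thm:wrinkledEmbeddingsParam} to produce a wrinkled family of singular embeddings $(g=(g_{k,s}),\{A_i\})$ with $|j^rg_{k,s}-F_{k,s}| < \varepsilon/2$, with $g_{k,0}$ the inclusion, with all singularities zig-zag wrinkles with cylinder base, and (by the relative part) disjoint from $L$. Recall that, by Lemma \ref{lem:liftZigzagWrinkle}, for $r$ odd the lifts $j^rg_{k,s}$ are only \emph{topological} embeddings: the defect is exactly located in neighbourhoods of the membranes $A_i$, where the lift of a zig-zag wrinkle is a closed wrinkle, i.e. a family of integral maps with stabilisation-type (closed pleat / vertical cusp) singularities of mapping. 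Away from $\bigcup_i A_i$ the lift $j^rg$ is already a smooth integral embedding, so nothing needs to be done there. Then, working one membrane at a time and using the fact that $J^r(Y,n)$ is locally modelled on a jet space of sections $J^r(\nu(\cdot))$ via Lemma \ref{lem:zeroSection} (exactly as in Subsection \ref{sssec:wrinkledEmbeddingsProof3}), I would apply Proposition \ref{prop:fishSurgery} to the model zig-zag wrinkle $g|_{A_i}$, with the $\varepsilon$ of that proposition chosen smaller than $\varepsilon/2$ and smaller than the collar thickness. This replaces $g$ on $\Op(A_i)$ by a multi-section $h_i$ which equals $g$ slightly outside $A_i$, satisfies $|j^rh_i - j^rg| < \varepsilon/2$, and has all its singularities a fish wrinkle with base $D$. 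Since $\ell>1$, the fish wrinkle is itself a topological embedding and its lift is an integral embedding (by the Lemma in Subsection \ref{ssec:fish}); moreover, because $j^rh_i$ is $C^0$-close to $j^rg$ on $\Op(A_i)$ and $g$ was a topological embedding, there are no new self-intersections of $j^r\widetilde f$ or of $\widetilde f$ involving $\Op(A_i)$, as in the proof of Theorem \ref{thm:fishSurgerySec}. Patching these replacements over all $i$ (the $A_i$ are disjoint and locally finite) gives the desired family $f=(f_{k,s})$, with the estimate $|j^rf_{k,s}-F_{k,s}| < \varepsilon$ by the triangle inequality, with $f_{k,0}$ still the inclusion (no singularities are created at $s=0$), and with the singularities disjoint from $L$ since those of $g$ already were.

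The step I expect to be the main obstacle is the bookkeeping at the membranes: checking that the surgered pieces $h_i$ glue smoothly to the unmodified part of $g$ as honest \emph{singular embeddings} (topological embeddings with smooth integral lift, Definition \ref{ssec:submanifoldsZigzag}), and in particular that the fish wrinkles introduced do not collide with each other or with pre-existing (possibly nested) wrinkles. Disjointness of the $A_i$ takes care of the combinatorial part, but one must be careful that the $C^0$-smallness in Proposition \ref{prop:fishSurgery} is genuinely enough to preserve global embeddedness of both $f_{k,s}$ and $j^rf_{k,s}$ simultaneously — this is where the hypothesis $\ell>1$ is essential (for $\ell=1$, $r$ odd, the fish wrinkle acquires controlled self-intersections and only the weaker Theorem \ref{thm:fishSurgerySec}-type statement survives, which is why $\ell=1$ is excluded here). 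A secondary point is the relative-in-$L$ clause: one should note that $L$ may be assumed (after a preliminary homotopy, exactly as in the proofs of Theorems \ref{thm:holonomicApproxParam} and \ref{thm:wrinkledEmbeddingsParam}) to be disjoint from all membranes $A_i$, so the surgeries take place away from $\Op(L)$ and the relative conclusion is automatic. Everything else is a routine assembly of results already proven in the paper.
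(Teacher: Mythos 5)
Your proposal is correct and follows essentially the same route as the paper: apply Theorem \ref{thm:wrinkledEmbeddingsParam} (which already settles $r$ even), and for $r$ odd apply the fish surgery of Proposition \ref{prop:fishSurgery} to each zig-zag wrinkle, using $\ell>1$ to ensure the fish wrinkle is a topological embedding. Your extra commentary on nesting is accurate and matches the paper's one-line justification (the surgery is localised to $\Op(\partial A)$, hence does not see nested wrinkles in the interior of the membrane), so no gap remains.
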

\begin{proof}
We apply Theorem \ref{thm:wrinkledEmbeddingsParam}. This solves the case of $r$ even. If $r$ is odd, we obtain a wrinkled family of embeddings $g$. We apply Proposition \ref{prop:fishSurgery} to each of its wrinkles $A$. This is just like in the multi-sections case: Since the surgery takes place in $\Op(\partial A)$, the process is unaffected by nesting.
\end{proof}
Observe that the analogous construction for $\ell=1$ and $r>1$ odd does not yield a family of singular embeddings due to the presence of self-intersections.

\subsection{Proof of Proposition \ref{prop:fishSurgery}} \label{ssec:fishSurgeryProof}

We let $0 < \delta, \nu, \eta < \varepsilon$ be small constants, to be fixed during the proof. We write $E$ for the equator of $g$ and $V_a$ for the $a$-neighbourhood of $E$.

\subsubsection{Introduce the fish wrinkle}

Write $C$ for the inner $\delta$-collar $(-1,0] \times \partial D$ of $\partial D$. It holds that $C \times \{0\} \subset V_{\delta}$. Pick an arbitrary height function $\rho$ adapted to $C$ and let $h'$ be the corresponding fish wrinkle.

Even though $h'$ is defined using a full $A_m$-membrane, we single out its $A_n$-membrane, which we denote by $B \subset K \times H \times \R$. We can reparametrise $K \times H \times \R$ in a fibered manner over $K \times H$ so that $B$ is contained in $U_\delta$.


Furthermore, a fibrewise compression in the $y_1$-coordinate (lifting to a point symmetry) allows us to assume that $j^rh'$ takes values in a $\nu$-neighbourhood of the zero section.

\subsubsection{Automatic matching at the equator}

By definition, $g(k,\widetilde x,0) = (k,\widetilde x,x_n=0,0)$, where the last zero denotes the zero section in jet space. Since $h'$ was defined using the Reidemeister move as a starting point, it similarly follows that $h'(k,\widetilde x,0) = (k,\widetilde x,x_n=0,0)$ on a neighbourhood of $E$.

Then, if $\delta$ is sufficiently small, we can assume that $j^rg|_{V_\delta}$ takes values in a $\eta$-neighbourhood of $j^rg|_E = 0$. Further, if $\nu$ is sufficiently small, we can also assume that $j^rh'|_{W \cap V_\delta}$ takes values in such a $\eta$-neighbourhood. It follows that $j^rh'$ and $j^rg$ are $2\eta$-close over $W \cap V_\delta$.

\begin{figure}[ht]
\centering
\includegraphics[width = \linewidth ]{./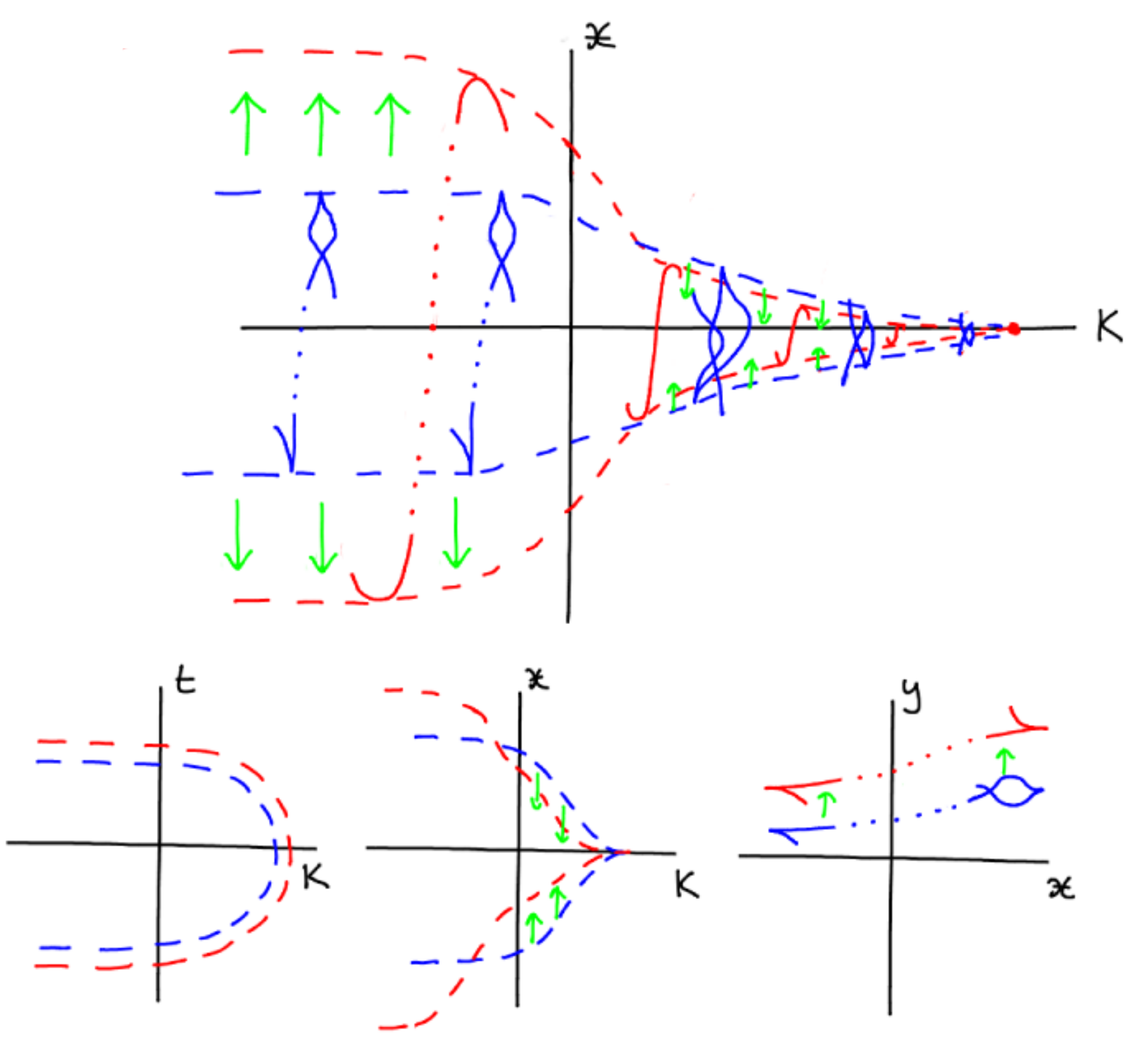}
\caption{Red represents the zig-zag wrinkle $g$ and blue the iterations of fish wrinkles that eventually yield $h$. Bottom right: $\partial D$ and its inner $\delta$-collar $C$; choosing a small $\delta$ ensures that the birth/death region associated to $h$ is thin and thus close to the equator of $g$. Middle (both bottom and top): we match the $x_n$-coordinates of $h$ and $g$, by adjusting the former away from the equator $E$. Left: we match $h$ to $g$ close to the cusp locus. }\label{fig:Surgery_FishSurgery}
\end{figure} 

\subsubsection{Adjusting at the cusp locus}

The next two steps in the upcoming proof use ideas from the proof of Theorem \ref{thm:wrinkledEmbeddings}.

We first approximately match the $x_n$-coordinates of $h'$ and $g$. If $\delta$ is sufficiently small, we can assume that $j^rg|_{U_\delta}$ takes values in a $\eta$-neighbourhood of $j^rg|_{\partial A} = 0$. We can then apply a point symmetry to $h'$ so that $x_n \circ h'|_W$ is $\eta$-close to $x_n \circ g_{\partial A}$. This point symmetry can be obtained from a suitable reparametrisation of $x_n$ in the region $U_\delta \setminus V_{\delta/2}$. We write $h''$ for the resulting map. It may be assumed to agree with $h'$ over $V_{\delta/2}$.

Now we match $h''$ and $g$, as multi-sections. We choose an auxiliary section 
\[ G: U_\delta \to K \times J^0(H \times \R,\R) \]
such that $j^rG$ and $j^rg$ are $\eta$-close on $U_\delta$. It can moreover be assumed that they agree over $\partial A \setminus V_\delta$ and that $G$ is the zero section in $V_{\delta/2}$. There is a fibered-over-$K \times H \times \R$ diffeomorphism $\Psi$ of $K \times J^0(H \times \R,\R)$, lifting the identity, taking the zero section to $G$. We can then set $h''' = j^r\Psi(h'')$. It follows that $j^rh'''$ and $j^rg$ are $4\eta$-close over $W$.

\subsubsection{Making the construction relative to the boundary of the membrane}

We now observe that $h''''$ is graphical in $\Op(B) \setminus B$. Similarly, $g$ is graphical in $U_\varepsilon \setminus \partial A$. By construction, the image of $h'''|_B$ is contained in a $5\eta$-neighbourhood of $g|_{\partial A}$. It follows that, if $\eta$ is sufficiently small it is possible to interpolate, over $U_\varepsilon \setminus B$, between $h$ and $g$, as we go from $\partial B$ to $\partial U_\varepsilon$. Write $h$ for the resulting multi-section defined over $U_\delta$.

We observe that the distance from one boundary component of the band $U_\varepsilon \setminus B$ to the other can be assumed to be bounded below by $\varepsilon/2$ by taking $\delta < \varepsilon/2$. It follows that a sufficiently small choice of $\eta$ guarantees the $\varepsilon$-closeness of $j^rh$ and $j^rg$.

I.e. we choose $\eta$ first, depending on $\varepsilon$. The constants $\delta$ and $\nu$ are chosen next, depending on $\eta$ (and indirectly on $\varepsilon$).

\subsubsection{Checking the claimed properties}

The argument presented is a sequence of point symmetries, so the resulting map is point equivalent over $B$ to $h'$, which was a fish wrinkle, and is graphical elsewhere. For the same reason, the base of all these maps is still $D$. The approximation property follows by construction, concluding the proof. \hfill$\Box$

\begin{remark}
The proof shows that the surgery procedure may be assumed to be localised to an arbitrarily small neighbourhood of the closed lower hemisphere of $\partial A$. \hfill$\triangle$
\end{remark}


.

%
%
%
%
%
%
\section{Other surgeries involving wrinkles} \label{sec:surgeries}

In this section we explain (Subsections \ref{ssec:chopping} and \ref{ssec:wrinklesBallBase}) how to pass from arbitrary zig-zag wrinkles to zig-zag wrinkles with ball base. We then show (Subsection \ref{ssec:wrinklesCylinderBase}) that a zig-zag wrinkle with ball base can be replaced by a zig-zag wrinkle with cylinder base. The cobordisms associated to these surgeries are in fact wrinkled families themselves.

Moreover, (the lifts of) these surgeries may be assumed to take place in an arbitrarily small neighbourhood of jet space. This implies that our holonomic approximation Theorems \ref{thm:holonomicApproxZZ} and \ref{thm:holonomicApproxParam} also hold if we restrict our attention to wrinkled families all whose wrinkles have ball base. This was stated already in Section \ref{sec:holonomicApproxParam} as Corollary \ref{cor:holonomicApproxBall}.

The ideas presented in this appendix are an adaptation to our setting of the usual ``chopping of wrinkles''; see for instance \cite[Section 2]{ElMiWrinI}.

\subsection{Removable zig-zag wrinkles} \label{ssec:removable}


Before we get to chopping, we need to provide some background on how a wrinkle may be removed.

\subsubsection{Formal removal}

As explained in Subsections \ref{sssec:regularisation} and \ref{sssec:regularisationWrin}, the wrinkle and the double fold are singularites of tangency that are formally inessential. I.e. the regularisation homotopes their Gauss maps to monomorphisms transverse to $V_\can$, relative to the boundary of the models. Similar statements hold for closed double folds and wrinkles, but in that case there is no homotopy between the Gauss map and the regularisation.

Even if there is no formal obstruction, it may not be possible to remove a wrinkle due to geometric reasons. It is easy to see that this is the case for the $1$-dimensional zig-zag: Since the model is only defined as a germ along the membrane, Bolzano's theorem tells us that we cannot replace it by a holonomic section. This motivates the upcoming discussion.

\subsubsection{Geometric removal}

Let $K$, $H$, $D$ and $\rho$ be as in Subsections \ref{ssec:wrinkles}, \ref{ssec:closedWrinkles} and \ref{ssec:zigzagParam}. If $r$ is even, let
\begin{align*}
f: K \times H \times \R \quad\longrightarrow\quad & K \times H \times \R \times \Sym^r(\R,\R)
\end{align*}
be the map defined in Subsection \ref{ssec:wrinkles}; the germ of $\Lift(f,0)$ along $A = \{t^2 + \rho(k,\widetilde x) \leq 0\}$ is the model wrinkle. If $r$ is odd, let $f$ be as in Subsection \ref{ssec:closedWrinkles} instead; the germ of $\Lift(f,0)$ along $A$ is then the model closed wrinkle. In both cases, the germ of $\pi_f \circ \Lift(f,0)$ along $A$ is the model zig-zag wrinkle.

\begin{definition}\label{def:removableZigzag}
A \textbf{removable zig-zag wrinkle} with base $D \subset K \times H$ and height $\rho$ is a pair $(g,U)$ consisting of:
\begin{itemize}
\item the germ $g$ of $\pi_f \circ \Lift(f,0)$ along
\[ B = \{t^2 + 3\rho(k,\widetilde x) \leq 0\}. \]
\item a set $U \subset K \times J^0(H \times \R,\R)$ containing the fibrewise convex hull of the image of $g$.
\end{itemize}
\end{definition}
The raison d'etre for this definition is that $g: \Op(B) \to \Op(U)$ is homotopic, relative to $\partial(\Op(B))$ and in a fibered manner over $K \times H$, to a graphical multi-section. See the right-hand side of Figures \ref{fig:Singularities_Regularization} and \ref{fig:Singularities_ClosedRegularization}. This homotopy may be assumed to be a wrinkled family of multi-sections itself. Furthermore, if $r$ is even, this homotopy lifts to $r$-jet space to a homotopy of integral embeddings. 

\begin{definition}
Let $h: K \times M \to K \times Y$ be a $K$-family of multi-sections. A zig-zag wrinkle $h|_A$, $A \subset K \times M$, is said to be \textbf{removable} if there are sets $B \supset A$ and $U \subset K \times Y$ such that:
\begin{itemize}
\item $(h|_B,U)$ is a removable zig-zag wrinkle.
\item $h^{-1}(U) = B$.
\end{itemize}
\end{definition}
The second property says that $h|_B$ takes values in $U$ and no other branches of $h$ enter $U$. This fact, together with the discussion above, proves that:
\begin{lemma} \label{lem:removable}
Let $h: K \times M \to K \times Y$ be a wrinkled $K$-family of multi-sections with a removable zig-zag wrinkle $(h|_A,U)$.

Then, there is a $K$-family of multi-sections $h'$ satisfying:
\begin{itemize}
\item $h' = h$ outside of $\Op(A)$.
\item $h'|_{\Op(A)}$ is graphical with image contained in $U$.
\item $h'|_{\Op(A)}$ and $h|_{\Op(A)}$ can be connected by a compactly-supported, wrinkled $(K \times [0,1])$-family of multi-sections.
\end{itemize}
\end{lemma}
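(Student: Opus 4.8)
\textbf{Proof plan for Lemma \ref{lem:removable}.}

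The plan is to apply the geometric removal discussion for removable zig-zag wrinkles (Definition \ref{def:removableZigzag}) purely locally, inside the set $B \supset A$ where the structure is modelled, and then paste the result back in. First I would unpack the two defining properties of a removable zig-zag wrinkle $(h|_A,U)$: the pair $(h|_B,U)$ is a model removable zig-zag wrinkle with some base $D$ and height $\rho$, and the preimage condition $h^{-1}(U)=B$ guarantees that modifications of $h$ with image confined to $U$ do not create new intersections with the other branches of $h$. The point is that all the work happens in $\Op(B)$ and the ambient map $h$ is untouched (and still a wrinkled family of multi-sections) on the complement.

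Next I would invoke the model statement recorded right after Definition \ref{def:removableZigzag}: the germ $g$ of $\pi_f \circ \Lift(f,0)$ along $B$ is homotopic, relative to $\partial(\Op(B))$ and fibered over $K \times H$, to a graphical multi-section with image in $U$, via a homotopy that is itself a wrinkled family of multi-sections. Transporting this model homotopy through the point/front symmetry identifying $h|_B$ with $g$ (this identification is exactly what Definition \ref{def:removableZigzag} and the definition of a removable wrinkle provide, since removability is defined up to equivalence), I obtain a compactly supported homotopy $(h_\tau)_{\tau\in[0,1]}$ with $h_0 = h|_{\Op(B)}$ and $h_1$ graphical with image in $U$, constant near $\partial(\Op(B))$. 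Since the homotopy is constant near the boundary of $\Op(B)$, I extend it by $h$ on the complement to a genuine $K\times[0,1]$-family $\wtd h$ of multi-sections; it is wrinkled because both pieces are, and they agree (constantly) on the overlap. Setting $h' := \wtd h_1$ gives the three claimed properties: $h'=h$ outside $\Op(A)$ (indeed outside $\Op(B)$), $h'|_{\Op(A)}$ is graphical with image in $U$, and $\wtd h$ is the required connecting family.

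The main obstacle I anticipate is bookkeeping around the overlap region $\Op(B)\setminus B$ and the relative-in-$\partial$ claim: one must be careful that the model homotopy really is the identity (not merely graphical) in a fixed neighbourhood of $\partial(\Op(B))$ so that the extension by $h$ is smooth and so that no stray branch of $h$ wanders into $U$ during the homotopy — this is where the hypothesis $h^{-1}(U)=B$ is doing essential work and must be used to guarantee that the intermediate maps $h_\tau$, whose images stay in the fibrewise convex hull $U$, remain embeddings in the relevant sense and remain multi-sections with zig-zags of the expected type. A secondary subtlety, only present when $r$ is odd, is that the lift to $r$-jet space of the model homotopy passes through integral maps with singularities of mapping (closed pleats/stabilisations); but the statement of Lemma \ref{lem:removable} is phrased entirely at the level of multi-sections (fronts), so this does not obstruct the argument here — it is precisely the issue deferred to Appendix \ref{sec:desingularisationOdd}. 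Everything else is a routine gluing of families that are constant near the gluing locus.
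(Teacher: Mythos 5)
Your proposal is correct and follows exactly the route the paper has in mind: the paper does not give a formal proof of Lemma~\ref{lem:removable} but derives it directly from the paragraph preceding it (the model homotopy from the removable zig-zag wrinkle germ to a graphical multi-section, rel $\partial(\Op(B))$ and fibered, realized as a wrinkled family), combined with the condition $h^{-1}(U)=B$ to rule out interference from other branches, which is precisely your strategy. Your remark distinguishing the front-level statement (which is all the lemma asserts) from the $r$-odd lifting issue deferred to Appendix~\ref{sec:desingularisationOdd} is also consistent with the paper's own caveat that the homotopy lifts to integral embeddings only when $r$ is even.
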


\begin{remark}
Even though the homotopy between a removable zig-zag wrinkle $(g,U)$ and a section is localised to $\Op(U) \subset K \times Y$, it is not small upon lifting to $r$-jet space. This is problematic if one is interested in making statements in the vein of holonomic approximation. We will address this issue in the next subsections.

The reader should compare Definition \ref{def:removableZigzag} to the notion of \emph{loose chart} for a legendrian \cite{Mur}. Indeed, our definition is an $r$-jet analogue of Murphy's, but there is a caveat. Whereas a loose chart tells us that a legendrian has a closed double fold with a ``large neighbourhood'' in $J^1$, removability asks for a ``large neighbourhood'' in $J^0$. The latter is a stronger condition, but this generality is sufficient for our statements.

We will revisit these ideas in the sequel \cite{PT2} and we will provide a better analogue of the loose chart in higher jet spaces. \hfill$\triangle$
\end{remark}

\subsection{Chopping} \label{ssec:chopping}

The idea behind chopping is that one can replace any zig-zag wrinkle by a collection of smaller wrinkles with ball base that are stacked upon each other. 

\subsubsection{The toy model}

Before we provide a complete statement, let us explain the construction in its most elementary form. Consider the multi-section
\[ g: \R^2 \to J^0(\R^2,\R),\quad (x_1,t) \mapsto (x_1,t^2,t^3),\]
obtained as the front projection of the fold. Its singularity locus equals $\Sigma(g) = \{t = 0\}$ and consists of cusps.

Let $\wtd{\Sigma} \subset \R^2$ be the graph of a compactly-supported function $x_1 \mapsto t(x_1) \leq 0$. Denote by $U$ the region sitting between $\Sigma$ and $\wtd{\Sigma}$. We ask ourselves whether there is a multi-section $f$ that satisfies:
\begin{itemize}
    \item The lifts $j^1f$ and $j^1g$ are $C^0$-zero close.
    \item $f$ has a cusp along $\wtd{\Sigma}$.
		\item $f$ has a zig-zag wrinkle in $\Op(U)$.
		\item $f$ has no other singularities.
\end{itemize}
That is, we introduce an auxiliary zig-zag wrinkle in order to move the cusp locus of $g$. See Figure \ref{fig:ModifyingSingularitiesMovingFoldLocus}. Imagine now that $\Sigma$ is one of the cusps of a zig-zag. Applying this argument repeatedly would allow us to put $\Sigma$ very close to the other cusp $\Sigma'$, at the expense of adding many smaller zig-zags with ball base. Once $\Sigma$ and $\Sigma'$ are close enough, the zig-zag is removable.

\begin{figure}[ht]
\centering
\includegraphics[width = 0.7\linewidth ]{./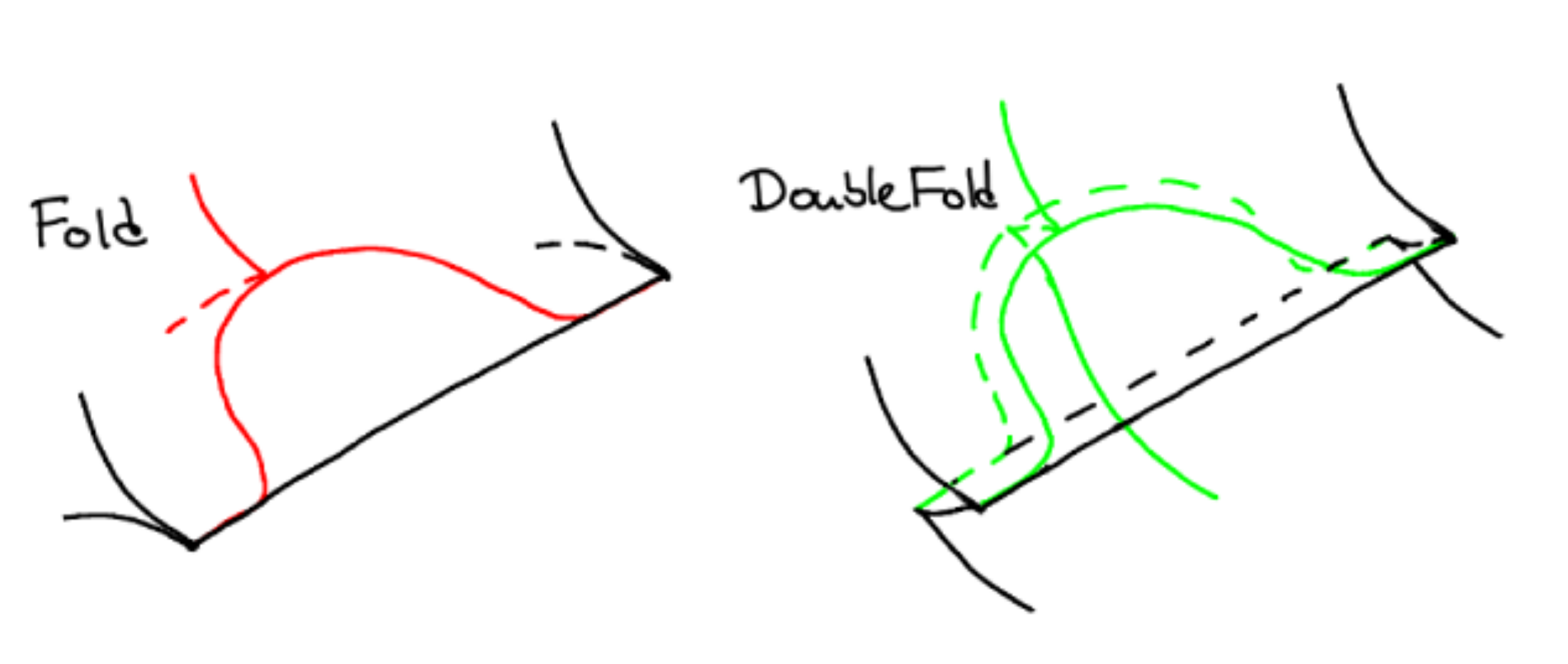}
\caption{On the left: Moving the cusp locus generally induces a $C^0$-big change. On the right: A removable double fold can be moved around in a $C^0$-small manner.}\label{fig:ModifyingSingularitiesFoldvsDoublefold}
\end{figure}

\begin{figure}[ht]
\centering
\includegraphics[width=\linewidth]{./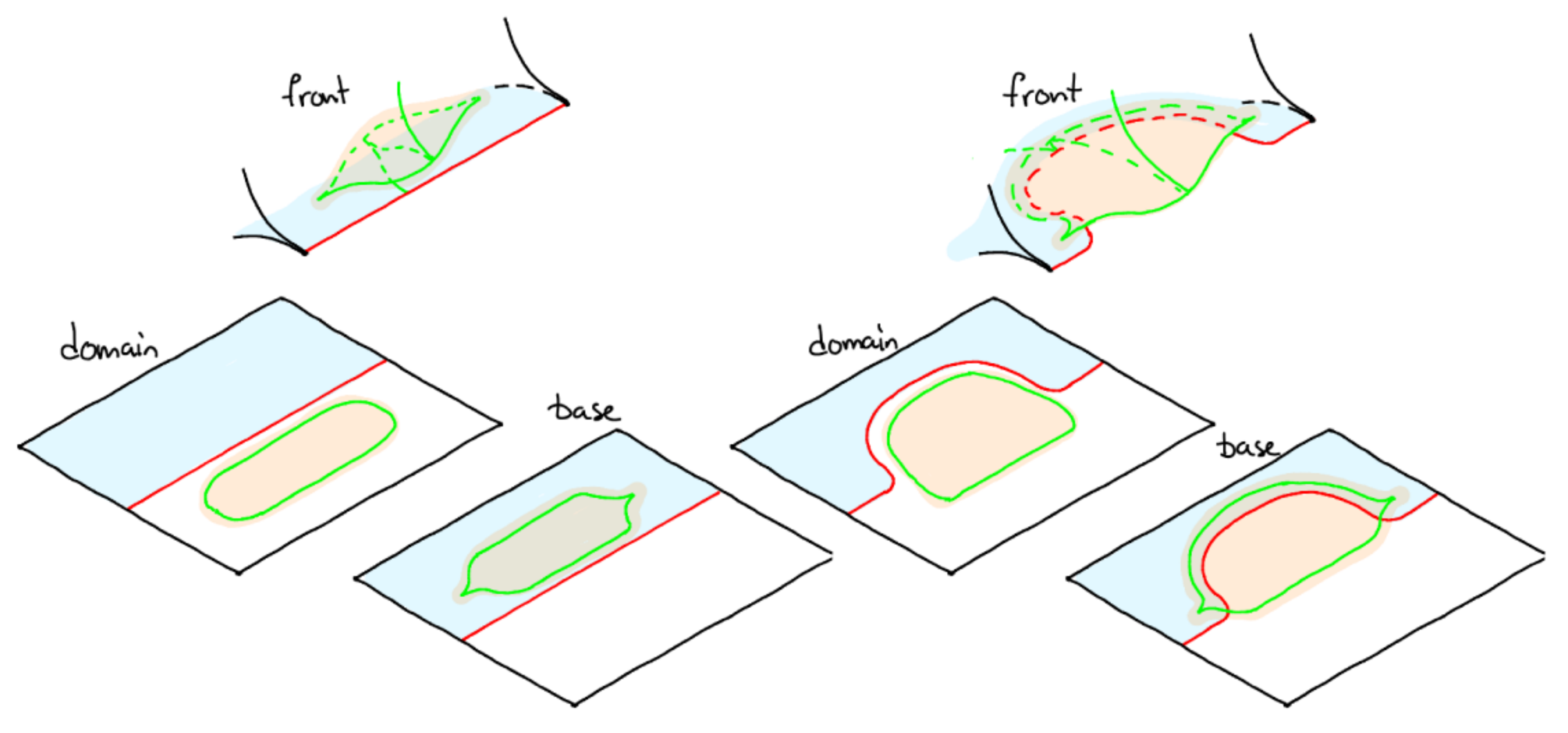}
\caption{In order to move the cusp locus $\Sigma$ of $g$ we first create a small zig-zag wrinkle in $\Op(U)$, as shown on the left. We imagine that $\Sigma$ and one of the hemispheres of the wrinkle are paired up, forming a zig-zag. As shown on the right, this allows us to move the two of them together towards $\wtd\Sigma$. The resulting map is $f$.}\label{fig:ModifyingSingularitiesMovingFoldLocus}
\end{figure}

\subsubsection{The chopping model}

The toy model introduced in the previous subsection relies on the existence of a certain nicely-behaved multi-section $F$, depicted in Figure \ref{fig:WrinkledEmbeddings_MovingSingularLocusTwo}. We explain its construction next.

Our starting point is the cusp 
\begin{align*}
G: \R^2   &\longrightarrow J^0(\R^2,\R) \\
(x_1,t)   &\mapsto (x_1,x_2(t) = t^2, y(t) = t^{2r+1}).
\end{align*}
Fix a constant $\varepsilon > 0$. Fix a non-increasing function $\chi: \R \to \R$ such that $\chi|_{\R^{\leq 0}} = 0$ and $\chi|_{\R^{\geq \varepsilon}} = -\id$. Then:
\begin{lemma} \label{lem:choppingModel}
There is a multi-section:
\[ F: \R^2 \longrightarrow J^0(\R^2,\R) \]
satisfying the following properties:
\begin{itemize}
\item $j^rF$ and $j^rG$ are $\varepsilon$-close.
\item $F = G$ if $x_1 \leq \varepsilon$ or $t \notin [-x_1-\varepsilon, \varepsilon]$.
\item $F$ has a cusp along $\{t=\chi(x_1)\}$.
\item $F$ has a model zig-zag wrinkle with membrane contained in $\{t \in (-x_1, \varepsilon/2)\}$.
\end{itemize}
\end{lemma}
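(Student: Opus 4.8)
\textbf{Proof plan for Lemma \ref{lem:choppingModel}.}

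The plan is to realise $F$ as a modification of the cusp $G$ that is supported in the quadrant-like region $\{x_1 > \varepsilon\} \cap \{t \in [-x_1 - \varepsilon, \varepsilon]\}$, building it, as everywhere else in the paper, out of point symmetries applied to standard models. First I would pass to the principal projection: the cusp $G$ is the front projection of $\fold$ (suitably stabilised in the $x_1$-direction), so $j^rG$ is the lift of the map $(x_1,t) \mapsto (x_1, x_2 = t^2/2, z^{(0,r)} = t)$ into $H \times \R \times \Sym^r(\R,\R)$ with $H = \R$. Thus it suffices to produce the analogous modification $\Phi$ in the principal projection, fibered over the $x_1$-coordinate, and then invoke Proposition \ref{prop:principalProjection} to lift it back to an integral map whose front is the desired $F$; all the stated properties (cusp locus, zig-zag wrinkle, immersion/embedding behaviour, $C^0$-closeness of the $r$-jets) transfer through the lift by the last clauses of that Proposition and the isomorphism between $(\xi_\can, \Omega(\xi_\can))$ and metasymplectic space.

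Next I would carry out the modification in the principal projection itself. For fixed $x_1 \le 0$ nothing changes. For $x_1 > 0$ the fibre slice is a curve $t \mapsto (x_2(t), z^{(0,r)}(t))$ which currently has a single fold at $t = \chi(x_1) = 0$ (for $x_1 \le \varepsilon$) and which we want to deform so that (i) the fold is moved to $t = \chi(x_1)$, and (ii) an extra pair of folds — a zig-zag, i.e. a double fold if $r$ is even and a closed double fold if $r$ is odd (Definition \ref{def:zigzag}) — is inserted with membrane inside $\{t \in (-x_1, \varepsilon/2)\}$. This is exactly the curve-level picture of Figure \ref{fig:ModifyingSingularitiesMovingFoldLocus}: take the standard double fold / closed double fold model of Subsection \ref{ssec:doubleFold} (resp. \ref{ssec:closedDoubleFold}), shrink it by a point symmetry so that its image lies within an $\varepsilon$-tube of the original curve, glue it into the graph of the $z^{(0,r)}$-coordinate between the original fold and the translate $t = \chi(x_1)$, and reparametrise the $t$-variable so the ``outer'' fold of the new zig-zag and the original fold of $G$ merge into a single fold located at $\{t=\chi(x_1)\}$. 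Running this construction in an $x_1$-dependent, fibered manner, with the deformation switched off for $x_1 \le \varepsilon$ via the cut-off implicit in $\chi$, produces $\Phi$; since $\chi$ is non-increasing with $\chi|_{\R^{\ge \varepsilon}} = -\id$, the moved fold locus is precisely $\{t = \chi(x_1)\}$ as required, and the membrane of the inserted zig-zag stays in $\{t \in (-x_1, \varepsilon/2)\}$ because we placed it strictly between $-x_1$ and the original fold at (approximately) $t=0$.

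The $C^0$-control is the point that needs care, and I expect it to be the main obstacle. One must check that moving the fold locus a distance $O(x_1)$ in the $t$-direction, which would ordinarily induce a $C^0$-\emph{large} change in the $r$-jet (this is precisely the content of Figure \ref{fig:ModifyingSingularitiesFoldvsDoublefold}, left), is here harmless because the displacement is accompanied by the insertion of a \emph{removable} zig-zag: the net vertical motion of the $z^{(0,r)}$-graph is bounded by the width $\varepsilon$ of the tube we work in, and integrating down to lower derivatives only improves the bound by the usual arguments (as in the proof of Proposition \ref{prop:zigzagBump}). Concretely, I would first fix the height and width of the inserted (closed) double fold model so that its principal projection stays $\varepsilon$-close to the original fold's, then note that the lifting integrals of Proposition \ref{prop:principalProjection} are contractions in the relevant sense, so $j^rF$ and $j^rG$ are $\varepsilon$-close. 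The remaining bookkeeping — that $F = G$ outside the stated region, that $F$ is a genuine multi-section (immersed and transverse to the fibres on a dense set, with smooth integral lift), and that its only singularities are the relocated cusp plus the one model zig-zag wrinkle — is then immediate from the corresponding properties of the standard models and the fibered lifting Proposition.
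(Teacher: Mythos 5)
Your plan recasts the construction in the principal projection — deforming the fibre curves $t \mapsto (x_2, z^{(0,r)})$ over $x_1$ and lifting via Proposition \ref{prop:principalProjection} — whereas the paper works directly in $J^0(\R^2,\R)$: it compresses the model swallowtail $H$ with a point symmetry until $j^rH|_A$ is $\eta$-small, glues it onto one branch of $G$ via a fibered diffeomorphism $\Psi$, and repositions by a further fibered diffeomorphism $\Phi$. The geometric idea is the same — insert an auxiliary zig-zag wrinkle, born at a swallowtail event as $x_1$ crosses $\varepsilon$, which re-pairs with the original fold of $G$ and provides the room to relocate the cusp — and you have identified it correctly. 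One combinatorial slip: nothing ``merges''. The original fold of $G$ pairs with one hemisphere of the inserted wrinkle to form the zig-zag wrinkle of $F$, while the \emph{other} hemisphere becomes the relocated cusp.

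The genuine gap is the $C^0$-estimate $|j^rF - j^rG| < \varepsilon$. You assert that it ``transfers through the lift by the last clauses of [Proposition \ref{prop:principalProjection}]'', but that proposition gives existence, uniqueness given the initial datum, inheritance of singularity/immersion/embedding structure, and smooth dependence on the data — not a continuity bound. Its recipe recovers the mixed entries $z^{(i_1,i_2)}$ with $i_1>0$ by \emph{differentiating} in the transverse $x_1$-direction, so $C^0$-closeness of the principal projection controls only the pure entries $z^{(0,j)}$; the mixed entries require $C^1$-in-$x_1$ control of the construction, which your fallback (``the lifting integrals are contractions'', ``integrating down only improves the bound'') does not supply. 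And this is not a technicality that can be waved away here: the pointwise bound forces $x_2 \circ F(x_1,-x_1)$ to lie within $\varepsilon$ of $x_1^2$, so the relocated cusp — and with it one endpoint of the inserted wrinkle's membrane — must slide along the parabola as $x_1$ grows, making the $\partial_{x_1}$-derivatives of the local sections of $F$ a priori large unless the family is arranged so that away from the moving fold the local sections are essentially $x_1$-independent. The paper's front-level assembly from a compressed model and fibered diffeomorphisms with prescribed supports is precisely what lets it control this $x_1$-dependence (even if tersely, behind ``if $\eta$ and $\delta$ are sufficiently small''); a principal-projection version needs to make the analogous control explicit, and the plan does not do so.
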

\begin{proof}
The map $F$ is shown in Figure \ref{fig:WrinkledEmbeddings_MovingSingularLocusTwo}. Let $\eta, \delta >0$ be small constants to be fixed later.

Suppose $H: \R^2 \to J^0(\R^2,\R)$ is the zig-zag swallowtail; let $A$ be its membrane. Using a point symmetry compressing $x_2$ and $y$ we may assume that $j^rH|_A$ maps to a $\eta$-neighbourhood of the zero section over the line $\{x_2=0\}$. Similarly, we can reparametrise in $t$ in order to compress the membrane to a $\eta$-neighbourhood of $\{t=0\}$.

We can then shift in both domain and target, in a fibered manner over $x_1$, so that:
\begin{itemize}
\item $H$ maps the $(x_1,t)$-half-line $L = [\delta,\infty) \times \{\delta\}$ to a graph over the $x$-half-line $[\delta,\infty) \times \{\delta\}$,
\item the membrane $A$ contains $L$ and is contained in the $\eta$-neighbourhood of $L$.
\end{itemize}

Let $\Psi$ be a fibered diffeomorphism of $J^0(\R^2,\R)$, lifting the identity, mapping the zero section over $\{t \geq \delta\}$ to the upper branch of $G$. We can define a new multi-section $H'$ that is given by $\Psi \circ H$ close to $A$ and by $G$ outside of a $\delta/4$-neighbourhood of $A$. We may assume that $H'|_L = G|_L$. It may be assumed that $j^rH'$ is $\delta$-close to $j^rG$ if $\eta$ is small enough.

Let $U$ be a neighbourhood of $\{t \leq \delta\}$. We ask that it is small enough so that it is disjoint from the upper branch of the wrinkle of $H$ for every $x_1 \geq 2\delta$. We now observe that there is a diffeomorphism $\Phi$ of $\Op(H'(U)) \subset J^0(\R^2,\R)$, that satisfies:
\begin{itemize}
\item It is fibered over $x_1$.
\item Its support is contained in the $\delta$-neighbourhood of $H(\{x_1 \geq 0, \chi(x_1) \leq t \leq \delta\})$.
\item It is fibered over $\R^2$ in $\Op(H(\{t \leq 0\}))$.
\item It maps $H(U)$ to a neighbourhood of $H(\{t \leq \chi(x_1) + \delta\})$.
\end{itemize}
We set $F = \Phi \circ H'$. If $\eta$ and $\delta$ are sufficiently small, $F$ satisfies the desired properties.
\end{proof} 

\begin{figure}[ht]
\centering
\includegraphics[width=0.85\linewidth]{./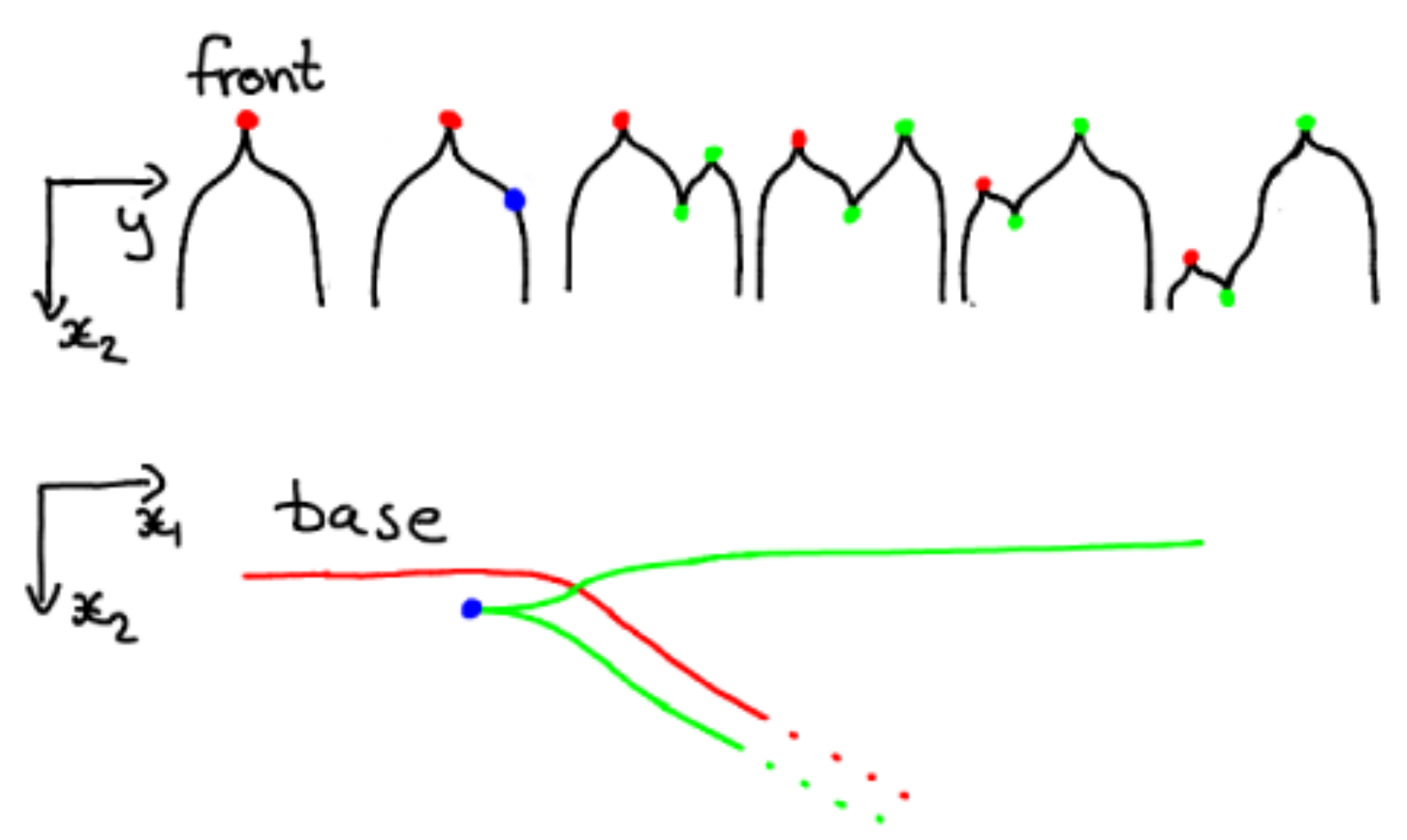}
\caption{The map $F$ introduced in Lemma \ref{lem:choppingModel}. The cusp is shown in red. The blue point denotes the swallowtail event. The cusp locus of the wrinkle is shown in green.}\label{fig:WrinkledEmbeddings_MovingSingularLocusTwo}
\end{figure}

\subsubsection{The chopping lemma}

Using the model introduced in the previous subsection we can finally state our surgery result:
\begin{proposition}\label{prop:WrinkledEmbeddings_MovingSingularLocus}
Fix a model zig-zag wrinkle $g$ with base $D \subset K \times H$ and membrane $A \subset K \times H \times \R$. Let $A' \subset A$ be obtained from $A$ by an isotopy $\psi$ fibered over $K \times H$. We require that the support of $\psi$ is disjoint from the lower hemisphere of $\partial A$ and lies over $\Op(B)$, where $B$ is ball contained in the interior of $D$.

Given $\varepsilon>0$, there exists a multi-section $f$ satisfying:
\begin{itemize}
    \item $|j^rf - j^rg|_{C^0} < \varepsilon$;
    \item $f$ has a model zig-zag wrinkle with base $D$ and membrane $A'$.
		\item The only other singularity of $f$ is a zig-zag wrinkle with base $B$ and membrane contained in $\Op(A \setminus A')$.
\end{itemize}
\end{proposition}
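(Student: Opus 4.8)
\textbf{Plan for the proof of Proposition \ref{prop:WrinkledEmbeddings_MovingSingularLocus}.}

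The plan is to reduce the global statement to repeated applications of the chopping model Lemma \ref{lem:choppingModel}. First I would decompose the isotopy $\psi$ into a finite composition of ``small'' isotopies $\psi = \psi_N \circ \cdots \circ \psi_1$, each of which moves the upper hemisphere of $\partial A$ by an amount so small that, after straightening coordinates near the relevant portion of the hemisphere, it looks (up to a point symmetry and a fibered reparametrisation) like the elementary cusp-displacement encoded in $G \rightsquigarrow F$ of Lemma \ref{lem:choppingModel}. Since $\psi$ is supported over $\Op(B)$ with $B$ a ball in the interior of $D$, the region where all the surgeries take place is a collar of (a piece of) the upper hemisphere; over this collar the model zig-zag wrinkle $g$ restricts, after a front symmetry coming from Lemma \ref{lem:zeroSection} and Proposition \ref{prop:principalProjectionGeneral}, to a stabilisation of the front projection of a fold, which is exactly the cusp $G$ appearing in Lemma \ref{lem:choppingModel}. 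This is the step that lets me invoke the Lemma at all: the local model near one hemisphere of a zig-zag wrinkle is a (stabilised) cusp.

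Next I would run the induction: having moved the hemisphere through $\psi_1, \dots, \psi_j$, the map has a model zig-zag wrinkle with base $D$ and some membrane $A_j$, plus $j$ auxiliary zig-zag wrinkles with ball bases $B_1, \dots, B_j \subset B$ stacked near the successive positions of the hemisphere. Applying Lemma \ref{lem:choppingModel} (in its $K$- and $H$-stabilised form, which is legitimate by the naming conventions of Subsection \ref{sssec:stabilisingSing}, since stabilisation of the model is again a model) with parameter $\varepsilon_j := \varepsilon/2^{j+1}$ produces the next membrane $A_{j+1}$ and one more ball-based zig-zag wrinkle, changing $j^rf$ by less than $\varepsilon_j$ in $C^0$; summing the geometric series keeps the total change below $\varepsilon$. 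After $N$ steps the hemisphere sits in its final position $\psi(\partial A) = \partial A'$, so the main wrinkle has membrane $A'$, and all the auxiliary wrinkles are contained over $\Op(B)$, hence their union is a single (disconnected, or — after a further small isotopy consolidating the $B_i$ — connected) zig-zag wrinkle with base $B$ and membrane inside $\Op(A \setminus A')$. The claim about the lower hemisphere being untouched is automatic because the support of each $\psi_i$, and of each local chopping surgery, avoids it by hypothesis.

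I expect the main obstacle to be bookkeeping rather than conceptual: one must check that the auxiliary ball-based wrinkles introduced at successive stages can be arranged disjointly (or merged) inside $\Op(A \setminus A')$ without creating new singularities, and that the ``stacking'' of Lemma \ref{lem:choppingModel} — where the new cusp of $F$ coincides with the old cusp $G$ along a half-line — glues correctly with the already-constructed data at each inductive step. Concretely, the delicate point is matching parametrisations: each application of Lemma \ref{lem:choppingModel} is stated in a fixed chart $\R^2$ fibered over $x_1$, and I must verify that the point symmetry and fibered reparametrisation identifying the relevant collar of the hemisphere with that chart can be chosen to depend smoothly on the stage $j$ and on the parameters $(k,\widetilde x)$, so that the $C^0$-distortion it introduces is absorbed into the choice of $\varepsilon_j$ (exactly as in the ``reduction to a standard square'' steps of Subsections \ref{ssec:zigzagBumpConstruction} and the proof of Proposition \ref{prop:fishSurgery}). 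Once this is set up, the estimate and the singularity-type assertions follow immediately from the corresponding clauses of Lemma \ref{lem:choppingModel}.
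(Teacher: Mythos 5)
Your reduction to a standard cusp near one hemisphere is the right opening move and matches the paper. The divergence comes with the iteration, which is both unnecessary and leaves a gap. Lemma \ref{lem:choppingModel} is not a model for a small, fixed cusp-displacement: the $x_1$-variable already encodes an \emph{arbitrary} displacement, with the cusp of $F$ sitting at $t=\chi(x_1)$ and hence being pushed down by $x_1$ once $x_1\geq\varepsilon$. Since the displacement one actually needs depends on the fibre $(k,\widetilde x)$ — it must vanish as one approaches $\partial D$, where the membrane pinches — the correct way to deploy the Lemma is to plug a fibered function into the $x_1$-slot, not to subdivide $\psi$ and iterate. The paper's proof does precisely this: after a point symmetry places $\partial^- A$ on the zero section, a fibered reparametrisation puts $g$ into the form $(k,\widetilde x,t^2,t^3)$ on the collar $\wtd A = A\setminus\Op(\partial^+ A)$, one writes the upper boundary of $A'$ as the graph of a positive function (again denoted $\psi$) over $D$, and sets
\[ f(k,\widetilde x,t):=\bigl(k,\widetilde x,\; x_n\circ F(\psi(k,\widetilde x),t),\; y\circ F(\psi(k,\widetilde x),t)\bigr). \]
A single application of Lemma \ref{lem:choppingModel} then produces exactly one auxiliary wrinkle, with base roughly $\{\psi>\varepsilon\}\subset K\times H$ (a ball near $B$, as required) and membrane contained in $\Op(A\setminus A')$.

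The genuine gap in your version is the consolidation at the end. The proposition demands that the only other singularity be a \emph{single} zig-zag wrinkle with base $B$; your iteration instead leaves $N$ auxiliary wrinkles stacked over $\Op(B)$ in the $t$-direction. That stack is $N$ separate wrinkles, not one wrinkle with disconnected base: disconnectedness of the base $D$ lives in $K\times H$, while your $B_i$ share essentially the same base and have disjoint membranes in $t$. The ``further small isotopy consolidating the $B_i$'' is therefore doing all the real work, and it is not a routine step — merging wrinkles is a surgery in its own right (morally the inverse of the chopping carried out in Theorem \ref{thm:ModifyingSingularitiesCuttingWrinkles}) and would need its own model and size estimate. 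The fix is not to supply that surgery but to notice the subdivision buys you nothing: the Lemma already covers arbitrary displacement, so the iteration should simply be replaced by the single fibered substitution above, which your care about fibered reparametrisations already sets up.
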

\begin{proof}
Since $B$ is contained in the interior of $D$, it is sufficient to prove the statement for a zig-zag wrinkle without equator. Namely: we have that $D = \Op(B)$, $A$ is an annulus with base $D$, and $A'$ is another annulus with the same lower boundary $\partial^- A$ as $A$. A point symmetry allows us to assume that $\partial^- A$ is situated in $t=0$ and it maps to the zero section over $x_n=0$. Let $\wtd A = A \setminus \Op(\partial^+ A)$. We can reparametrise $\wtd A$, in a fibered manner over $D$, so that:
\[ g(k,\widetilde x, t) = (k,\widetilde x,t^2, t^3). \]
The upper boundary of $A' \subset \wtd A$ is then given as the graph of a positive function $\psi$ over $D$.

Let $F$ and $G$ be as in Lemma \ref{lem:choppingModel}. The proof concludes by setting 
\[ f(k,\widetilde x,t) := (k,\widetilde x, x_n \circ F(\psi(k,\widetilde x),t), y \circ F(\psi(k,\widetilde x),t)), \]
as long as the constant bounding the distance between $j^rF$ and $j^rG$ is chosen to be sufficiently small.
\end{proof}

\begin{figure}[ht]
\centering
\includegraphics[width=0.85\linewidth]{./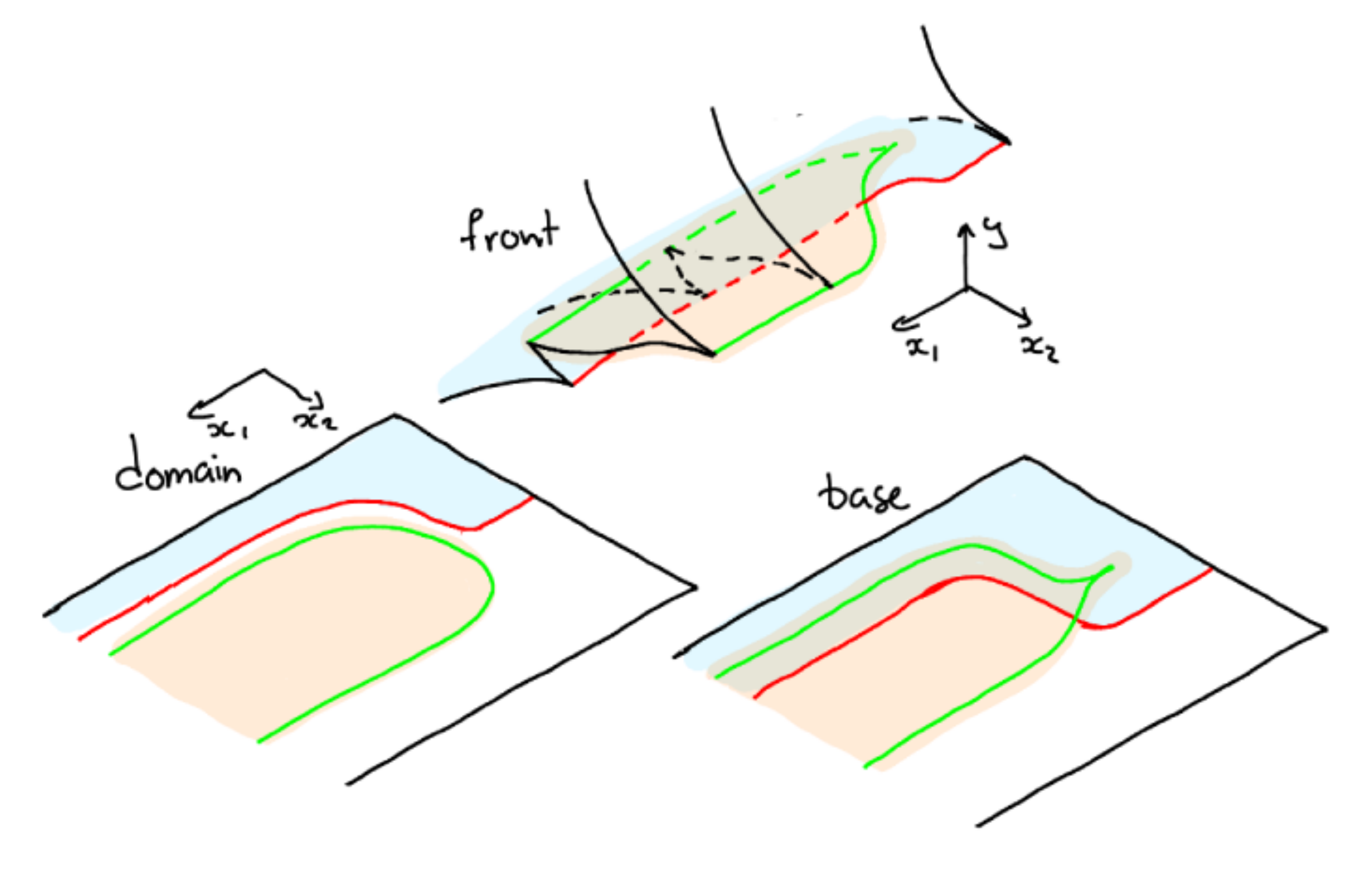}
\caption{The map $f$ produced by Proposition \ref{prop:WrinkledEmbeddings_MovingSingularLocus}.}\label{fig:WrinkledEmbeddings_MovingSingularLocus}
\end{figure}

\subsection{Reducing to wrinkles with ball base} \label{ssec:wrinklesBallBase}

Using Proposition \ref{prop:WrinkledEmbeddings_MovingSingularLocus}, we now explain how to replace wrinkled families of multi-sections by families whose wrinkles all have ball base. The size of the resulting wrinkles will be as small as we want.

\begin{theorem}\label{thm:ModifyingSingularitiesCuttingWrinkles}
Fix a constant $\varepsilon >0$. Let $g = (g_k : M \to J^r(Y))_{k \in K}$ be a wrinkled family of multi-sections. Suppose we are given $\{U_i\}$ a cover of $M$ and $\{V_j\}$ a cover of $Y$.

Then, there exists a wrinkled family of multi-sections $f$ satisfying:
\begin{itemize}
    \item $|j^rf - j^rg|_{C^0}< \varepsilon$;
    \item all the singularities of $f$ are wrinkles with ball base;
    \item the membrane of each wrinkle is contained in some $U_i$, and its image in some $V_j$.
\end{itemize}
\end{theorem}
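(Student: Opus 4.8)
The plan is to reduce to a local, single-wrinkle statement and then invoke the chopping model of Lemma~\ref{lem:choppingModel} iteratively. First I would set up the bookkeeping: since $g$ is a wrinkled family, its singularity locus is a locally finite disjoint union of membranes $A_\alpha \subset K \times M$, each of which is (the front of) a model zig-zag wrinkle with some base $D_\alpha$. Because membranes are disjoint and the statement is local in nature, it suffices to treat one membrane $A = A_\alpha$ at a time, keeping $g$ unchanged outside an arbitrarily small neighbourhood of $A$; the $\varepsilon$-budget is then split among the (locally finitely many) membranes meeting any given compact set, with an exhaustion argument in the non-compact case. So I would fix a single model zig-zag wrinkle $g|_A$ with base $D \subset K \times H$ and aim to replace it by a stack of wrinkles with ball base, each with membrane inside some $U_i$ and image inside some $V_j$, at $C^0$-cost $<\varepsilon$.

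The core engine is Proposition~\ref{prop:WrinkledEmbeddings_MovingSingularLocus}: it lets us push (part of) the cusp locus of $g$ along a fibered isotopy supported over a small ball $B$ in the interior of $D$, paying an arbitrarily small $C^0$-price and creating exactly one extra zig-zag wrinkle with ball base $B$ and membrane confined to the region swept out. The plan is to apply this repeatedly. Triangulate (or take a fine open cover refining $\{U_i\}$ and, via the front projection, $\{V_j\}$) so that the relevant neighbourhood of $A$ is broken into small pieces. In each piece, use Proposition~\ref{prop:WrinkledEmbeddings_MovingSingularLocus} to shear a small portion of one hemisphere of $\partial A$ across towards the opposite hemisphere, spawning a small ball-base wrinkle whose membrane and image sit inside the chosen chart. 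After finitely many such moves (their number controlled by the combinatorics of the cover, hence the per-move budget is $\varepsilon$ over that number), the original membrane $A$ has been shrunk so that its two hemispheres are $C^0$-close: what remains of $g|_A$ is a removable zig-zag wrinkle in the sense of Subsection~\ref{ssec:removable}. Then Lemma~\ref{lem:removable} replaces it by a graphical multi-section, again $C^0$-small after we arrange (using the $J^0$-neighbourhood in Definition~\ref{def:removableZigzag}) that the removal is localised. What is left is precisely a stack of ball-base wrinkles, each sitting in a prescribed chart.

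A few technical points need care. One is that Proposition~\ref{prop:WrinkledEmbeddings_MovingSingularLocus} requires the support of the pushing isotopy to be disjoint from the lower hemisphere of $\partial A$ and to lie over a ball in the \emph{interior} of $D$; when $A$ has a non-empty equator (the generic parametric case, $K$ not a point), I would handle the equatorial region separately, first using the cylinder/ball-base discussion to localise near the equator, since near the equator the wrinkle is a stabilisation of a lower-dimensional cubic/Reidemeister model and chopping in the transverse slices still applies. Another point is ensuring the images land in the $V_j$: since each chopping move is localised near a point of the front $\pi_f \circ g$ and the front of the new small wrinkle can be taken in an arbitrarily small neighbourhood of that point (by compressing the model of Lemma~\ref{lem:choppingModel} via point symmetries), a sufficiently fine cover does the job. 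Finally, I must check that after all surgeries $f$ is still a genuine wrinkled family of multi-sections — i.e. a topological embedding (resp.\ with controlled self-intersections when $r$ is odd and $\ell=1$, though the theorem as stated only asks for a wrinkled family so this is fine) with all singularities model wrinkles — which follows because every operation used (Proposition~\ref{prop:WrinkledEmbeddings_MovingSingularLocus}, Lemma~\ref{lem:removable}) outputs wrinkled families and is localised.

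The main obstacle I expect is the bookkeeping of the isotopies near the equator in the parametric setting: Proposition~\ref{prop:WrinkledEmbeddings_MovingSingularLocus} is phrased for pushing over a ball in the interior of the base, so turning a wrinkle with a genuine equator into a finite stack of \emph{ball-base} wrinkles requires either an auxiliary step that first trades the cylinder/general base for ball bases near the equator, or a direct equatorial version of the chopping model. Getting the $C^0$-estimates to compose cleanly through the (finitely many, but cover-dependent) iterated moves — so that the total drift stays below $\varepsilon$ — is the other place where one has to be slightly careful, though it is routine once the per-move count is fixed by the combinatorics of $\{U_i\}$ and $\{V_j\}$.
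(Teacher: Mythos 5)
Your plan is essentially the paper's proof: reduce to a single model zig-zag wrinkle, repeatedly invoke Proposition~\ref{prop:WrinkledEmbeddings_MovingSingularLocus} to shear the upper hemisphere of the membrane downwards while spawning small ball-base wrinkles, and finally recognise the shrunk residual as a removable wrinkle and remove it via Lemma~\ref{lem:removable}. Where the paper is sharper is in the bookkeeping of the chopping: rather than an unspecified ``triangulation or fine cover,'' it decomposes the height function $\psi = 2\sqrt{\rho}-\eta$ of the wrinkle into a finite sum of $\delta$-small bump functions $\psi_{l,m}$ by cutting at levels $\delta l$ and applying a partition of unity subordinate to a $\delta$-fine cover $\{D_m\}$ of $D' \subset D$; it then does a double induction moving the upper hemisphere down one $\psi_{l,m}$-bump at a time. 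That explicit decomposition is what fixes the number of moves and yields the per-move $C^0$-bound you say needs care.

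The equator issue you flag as ``the main obstacle'' is in fact not an obstacle at all in this scheme, and your proposed workarounds (an auxiliary trade near the equator, or an ``equatorial chopping model'') are unnecessary. The paper only chops over $D' = D \setminus \Op(\partial D)$, so every application of Proposition~\ref{prop:WrinkledEmbeddings_MovingSingularLocus} is supported over a ball in the interior of $D$, exactly as required; the equatorial region is never touched during chopping. What remains after the induction is a wrinkle with the \emph{original} base $D$ (and hence the original equator) but of height $O(\delta+\eta)$, i.e.\ removable in the sense of Subsection~\ref{ssec:removable}. Its removal, which is the one step that interacts with the equator, is $C^0$-small for free. One more correction: a wrinkle has non-empty equator whenever $\partial D \neq \emptyset$, which happens already for $K$ a point (e.g.\ a swallowtail), so this is not peculiar to the parametric case.
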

\begin{proof}
We assume $K \times M$ is compact. If not, the argument has to be adapted using a exhaustion. By choosing refinements we can assume that for each $i$ there is a $j$ such that $g(U_i) \subset V_j$. Let $\delta, \eta >0$ be small constants to be fixed later.

We just need to explain how to cut a single zig-zag wrinkle $g|_A$. We assume that it is given by its model: I.e. we write $D \subset K \times H$ for its base, $\rho$ for the height, and we see the membrane $A$ as the subset  
\[ \{t^2 + \rho(k,\widetilde x) \leq 0\} \subset K \times H \times \R. \]
We think of $g|_{\Op(A)}$ as a map with domain $\Op(A)$ and target $K \times J^0(H \times \R,\R)$. The fibres of $Y$ may have dimension greater than $1$ but, according to the model, we can disregard these additional directions.

We can now see $\partial A$ as the union of the two hemispheres $\partial^+ A$ and $\partial^- A$; the first is given as the graph of $\sqrt{\rho}: D \subset K \times H \to \R$ and the latter as the graph of $-\sqrt{\rho}$. Let $D' \subset D$ be $D$ minus a $\delta$-neighbourhood of $\partial D$. Over $D'$, both functions $\pm\sqrt{\rho}$ are smooth.

Roughly speaking, our goal now is to cut the difference function $\sqrt{\rho}-(-\sqrt{\rho}) = 2\sqrt{\rho} > 0$ into small bump functions. This will then translate into the chopping of the wrinkle itself. Ultimately it will be more convenient to work with $\psi := 2\sqrt{\rho} - \eta$ instead.

Consider the functions $\phi_l := \min(\delta\ell,\psi)$; for every $l$ sufficiently large it holds that $\phi_l = \psi$. We now write $\psi_l$ for a smoothing of the difference $\psi_l - \psi_{l-1}$. It may be assumed that $\psi - \sum_l \psi_l$ is positive and $\delta$-small by taking $\eta$ sufficiently small. Note that the sum on the right is finite.

Let $\{D_m \subset D\}$ be a cover of $D'$ by opens of diameter $\delta$. We consider a partition of unity subordinated to $\{D_m\}$ and we apply it to each $\psi_l$. This yields bump functions $\psi_{l,m}: D \to [0,2\delta]$ whose supports have size $\delta$ and such that $\psi - \sum_{l,m} \psi_{l,m}$ is $\delta$-small. We can furthermore assume, by a small deformation, that $\psi_{l,m}$ is positive in the interior of a ball $B_{l,m}$, negative outside, and cuts the boundary sphere transversely.

We now do a double induction, with $l_0$ as the outer parameter and $m_0$ as the inner one. Write $P_{l_0,m_0}$ for the partial sum 
\[ \sum_{l > l_0, m} \psi_{l,m} + \sum_{m < m_0} \psi_{l_0,m}. \]
The induction hypothesis is that we have replaced $g$ by a wrinkled family $\wtd g$ that:
\begin{itemize}
\item has a zig-zag wrinkle with base $D$. Its membrane has $\partial^- A$ as lower hemisphere and $\partial^{l_0,m_0} A$ as upper hemisphere. Here $\partial^{l_0,m_0} A$ is the graph of $\sqrt{\rho} - P_{l_0,m_0}$.
\item its other singularities are zig-zag wrinkles with ball base contained in some $D_m$.
\end{itemize}
The inductive step follows as an application of Proposition \ref{prop:WrinkledEmbeddings_MovingSingularLocus} by taking $\Sigma = \partial^{l_0,m_0}A$ and $\wtd \Sigma = \partial^{l_0,m_0+1}A$ or $\wtd \Sigma=\partial^{l_0-1,0}A$, as needed.

Once the induction is complete, the new zig-zag wrinkles have absorbed most of the wrinkle we started with. Namely, there is still a wrinkle of base $D$ whose height can be assumed to be arbitrarily small by choosing $\eta$ and $\delta$ small. In particular, it is removable. If $\eta$ and $\delta$ are sufficiently small it can be removed yielding the claimed $f$ with $j^rf$ close to $j^rg$.
\end{proof}

\subsubsection{Applications}

Our main application says that holonomic approximation can be achieved using wrinkled families all whose wrinkles have ball base:
\begin{proof}[Proof of Corollary \ref{cor:holonomicApproxBall}]
We apply Theorem \ref{thm:ModifyingSingularitiesCuttingWrinkles} to the holonomic approximation produced by Theorem \ref{thm:holonomicApproxParam}.
\end{proof}

We also prove the analogous statement for submanifolds:
\begin{proof}[Proof of Corollary \ref{cor:wrinkledEmbeddingsBall}]
Unlike the multi-section case, we cannot apply Theorem \ref{thm:ModifyingSingularitiesCuttingWrinkles} directly to the holonomic approximation produced by Theorem \ref{thm:wrinkledEmbeddingsParam}. The reason is that our arguments for singular submanifolds require nesting, but our surgery methods apply only to model zig-zag wrinkles.

This can be addressed by adjusting the proof of Theorem \ref{thm:wrinkledEmbeddingsParam}. Namely, every time we work with a graphical $r$-jet homotopy, we invoke the multi-section result Theorem \ref{thm:holonomicApproxParam} first, followed by the surgery procedure Theorem \ref{thm:ModifyingSingularitiesCuttingWrinkles}. This will produce wrinkles with ball base at each step, as desired. 
\end{proof}

Lastly, we point out that Theorems \ref{thm:fishSurgerySec} and \ref{thm:fishSurgerySubm} from Appendix \ref{sec:desingularisationOdd} also hold for fish wrinkles with ball base. This can be achieved by applying first the chopping Theorem \ref{thm:ModifyingSingularitiesCuttingWrinkles} and then applying the fish surgery from Proposition \ref{prop:fishSurgery}.

\begin{figure}[ht]
\centering
\includegraphics[width=\linewidth]{./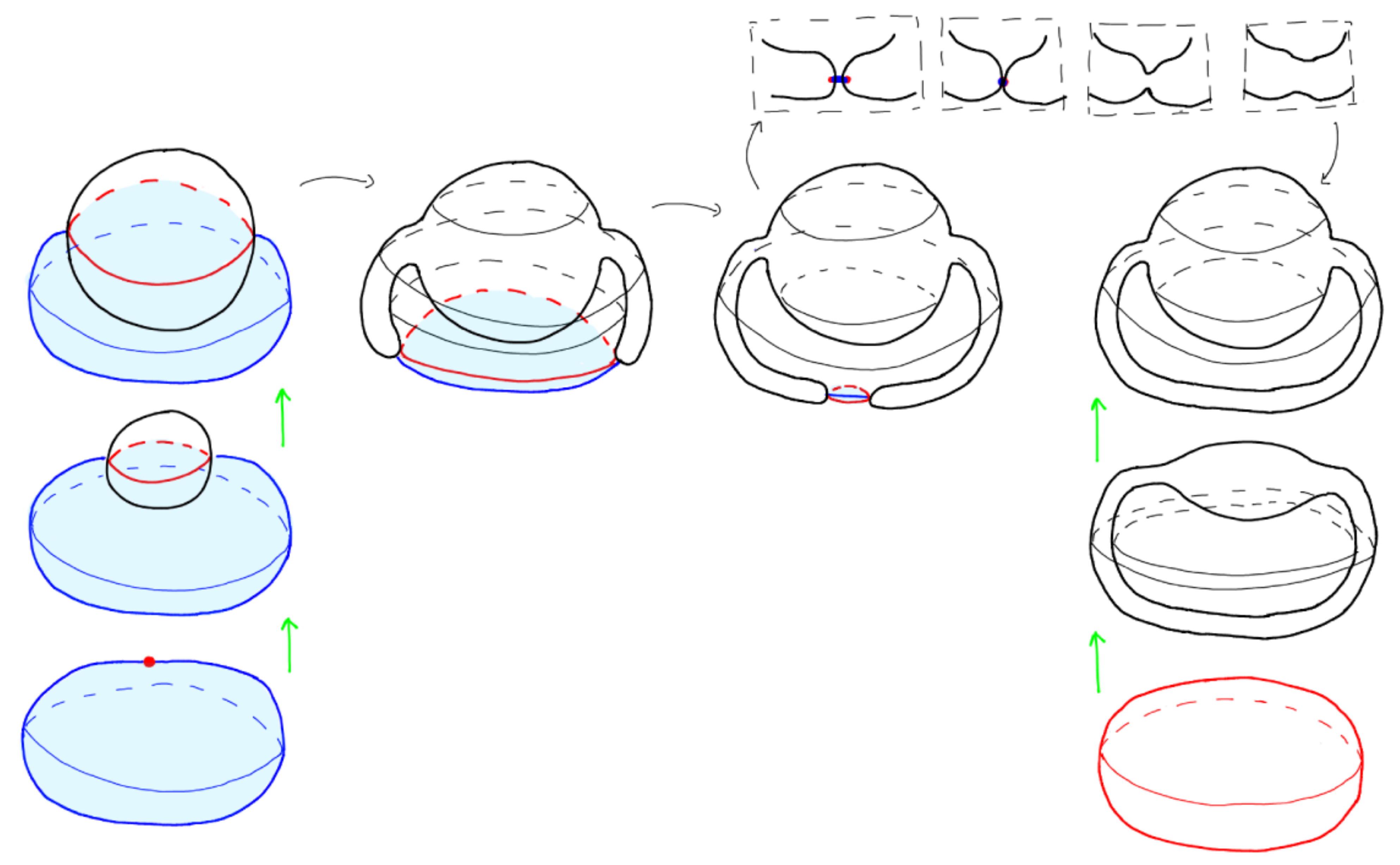}
\caption{The three pictures on the left-hand side depict the domain of a zig-zag wrinkle with ball base $D$. In red we show the equator; the bottom picture depicts an embryo event. We then choose a family of codimension-1 discs $\SD$ (in blue), that run close to the lower hemisphere of the membrane and that cap-off the equator. The model of the wrinkle can be extended to $\Op(\SD)$, where the map is graphical. We then replace the map over $\Op(\SD)$ in order to yield a wrinkle with cylinder base $D \cup \SD$. This is shown on the right; the bottom right depicts an embryo sphere. This replacement is in fact a cobordism given by a wrinkled family, as shown in the two middle figures.}\label{fig:Wrinkling_InverseSurgery}
\end{figure}

\subsection{Reducing to wrinkles with cylinder base} \label{ssec:wrinklesCylinderBase}

We now state the analogue of Theorem \ref{thm:ModifyingSingularitiesCuttingWrinkles} for wrinkles with cylinder base.
\begin{theorem}
Fix a constant $\varepsilon >0$. Let $g = (g_k : M \to J^r(Y))_{k \in K}$ be a wrinkled family of multi-sections all whose singularities have ball base.

Then, there exists a wrinkled family of multi-sections $f$ satisfying:
\begin{itemize}
    \item $|j^rf - j^rg|_{C^0}< \varepsilon$;
    \item all the singularities of $f$ are wrinkles with cylinder base;
    \item the wrinkles of $f$ and $g$ are in 1-to-1 correspondence and their membranes are $C^0$-close.
\end{itemize}
\end{theorem}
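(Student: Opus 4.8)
The strategy is the inverse of chopping: instead of breaking a large wrinkle into small ball-base wrinkles, we fuse a ball-base wrinkle with a suitable graphical cap so that its base becomes a cylinder. We work one wrinkle $g|_A$ at a time, given by its model with base a ball $D \subset K \times H$, height $\rho$, and membrane $A = \{t^2 + \rho(k,\widetilde x) \le 0\}$. The equator $E = (\partial D) \times \{0\}$ is a sphere, and we want to replace $D$ by a cylinder $D \cup \SD$, where $\SD$ is a family of codimension-one discs capping off $E$ and running close to the lower hemisphere $\partial^- A$ of the membrane. The key point is that the model of a wrinkle is defined as a germ along the membrane, and the map $\pi_f \circ \Lift(f,0)$ extends past the membrane as a \emph{graphical} multi-section on a neighbourhood $\Op(\SD)$ of such a disc family. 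We will replace the map over $\Op(\SD)$ by one exhibiting a cylinder-base wrinkle with base $D \cup \SD$, while the old membrane $A$ sits inside the new one.

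First I would set up the capping discs. Since $D$ is a ball, its boundary sphere $\partial D$ bounds a disc in $K \times H$; thicken this to a family $\SD$ of discs in $K \times H \times \R$ with $\partial \SD = E$ such that $\SD$ hugs the lower hemisphere $\partial^- A$, stays in the region where $g$ (extended past its membrane) is graphical, and is $C^0$-close to $\partial^- A$. Then $D \cup \SD$ is diffeomorphic, rel boundary, to a cylinder $\NS^{n-1} \times [0,1]$ (here I am using that capping a ball along its boundary sphere with a disc produces a cylinder in the relevant sense), so there is a height function $\rho'$ on $\Op(D \cup \SD)$ negative inside, positive outside, cutting the boundary transversely, and agreeing with $\rho$ on $D$ minus a collar of $\partial D$. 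The model cylinder-base wrinkle $\ClosedWrin^{D \cup \SD, \rho'}$ (resp.\ $\Wrin$, depending on parity of $r$) then has a membrane $A''$ containing $A$, and outside $A$ it is graphical.

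Next I would carry out the replacement. On $\Op(\SD)$, both the extension of $g$ past its membrane and the new model wrinkle are graphical multi-sections into $K \times J^0(H \times \R,\R)$; on the overlap near $\partial^- A$ (and near $E$, where both are the relevant embryo/graphical germ) they are $C^0$-close provided $\SD$ was chosen close enough to $\partial^- A$. As in the chopping arguments (Proposition \ref{prop:WrinkledEmbeddings_MovingSingularLocus} and Theorem \ref{thm:ModifyingSingularitiesCuttingWrinkles}), I interpolate between the two graphical multi-sections over $\Op(\SD) \setminus A''$ using a fibered-over-$K\times H$ diffeomorphism of $J^0(H \times \R,\R)$ lifting the identity; this is a $C^0$-small modification in jet space once the closeness constants are chosen small enough, and it keeps the map a multi-section since graphicality is preserved. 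The resulting $f$ agrees with $g$ away from $\Op(\SD)$, has a cylinder-base wrinkle with base $D \cup \SD$ and membrane $C^0$-close to $A$ in place of $g|_A$, and satisfies $|j^rf - j^rg|_{C^0} < \varepsilon$. Doing this for each of the (locally finitely many) wrinkles of $g$, disjointly, gives the statement; the 1-to-1 correspondence of wrinkles is built into the construction. Finally, the cobordism between $f$ and $g$ is itself a wrinkled $(K \times [0,1])$-family: the interpolation above can be run as a homotopy in which the capping disc is grown out of $\partial^- A$, and along the way the only singularities that appear are (stabilised) embryos of the cylinder-base wrinkle, i.e.\ swallowtail moves along the equator $E$, exactly as in Figure \ref{fig:Wrinkling_InverseSurgery}.

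\textbf{Main obstacle.} The delicate point is the behaviour near the equator $E$. For a ball-base wrinkle the equator carries an embryo singularity (a cubic / vertical cusp, or its $H$-stabilisation), whereas a cylinder-base wrinkle over $D \cup \SD$ has no equator at all in the interior — the ``capping'' must convert the embryo germ of $g$ along $E$ into the interior of the new membrane. Checking that the model of $g$ genuinely extends across $E$ to a graphical germ on $\Op(\SD)$, and that this extension matches (up to a $C^0$-small, fibered, base-lifting diffeomorphism) the corresponding graphical germ of the cylinder-base model, is where one has to be careful with the explicit normal forms from Subsections \ref{ssec:wrinkles}, \ref{ssec:closedWrinkles}, and \ref{ssec:zigzagParam}; this is also the step where the parity of $r$ intervenes (double fold vs.\ closed double fold), though the argument is uniform once phrased in terms of the front projection. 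The rest is a routine adaptation of the chopping surgery, including the bookkeeping of the closeness constants $\delta, \eta$.
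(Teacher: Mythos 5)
Your strategy --- cap off the equator $E$ with a family of discs $\SD$ hugging $\partial^- A$, extend the model graphically over $\Op(\SD)$, replace the map there by the appropriate model, and run the replacement as a wrinkled cobordism --- is exactly what Figure~\ref{fig:Wrinkling_InverseSurgery} and its caption describe, which is all the proof the paper provides. You also correctly single out the behaviour near the equator as the delicate point, since that is where the embryo germ of the ball-base wrinkle must be absorbed into the interior of the new membrane.

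That said, the topological step you lean on is wrong as stated. You write that ``$D \cup \SD$ is diffeomorphic, rel boundary, to a cylinder $\NS^{n-1} \times [0,1]$'' because ``capping a ball along its boundary sphere with a disc produces a cylinder''. It does not: $D$ and $\SD$ are both $(k{+}n{-}1)$-discs glued along the full common boundary $E \cong \NS^{k+n-2}$, so $D \cup_E \SD$ is a closed sphere $\NS^{k+n-1}$ (it has no boundary, so ``rel boundary'' does not parse), whereas the base of a cylinder wrinkle is $\D^k \times \NS^{n-1}$ with non-empty boundary $\NS^{k-1}\times\NS^{n-1}$. The singular locus also changes type: $\partial A \cong \NS^{k+n-1}$ must become $\NS^k \times \NS^{n-1}$, which for $k,n-1 \geq 1$ is a genuine (index-$k$ type) surgery on $\partial A$, not a capping. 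The paper's own shorthand ``$D \cup \SD$'' is equally loose, but since it leaves the verification to the reader, you need to do better here: identify concretely the region of $\Op(A) \cup \Op(\SD)$ that will serve as the new membrane, show that it retracts onto a cylinder $\D^k \times \NS^{n-1}$, construct a height function $\rho'$ with $\partial D'$ cut transversely, and check that the germ of the modified map along this new $\partial A''$ matches the cylinder-base model via a point symmetry. At present your proposal assumes the answer to the hardest part rather than deriving it.
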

We leave the proof to the reader, but we illustrate it in Figure \ref{fig:Wrinkling_InverseSurgery}.

\bibliographystyle{abbrv}
\bibliography{Bibliography}
\end{document}